\newtheorem{theorem}{Theorem}[section]
\newtheorem{claim}[theorem]{Claim}
\newtheorem{lemma}[theorem]{Lemma}
\newtheorem{proposition}[theorem]{Proposition}
\newtheorem{corollary}[theorem]{Corollary}
\theoremstyle{definition}
\newtheorem{definition}[theorem]{Definition}
\newtheorem{question}[theorem]{Question}
\theoremstyle{remark}
\newtheorem{remark}[theorem]{Remark}
\newtheorem{fact}[theorem]{Fact}
\def\l{{\langle}}
\def\r{{\rangle}}
\def\mathunderaccent#1#2 {\let\theaccent#1\skewfactor#2
\mathpalette\putaccentunder}
\def\putaccentunder#1#2{\oalign{$#1#2$\crcr\hidewidth
\vbox to.2ex{\hbox{$#1\skew\skewfactor\theaccent{}$}\vss}\hidewidth}}
\newcommand{\lusim}[1]{\smash{\underset{\raisebox{1.2pt}[0cm][0cm]{$\sim$}}
{{#1}}}}
\def\smallbox#1{\leavevmode\thinspace\hbox{\vrule\vtop{\vbox
   {\hrule\kern1pt\hbox{\vphantom{\tt/}\thinspace{\tt#1}\thinspace}}
   \kern1pt\hrule}\vrule}\thinspace}
\DeclareMathOperator{\dom}{dom}
\title{On ultrapowers and cohesive ultrafilters}
\author{Tom Benhamou}
\thanks{This research  was supported by the National Science Foundation under Grant
No. DMS-2346680}
\address[Benhamou]{Department of Mathematics, Rutgers University, New Brunswick ,NJ USA}
\email{tom.benhamou@rutgers.edu}
\subjclass[2020]{03E02, 03E04, 03E05, 03E55, 06A06, 06A07}
\keywords{ultrafilter, $p$-point, rapid ultrafilter,  Tukey order, measurable cardinal, Galvin's property}
\begin{document}
\begin{abstract}
    We characterize the Tukey order, the Galvin property/ Cohesive ultrafilters from \cite{Kanamori1978} in terms of ultrapowers. We use this characterization to measure the distance between the Tukey order and other well-known orders of ultrafilters. Secondly, we improve two theorems of Kanamori \cite{Kanamori1978} from the 70's. We then study the point spectrum and the depth spectrum of an ultrafilter, and give a simple positive answer to Kanamori's question \cite[Question 2]{Kanamori1978} starting from a supercompact cardinal. We also prove that a positive answer requires more than $o(\kappa)=\kappa^{++}$. Finally, we prove several consistency results regarding the point and depth spectrum.
\end{abstract}

\maketitle
\section{Introduction}
Among the most elegant applications of set theory, involve ultrafilters \cite{KomjathBook,HindimanScott}. For example, in Topology, ultrafilters can be used to define the Stone-\v{C}ech compactification, provide examples of topological spaces with special properties, and in  Moore-Smith convergence of nets. The latter also motivates the study of the Tukey order \cite{Tukey} which studies cofinal types of partially ordered sets. The Tukey order was studied extensively, both on general directed sets and on sets of the form $(\mathcal{X},\preceq)$, $(\mathcal{X},\preceq^*)$ where $\mathcal{X}$ is a filter or an ideal, and $\preceq$ is either $\supseteq$ or $\subseteq$ respectively\footnote{We denote by $A\subseteq^*B$ if and only if $A\setminus B$ is bounded in a cardinal which is understood from the context.}. One remarkable theorem due to Todorcevic \cite{TodorcevicDirSets85} is that there are only $5$ distinct cofinal types of size at most $\aleph_1$ which are provably different, but many cofinal types of cardinality $\mathfrak{c}$.

The Tukey order on ultrafilters was first considered by J. Isbell \cite{Isbell65} in the 60's. Isbell discovered a combinatorical criterion for the maximality of ultrafilters in the Tukey order. This was later generalized to measurable cardinals in \cite{TomNatasha}.
\begin{theorem}[Isbell]
Let $U$ be an ultrafilter on $\lambda$. The following are equivalent:
\begin{enumerate}
    \item $[2^\lambda]^{<\omega}\leq_T U$.
    \item $\mathbb{P}\leq_TU$ for every directed poset $\mathbb{P}$ of size $2^\lambda$.
    \item There is a subset $\mathcal{A}\subseteq U$ of cardinality $2^\lambda$, such that the intersection of every infinite subset of $\mathcal{A}$ is not in the ultrafilter.
\end{enumerate}
\end{theorem}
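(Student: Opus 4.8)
The plan is to treat $U$ as the directed set $(U,\supseteq)$ and to use the two standard reformulations of $D\leq_T E$: it holds iff there is an \emph{unbounded} (Tukey) map $f\colon D\to E$, one carrying unbounded subsets of $D$ to unbounded subsets of $E$; equivalently, iff there is a \emph{cofinal} map $g\colon E\to D$, one carrying cofinal subsets to cofinal subsets. The local fact I would record first is that in $(U,\supseteq)$ a subfamily $\mathcal B\subseteq U$ is bounded precisely when $\bigcap\mathcal B\in U$, and unbounded otherwise. I would then establish $(1)\Leftrightarrow(2)$ and $(1)\Leftrightarrow(3)$ separately.

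For $(2)\Rightarrow(1)$ there is nothing to prove: $[2^\lambda]^{<\omega}$ is itself a directed poset of cardinality $2^\lambda$, hence one of the posets $\mathbb P$ quantified over in (2). For $(1)\Rightarrow(2)$ I would invoke the maximality of $[2^\lambda]^{<\omega}$ among directed sets of size at most $2^\lambda$: given a directed $\mathbb P=\{p_\alpha:\alpha<2^\lambda\}$, define $g\colon[2^\lambda]^{<\omega}\to\mathbb P$ by letting $g(s)$ be an upper bound of $\{p_\alpha:\alpha\in s\}$, which exists by directedness. Then $g$ is cofinal: if $C$ is cofinal in $[2^\lambda]^{<\omega}$, then for each $\alpha$ some $s\in C$ contains $\alpha$, so $g(s)\geq p_\alpha$, whence $g[C]$ is cofinal. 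Thus $\mathbb P\leq_T[2^\lambda]^{<\omega}$, and composing with the hypothesis $[2^\lambda]^{<\omega}\leq_T U$ via transitivity of $\leq_T$ gives $\mathbb P\leq_T U$.

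The heart of the argument is $(1)\Leftrightarrow(3)$. For $(3)\Rightarrow(1)$, given $\mathcal A=\{A_\alpha:\alpha<2^\lambda\}$, define $\phi\colon[2^\lambda]^{<\omega}\to U$ by $\phi(s)=\bigcap_{\alpha\in s}A_\alpha$, which lands in $U$ as $U$ is closed under finite intersections. For $X\subseteq[2^\lambda]^{<\omega}$ one has $\bigcap_{s\in X}\phi(s)=\bigcap_{\alpha\in\bigcup X}A_\alpha$; if $X$ is unbounded then $\bigcup X$ is infinite, so the hypothesis of (3) gives $\bigcap_{\alpha\in\bigcup X}A_\alpha\notin U$, i.e.\ $\phi[X]$ is unbounded. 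Hence $\phi$ is an unbounded map and $[2^\lambda]^{<\omega}\leq_T U$. For $(1)\Rightarrow(3)$, fix an unbounded map $f\colon[2^\lambda]^{<\omega}\to U$ and set $A_\alpha=f(\{\alpha\})$. For any infinite $I\subseteq2^\lambda$ the family $\{\{\alpha\}:\alpha\in I\}$ is unbounded in $[2^\lambda]^{<\omega}$, so $\{A_\alpha:\alpha\in I\}$ is unbounded in $U$, meaning $\bigcap_{\alpha\in I}A_\alpha\notin U$; this is precisely the intersection property of (3).

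The point demanding care — and the step I expect to be the main obstacle — is verifying that the family produced in $(1)\Rightarrow(3)$ genuinely has cardinality $2^\lambda$, since $\alpha\mapsto A_\alpha$ need not be injective. Here I would note that the intersection property just derived already forces $\alpha\mapsto A_\alpha$ to be finite-to-one: if some value $C$ were attained on an infinite index set $J$, then $\bigcap_{\alpha\in J}A_\alpha=C\in U$, contradicting the property applied to $I=J$. A finite-to-one map on a set of size $2^\lambda$ has image of size $2^\lambda$, so $\mathcal A=\{A_\alpha:\alpha<2^\lambda\}$ has the required cardinality, and the intersection property passes verbatim from infinite index sets to infinite subfamilies of distinct elements. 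This completes all the implications.
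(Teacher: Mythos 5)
Your proof is correct; the paper states this result as background, quoting Isbell, and gives no proof of its own, so there is nothing to diverge from. Your argument is the standard one — Tukey maximality of $[2^\lambda]^{<\omega}$ among directed sets of size $2^\lambda$ for $(1)\Leftrightarrow(2)$, and the translation between unbounded subsets of $(U,\supseteq)$ and intersections outside $U$ for $(1)\Leftrightarrow(3)$ — and your attention to the finite-to-one point, ensuring the family in $(3)$ really has cardinality $2^\lambda$, closes the one step that is usually glossed over.
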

Then Isbell \cite{Isbell65} and independently Juhasz \cite{Juhasz67} constructed ultrafilters meeting the above criterion, using long independent families. There have been several constructions of such ultrafilter (see for example \cite{tomFanxin, DowZhou, kunen1972}).

The combinatorical criterion in $(3)$ was independently studied in the 70's by F. Galvin \cite{MR0369081} and A. Kanamori \cite{Kanamori1978}, under different names, as a weak form of regularity of ultrafilters. 
\begin{definition}[Kanamori]
    An ultrafilter $U$ on $\kappa$ is $(\lambda,\mu)$-cohesive if for every $\mathcal{A}\in [U]^\lambda$, there is $\mathcal{B}\in[\mathcal{A}]^\mu$ such that $\bigcap \mathcal{B}\in U$.
\end{definition}
To see the translation, note that Isbell's criterion $(3)$ translates to $U$ not being $(2^\lambda,\omega)$-cohesive. 
In recent developments in Prikry-type forcing theory due to the author, Gitik, Garti and Poveda \cite{TomMoti,partOne,Parttwo,GitDensity,bgp}, cohesiveness was used under (yet) a different name- the \textit{Galvin property}- to characterize certain intermediate models; the statement that $U$ is $(\lambda,\mu)$-cohesive is denoted there by $\text{Gal}(U,\mu,\lambda)$.
These results led to the investigation of the Galvin property at the realm of measurable cardinals \cite{GalDet,Non-GalvinFil,bgs,OnPrikryandCohen,ghhm,GartiTilt}. 

The author and Dobrinen \cite{TomNatasha,TomNatasha2}  made the connection between the two parallel research streams, and developed the basic framework to study the Tukey order on measurable cardinals, generalizing results of Dobrinen and Todorcevic \cite{Dobrinen/Todorcevic11,Dobrinen/Todorcevic14} to the measurable contexts, but also discovering surprising discrepancies between the two. 

Unlike other well-studied orders on ultrafilters, the Tukey order lacks an ultrapower characterization. This makes our understanding of the Tukey order somehow limited, especially when large cardinals are involved or under other axiomatic systems such as canonical inner models or under the Ultrapower Axiom (UA)  \cite{GoldbergUA}. This was pointed out by the author and Goldberg in \cite{TomGabe24}. The first result of this paper provides such an ultrapower characterization of the Tukey order.
\begin{theorem}
    Let $U$ be an ultrafilter and $\mathbb{P}$ any directed set. The following are equivalent:
    \begin{enumerate}
        \item $\mathbb{P}\leq_T U$.
        \item There is a thin cover\footnote{See Definition \ref{def: Thin cover}} $X\in M_U$ of $j_U''\mathbb{P}$.
    \end{enumerate}
\end{theorem}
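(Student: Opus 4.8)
The plan is to pass through the standard \emph{unbounded-map} (Tukey-map) reformulation of the Tukey order and then transport it across the ultrapower embedding by a single application of \L o\'s's theorem. Recall that, writing $(U,\supseteq)$ for the directed set attached to the ultrafilter, a subset $D\subseteq U$ is bounded precisely when $\bigcap D\in U$; hence $\mathbb{P}\leq_T U$ holds if and only if there is a map $f\colon\mathbb{P}\to U$ with the property that, for every $D\subseteq\mathbb{P}$, if $\bigcap f[D]\in U$ then $D$ is bounded in $\mathbb{P}$. The entire argument will run on the dictionary between such an $f$ and a subset of $j_U(\mathbb{P})$ living in $M_U$: for any $Y=[\langle Y_\alpha\mid\alpha<\kappa\rangle]_U\in M_U$ with each $Y_\alpha\subseteq\mathbb{P}$, \L o\'s's theorem yields $j_U(p)\in Y\iff\{\alpha: p\in Y_\alpha\}\in U$ and, applied to whole subsets, $j_U(D)\subseteq Y\iff\{\alpha: D\subseteq Y_\alpha\}\in U$, where the latter set equals $\bigcap_{p\in D}\{\alpha: p\in Y_\alpha\}$.

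For $(1)\Rightarrow(2)$ I would fix an unbounded map $f\colon\mathbb{P}\to U$ and set $Y_\alpha=\{p\in\mathbb{P}: \alpha\in f(p)\}$, $Y=[\langle Y_\alpha\mid\alpha<\kappa\rangle]_U$. Since $f(p)\in U$ for each $p$, the dictionary gives $j_U(p)\in Y$, so $j_U''\mathbb{P}\subseteq Y$, i.e.\ $Y$ covers $j_U''\mathbb{P}$. For thinness in the sense of Definition~\ref{def: Thin cover}, I would verify that $Y$ contains $j_U(D)$ only for bounded $D$: if $j_U(D)\subseteq Y$, then $\{\alpha: D\subseteq Y_\alpha\}=\bigcap_{p\in D}f(p)\in U$, and the defining property of the unbounded map $f$ forces $D$ to be bounded. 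Thus $Y$ is a thin cover of $j_U''\mathbb{P}$ in $M_U$.

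The converse $(2)\Rightarrow(1)$ reverses this. Given a thin cover $X\in M_U$ of $j_U''\mathbb{P}$, represent it as $X=[\langle X_\alpha\mid\alpha<\kappa\rangle]_U$ with $X_\alpha\subseteq\mathbb{P}$ and define $f\colon\mathbb{P}\to U$ by $f(p)=\{\alpha: p\in X_\alpha\}$. Because $X$ covers $j_U''\mathbb{P}$ we have $j_U(p)\in X$ for every $p$, so by the dictionary $f(p)\in U$ and $f$ indeed maps into $U$. To see that $f$ is an unbounded map, suppose $\bigcap f[D]\in U$ for some $D\subseteq\mathbb{P}$; then $\{\alpha: D\subseteq X_\alpha\}\in U$, hence $j_U(D)\subseteq X$, and the thinness clause of Definition~\ref{def: Thin cover} yields that $D$ is bounded. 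Therefore $f$ witnesses $\mathbb{P}\leq_T U$.

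The one genuinely delicate point — and where I expect the real work to sit — is matching the two distinct uses of the ultrafilter on the two sides of the equivalence: the \emph{covering} requirement is a statement about the pointwise image $j_U''\mathbb{P}$, governed by the $U$-largeness of $\{\alpha: p\in Y_\alpha\}$ for single points $p$, whereas \emph{thinness} is a statement about the full images $j_U(D)$ of arbitrary, possibly unbounded, $D\subseteq\mathbb{P}$, governed by the $U$-largeness of $\{\alpha: D\subseteq Y_\alpha\}=\bigcap_{p\in D}\{\alpha: p\in Y_\alpha\}$. Getting this second \L o\'s translation exactly right — so that $j_U(D)\subseteq Y$ becomes precisely $\bigcap_{p\in D}f(p)\in U$ rather than a merely pointwise assertion — is what makes the covering and thinness clauses line up with the two halves of the unbounded-map condition; everything else is bookkeeping with representing functions. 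A secondary point to check is that passing between the unbounded-map and monotone-cofinal-map formulations of $\leq_T$ causes no trouble when $\mathbb{P}$ is merely directed rather than a poset, and that the construction is insensitive to the choice of representing functions for $Y$.
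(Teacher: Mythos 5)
Your proposal is correct and is essentially the paper's own argument: the paper sets up the same dictionary between maps $f\colon\mathbb{P}\to U$ and subsets of $j_U(\mathbb{P})$ in $M_U$ (via $X_f=\{y\mid [id]_U\in j_U(f)(y)\}$ and the representing sequence $X_\alpha=\{p\mid\alpha\in f(p)\}$), and the \L o\'s computation $j_U(\mathcal{A})\subseteq X_f\iff[id]_U\in j_U\bigl(\bigcap_{B\in\mathcal{A}}f(B)\bigr)\iff\bigcap_{B\in\mathcal{A}}f(B)\in U$ is exactly your matching of thinness with unboundedness. The only cosmetic difference is that you start from a representing sequence $\langle Y_\alpha\rangle$ with $Y_\alpha\subseteq\mathbb{P}$ (which requires the harmless normalization $X\mapsto X\cap j_U(\mathbb{P})$ in the converse direction), whereas the paper phrases everything through the seed $[id]_U$; the content is identical.
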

We then use this characterization to measure the distance between the Tukey order and other orders on ultrafilters such as the Rudin-Keilser order, the Ketonen order. We believe that such a characterization can be useful in determining the structure of the Tukey order on $\sigma$-complete ultrafilters under UA, or at least in the Mitchell models of the form $L[\vec{U}]$, where $\vec{U}$ is a coherent sequence of normal ultrafilter. We also apply our characterization to give a simple ultrapower characterization of cohesiveness

In the second part of this paper, we improve two results from \cite{Kanamori1978}:
\begin{theorem}
    Suppose that $U$ is $(\lambda,\lambda)$-cohesive and $W$ is $(\lambda,\kappa)$ cohesive for $\kappa\leq\lambda$,  or vise versa. Then $U\cdot W$ is $(\lambda,\kappa)$-cohesive.
\end{theorem}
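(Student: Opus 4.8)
The plan is to unwind the definition of the sum $U\cdot W$ and reduce cohesiveness of the product to a two–step use of the cohesiveness of the factors, one ``horizontally'' for the outer measure $U$ and one ``vertically'' for the inner measure $W$. Throughout, for $A$ in the domain of $U\cdot W$ and $\alpha$ in the domain of $U$, write $A_\alpha=\{\beta:(\alpha,\beta)\in A\}$ for the $\alpha$–section, so that $A\in U\cdot W$ iff $\{\alpha:A_\alpha\in W\}\in U$, and $\left(\bigcap_{\xi\in\mathcal B}A_\xi\right)_\alpha=\bigcap_{\xi\in\mathcal B}(A_\xi)_\alpha$.

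First I would fix a family $\{A_\xi:\xi<\lambda\}\in[U\cdot W]^\lambda$ and set $S_\xi=\{\alpha:(A_\xi)_\alpha\in W\}\in U$. Treating the case where $U$ is $(\lambda,\lambda)$–cohesive and $W$ is $(\lambda,\kappa)$–cohesive, I would apply the $(\lambda,\lambda)$–cohesiveness of $U$ to $\{S_\xi:\xi<\lambda\}$ to obtain $\mathcal B_0\in[\lambda]^\lambda$ with $S^\ast:=\bigcap_{\xi\in\mathcal B_0}S_\xi\in U$ (minor bookkeeping, e.g. when many $S_\xi$ coincide, is routine and omitted here). For every $\alpha\in S^\ast$ the vertical family $\{(A_\xi)_\alpha:\xi\in\mathcal B_0\}$ consists of $\lambda$ sets in $W$, so the $(\lambda,\kappa)$–cohesiveness of $W$ furnishes some $\mathcal B_\alpha\in[\mathcal B_0]^\kappa$ with $\bigcap_{\xi\in\mathcal B_\alpha}(A_\xi)_\alpha\in W$. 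The decisive elementary observation is that this property is downward closed: if $\mathcal B\subseteq\mathcal B_\alpha$ then $\bigcap_{\xi\in\mathcal B}(A_\xi)_\alpha\supseteq\bigcap_{\xi\in\mathcal B_\alpha}(A_\xi)_\alpha\in W$, so any $\kappa$–subset of a good fiber family is again good for that fiber. Hence it suffices to produce a single $\mathcal B\in[\mathcal B_0]^\kappa$ contained in $\mathcal B_\alpha$ for $U$–many $\alpha$; for such a $\mathcal B$ one gets $\{\alpha:\bigcap_{\xi\in\mathcal B}(A_\xi)_\alpha\in W\}\supseteq\{\alpha:\mathcal B\subseteq\mathcal B_\alpha\}\in U$, which is exactly $\bigcap_{\xi\in\mathcal B}A_\xi\in U\cdot W$.

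The main obstacle is precisely this uniformization: the good fiber family $\mathcal B_\alpha$ depends on $\alpha$, and, since neither $U$ nor $W$ is assumed to carry any completeness, one can neither stabilize the map $\alpha\mapsto\mathcal B_\alpha$ on a $U$–large set, nor cover $S^\ast$ by the $\lambda^\kappa$ sets $\{\alpha:\bigcap_{\xi\in\mathcal B}(A_\xi)_\alpha\in W\}$ and appeal to an intersection, as each such set may fail to lie in $U$. To overcome this I would pass to the ultrapower and exploit that the sum of ultrafilters is computed by an iterated ultrapower, $j_{U\cdot W}=k\circ j_U$, where $k\colon M_U\to\Ult(M_U,j_U(W))$ is the ultrapower of $M_U$ by $j_U(W)$. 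Writing $d=[\mathrm{id}]_U$, {\L}o\'s's theorem gives $(j_U(A_\xi))_d\in j_U(W)$ for every $\xi\in\mathcal B_0$, and by elementarity $M_U\models j_U(W)$ is $(j_U(\lambda),j_U(\kappa))$–cohesive. A good $\kappa$–subfamily along the \emph{standard} indices $j_U''\mathcal B_0$ is exactly what pulls back to the desired single $\mathcal B$, so I expect the hard point to be forcing the internally chosen good subfamily to consist of standard indices; this is where the full strength $(\lambda,\lambda)$ (rather than merely $(\lambda,\kappa)$) of one factor is indispensable, since it supplies a reservoir of size $\lambda$ through which the selection can be driven.

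Finally, the case ``or vice versa'', in which $U$ is $(\lambda,\kappa)$–cohesive and $W$ is $(\lambda,\lambda)$–cohesive, is handled by the same scheme with the roles of the two coordinates interchanged: the $(\lambda,\lambda)$–cohesive inner factor $W$ now maintains a $\lambda$–sized reservoir of indices valid on each fiber, while the $(\lambda,\kappa)$–cohesive outer factor $U$ performs the final cut to a set of size $\kappa$. The asymmetry of the sum means that whether the strong factor is inner or outer dictates whether the reservoir is maintained vertically (within fibers, governed by $W$) or horizontally (within the index set, governed by $U$); in both configurations the same downward–closure observation reduces matters to the uniformization step, which is the crux of the argument.
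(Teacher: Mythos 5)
Your reduction to the uniformization step is faithful to the combinatorics, but the proposal stops exactly where the real difficulty begins, and that difficulty is not resolved. Having obtained $\mathcal B_0\in[\lambda]^\lambda$ with $S^{*}=\bigcap_{\xi\in\mathcal B_0}S_\xi\in U$ and, for each $\alpha\in S^{*}$, a fiberwise good $\mathcal B_\alpha\in[\mathcal B_0]^\kappa$, you need a single $\mathcal B\in[\mathcal B_0]^\kappa$ with $\{\alpha\mid \bigcap_{\xi\in\mathcal B}(A_\xi)_\alpha\in W\}\in U$. The sufficient condition you reduce to --- one $\mathcal B$ contained in $\mathcal B_\alpha$ for $U$-many $\alpha$ --- is in general unattainable: the $\mathcal B_\alpha$ are $\kappa$-sized subsets of a $\lambda$-sized index set and need not even pairwise intersect in a set of size $\kappa$, let alone share a common $\kappa$-sized subset across a $U$-large set, and you impose no constraint on the map $\alpha\mapsto\mathcal B_\alpha$ that would prevent this. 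The ultrapower variant runs into the same wall, as you yourself concede: elementarity produces an internal good subfamily $Y\in M_U$ of internal cardinality $j_U(\kappa)$ inside $j_U(\mathcal B_0)$, but nothing forces $Y$ to contain $\kappa$-many standard indices $j_U(\xi)$, and the $(\lambda,\lambda)$-cohesiveness of $U$ has already been spent producing $S^{*}$; it gives no leverage over which internal indices get selected. This is essentially the obstruction Kanamori pointed out when noting that his argument for $\omega$ does not generalize, so the proposal has a genuine gap at its declared crux.

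The paper takes a different route that avoids fiberwise selection altogether: by the Benhamou--Dobrinen theorem, for $\kappa$-complete ultrafilters over a measurable $\kappa>\omega$ one has $U\cdot W\equiv_T U\times W$, and $(\lambda,\mu)$-cohesiveness is a Tukey invariant (downward closed under $\leq_T$). It therefore suffices to verify cohesiveness of the Cartesian product, which is immediate: given pairs $\langle (A_\alpha,B_\alpha)\mid\alpha<\lambda\rangle$, first apply $(\lambda,\lambda)$-cohesiveness of one factor to stabilize $\lambda$-many coordinates on one side, then $(\lambda,\kappa)$-cohesiveness of the other factor to the surviving indices on the other side; the ``vice versa'' case is free since $U\times W\cong W\times U$. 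Note that your argument never uses $\kappa$-completeness, whereas the theorem is stated for $\kappa$-complete ultrafilters on a measurable (or for ultrafilters on $\omega$ with $W\cdot W\equiv_T W$) precisely because that hypothesis is what drives the Tukey equivalence $U\cdot W\equiv_T U\times W$; completing your direct argument would in effect require reproving the relevant case of that equivalence.
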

Kanamori proved \cite[Proposition 2.3]{Kanamori1978} the special case where $\lambda=\mu=\omega_1$ and $U,W$ are ultrafilters on $\omega$. He also pointed out that his argument does not generalize to other cardinals. The theorem follows from the author and Dobrinen's simple formulas \cite{TomNatasha,TomNatasha2} for the Tukey-type of Fubini product of $\kappa$-complete ultrafilters on a measurable cardinal $\kappa$ and for certain ultrafilters on $\omega$.

The second result we would like to improve is the following:
\begin{theorem}[{\cite[Theorem 1.2(2)]{Kanamori1978}}]\label{thm:Kanamoti cohesive kappa+}
    Assume $2^\kappa=\kappa^+$. Any uniform ultrafilter over $\kappa$ is not $(\kappa^+,\kappa^+)$-cohesive. 
\end{theorem}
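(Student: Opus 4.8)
The plan is to prove Theorem \ref{thm:Kanamoti cohesive kappa+} via a diagonalization/enumeration argument that produces a family witnessing the failure of $(\kappa^+,\kappa^+)$-cohesiveness. Recall that $U$ being $(\kappa^+,\kappa^+)$-cohesive means every subfamily $\mathcal{A}\in[U]^{\kappa^+}$ contains a subfamily $\mathcal{B}\in[\mathcal{A}]^{\kappa^+}$ with $\bigcap\mathcal{B}\in U$. So to \emph{refute} cohesiveness, I must exhibit a single family $\mathcal{A}=\{A_\alpha:\alpha<\kappa^+\}\subseteq U$ such that every subfamily of full size $\kappa^+$ has intersection \emph{not} in $U$ (equivalently, its complement is in $U$). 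The hypothesis $2^\kappa=\kappa^+$ is exactly what lets me enumerate all potential obstructions in a list of length $\kappa^+$, which is the engine of the construction.

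The key steps, in order. First, since $U$ is uniform on $\kappa$ and $|U|\le 2^\kappa=\kappa^+$, fix an enumeration $\{X_\xi:\xi<\kappa^+\}$ of all subsets of $\kappa$ (or of all size-$\kappa$ subsets of the index set $\kappa^+$ I am building against — I will need to enumerate the candidate "large" subfamilies). The cleaner route is to enumerate all functions or all potential cofinal/large objects and diagonalize. Concretely, I would build $\langle A_\alpha:\alpha<\kappa^+\rangle$ by recursion so that at stage $\alpha$ I have committed to $A_\beta\in U$ for $\beta<\alpha$, and I arrange the $A_\alpha$'s to be sufficiently "independent" or "spread out" that no $\kappa^+$-sized subfamily can have large intersection. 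The natural device is a \emph{regular} or \emph{independent} family tailored to $U$: using $2^\kappa=\kappa^+$, one can build the $A_\alpha$ so that for every $Z\in[\kappa^+]^{\kappa^+}$ the intersection $\bigcap_{\alpha\in Z}A_\alpha$ is empty, or at least bounded, hence not in the uniform $U$. Equivalently, I encode each point $\nu<\kappa$ with a bounded "acceptance set" of indices, so that any $\kappa^+$ indices eventually exclude $\nu$; since there are only $\kappa$ points but $\kappa^+$ indices, a counting/pigeonhole argument forces $\bigcap_{\alpha\in Z}A_\alpha=\emptyset$ for unbounded $Z$.

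The main obstacle, and where I expect the real work to lie, is simultaneously keeping each $A_\alpha\in U$ \emph{and} forcing all large intersections out of $U$: these pull in opposite directions, since $U$ is a filter and wants to keep intersections large. The resolution uses uniformity crucially — a uniform ultrafilter on $\kappa$ contains no set of size $<\kappa$, so it suffices to drive the large intersections down to size $\le\kappa$ (indeed bounded) rather than literally empty, and that is achievable because I have $\kappa^+$ indices but only $\kappa$ "coordinates" to distribute. I would set up the recursion so that at each stage the requirement "$A_\alpha\in U$" is met by choosing $A_\alpha$ as a $U$-large modification of the generic pattern, using that $U$ is an ultrafilter (so for the defining partition at each step exactly one piece is in $U$), and the $2^\kappa=\kappa^+$ hypothesis guarantees the bookkeeping list has the right length to catch every candidate large subfamily. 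Care must be taken that the recursion does not collapse — i.e., that the constraints imposed at earlier stages remain satisfiable — which is the standard but delicate bookkeeping at the heart of such constructions.
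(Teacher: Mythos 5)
Your overall framing is right (exhibit one family $\mathcal{A}\in[U]^{\kappa^+}$ all of whose $\kappa^+$-sized subfamilies have intersection outside $U$), but the mechanism you propose for producing it cannot work. You want each point $\nu<\kappa$ to have a bounded ``acceptance set'' $S_\nu=\{\alpha<\kappa^+ : \nu\in A_\alpha\}$, so that every unbounded $Z\subseteq\kappa^+$ escapes every $S_\nu$ and hence $\bigcap_{\alpha\in Z}A_\alpha=\emptyset$. This is impossible by counting: if every $|S_\nu|\le\kappa$, then $\bigl|\bigcup_{\nu<\kappa}S_\nu\bigr|\le\kappa$, so $\kappa^+$-many indices $\alpha$ would have $A_\alpha=\emptyset$, contradicting $A_\alpha\in U$. (Dually, some single point must lie in $\kappa^+$ many of the $A_\alpha$, so you can never make all unbounded intersections empty.) The fallback of driving intersections down to ``size $\le\kappa$'' is not sufficient — every member of a uniform ultrafilter on $\kappa$ has size exactly $\kappa$ — and ``bounded in $\kappa$'' is a genuinely stronger conclusion that you assert is achievable but do not construct. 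Finally, the bookkeeping you invoke cannot run as described: there are $2^{\kappa^+}$ candidate subfamilies $Z\in[\kappa^+]^{\kappa^+}$, so no list of length $\kappa^+$ enumerates the obstructions to be diagonalized against; the hypothesis $2^\kappa=\kappa^+$ only lets you enumerate $U$ itself.

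The missing idea is to make the large intersections leave $U$ for a reason unrelated to their size. Since a uniform ultrafilter on $\kappa$ has character greater than $\kappa$, the hypothesis $2^\kappa=\kappa^+$ gives $\mathfrak{ch}(U)=\kappa^+$. One then recursively extracts from a base a sequence $\langle b^*_i \mid i<\kappa^+\rangle$ which is still a base and such that no $b^*_i$ contains any earlier $b^*_j$: at each stage, minimality of the character supplies a set $X\in U$ containing no earlier $b^*_j$, and one picks a new base element below $X$. If some unbounded $I\subseteq\kappa^+$ had $\bigcap_{i\in I}b^*_i\in U$, this intersection would contain some base element $b^*_j$; picking $i\in I$ with $i>j$ yields $b^*_j\subseteq b^*_i$, contradicting the construction. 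This is exactly the argument the paper runs (Theorem \ref{thm: second kanamori improvment}, phrased as $\mathfrak{ch}(U)\le_T U$, combined with Claim \ref{claim: cohesive iff Tukey}), and it is where the real content of the proof lies.
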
 
In light of this result, the following question is natural:
\begin{question}[Kanamori]\label{Kanamoris question}
    Is it consistent that there is a measurable cardinal carrying a $\kappa$-complete ultrafilter which is $(\kappa^+,\kappa^+)$-cohesive?
\end{question}
To formulate our first theorem, we define The \textit{character} of an ultrafilter $U$, as the cardinal: 
    $$\mathfrak{ch}(U)=\min\{|\mathcal{B}|\mid \mathcal{B}\text{ generates U}\}$$
    Where $\mathcal{B}\subseteq U$ generates $U$ (or forms a base for $U$) if for every $X\in U$ there is $b\in \mathcal{B}$ such that $b\subseteq X$.
\begin{theorem}\label{thm:improve Kanamori cohesive}
    Any uniform ultrafilter $U$ over any cardinal $\kappa$ is $(cf(\mathfrak{ch}(U)),cf(\mathfrak{ch}(U)))$-cohesive.
\end{theorem}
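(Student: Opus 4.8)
The plan is to prove the theorem by exhibiting, from a base of minimal size, a single family of $\lambda := \cf(\mathfrak{ch}(U))$ members of $U$ witnessing the failure of $(\lambda,\lambda)$-cohesiveness: a family $\mathcal A\in[U]^\lambda$ none of whose $\lambda$-sized subfamilies has its intersection in $U$. Write $\chi:=\mathfrak{ch}(U)$, fix a base $\{B_\xi:\xi<\chi\}$ of $U$ of size $\chi$, and fix an increasing sequence $\langle\chi_i:i<\lambda\rangle$ of ordinals cofinal in $\chi$. For each $i<\lambda$ the initial piece $\mathcal B_i:=\{B_\xi:\xi<\chi_i\}$ has cardinality $|\chi_i|<\chi$, so by minimality of $\chi$ it is not a base for $U$; unwinding the definition of ``base'', this means I may choose some $X_i\in U$ with $B_\xi\not\subseteq X_i$ for every $\xi<\chi_i$. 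Set $\mathcal A:=\{X_i:i<\lambda\}$.

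The \emph{heart of the argument} is a diagonalization showing that no cofinal subfamily of $\mathcal A$ has its intersection in $U$. Suppose $I\subseteq\lambda$ is cofinal and, toward a contradiction, $\bigcap_{i\in I}X_i\in U$. Since $\{B_\xi:\xi<\chi\}$ is a base, there is $\eta<\chi$ with $B_\eta\subseteq\bigcap_{i\in I}X_i$. As the $\chi_i$ are cofinal in $\chi$ and $I$ is cofinal in $\lambda$ (here I use that $\lambda$, being a cofinality, is regular), I can pick $i\in I$ with $\chi_i>\eta$. Then $B_\eta\subseteq\bigcap_{i'\in I}X_{i'}\subseteq X_i$ with $\eta<\chi_i$, directly contradicting the choice of $X_i$. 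Hence $\bigcap_{i\in I}X_i\notin U$ for every cofinal $I\subseteq\lambda$.

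Two short bookkeeping steps then finish the proof. First, the family has full size: if $\{X_i:i<\lambda\}$ had fewer than $\lambda$ distinct values, then by regularity of $\lambda$ some fixed $X\in U$ would satisfy $X_i=X$ for all $i$ in a cofinal set $I$, giving $\bigcap_{i\in I}X_i=X\in U$, which contradicts the previous paragraph; so $\mathcal A\in[U]^\lambda$. Second, because the $X_i$ are distinct, every $\lambda$-sized subfamily of $\mathcal A$ is of the form $\{X_i:i\in I\}$ with $|I|=\lambda$, hence with $I$ cofinal in $\lambda$, so its intersection lies outside $U$ by the diagonalization. Thus $\mathcal A$ witnesses that $U$ is not $(\lambda,\lambda)$-cohesive, as required.

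I expect the only genuinely load-bearing step to be the extraction of the $X_i$, and it is precisely here that the minimality built into $\mathfrak{ch}(U)$ is used: nothing of size $<\chi$ can be a base, so each initial segment $\mathcal B_i$ must fail to capture some member of $U$ lying below it. The rest—the regularity of $\lambda$, the interleaving of the cofinal sequence $\langle\chi_i\rangle$ with a cofinal $I$, and the automatic distinctness of the $X_i$—should be routine, and the argument is uniform whether $\chi$ is regular (so $\lambda=\chi$) or singular (so $\lambda<\chi$).
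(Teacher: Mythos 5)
Your argument is correct, and you have proved what the paper actually intends: the statement as printed is missing a ``not''. As the surrounding text makes clear (Kanamori's Theorem~\ref{thm:Kanamoti cohesive kappa+}, a non-cohesiveness statement, is said to be a special case, and the restated version, Theorem~\ref{thm: second kanamori improvment}, asserts $\mathfrak{ch}(U)\leq_T U$), the intended claim is that $U$ is \emph{not} $(\cf(\mathfrak{ch}(U)),\cf(\mathfrak{ch}(U)))$-cohesive, which is exactly what you establish.

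Your route differs from the paper's in a mild but genuine way. The paper proves the equivalent Tukey statement $\mathfrak{ch}(U)\leq_T U$: it re-enumerates a minimal base as $\langle b^*_i\mid i<\mathfrak{ch}(U)\rangle$ so that $b^*_j\not\subseteq b^*_i$ for $j<i$, checks that this full-length sequence is Tukey-unbounded in $(U,\supseteq)$, and then passes to $\cf(\mathfrak{ch}(U))$ via the general equivalence of Claim~\ref{claim: cohesive iff Tukey} between $\lambda\leq_T U$ and the failure of $(\cf(\lambda),\cf(\lambda))$-cohesiveness. You instead diagonalize directly at length $\lambda=\cf(\mathfrak{ch}(U))$: proper initial segments of a minimal base are not bases, so each fails to capture some $X_i\in U$, and any cofinal subfamily of the $X_i$ whose intersection lay in $U$ would trap a base element $B_\eta$ below some $\chi_i$. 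The underlying diagonalization against initial segments of a minimal base is the same in both proofs, but yours is self-contained and skips the Tukey-order bookkeeping, whereas the paper's packaging buys the stronger conclusion $\mathfrak{ch}(U)\leq_T U$, which is reused later (e.g.\ in Proposition~\ref{prop: propertied of dp and ch}). Your bookkeeping steps (distinctness of the $X_i$ via regularity of $\lambda$, and the identification of $\lambda$-sized subfamilies with cofinal index sets) are all sound.
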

Now Kanamori's theorem \ref{thm:Kanamoti cohesive kappa+} is a special case of the Theorem \ref{thm:improve Kanamori cohesive}, since if $2^\kappa=\kappa^+$, then $\mathfrak{ch}(U)=\kappa^+$.

Theorem \ref{thm:improve Kanamori cohesive} is optimal when $\mathfrak{ch}(U)$ is regular, as for every regular $\lambda>\mathfrak{ch}(U)$, $U$ is $(\lambda,<{\lambda})$-cohesive, and for singulars, $(\lambda,<{\lambda})$-cohesive; that is, for every $\mu<\lambda$, $U$ is $(\lambda,\mu)$-cohesive. This leaves an intriguing case when $\mathfrak{ch}(U)$ is singular which we do not address in this paper. Nonetheless, note that this does not mean that $U$ is not $(\lambda',\lambda')$-cohesive for $\lambda'<\mathfrak{ch}(U)$. We then investigate what is known as the \textit{point spectrum} of an ultrafilter, denoted here by $Sp_T(U)$, which consists of all regular cardinal $\lambda$ such that $\lambda\leq_T U$. This was considered before for general directed sets by Isbell \cite{Isbell65} and earlier by Schmidt \cite{Schmidt55}, and recently in connection to pcf theory by Gartside and Mamatelashvili \cite{GARTSIDE2021102873}, and Gilton \cite{Gilton}. For ultrafilters, this was indirectly addressed in \cite{bgp} by Garti, Poveda and the author. We will use this to answer Question \ref{Kanamoris question}.
 
As it investigates cofinal types, the Tukey order is highly connected to $\mathfrak{ch}(U)$ and the  \textit{generalized ultrafilter number}, which is defined for an infinite cardinal $\kappa\geq\omega$ by
$$\mathfrak{u}_\kappa=\min\{\mathfrak{ch}(U)\mid U\text{ is a uniform ultrafilter over }\kappa\}.$$
The most studied instance is $\mathfrak{u}_\omega=\mathfrak{u}$ also known as the \textit{ultrafilter number}. It is now a long-standing open problem whether it is consistent that $\mathfrak{u}_{\omega_1}<2^{\omega_1}$.
The main technique to separate the ultrafilter number from the continuum is to iterate Mathias forcing and create an ultrafilter with a $\subseteq^*$-decreasing generating sequence (see Definition \ref{definition: generating sequence}). This technique does not generalize to higher cardinals, but some variation of it was used to show the consistency of $\mathfrak{u}_\kappa<2^\kappa$ for measurable cardinals $\kappa$ starting from a supercompact cardinal \cite{BROOKETAYLOR201737}. The following is completely open:
\begin{question}
    Is the consistency strength of $\mathfrak{u}_\kappa<2^\kappa$ on a measurable cardinal higher than $o(\kappa)=\kappa^{++}$?
\end{question}
Note that  $o(\kappa)=\kappa^{++}$ is a lower bound since we must violate GCH at a measurable cardinal. 

The technique of obtaining long generating sequence of ultrafilter is tightly related to the generalization of $p$-points considered by Kunen:
\begin{definition}[Kunen]\label{def:Plambda point}
Let $U$ be an ultrafilter. $U$ is called a $P_\lambda$-point if for any $\l A_\alpha\mid \alpha<\mu\r\subseteq U$, where $\mu<\lambda$, there is $A\in U$ such that $A\subseteq^* A_\alpha$ for every $\alpha<\mu$.
\end{definition}
Hence, when $U$ is $\kappa$-complete over $\kappa$, $U$ is a $p$-point precisely when it is a $P_{\kappa^+}$-point. This can of course be formulated in terms of general topological spaces by saying that a point $x$ is a $P_\lambda$-point if every less than $\lambda$ many open neighborhoods of $x$ contain a common open neighborhood. Then an ultrafilter $U$ on $\kappa$ is a $P_\lambda$ point iff it is such in the topological space $\beta\kappa\setminus\kappa$.

We use a refinement of the point spectrum, which we call the \textit{depth Spectrum}, and define the \textit{depth of an ultrafilter} to connect $P_\lambda$-points, Kanamori's question \ref{Kanamoris question}, and strong generating sequences in terms of consistency strength. For precisely we prove that the following are equiconsistent:
\begin{enumerate}
    \item There exists a $P_{\kappa^{++}}$-point.
    \item There is a $\kappa$-complete ultrafilter which is $(\kappa^+,\kappa^+)$-cohesive ultrafilter.
    \item There exists an ultrafilter on $\kappa$ with a strong generating sequence of length $\kappa^{++}$.
\end{enumerate}
It will be clear later that the depth spectrum is to the order $(U,\supseteq^*)$ what the decomposability spectrum of Chang and Keisler is to $(U,\supseteq)$ (i.e. to
completeness).

In the last section, we prove some consistency results regarding the point spectrum of a $\kappa$-complete ultrafilter over $\kappa>\omega$. In particular, we provide a calculation of the point and depth spectrum in the Cohen extension. To round up the picture, relying on the work of Gitik and the above connection, we conclude that a positive answer to Kanamori's question cannot be achieved by the n\"{a}ive lower bound of $o(\kappa)=\kappa^{++}$:
\begin{theorem}
    Suppose that there is a $(\kappa^+,\kappa^+)$-cohesive ultrafilter, then it is consistent that there is a measurable cardinal $\kappa$ with $o(\kappa)\geq \kappa^{++}+\kappa$.
\end{theorem}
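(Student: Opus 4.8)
The plan is to read the statement as a lower bound on the consistency strength of a $(\kappa^+,\kappa^+)$-cohesive ultrafilter, and to extract it from Gitik's core-model analysis together with the equiconsistency already established. By the three-way equivalence proved above, the existence of a $(\kappa^+,\kappa^+)$-cohesive $\kappa$-complete ultrafilter — which holds by hypothesis — yields the consistency of an ultrafilter on $\kappa$ carrying a strong generating sequence of length $\kappa^{++}$, equivalently of a $P_{\kappa^{++}}$-point. I would carry the argument with the strong generating sequence, since it is the form most directly visible to the inner-model machinery: a $\subseteq^*$-decreasing base of that length is exactly the kind of object whose existence the covering apparatus can detect.

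Next I would run the standard core-model dichotomy. If some inner model already has a measurable $\kappa$ with $o(\kappa)\geq\kappa^{++}+\kappa$, that model witnesses the desired consistency and we are done. Otherwise there is no inner model at this Mitchell level, so the appropriate core model $K$ for coherent sequences of measures exists and enjoys covering and condensation; in particular $K$ computes $\kappa^{++}$ and the power of $\kappa$ accurately enough to recover measure sequences on $\kappa$. The crux, due to Gitik, is that a $\subseteq^*$-decreasing generating sequence of length $\kappa^{++}$ cannot be sustained by the measures available once $o^K(\kappa)<\kappa^{++}+\kappa$: passing the sequence through the covering computation reconstructs inside $K$ a Mitchell-increasing system of measures on $\kappa$ of order type at least $\kappa^{++}+\kappa$, contradicting the assumed bound. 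This rules out the second case, so the first holds and the consistency follows.

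Heuristically, the extra summand $\kappa$ beyond the naive $\kappa^{++}$ is where the difficulty concentrates: the bound $o(\kappa)=\kappa^{++}$ only suffices to force $2^\kappa=\kappa^{++}$, whereas closing the ultrafilter under pseudo-intersections of $\leq\kappa^+$-sized subfamilies — the $P_{\kappa^{++}}$ property implicit in a strong generating sequence of that length — consumes an additional block of $\kappa$ measures below the top. The main obstacle is precisely making this accounting exact: verifying that Gitik's covering computation returns the ordinal $\kappa^{++}+\kappa$ and not merely ``something above $\kappa^{++}$.'' The equiconsistency reduction and the dichotomy itself are routine given the results already in hand; the delicate point is calibrating the pseudo-intersection closure against the length of the measure sequence recovered in $K$.
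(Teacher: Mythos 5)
Your overall architecture coincides with the paper's: use the equiconsistency theorem (in fact the direct implications from a $\kappa$-complete $(\kappa^+,\kappa^+)$-cohesive ultrafilter to a normal ultrafilter of depth $\geq\kappa^{++}$, i.e.\ a $P_{\kappa^{++}}$-point) and then obtain the Mitchell-order lower bound from Gitik's core-model argument. The reduction half of your proposal is correct and matches the paper.

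The gap is in the second half, which is where the entire content of the theorem lives. You describe the core-model step as ``passing the sequence through the covering computation'' so as to ``reconstruct inside $K$ a Mitchell-increasing system of measures of order type at least $\kappa^{++}+\kappa$,'' and you yourself flag the exact accounting as an unresolved obstacle; but no such covering-style reconstruction is performed, and as stated this step would not go through. What the paper actually does is analyze $j_U\restriction\mathcal{K}$ as a normal iteration of measures/extenders of $\mathcal{K}$ (Mitchell--Schindler). The $P_{\kappa^{++}}$ property is used locally, once per measure: since $(2^\kappa)^{\mathcal{K}}=\kappa^+$, any normal $\mathcal{K}$-measure $U_0$ on $\kappa$ contained in $U$ is diagonalized by a single $A\in U$ with $A\subseteq^* B$ for all $B\in U_0$; as $A\in M_U$, this places $U_0$ in $\mathcal{K}^{M_U}$ and hence in its own ultrapower, which is impossible. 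That already rules out $o^{\mathcal{K}}(\kappa)\leq\kappa^{++}$, since then the iteration would begin with a measure on $\kappa$. To reach $\kappa^{++}+\kappa$ one must then control the extender steps: each extender on an image of $\kappa$ can be applied only finitely often (an $\omega$-limit of applications would again put its image into $\mathcal{K}^{M_U}$), a seed/normality argument shows the iteration must act at images of $\kappa$ cofinally often below $j_U(\kappa)$, and if fewer than $\kappa$ extender indices were available on the coherent sequence, a pigeonhole argument produces an $\omega+1$-block of stages using the same index, recreating the contradiction. That counting, not a covering computation, is what yields $o^{\mathcal{K}}(\kappa)\geq\kappa^{++}+\kappa$; without it your argument stops at the heuristic level.
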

Finally, we use the Extender-based Radin forcing of Merimovich \cite{CarmiRadin} to exhibit how to obtain measurable cardinals admitting a long decreasing sequences of clubs without a pseudo intersection. In particular, we are able to add values above $\kappa^+$ to $Sp_{dp}(U)$ starting much lower than a supercompact cardinal.
The structure of this paper is as follows:
\begin{itemize}
    \item In Section~\S\ref{Section: Cover Characterizaiton}: we prove our characterization of the Tukey order in terms of the ultrapower, and deduce some corollaries.
    \item In Section~\S\ref{Sec: Cohesivness and Kanamori's theorem 1} we characterize cohesivness in therms of ultrapowers and improve \cite[Proposition 2.3]{Kanamori1978}.
    \item  In Section~\S\ref{Sec: depth and point} we explore the point and depth spectrum of an ultrafilter to improve Theorem \ref{thm:Kanamoti cohesive kappa+} and to address Question \ref{Kanamoris question}.
    \item In Section\ref{Section: Consistency}, we present several consistency results relevant to the results of the other sections.
\end{itemize}
\subsection*{Notations \& global assumptions}

Our notations are standard for the most part. An ultrafilter $U$ on an infinite set $X$ is a nonempty collection of subsets of $X$ that is closed under intersection and superset, does not contain $\emptyset$, and for every $Y\subseteq X$, with $Y\in U$ or $X\setminus Y\in U$. We say that $U$ is uniform if for every $Y\in U$, $|Y|=|X|$. If $U$ is a filter on $X$ and $f:X\rightarrow Y$ is a map, then $f_*(U)=\{B\subseteq Y\mid f^{-1}(B)\in U\}$ is also an ultrafilter, called the image ultrafilter or the pushforward ultrafilter. Many properties of $U$ are inherited by $f_*(U)$. 

For two ultrafilters $U,V$ on $X,Y$ respectively, we say $U$ is Rudin-Keisler reducible to $V$, denoted $U\leq_{RK}V$, if there is a map $f:Y\rightarrow X$ such that $U=f_*(V)$. We call $U$ and $V$ Rudin-Keisler equivalent, denoted $U\equiv_{RK}V$, if there is a bijection $f:X\rightarrow Y$ such that $U=f_*(V)$. It is a standard fact that $U\leq_{RK}V$ and $V\leq_{RK}U$ imply $U\equiv_{RK}V$. 

$[X]^{\kappa},[X]^{<\kappa},[X]^{\leq\kappa}$ denote the sets of all subsets of $X$ of cardinality $\kappa$, less than $\kappa$, at most $\kappa$, respectively.

Let $F$ be a filter on $X$, and $f,g:X\rightarrow\kappa$. We denote by $f\leq_F g$ if $\{x\in X\mid f(x)\leq g(x)\}\in F$ and we say that $f$ is bounded by $g$ mod $F$; variations on this notation such as $f=_F g$ or $f<_F g$ should be self-explanatory. Note that if $f<_F g$ and $F'$ is a filter extending $F$ then $f<_{F'}g$. A function $f$ is bounded mod $F$ if there is $\alpha\in\kappa$ such that $f\leq_F c_\alpha$ where $c_\alpha$ is the constant function $\alpha$. We say that $f$ is unbounded mod $F$ if $f$ is not bounded mod $F$.
Finally, our forcing convention are in Israel style, namely,  $p\leq q$ means $q$ is stronger than $p$.



\section{A characterization of the Tukey order in terms of the ultrapower}\label{Section: Cover Characterizaiton}
Given two directed partially ordered sets $(\mathbb{P},\leq_{\mathbb{P}}),(\mathbb{Q},\leq_{\mathbb{Q}})$ a \textit{Tukey map} or a \textit{Tukey reduction} from $\mathbb{P}$ to $\mathbb{Q}$ is a function $f:\mathbb{P}\rightarrow \mathbb{Q}$ which is unbounded; that is, whenever $\mathcal{A}\subseteq \mathbb{P}$ is unbounded in $\mathbb{P}$, $f''\mathcal{A}$ is unbounded in $\mathbb{Q}$. The Tukey order, denoted by $\leq_T$, is then defined by setting $(\mathbb{P},\leq_{\mathbb{P}})\leq_T(\mathbb{Q},\leq_{\mathbb{Q}})$ iff there is a Tukey map from $\mathbb{P}$ to $\mathbb{Q}$. Schmidt found that the dual of Tukey maps are cofinal maps; A function $f:\mathbb{Q}\rightarrow\mathbb{P}$ is cofinal if for every $\mathcal{B}\subseteq \mathbb{Q}$ cofinal, $f''\mathcal{B}$ is cofinal in $\mathbb{P}$. 
\begin{proposition}[Schmidt duality \cite{Schmidt55}]
    There is a Tukey map $f:\mathbb{P}\rightarrow \mathbb{Q}$ iff there is a cofinal map $g:\mathbb{Q}\rightarrow \mathbb{P}$
\end{proposition}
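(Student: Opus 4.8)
The plan is to prove the two implications separately, in each case constructing the dual map explicitly. Throughout I will use the reformulation that a map $f:\mathbb{P}\to\mathbb{Q}$ is a Tukey map if and only if the preimage of every bounded subset of $\mathbb{Q}$ is bounded in $\mathbb{P}$; equivalently, for every $q\in\mathbb{Q}$ the set $\{p\in\mathbb{P}:f(p)\leq_{\mathbb{Q}}q\}$ is bounded in $\mathbb{P}$. This is immediate from the definition by contraposition, and it is the form in which the unboundedness hypothesis is easiest to apply.

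For the direction from a Tukey map to a cofinal map, suppose $f:\mathbb{P}\to\mathbb{Q}$ is unbounded. For each $q\in\mathbb{Q}$ the set $B_q=\{p\in\mathbb{P}:f(p)\leq_{\mathbb{Q}}q\}$ is the preimage of a set bounded by $q$, hence bounded in $\mathbb{P}$; I define $g(q)$ to be any upper bound of $B_q$ (and anything at all if $B_q=\emptyset$). To check that $g$ is cofinal, take $\mathcal{B}\subseteq\mathbb{Q}$ cofinal and any $p\in\mathbb{P}$: since $\mathcal{B}$ is cofinal there is $q\in\mathcal{B}$ with $f(p)\leq_{\mathbb{Q}}q$, so $p\in B_q$ and thus $p\leq_{\mathbb{P}}g(q)\in g''\mathcal{B}$. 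Hence $g''\mathcal{B}$ is cofinal in $\mathbb{P}$.

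The reverse direction, from a cofinal map $g:\mathbb{Q}\to\mathbb{P}$ to a Tukey map, is where the real difficulty lies, and I expect it to be the main obstacle. The naive attempt, using that $g''\mathbb{Q}=\rng(g)$ is cofinal to pick, for each $p$, some $q$ with $p\leq_{\mathbb{P}}g(q)$ and setting $f(p)=q$, fails: $g$ need not be monotone, so knowing $f(p)\leq_{\mathbb{Q}}q_0$ gives no control on $g(f(p))$. The fix rests on one observation: for any $p\in\mathbb{P}$ the set $\{p'\in\mathbb{P}:p'\not\geq_{\mathbb{P}}p\}$ is never cofinal, since it cannot dominate $p$ itself. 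Consequently the set $T_p=\{q\in\mathbb{Q}:g(q)\not\geq_{\mathbb{P}}p\}$ cannot be cofinal in $\mathbb{Q}$: otherwise $g''T_p$ would be cofinal in $\mathbb{P}$ by hypothesis, yet $g''T_p\subseteq\{p':p'\not\geq_{\mathbb{P}}p\}$. Non-cofinality of $T_p$ yields some $q_p\in\mathbb{Q}$ with $g(q)\geq_{\mathbb{P}}p$ for every $q\geq_{\mathbb{Q}}q_p$, and I set $f(p)=q_p$. To verify that $f$ is unbounded I show the preimage of each bounded set is bounded: if $f(p)=q_p\leq_{\mathbb{Q}}q_0$ for all $p$ in some set $\mathcal{A}$, then taking $q=q_0\geq_{\mathbb{Q}}q_p$ in the defining property of $q_p$ gives $p\leq_{\mathbb{P}}g(q_0)$ for every $p\in\mathcal{A}$, so $\mathcal{A}$ is bounded by $g(q_0)$. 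Thus $f$ sends unbounded sets to unbounded sets. The only genuinely delicate point is isolating the correct witness $q_p$, one whose entire up-cone is sent above $p$, rather than an arbitrary preimage; this is exactly what the non-cofinality observation supplies.
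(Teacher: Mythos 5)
Your proof is correct and is the standard argument for Schmidt/Tukey duality: in one direction $g(q)$ is an upper bound of the bounded preimage $\{p : f(p)\leq_{\mathbb{Q}} q\}$, and in the other $f(p)$ is a point above which $g$ lands above $p$, obtained from the non-cofinality of $\{q : g(q)\not\geq_{\mathbb{P}} p\}$. The paper itself gives no proof (it cites Schmidt), so there is nothing to compare against; your argument is exactly the one intended.
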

We will mostly be interested in the Tukey order restricted to directed sets of the form $(F,\supseteq)$ where $F$ is a filter (usually an ultrafilter) ordered by reversed inclusion.
For filters we may always assume that the cofinal map is (weakly) monotone, that is, if $A\subseteq B$ then $f(A)\subseteq f(B)$. For more information regarding the Tukey order restricted to ultrafilters, we refer the reader to N. Dobrinen's survey \cite{DobrinenTukeySurvey15}. 

The goal of this section is to characterize the Tukey order $\mathbb{P}\leq_T U$ for an ultrafilter over $\kappa\geq\omega$ in terms of its ultrapower, and more precisely, in terms of the existence of certain ``covers" of $j_U''\mathbb{P}$ . To do that we will establish a connection between these covers and functions $f:\mathbb{P}\rightarrow U$.

\begin{definition}
    Let $f,g:A\rightarrow P(\kappa)$ for some set $A$. We say that $f=_Ug$ if there is a set $Z\in U$ such that for every $a\in A$, $f(a)\cap Z=g(a)\cap Z$. 
\end{definition}
Given $X\in M_U$, we pick  $Y\in V$ such that $X\subseteq j_U(Y)$. Also pick a representing function $\vec{X}=\l X_\alpha\mid \alpha<\kappa\r$ (so in particular, $j_U(\vec{X})_{[id]_U}=X$), and denote by $f^{\vec{X}}_X:Y\rightarrow P(\kappa)$ the function  defined by $$f^{\vec{X}}_X(y)=\{\alpha<\kappa\mid y\in X_\alpha\}.$$ Note that if $\vec{X}'$ also represents $X$, then there is a set $Z\in U$ such that for every $\alpha\in Z$, $X_\alpha=X'_\alpha$. So for all $y\in Y$, $f^{\vec{X}}_X(y)\cap Z=f^{\vec{X}'}_X(y)\cap Z$, namely $f^{\vec{X}}_X=_Uf^{\vec{X}'}_X$. We let $f_X$ be some representative of this equivalence class. 

In the other direction, any $f:Y\rightarrow P(\kappa)$ induces a set $X_f$ in $M_U$ defined by $$M_U\models X_f=\{y\in j_U(Y)\mid [id]_U\in j_U(f)(y)\}.$$ 
Once again, note that if $f=_Ug$, then $X_f=X_g$.

\begin{proposition}
    For any $X\in M_U$, any choice of $Y$ so that $X\subseteq j_U(Y)$, $X_{f_X}=X$. Also for any $f:Y\rightarrow P(\kappa)$, $f_{X_f}=_Uf$.
\end{proposition}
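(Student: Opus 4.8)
The plan is to verify the two asserted identities by directly unwinding the definitions of the correspondences $X\mapsto f_X$ and $f\mapsto X_f$, using only the defining properties of the ultrapower embedding $j_U$ and Łoś's theorem. The entire content is bookkeeping with representing functions, so I would treat the two claims separately and in each case reduce everything to a computation inside $M_U$ about membership, which then transfers back to $V$ via Łoś.

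\medskip

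For the first identity $X_{f_X}=X$, fix $X\in M_U$ and a set $Y\in V$ with $X\subseteq j_U(Y)$, together with a representing sequence $\vec{X}=\langle X_\alpha\mid\alpha<\kappa\rangle$ so that $j_U(\vec{X})_{[id]_U}=X$. I would compute $X_{f_X}$ by applying the definition of the induced set: $X_{f_X}=\{y\in j_U(Y)\mid [id]_U\in j_U(f_X)(y)\}$ as computed in $M_U$. The key step is to evaluate $j_U(f_X)$ via Łoś on the defining formula $f_X(y)=\{\alpha<\kappa\mid y\in X_\alpha\}$: for a fixed representative $f^{\vec{X}}_X$, elementarity gives that $M_U$ believes $j_U(f^{\vec{X}}_X)(y)=\{\beta\mid y\in j_U(\vec{X})_\beta\}$. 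Plugging in $y\in j_U(Y)$ and asking whether $[id]_U$ lies in this set, I get the equivalence $y\in X_{f_X}\iff y\in j_U(\vec{X})_{[id]_U}=X$. Since the definition of $X_f$ is insensitive to the choice of $=_U$-representative, using $f^{\vec{X}}_X$ in place of the abstract $f_X$ is harmless, and the identity follows.

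\medskip

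For the second identity $f_{X_f}=_U f$, I start from an arbitrary $f:Y\rightarrow P(\kappa)$ and the set $X_f\in M_U$. Here I would produce a natural representing sequence for $X_f$: the sequence $\langle Y_\alpha\mid\alpha<\kappa\rangle$ given by $Y_\alpha=\{y\in Y\mid \alpha\in f(y)\}$, and check by Łoś that $j_U(\langle Y_\alpha\rangle)_{[id]_U}=X_f$, i.e.\ that this sequence indeed represents $X_f$. Computing $f_{X_f}$ through this representative gives $f_{X_f}(y)=\{\alpha<\kappa\mid y\in Y_\alpha\}=\{\alpha<\kappa\mid \alpha\in f(y)\}=f(y)$ for every $y\in Y$, which is even equality on the nose (so the mod-$U$ equivalence is more than satisfied). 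The only subtlety is that $f_{X_f}$ is defined only up to $=_U$, so the conclusion is correctly stated as $f_{X_f}=_U f$ rather than strict equality; once I exhibit one representative equal to $f$ pointwise, the equivalence class equation is immediate.

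\medskip

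The \textbf{main obstacle}, such as it is, is purely notational: keeping straight which objects live in $V$ and which in $M_U$, and correctly transporting the set-builder definitions of $f_X$ and $X_f$ across $j_U$ without conflating a sequence with its image or $[id]_U$ with $\kappa$. The mathematical core is a single application of Łoś's theorem in each direction, so I expect no genuine difficulty beyond making sure the representing sequence chosen in the second part is verified to represent $X_f$. I would state that verification explicitly, since it is the one place where an unjustified step could hide.
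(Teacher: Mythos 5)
Your proposal is correct and follows essentially the same route as the paper: both identities are verified by unwinding the definitions and applying Łoś's theorem, with the second part using exactly the representing sequence $\alpha\mapsto\{y\in Y\mid\alpha\in f(y)\}$ that the paper uses. Your extra remark that one should explicitly check this sequence represents $X_f$ is a reasonable bit of care (the paper asserts it without comment), but it is a one-line Łoś computation and does not change the argument.
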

\begin{proof} Note that
    $X_{f_X}=\{q\in j_U(Y)\mid [id]_U\in j_U(f_{X})(q)\}$, then for every $q\in j_U(Y)$, 
    \begin{align*}
    q\in X_{f_X} &\text{ iff } [id]_U\in j_U(f_X)(q)\\&\text{ iff }[id]_U\in\{\alpha<j_U(\kappa)\mid q\in j_U(\vec{X})_\alpha\}\\&\text{ iff }q\in j_U(\vec{X})_{[id]_U}\\&\text{ iff }q\in X
\end{align*}
For the second part, $X_f$ is represented by  $\vec{X}=\l X_\alpha\mid \alpha<\kappa\r$, where $X_\alpha:=\{p\in Y\mid \alpha\in f(p)\}$. Let $p\in Y$, then 
$$f^{\vec{X}}_{X_f}(p)=\{\alpha<\kappa\mid p\in (X_f)_\alpha\}=\{\alpha<\kappa\mid 
\alpha\in f(p)\}=f(p)$$

Hence $f=f^{\vec{X}}_X=_Uf_X$

\end{proof}

\begin{definition}
    Let $Z\subseteq M_U$. We say that $X\in M_U$ \textit{covers} $Z$ if for every $p\in Z$, $M_U\models p \in X$. 
\end{definition}
If $M_U$ is well-founded (and therefore identified with its transitive collapse), then a cover is just a superset. From now on, we will write $x\in y$ for elements in $M_U$ where we actually mean that $M_U\models x\in y$. Similarly, $A\cap B$ for $A,B\in M_U$ is defined in $M_U$ as the set of all $p$ such that $M_U\models p\in A\wedge p\in B$, and so on.
\begin{claim} For any function $f:Y\rightarrow P(\kappa)$ and  $Z\subseteq Y$,
    $X_f$ covers $j_U'' Z$ iff $f\restriction Z:Z\rightarrow U$.
\end{claim}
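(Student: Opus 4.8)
The plan is to unwind the definition of ``covers'' pointwise and then collapse the whole statement onto the single fundamental fact about the ultrapower embedding: for $A\subseteq\kappa$ one has $M_U\models [id]_U\in j_U(A)$ if and only if $A\in U$. Everything else is elementarity bookkeeping, so I expect no genuine obstacle; the only care needed is in applying this membership fact at the right place and in the function-application identity for $j_U$.

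First I would observe that, directly from the definition of covering, $X_f$ covers $j_U''Z$ exactly when $M_U\models j_U(z)\in X_f$ for every $z\in Z$. Substituting the definition of $X_f$, this reads $M_U\models [id]_U\in j_U(f)\big(j_U(z)\big)$ for every $z\in Z$. Here I use that $Z\subseteq Y=\dom f$, so each $j_U(z)$ genuinely lies in $j_U(Y)$ and the expression $j_U(f)(j_U(z))$ makes sense.

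Next, since $j_U$ is elementary and commutes with function evaluation, $j_U(f)\big(j_U(z)\big)=j_U\big(f(z)\big)$ for each $z\in Z$, so the condition becomes $M_U\models [id]_U\in j_U\big(f(z)\big)$ for every $z\in Z$. Now for each fixed $z$ the value $f(z)$ is a subset of $\kappa$, and by \L o\'s's theorem, reading $[id]_U$ via the identity function and $j_U(f(z))$ via the constant function with value $f(z)$, we get $M_U\models [id]_U\in j_U\big(f(z)\big)$ iff $\{\alpha<\kappa\mid \alpha\in f(z)\}=f(z)\in U$.

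Assembling these equivalences gives that $X_f$ covers $j_U''Z$ iff $f(z)\in U$ for every $z\in Z$, which is precisely the assertion that $f\restriction Z$ is a function from $Z$ into $U$. Thus the proof is a short chain of iff's, and the step most worth stating carefully is the membership characterization $[id]_U\in j_U(A)\Leftrightarrow A\in U$ together with the elementarity identity $j_U(f)(j_U(z))=j_U(f(z))$; once these are in place the claim follows with no further work.
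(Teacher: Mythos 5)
Your proposal is correct and follows essentially the same route as the paper: both reduce the claim to the identity $j_U(f)(j_U(z))=j_U(f(z))$ together with the fact that $[id]_U\in j_U(A)$ iff $A\in U$. The only cosmetic difference is that you run the whole argument as a single chain of equivalences, whereas the paper spells out the backward direction and dismisses the forward direction as an easy consequence of \L o\'s's theorem.
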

\begin{proof}
    \underline{$\Longrightarrow$:} follows easily from Lo\'{s} Theorem. 
    
    \underline{$\Longleftarrow$:} Let $p\in Z$, then $j_U(f)(j_U(p))=j_U(f(p))$. Since $f(p)\in U$, $[id]_U\in j_U(f(p))$, and by the definition of $X_f$, $j_U(p)\in X_f$.

\end{proof}
By the previous claim, we conclude that
\begin{corollary}\label{cor: cover vs U image}
    For any set $X\in M_U$, and any sets $Z\subseteq Y$, $X$ covers $j_U''Z$ iff $f_X\restriction 
    Z:Z\rightarrow U$.
\end{corollary}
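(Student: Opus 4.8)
The plan is to read this off immediately from the preceding Claim together with the Proposition, since the corollary is just the reformulation of the Claim obtained by passing from functions to the sets they induce. The whole content is that the correspondences $X\mapsto f_X$ and $f\mapsto X_f$ are mutually inverse up to $=_U$, so the covering condition, which the Claim phrases on the function side, can be transported to an arbitrary $X\in M_U$.

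Concretely, I would fix $X\in M_U$, a set $Y$ with $X\subseteq j_U(Y)$, and a representing sequence $\vec{X}$, so that $f_X:=f^{\vec{X}}_X$ is a concrete representative of the $=_U$-class attached to $X$. By the Proposition we have $X_{f_X}=X$, i.e. the set under discussion is literally the set induced by $f_X$. Then I would apply the Claim verbatim to the function $f=f_X$ and the given $Z\subseteq Y$: it gives that $X_{f_X}$ covers $j_U''Z$ if and only if $f_X\restriction Z:Z\rightarrow U$. Substituting $X_{f_X}=X$ yields exactly the stated biconditional, so the core of the proof is a single substitution.

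The only point needing a short verification, and the only place I expect any friction, is well-definedness of the right-hand side: $f_X$ is determined only up to $=_U$, so I must check that the condition ``$f_X\restriction Z:Z\rightarrow U$'' is independent of the chosen representative. I would argue this directly from the definition of $=_U$. If $f=_Ug$ is witnessed by some $Z'\in U$ with $f(a)\cap Z'=g(a)\cap Z'$ for all $a$, and $f(a)\in U$ for every $a\in Z$, then $f(a)\cap Z'\in U$ because $U$ is a filter, hence $g(a)\supseteq g(a)\cap Z'=f(a)\cap Z'\in U$ forces $g(a)\in U$; by symmetry the two restrictions map into $U$ simultaneously. With this invariance noted, the right-hand side of the corollary is unambiguous and the substitution above completes the argument.
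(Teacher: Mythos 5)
Your proposal is correct and follows exactly the paper's own argument: the paper likewise proves the corollary by noting $X=X_{f_X}$ and applying the preceding Claim to $f_X$. The additional check that the condition ``$f_X\restriction Z:Z\rightarrow U$'' is invariant under $=_U$ is a reasonable extra verification that the paper leaves implicit.
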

\begin{proof}
    Since
    $X=X_{f_X}$, the corollary follows by applying the claim to $f_X$.
\end{proof}

    Now we translate between properties of $f$ and properties of $X$. The first, is unboundedness:
\begin{definition}\label{def: Thin cover}
    Let $U$ be an ultrafilter, and $\mathbb{P}$ a directed set. We say that a set $X\in M_U$ is a \textit{thin cover} of $\mathbb{P}$ if $j_U''P\subseteq X$ and for any unbounded set $\mathcal{A}\subseteq \mathbb{P}$, $j_U(\mathcal{A})\not\subseteq \mathbb{P}$.
\end{definition}
\begin{lemma}
     Let $\mathbb{P}$ be a directed set and $f:Y\rightarrow P(\kappa)$ such that $\mathbb{P}\subseteq Y$. Then $f\restriction \mathbb{P}:\mathbb{P}\rightarrow U$ is unbounded iff $X_f$ is a thin cover.
\end{lemma}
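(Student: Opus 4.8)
The plan is to convert each clause of the thin-cover definition into the corresponding clause of Tukey-unboundedness, using the dictionary already set up between $X_f$ and $f$. First I would dispose of the two ``boundary'' requirements at once: applying Corollary \ref{cor: cover vs U image} to $X=X_f$, and recalling that $f_{X_f}=_Uf$, shows that $j_U''\mathbb{P}\subseteq X_f$ if and only if $f\restriction\mathbb{P}$ actually maps $\mathbb{P}$ into $U$. This simultaneously accounts for the ``$j_U''\mathbb{P}\subseteq X$'' conjunct of Definition \ref{def: Thin cover} and for the implicit requirement, built into calling $f\restriction\mathbb{P}$ a map $\mathbb{P}\to U$, that its values lie in $U$. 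From here on I may therefore assume $f(a)\in U$ for every $a\in\mathbb{P}$ and argue only about unbounded sets.

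The heart of the argument is a single Lo\'{s}-theorem computation, carried out for a fixed $\mathcal{A}\subseteq\mathbb{P}$: I claim
$$ j_U(\mathcal{A})\subseteq X_f \iff \bigcap_{a\in\mathcal{A}} f(a)\in U. $$
To see this, unfold the left-hand side inside $M_U$, using the definition of $X_f$, as $M_U\models \forall y\in j_U(\mathcal{A})\,\big([id]_U\in j_U(f)(y)\big)$. Since $j_U(\mathcal{A})$ and $j_U(f)$ are represented by the constant functions $\alpha\mapsto\mathcal{A}$ and $\alpha\mapsto f$, while the relevant test point is $[id]_U$, Lo\'{s}'s theorem turns this internal statement into
$$\{\alpha<\kappa \mid \forall y\in\mathcal{A}\ (\alpha\in f(y))\}\in U,$$
and the displayed set is exactly $\bigcap_{a\in\mathcal{A}} f(a)$.

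It remains to read the right-hand side as a boundedness statement in $(U,\supseteq)$. Because $U$ is ordered by reverse inclusion, an upper bound of a family $\mathcal{C}\subseteq U$ is a set $B\in U$ with $B\subseteq C$ for all $C\in\mathcal{C}$, i.e. $B\subseteq\bigcap\mathcal{C}$; since $U$ is upward closed, such a $B$ exists precisely when $\bigcap\mathcal{C}\in U$. Applying this with $\mathcal{C}=f''\mathcal{A}$ gives that $\bigcap_{a\in\mathcal{A}}f(a)\in U$ if and only if $f''\mathcal{A}$ is bounded; contrapositively, $j_U(\mathcal{A})\not\subseteq X_f$ if and only if $f''\mathcal{A}$ is unbounded in $U$. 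Quantifying over all unbounded $\mathcal{A}\subseteq\mathbb{P}$ and combining with the first paragraph matches the two conjuncts of ``$X_f$ is a thin cover'' with the two conjuncts of ``$f\restriction\mathbb{P}$ is an unbounded map into $U$'', which is the desired equivalence.

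I expect the only delicate step to be the Lo\'{s} computation: one must keep straight that it is $j_U(\mathcal{A})$ (the image set, possibly containing nonstandard points) rather than $j_U''\mathcal{A}$ that appears, and that the quantifier ``$\forall y\in j_U(\mathcal{A})$'' together with the fixed test point $[id]_U$ produces an \emph{intersection} over $\mathcal{A}$ inside $U$. The reverse-inclusion bookkeeping of the last paragraph is then routine.
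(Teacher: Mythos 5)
Your proposal is correct and follows essentially the same route as the paper: the cover clause is dispatched via Corollary \ref{cor: cover vs U image}, and the thinness clause reduces to the Lo\'{s} computation $j_U(\mathcal{A})\subseteq X_f$ iff $\bigcap_{a\in\mathcal{A}}f(a)\in U$ (the paper writes this as $[id]_U\in\bigcap_{B\in j_U(\mathcal{A})}j_U(f)(B)=j_U(\bigcap_{B\in\mathcal{A}}f(B))$), which is then read as boundedness of $f''\mathcal{A}$ in $(U,\supseteq)$. Your packaging as a single biconditional rather than two separate directions is a minor stylistic difference only.
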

\begin{proof}
    Suppose $f\restriction \mathbb{P}\rightarrow U$ is unbounded. Then by Corollary \ref{cor: cover vs U image}, $X_f$ is indeed a cover. To see it is thin, suppose $j_U(\mathcal{A})\subseteq X_f$, then $$[id]_U\in \bigcap_{B\in j_U(\mathcal{A})}j_U(f)(B)=j_U(\bigcap_{B\in\mathcal{A}}f(B)).$$
    Hence $\bigcap_{B\in\mathcal{A}}f(B)\in U$, which means that $f''\mathcal{A}$ is bounded in $U$. Since $f\restriction \mathbb{P}$ is unbounded, $\mathcal{A}$ must have been bounded. In the other direction, suppose that $X_f$ is a thin cover. Then by Corollary \ref{cor: cover vs U image}, $f\restriction \mathbb{P}:\mathbb{P}\rightarrow U$. To see that $f\restriction \mathbb{P}$ is unbounded, let $\mathcal{A}\subseteq\mathbb{P}$ be unbounded, since $j_U(\mathcal{A})\not\subseteq  X_f$, $$[id]_U\notin \bigcap_{B\in j_U(\mathcal{A})}j_U(f)(B)=j_U(\bigcap_{B\in\mathcal{A}}f(B)).$$
    Hence $f''\mathcal{A}$ is unbounded in $U$.
\end{proof}
\begin{corollary}
    For any cover $X\in M_U$, $X$ is a thin cover iff $f_X\restriction \mathbb{P}$ is unbounded.
\end{corollary}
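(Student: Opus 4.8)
The plan is to deduce this directly from the preceding Lemma by specializing its function $f$ to be $f_X$, exactly as Corollary \ref{cor: cover vs U image} was deduced from the Claim. First I would observe that since $X$ is assumed to be a cover of $j_U''\mathbb{P}$, Corollary \ref{cor: cover vs U image} guarantees that $f_X\restriction\mathbb{P}$ is an honest map $\mathbb{P}\rightarrow U$, so the hypothesis of the Lemma is satisfied when we feed it $f_X$ (after fixing any $Y$ with $\mathbb{P}\subseteq Y$ and $X\subseteq j_U(Y)$).

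Next I would apply the Lemma to $f=f_X$. The Lemma then yields that $f_X\restriction\mathbb{P}:\mathbb{P}\rightarrow U$ is unbounded if and only if $X_{f_X}$ is a thin cover. The final step is to rewrite $X_{f_X}$: by the Proposition, for any choice of $Y$ with $X\subseteq j_U(Y)$ we have $X_{f_X}=X$. Substituting this identity into the Lemma's equivalence gives precisely the desired statement, namely that $X$ is a thin cover iff $f_X\restriction\mathbb{P}$ is unbounded.

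I expect no genuine obstacle here, as all the substantive work was already carried out in the Lemma, the Claim/Corollary \ref{cor: cover vs U image}, and the Proposition; this corollary simply dualizes the Lemma from the ``$f$ side'' to the ``$X$ side'' via the round-trip identity $X_{f_X}=X$. The only point worth a sentence of care is well-definedness: $f_X$ is only determined up to the equivalence $=_U$, but since $f=_U g$ implies $X_f=X_g$ and does not affect unboundedness of the restriction to $\mathbb{P}$, the statement is independent of the chosen representative, and likewise independent of the auxiliary set $Y$.
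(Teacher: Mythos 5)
Your proof is correct and follows exactly the route the paper intends (the paper leaves this corollary unproved, as an immediate consequence of the preceding Lemma applied to $f=f_X$ together with the identity $X_{f_X}=X$ from the earlier Proposition). Your extra remark on well-definedness up to $=_U$ is a reasonable point of care that the paper omits.
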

This gives a characterization of the Tukey order in terms of covers:
\begin{theorem}\label{thm: Tukey-order characterization}
    Let $U$ be an ultrafilter and $\mathbb{P}$ any directed set. The following are equivalent:
    \begin{enumerate}
        \item $\mathbb{P}\leq_T U$.
        \item There is a thin cover $X\in M_U$ of $j_U''\mathbb{P}$.
    \end{enumerate}
\end{theorem}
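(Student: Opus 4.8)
The plan is to prove the equivalence by translating the Tukey reduction into the language of the function/cover correspondence that has just been set up, and then invoking the lemma that identifies unbounded maps with thin covers. The whole machinery of Corollary~\ref{cor: cover vs U image} and the preceding Lemma is designed precisely so that the two directions become essentially a matter of packaging an existing object as a function $f\colon Y\to P(\kappa)$ or reading a function off a set $X\in M_U$.

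\medskip

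\textbf{From $(1)$ to $(2)$.} Suppose $\mathbb{P}\leq_T U$, so there is a Tukey (unbounded) map $g\colon\mathbb{P}\to U$, where $U$ is ordered by $\supseteq$. First I would fix a set $Y\supseteq\mathbb{P}$ on which a function valued in $P(\kappa)$ can be defined (for instance $Y=\mathbb{P}$ itself, or $\mathbb{P}\cup\kappa$ if one wants the codomain to literally be $P(\kappa)$), and extend $g$ to a function $f\colon Y\to P(\kappa)$ by letting $f\restriction\mathbb{P}=g$ and defining $f$ arbitrarily (say as $\emptyset$, or $\kappa$) off $\mathbb{P}$. Since $g=f\restriction\mathbb{P}\colon\mathbb{P}\to U$ is unbounded, the preceding Lemma applies directly and tells us that $X_f$ is a thin cover. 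By construction $X_f$ covers $j_U''\mathbb{P}$, so $X_f$ is the desired thin cover in $M_U$.

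\medskip

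\textbf{From $(2)$ to $(1)$.} Conversely, suppose there is a thin cover $X\in M_U$ of $j_U''\mathbb{P}$. Then I would pass to the associated function $f_X$ via the correspondence established in the earlier Proposition, taking $Y$ so that $X\subseteq j_U(Y)$ and $\mathbb{P}\subseteq Y$. By the Corollary following the Lemma, $X$ being a thin cover yields that $f_X\restriction\mathbb{P}$ is unbounded; in particular, by Corollary~\ref{cor: cover vs U image}, $f_X\restriction\mathbb{P}\colon\mathbb{P}\to U$, so $f_X\restriction\mathbb{P}$ is a genuine map into $U$ that is unbounded, i.e.\ a Tukey map. Hence $\mathbb{P}\leq_T U$.

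\medskip

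I expect the proof itself to be short, since both directions are assembled from results already proven; the only real care needed is bookkeeping about the ambient set $Y$. The subtle point—the place where I would slow down—is making sure the choice of $Y$ is consistent across the two correspondences: in the forward direction I choose $Y$ to define $f$, while the Lemma and Corollary~\ref{cor: cover vs U image} are stated relative to a fixed $Y$ with $\mathbb{P}\subseteq Y$, and in the backward direction $Y$ is constrained by $X\subseteq j_U(Y)$. One must check that a single $Y$ can serve both roles (it can, by enlarging $Y$ if necessary, since enlarging $Y$ only extends $f$ trivially and does not affect $X_f$ nor the restriction $f_X\restriction\mathbb{P}$). Once that is pinned down, the equivalence falls out immediately from the Lemma and its Corollary, and no further computation in $M_U$ is required.
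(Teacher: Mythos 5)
Your proof is correct and is exactly the argument the paper intends: the theorem is stated immediately after the Lemma and its Corollary precisely because it follows from them by the packaging you describe (extend a Tukey map $g$ to $f$ and take $X_f$ in one direction; read off $f_X\restriction\mathbb{P}$ from a thin cover in the other). Your remark about keeping the ambient set $Y$ consistent is the only point of care, and your resolution of it is right.
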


\begin{corollary}
    Let $U,W$ be ultrafilters. The following are equivalent:
    \begin{enumerate}
        \item $W\leq_T U$.
        \item There is a cover $X\in M_U$ of $j_U''W$ such that if $\bigcap\mathcal{A}\notin W$, then $j_U(\mathcal{A})\not\subseteq X$.
    \end{enumerate}
\end{corollary}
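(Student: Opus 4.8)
The plan is to read this off directly from Theorem \ref{thm: Tukey-order characterization} by instantiating the directed set $\mathbb{P}$ as the ultrafilter $W$ itself, equipped with the reverse-inclusion order $(W,\supseteq)$ discussed at the start of this section. The entire content of the argument is then the translation of the abstract notion of unboundedness in $(W,\supseteq)$ into the concrete intersection condition appearing in clause (2).

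First I would confirm that $(W,\supseteq)$ is genuinely directed: for $A,B\in W$ the set $A\cap B$ belongs to $W$ and satisfies $A\cap B\subseteq A$ and $A\cap B\subseteq B$, so it is a common upper bound in the reverse-inclusion order. Next I would pin down the bounded families. A family $\mathcal{A}\subseteq W$ admits an upper bound $C\in W$ exactly when $C\subseteq A$ for every $A\in\mathcal{A}$, i.e. $C\subseteq\bigcap\mathcal{A}$; since $W$ is upward closed, such a $C\in W$ exists if and only if $\bigcap\mathcal{A}\in W$. Hence $\mathcal{A}$ is unbounded in $(W,\supseteq)$ if and only if $\bigcap\mathcal{A}\notin W$.

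With this dictionary in hand, I would unfold Definition \ref{def: Thin cover} for $\mathbb{P}=W$: a set $X\in M_U$ is a thin cover of $j_U''W$ precisely when $j_U''W\subseteq X$ (that is, $X$ covers $j_U''W$) and $j_U(\mathcal{A})\not\subseteq X$ for every unbounded $\mathcal{A}\subseteq W$. By the previous paragraph the latter is exactly the requirement that $j_U(\mathcal{A})\not\subseteq X$ whenever $\bigcap\mathcal{A}\notin W$, which is verbatim clause (2). Applying Theorem \ref{thm: Tukey-order characterization} to $\mathbb{P}=W$ then delivers the equivalence of (1) and (2).

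I do not anticipate a serious obstacle: the argument is purely a matter of interpreting the general theorem in the filter setting, and the single point worth stating carefully is that the upward closure of $W$ is what permits replacing ``there is $C\in W$ contained in $\bigcap\mathcal{A}$'' by ``$\bigcap\mathcal{A}$ itself lies in $W$.''
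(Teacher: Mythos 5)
Your proposal is correct and is exactly the argument the paper intends: the corollary is stated as an immediate instance of Theorem \ref{thm: Tukey-order characterization} with $\mathbb{P}=(W,\supseteq)$, and the only content is the observation (which you make carefully, including the role of upward closure of $W$) that a family $\mathcal{A}\subseteq W$ is unbounded in $(W,\supseteq)$ if and only if $\bigcap\mathcal{A}\notin W$. Nothing is missing.
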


\begin{remark}
    
We are crucially missing a characterization of a cofinal map $g:U\rightarrow W$ in terms of the ultrapower by $W$. 
\end{remark}
When we cover $j_U''W$, it is tempting to require that the cover $X\in M_U$ is a filter. However, as we will further notice, this corresponds to  Tukey maps with additional properties. From now on, we only consider $\mathbb{P}=W$ for some ultrafilter $W$ and our canonical choice of $Y$ would be $P(\kappa)$. So we consider functions $f:P(\kappa)\rightarrow P(\kappa)$.
\begin{definition}
    We say that a function $f:P(\kappa)\rightarrow P(\kappa)$ is:
    \begin{enumerate}
        \item monotone, if $A\subseteq B\Rightarrow f(A)\subseteq f(B)$
        \item semi-additive, if $f(A)\cap f(B)\subseteq f(A\cap B)$.
        \item additive if $f(A)\cap f(B)=f(A\cap B)$.
        \item $\mu$-semi-additive  if for any $\l A_i\mid i<\lambda\r\in [P(\kappa)]^{<\mu}$,  $\bigcap_{i<\mu}f(A_i)\subseteq f(\bigcap_{i<\mu}A_i)$ \item 
 $\mu$-additive if for any $\l A_i\mid i<\lambda\r\in [P(\kappa)]^{<\mu}$, $\bigcap_{i<\mu}f(A_i)=f(\bigcap_{i<\mu}A_i)$.
 \item negative if $f(\kappa\setminus A)=\kappa\setminus f(A)$.
 \item an homomorphism if $f$ is negative and additive.
    
    \end{enumerate}  
\end{definition}
It is not hard to check that $f$ is $\mu$-additive iff it is $\mu$-semi-additive and monotone. We say that a set $X\subseteq P(\kappa)$ is ultra, if for every $A$,  $$A\in X\text{ xor }\kappa\setminus A\in X.$$ The proof of the following propositions is simple.
\begin{proposition}
    \begin{enumerate}
        \item $f$ is monotone $\Rightarrow$ $X_f$ is upwards closed.
    \item $f$ is  semi-additive $\Rightarrow$ $X_f$ is closed under intersection.
    \item $f$ is  additive $\Rightarrow$ $X_f$ is a filter.
    \item $f$ is $\mu$-additive $\Rightarrow$ $X_f$ is a $\mu$-complete filter.
    \item $f$ is negative $\Rightarrow$ $X_f$ is ultra.
    \item $f$ is an homomorphism $\Rightarrow$ $X_f$ is an ultrafilter.
\end{enumerate}
\end{proposition}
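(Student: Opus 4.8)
The plan is to prove each implication directly from the definitions of the operations on $f$ and the construction $X_f = \{y \in j_U(Y) \mid [id]_U \in j_U(f)(y)\}$, transported through \L o\'s's Theorem. The unifying principle is that membership in $X_f$ is controlled by whether $[id]_U$ lies in $j_U(f)(y)$, and each algebraic property of $f$ (monotone, semi-additive, negative, etc.) passes through $j_U$ since elementary embeddings preserve first-order statements. So for each clause I would translate the structural condition on $X_f$ (upward closure, closure under intersection, etc.) into a membership statement, apply $j_U$ to the hypothesis on $f$, and read off the conclusion.

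Concretely, for (1): suppose $A \in X_f$ (meaning $M_U \models [id]_U \in j_U(f)(A)$) and $A \subseteq B$ in $M_U$; since $f$ is monotone, $M_U \models j_U(f)(A) \subseteq j_U(f)(B)$ holds by elementarity, so $[id]_U \in j_U(f)(B)$ and $B \in X_f$. For (2), semi-additivity $f(A) \cap f(B) \subseteq f(A \cap B)$ lifts under $j_U$ to the same inclusion for $j_U(f)$ at arguments in $M_U$, so if $[id]_U$ is in both $j_U(f)(A)$ and $j_U(f)(B)$ it is in $j_U(f)(A \cap B)$, giving closure under intersection. Clause (3) is just the conjunction of (1) and (2), using the earlier remark that additive equals monotone plus semi-additive, hence $X_f$ is upward closed and intersection-closed, i.e.\ a filter. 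Clause (4) is identical but takes intersections of fewer than $\mu$ many sets simultaneously: $\mu$-additivity lifts to $j_U(f)$, and since $j_U$ of a $<\mu$-sequence is (by elementarity, at least when $\mu$ is below the critical point or handled correctly) a corresponding sequence in $M_U$, the intersection $\bigcap_{i<\mu} A_i$ behaves correctly, yielding $\mu$-completeness. For (5), negativity $f(\kappa \setminus A) = \kappa \setminus f(A)$ lifts to say $[id]_U \in j_U(f)(\kappa \setminus A)$ iff $[id]_U \notin j_U(f)(A)$, which is exactly the xor condition defining \emph{ultra}. Finally (6) combines (3) and (5): a filter that is ultra is an ultrafilter.

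The one genuinely delicate point, which I expect to be the main obstacle, is clause (4): handling the passage of a $<\mu$-indexed intersection through $j_U$. The issue is that $X_f$ lives in $M_U$, so $\mu$-completeness must be verified for sequences $\langle A_i \mid i < \mu\rangle$ that are elements of $M_U$, not merely external sequences; one must check that $\bigcap_{i<\mu} A_i$ computed in $M_U$ matches what the lifted $\mu$-additivity of $j_U(f)$ controls, and that the statement ``$f$ is $\mu$-additive'' is genuinely first-order expressible so that $M_U \models$ ``$j_U(f)$ is $\mu$-additive.'' For the other clauses the implications are, as the text says, immediate from \L o\'s's Theorem applied pointwise, so I would state them briefly and concentrate the written detail on the quantifier management in the $\mu$-complete case, in particular making sure the indexing set of the sequence is correctly interpreted in $M_U$ and that closure under $<\mu$ intersections in $M_U$ follows.
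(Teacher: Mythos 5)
Your proof is correct and is exactly the argument the paper has in mind: the paper omits the proof entirely (declaring it simple), and the intended justification is precisely your pointwise application of elementarity/\L o\'s to transfer each algebraic property of $f$ to $j_U(f)$ and then read off the corresponding closure property of $X_f$ from the membership condition $[id]_U\in j_U(f)(y)$. Your handling of the $\mu$-complete case is also right: elementarity gives that $j_U(f)$ is $j_U(\mu)$-additive in $M_U$, which covers all $<\mu$-sequences that are elements of $M_U$, and that is what $\mu$-completeness of $X_f$ as an object of $M_U$ requires.
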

    \qed
    \begin{proposition}
        \begin{enumerate}
        \item $X$ is upwards closed$\Rightarrow$ then there is a monotone $f$ such that   $f_X=f$.
    \item $X$ is closed under intersection $\Rightarrow$  there is
    a semi-additive $f$ so that $f_X=_Uf$ is \item $X$ is a filter $\Rightarrow$ there is an additive $f$  so that $f_X=_Uf$.
    \item $X$ is a $\mu$-complete filter $\Rightarrow$ there is a $\mu$-additive $f$  so that $f_X=_Uf$.
    \item $X$ is ultra $\Rightarrow$  there is a negative $f$  so that $f_X=_Uf$.
    \item $X$ is an ultrafilter $\Rightarrow$ there is an homomorphism $f$ such that $f_X=_Uf$.\end{enumerate}
    \end{proposition}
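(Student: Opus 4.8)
The plan is to prove all six implications by one mechanism: reflect the property of $X$ in $M_U$ down to the coordinates via {\L}o\'{s}'s theorem, repair the $U$-null set of exceptional coordinates, and then read off the matching combinatorial property of the resulting representative of $f_X$. Throughout, fix a representing sequence $\vec{X}=\langle X_\alpha\mid\alpha<\kappa\rangle$ of $X$, so each $X_\alpha\subseteq P(\kappa)$ and $f^{\vec X}_X(y)=\{\alpha<\kappa\mid y\in X_\alpha\}$, and recall from the paragraph defining $f_X$ that altering $\vec X$ on a $U$-null set leaves $X=[\vec X]_U$ unchanged and only changes the induced function up to $=_U$.

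The mechanism for the finitary items (1),(2),(3),(5) is as follows. Each hypothesis ``$X$ has property $P$'' (upward closed, closed under intersection, a proper filter, ultra) is first order in the parameter $X$, so by {\L}o\'{s}'s theorem it holds in $M_U$ iff $Z_P:=\{\alpha<\kappa\mid X_\alpha\text{ has }P\}\in U$. On the $U$-null set $\kappa\setminus Z_P$ replace $X_\alpha$ by a fixed canonical object with property $P$ (the trivial proper filter $\{\kappa\}$ for (1)--(3), a fixed principal ultrafilter for (5)); the repaired sequence still represents $X$, so the function $f$ it induces satisfies $f=_U f_X$. Since now \emph{every} coordinate has property $P$, a coordinatewise computation yields the corresponding property of $f$: for (1), $A\subseteq B$ and $X_\alpha$ upward closed give $f(A)\subseteq f(B)$; for (2), $X_\alpha$ closed under intersection gives $f(A)\cap f(B)\subseteq f(A\cap B)$; (3) is (1) together with (2), i.e.\ additivity $=$ monotone $+$ semi-additive; and for (5), $X_\alpha$ ultra gives $f(\kappa\setminus A)=\{\alpha\mid \kappa\setminus A\in X_\alpha\}=\{\alpha\mid A\notin X_\alpha\}=\kappa\setminus f(A)$, so $f$ is negative. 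Item (6) is then immediate, since an ultrafilter in $M_U$ is ultra and a filter, so the repaired coordinates (principal ultrafilters) make $f$ simultaneously negative and additive, i.e.\ a homomorphism. In the monotone case (1) one may even secure literal equality $f_X=f$ by choosing $\vec X$ at the outset to consist of upward-closed sets, replacing each exceptional $X_\alpha$ by its upward closure $\uparrow X_\alpha$ rather than by $\{\kappa\}$; this does not disturb the $U$-many already upward-closed coordinates.

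The one point needing genuine care, and the main obstacle, is the infinitary item (4). The subtlety is matching the external $<\mu$-families $\langle A_i\mid i<\nu\rangle\in[P(\kappa)]^{<\mu}$ appearing in the definition of $\mu$-(semi)additivity with the internal sequences that witness $\mu$-completeness of $X$ inside $M_U$: to run the {\L}o\'{s} reflection one must control how $j_U$ acts on $\mu$ and on these sequences, and to identify $M_U$-intersections with true intersections one wants $M_U$ closed under the relevant sequences. In the setting of interest, where $U$ is $\kappa$-complete and $\mu\le\kappa$, this is automatic: $j_U(\mu)=\mu$ and $M_U$ is closed under $<\mu$-sequences, so ``$X$ is a $\mu$-complete filter'' reflects to ``$U$-almost every $X_\alpha$ is a $\mu$-complete filter.'' Repairing the exceptional coordinates by $\{\kappa\}$, which is trivially $\mu$-complete for every $\mu$, the computation $\bigcap_{i<\nu}f(A_i)\subseteq f(\bigcap_{i<\nu}A_i)$ for $\nu<\mu$ goes through exactly as in the finitary case, and monotonicity (from upward closure of the coordinates) upgrades this $\mu$-semi-additivity to full $\mu$-additivity, as recorded in the remark that $f$ is $\mu$-additive iff it is $\mu$-semi-additive and monotone.
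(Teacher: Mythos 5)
Your proof is correct, and it is essentially the argument the paper intends: the paper itself prints no proof (it declares both propositions ``simple'' and gives only the end-of-proof symbol), and the Lo\'{s}-reflection of the property of $X$ to $U$-almost all coordinates $X_\alpha$, followed by repairing the null set and verifying the combinatorial property of $f$ coordinatewise, is the standard and evidently intended route. Your treatment of item (4) --- observing that transferring $\mu$-completeness of $X$ to $\mu$-additivity of $f$ needs $j_U(\mu)=\mu$ and closure of $M_U$ under the relevant short sequences, which holds in the paper's standing setting of $\kappa$-complete $U$ and $\mu\le\kappa$ --- is if anything more careful than the unqualified statement in the paper, and correctly isolates the only step requiring genuine attention.
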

    \qed
\begin{corollary}
    \begin{enumerate}
        \item There is a thin upward closed cover iff there is a monotone Tukey map.
        \item There is a thin cover closed under intersection iff there is a semi-additive Tukey map.
        \item There is a thin filter cover  iff there is an additive Tukey map.
        \item  There is a thin $\mu$-complete filter cover  iff there is a $\mu$-additive Tukey map.
        \item There is an ultra thin cover  iff there is a negative Tukey map.
        \item There is an ultrafilter thin cover  iff there is an homomorphism Tukey map.
\end{enumerate}
\end{corollary}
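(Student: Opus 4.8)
The plan is to prove all six biconditionals at once, since each is obtained by feeding a matched pair of properties through the same machine: the two Propositions immediately above translate an algebraic property of a function $f\colon P(\kappa)\to P(\kappa)$ into the corresponding structural property of the cover $X_f$ and back, while the Lemma and its corollary translate ``$X_f$ is a thin cover'' into ``$f\restriction W$ is unbounded,'' i.e.\ into ``$f\restriction W$ is a Tukey map.'' Write $(\ast)$ for one of the six function-side properties (monotone, semi-additive, additive, $\mu$-additive, negative, homomorphism) and $(\star)$ for the matching cover-side property (upward closed, closed under intersection, filter, $\mu$-complete filter, ultra, ultrafilter). Throughout I read ``$(\ast)$ Tukey map'' as a function $f\colon P(\kappa)\to P(\kappa)$ with property $(\ast)$ whose restriction $f\restriction W$ is unbounded, consistent with the standing convention $Y=P(\kappa)$, $\mathbb{P}=W$.

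For the direction from Tukey maps to covers, suppose $f$ is a $(\ast)$ Tukey map. By the Lemma, the fact that $f\restriction W\colon W\to U$ is unbounded gives that $X_f$ is a thin cover of $j_U''W$, and by the first Proposition property $(\ast)$ of $f$ forces $X_f$ to have property $(\star)$; hence $X_f$ is a thin $(\star)$ cover. Conversely, suppose $X\in M_U$ is a thin $(\star)$ cover. By the second Proposition there is a function $f$ with property $(\ast)$ and $f=_Uf_X$ (with genuine equality $f=f_X$ in the monotone case). Since $X$ is a thin cover, the corollary to the Lemma gives that $f_X\restriction W$ is unbounded, and I will transfer this to $f\restriction W$; then $f$ is a $(\ast)$ Tukey map, completing the biconditional.

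The one point needing care --- and the only real content beyond bookkeeping --- is this transfer of unboundedness across the relation $=_U$ in cases $2$--$6$, where the second Proposition yields only $f=_Uf_X$. Fix $Z\in U$ witnessing $f=_Uf_X$, so that $f(A)\cap Z=f_X(A)\cap Z$ for all $A$. First, $f\restriction W$ does land in $U$: for $B\in W$ we have $f(B)\supseteq f_X(B)\cap Z\in U$, so $f(B)\in U$. Second, for any $\mathcal{A}\subseteq W$ one has $\bigcap_{B\in\mathcal{A}}f(B)\cap Z=\bigcap_{B\in\mathcal{A}}f_X(B)\cap Z$, and since $Z\in U$ this gives $\bigcap_{B\in\mathcal{A}}f(B)\in U$ iff $\bigcap_{B\in\mathcal{A}}f_X(B)\in U$; recalling that $f''\mathcal{A}$ is bounded in $U$ exactly when $\bigcap_{B\in\mathcal{A}}f(B)\in U$ (as used in the proof of the Lemma), unboundedness of $f_X\restriction W$ passes to $f\restriction W$. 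This completes the generic argument, and the six cases of the Corollary are its six instances.
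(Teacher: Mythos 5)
Your proof is correct and follows exactly the route the paper intends: the corollary is stated with no written proof because it is meant to follow by composing the two translation propositions with the lemma characterizing thin covers via unbounded maps, which is precisely what you do. The one substantive detail you add --- transferring unboundedness and the property of landing in $U$ across the relation $f=_Uf_X$ via a fixed witness $Z\in U$ --- is a genuine gap the paper leaves silent, and your handling of it is correct.
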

\qed

By the ultrafilter lemma we get that:
\begin{corollary}
    There is an additive Tukey map iff there is an homomorphism Tukey map.
\end{corollary}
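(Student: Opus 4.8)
The plan is to prove the nontrivial implication; the converse is immediate, since a homomorphism is by definition additive, so every homomorphism Tukey map is in particular an additive Tukey map.

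So assume we are given an additive Tukey map, equivalently (by the two Propositions and the preceding Corollary) a thin filter cover $X\in M_U$ of $j_U''W$. Since $M_U\models\mathrm{ZFC}$, the ultrafilter lemma holds there, and I would extend $X$ to an ultrafilter $X'\supseteq X$ in $M_U$. Being a superset of $X\supseteq j_U''W$, $X'$ is again a cover, and by the Proposition it equals $X_g$ for a homomorphism $g$ with $g\restriction W\colon W\to U$. Equivalently, writing $f=f_X$ and working on the function side, I would extend each slice filter $\mathcal F_\xi=\{A\subseteq\kappa:\xi\in f(A)\}$ to an ultrafilter $V_\xi\supseteq\mathcal F_\xi$ and set $g(A)=\{\xi<\kappa:A\in V_\xi\}$, a homomorphism with $g\supseteq f$ pointwise. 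Either way the only thing left to check is that $X'$ (equivalently $g\restriction W$) is \emph{thin} (equivalently \emph{unbounded}); this is the whole content of the corollary and is where the ultrafilter lemma must be used with care, since enlarging a cover can a priori spoil thinness.

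The observation that makes this feasible is that, because $W$ is an ultrafilter, $f$ is negative modulo $U$ in a per-set sense. We may assume $f(\emptyset)=\emptyset$ (replace $f(A)$ by $f(A)\setminus f(\emptyset)$, which keeps $f$ additive and unbounded, and still into $U$ because $f(\emptyset)\subseteq\bigcap_{A\in\mathcal A}f(A)\notin U$). Then additivity gives $f(A)\cap f(\kappa\setminus A)=f(\emptyset)=\emptyset$, while exactly one of $A,\kappa\setminus A$ lies in $W$, so one of $f(A),f(\kappa\setminus A)$ lies in $U$; hence the undecided set $\{\xi:\xi\notin f(A)\cup f(\kappa\setminus A)\}$ is not in $U$ for every single $A$. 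Consequently, for each fixed $A$, $\{\xi:A\notin\mathcal F_\xi\}=_U\{\xi:\kappa\setminus A\in\mathcal F_\xi\}$, so on a $U$-large set the slice $\mathcal F_\xi$ already decides $A$ against it, and any extension $V_\xi$ keeps $A\notin V_\xi$ there.

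The main obstacle is to upgrade this from a single set to unbounded families: for $g\restriction W$ to be unbounded I need, for every unbounded $\mathcal A\subseteq W$, that $\bigcap_{A\in\mathcal A}g(A)=\{\xi:\mathcal A\subseteq V_\xi\}\notin U$, i.e.\ that for $U$-almost every $\xi$ some $A\in\mathcal A$ has $\kappa\setminus A\in V_\xi$. Unboundedness of $f$ gives $\{\xi:\exists A\in\mathcal A\ \xi\notin f(A)\}\in U$, and the per-set negativity upgrades ``$A\notin\mathcal F_\xi$'' to ``$\kappa\setminus A\in\mathcal F_\xi$'' \emph{one $A$ at a time}; the difficulty is that the exceptional undecided sets are indexed by the infinitely many $A\in\mathcal A$, and their union need not be $U$-null. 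I expect this to be exactly where the freedom in the ultrafilter lemma is spent: the extensions $V_\xi$ must be chosen coherently so that undecided sets are resolved in favour of the dual filter of $W$. In the cover formulation this amounts to extending $X$ together with the complements of suitably chosen witnesses $B\in j_U(\mathcal A)\setminus X$ and checking the finite intersection property; the fact that a finite union of unbounded subfamilies of $W$ is again unbounded is the tool I would try to leverage, but making the choice simultaneously consistent across \emph{all} unbounded families is the delicate step on which the argument turns.
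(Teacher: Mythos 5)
Your reduction is correct as far as it goes---the trivial direction, the translation to a thin filter cover $X$, the normalization $f(\emptyset)=\emptyset$, and the per-set observation that $\kappa\setminus\bigl(f(A)\cup f(\kappa\setminus A)\bigr)\notin U$ are all fine---but the argument stops exactly at the step that carries all the content, and you say so yourself. That is a genuine gap, and your worry about it is justified: an arbitrary application of the ultrafilter lemma really does fail, because for nonprincipal $W$ a \emph{nonprincipal} ultrafilter cover of $j_U''W$ is never thin. Indeed, $\mathcal{A}_0=\{\kappa\setminus\{\alpha\}\mid\alpha<\kappa\}$ is an unbounded subfamily of $W$ (its intersection is empty), and $j_U(\mathcal{A}_0)$ is the family of all co-singletons of $j_U(\kappa)$ in $M_U$, which every nonprincipal $M_U$-ultrafilter contains. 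So the ``coherent choice of the $V_\xi$'' you are searching for cannot be found among nonprincipal extensions at all, and no amount of care with undecided sets will produce one.

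The missing idea is to apply thinness of the given filter cover $X$ to that same family $\mathcal{A}_0$. This yields $\beta<j_U(\kappa)$ with $j_U(\kappa)\setminus\{\beta\}\notin X$; since $X$ is upward closed, every $B\in X$ contains $\beta$, so $\beta\in\bigcap X\subseteq\bigcap j_U''W$. The principal ultrafilter $p_\beta=\{B\mid \beta\in B\}$ is then an ultrafilter cover extending $X$, and it is thin: $j_U(\mathcal{A})\subseteq p_\beta$ forces $\beta\in j_U(\bigcap\mathcal{A})$, whereas $\bigcap\mathcal{A}\notin W$ would give $\kappa\setminus\bigcap\mathcal{A}\in W$ and hence $\beta\in j_U(\kappa)\setminus j_U(\bigcap\mathcal{A})$. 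On the function side this says your slice ultrafilters are forced to be principal: unboundedness at $\mathcal{A}_0$ gives $Z=\kappa\setminus\bigcap_{\alpha<\kappa}f(\kappa\setminus\{\alpha\})\in U$ and, choosing $h(\xi)$ with $\xi\notin f(\kappa\setminus\{h(\xi)\})$ for $\xi\in Z$, monotonicity yields $f(A)\cap Z\subseteq h^{-1}[A]$ for every $A$; hence $W=h_*(U)\leq_{RK}U$ and $A\mapsto h^{-1}[A]$ is the desired homomorphism Tukey map. In other words, both sides of the corollary are equivalent to $W\leq_{RK}U$ (for nonprincipal $W$; the principal case is trivial), which is why no simultaneous choice over all unbounded families is needed. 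The paper's one-line justification ``by the ultrafilter lemma'' is itself elliptical on exactly this point: the extension must be taken principal, and thinness of the filter cover is what guarantees a principal extension exists.
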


\begin{lemma}
Any cover $X\in M_U$, closed under intersections, such that $j_U(\mathcal{C})\not\subseteq X$ for every $\mathcal{C}\subseteq W$ with $\bigcap C=\emptyset$ must be a thin cover.
\end{lemma}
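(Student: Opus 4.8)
The plan is to unwind Definition~\ref{def: Thin cover} in the case $\mathbb{P}=W$ and then reduce the defining condition of thinness to the empty-intersection hypothesis we are given. Since $X$ is assumed to be a cover of $j_U''W$, the clause $j_U''W\subseteq X$ of the definition holds for free, so I would only need to verify the second clause: that $j_U(\mathcal{A})\not\subseteq X$ for every unbounded $\mathcal{A}\subseteq W$. The first thing to record is what unboundedness means here: in the directed poset $(W,\supseteq)$ a family $\mathcal{A}\subseteq W$ has an upper bound exactly when $\bigcap\mathcal{A}\in W$ (an upper bound $C\in W$ satisfies $C\subseteq\bigcap\mathcal{A}$, and upward closure of $W$ gives the converse), so $\mathcal{A}$ is unbounded iff $\bigcap\mathcal{A}\notin W$. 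Thus the target statement is ``$\bigcap\mathcal{A}\notin W\Rightarrow j_U(\mathcal{A})\not\subseteq X$,'' while the hypothesis only supplies this under the stronger assumption $\bigcap\mathcal{A}=\emptyset$.

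The main step is a reduction from ``$\bigcap\mathcal{A}\notin W$'' to ``empty intersection'' by adjoining a single complement. Given an unbounded $\mathcal{A}$, I would set $D:=\kappa\setminus\bigcap\mathcal{A}$; since $W$ is an ultrafilter and $\bigcap\mathcal{A}\notin W$, we get $D\in W$. Then $\mathcal{C}:=\mathcal{A}\cup\{D\}$ satisfies $\mathcal{C}\subseteq W$ and $\bigcap\mathcal{C}=\bigl(\bigcap\mathcal{A}\bigr)\cap D=\emptyset$, so the hypothesis gives $j_U(\mathcal{C})\not\subseteq X$. By elementarity of $j_U$, $j_U(\mathcal{C})=j_U(\mathcal{A})\cup\{j_U(D)\}$. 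Finally, because $D\in W$ and $X$ covers $j_U''W$, we have $j_U(D)\in j_U''W\subseteq X$. Consequently any witness $q\in j_U(\mathcal{C})\setminus X$ must be different from $j_U(D)$ and hence lie in $j_U(\mathcal{A})$, which yields $j_U(\mathcal{A})\not\subseteq X$, completing the verification.

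I do not anticipate a genuine obstacle. The only point requiring care is to keep the image of the \emph{set} $j_U(\mathcal{A})$ distinct from the pointwise image $j_U''\mathcal{A}$, and to confirm that the elementarity identity $j_U(\mathcal{A}\cup\{D\})=j_U(\mathcal{A})\cup\{j_U(D)\}$ is legitimate for the full image set; it is, since union and singleton are first-order definable operations preserved by $j_U$. It is worth noting that the verification uses only the cover property together with the empty-intersection hypothesis: the assumption that $X$ is closed under intersections is the ambient condition under which this criterion is meant to be applied (by the preceding propositions it is what makes the induced $f_X$ semi-additive), rather than a hypothesis invoked in the proof of thinness itself.
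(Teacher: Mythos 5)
Your proof is correct, and it takes a genuinely different route from the paper's. Both arguments reduce the general case $\bigcap\mathcal{A}\notin W$ to the empty-intersection hypothesis via the complement $D=\kappa\setminus\bigcap\mathcal{A}\in W$, but they execute the reduction differently. The paper argues by contradiction: assuming $j_U(\mathcal{C})\subseteq X$, it replaces each $Y\in\mathcal{C}$ by $Y\cap D$ to form a family $\mathcal{C}'$ with empty intersection, and then needs the closure of $X$ under intersections to conclude that $j_U(\mathcal{C}')\subseteq X$, contradicting the hypothesis. You instead adjoin $D$ as a single new member of the family, so that $j_U(\mathcal{A}\cup\{D\})=j_U(\mathcal{A})\cup\{j_U(D)\}$ and the only property of $X$ needed is that it covers $j_U''W$ (to place $j_U(D)$ in $X$). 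The payoff of your version is that it proves a strictly stronger statement: the hypothesis that $X$ is closed under intersections is not needed at all, as you correctly observe at the end. The paper's version, by contrast, genuinely invokes that hypothesis. Your bookkeeping on the distinction between $j_U(\mathcal{A})$ and $j_U''\mathcal{A}$, and on the elementarity identity for the union with a singleton, is exactly the care the argument requires (and it survives the reading of $\in$ and $\subseteq$ internally to $M_U$ in the possibly ill-founded case).
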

\begin{proof}
    It remains to see that $X$ does not contain the image of a set $\mathcal{C}$ such that $\bigcap \mathcal{C}\notin W$. Suppose otherwise, then $j_U(\mathcal{C})\subseteq X$. Let $$\mathcal{C}'=\{Y\cap \overline{\bigcap \mathcal{C}}\mid Y\in \mathcal{C}\}.$$ Then $\bigcap \mathcal{C}'=\emptyset$. But since $X$ covers $j_U''W$, and $\overline{\bigcap \mathcal{C}}\in W$, $j_U(\overline{\bigcap \mathcal{C}})\in X$. Also since $X$ is closed under intersection and $j_U(\mathcal{C})\subseteq X$, then $j_U(\mathcal{C}')=\{ Y\cap j_U(\overline{\bigcap \mathcal{C}})\mid Y\in j_U(\mathcal{C})\}\subseteq X$, contradicting the assumption regarding $X$.
\end{proof}
Next, we would like to use the above characterization to measure the distance between the Tukey order and other orders on ultrafilters. Recall that the Rudin-Keisler order is defined as follows:
\begin{definition}
    Let $U,W$ be ultrafilters over $\kappa$ respectively. We say that $W\leq_{RK}U$ if there is $f:\kappa\rightarrow \kappa$ such that $$W=f_*(U)=\{A\subseteq \kappa\mid f^{-1}[A]\in U\}.$$ 
\end{definition}
Given a Rudin-Keisler projection $f:\kappa\rightarrow \kappa$, we can define $F_f:P(\kappa)\rightarrow P(\kappa)$ by $F_f(A)=f^{-1}[A]$. Then we have the following properties:
\begin{enumerate}
    \item $F_f$ is an  $\infty$-additive homomorphism. That is, for any $\mathcal{A}\subseteq P(\kappa)$, $F_f(\bigcap \mathcal{A})=\bigcap F_f''\mathcal{A}$.
    \item $F_f\restriction W:W\rightarrow U$ is Tukey.
\end{enumerate}
\begin{corollary}
    $W\leq_{RK}U$ then $W\leq_T U$
\end{corollary}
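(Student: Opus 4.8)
The plan is to verify that the map $F_f: P(\kappa) \to P(\kappa)$ given by $F_f(A) = f^{-1}[A]$ has the two properties claimed, and then invoke the corollary to Theorem~\ref{thm: Tukey-order characterization} identifying additive Tukey maps with thin filter covers (or more directly, just the basic Tukey characterization). Concretely, given $W \leq_{RK} U$ witnessed by $f:\kappa\to\kappa$ with $W = f_*(U)$, I would show that $F_f\restriction W : W \to U$ is a well-defined unbounded map, which by definition of $\leq_T$ yields $W \leq_T U$.

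\emph{First}, I would check that $F_f$ lands in $U$ on elements of $W$. If $A \in W = f_*(U)$, then by definition $f^{-1}[A] \in U$, i.e. $F_f(A) \in U$; so $F_f\restriction W$ indeed maps into the directed set $(U,\supseteq)$. \emph{Second}, I would record the $\infty$-additivity: for any family $\mathcal{A}\subseteq P(\kappa)$ we have $f^{-1}[\bigcap\mathcal{A}] = \bigcap_{A\in\mathcal{A}} f^{-1}[A]$, which is just the elementary fact that preimage commutes with arbitrary intersection; this immediately gives that $F_f$ is additive (indeed fully $\infty$-additive) and monotone, hence an homomorphism in the sense of the definition once one also notes $f^{-1}[\kappa\setminus A] = \kappa\setminus f^{-1}[A]$.

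\emph{Third}, and this is the only step with any content, I would prove unboundedness of $F_f\restriction W$. Let $\mathcal{A}\subseteq W$ be unbounded in $(W,\supseteq)$, meaning $\bigcap\mathcal{A}\notin W$. I must show $F_f''\mathcal{A}$ is unbounded in $(U,\supseteq)$, i.e. $\bigcap_{A\in\mathcal{A}} F_f(A)\notin U$. By $\infty$-additivity this intersection equals $F_f(\bigcap\mathcal{A}) = f^{-1}[\bigcap\mathcal{A}]$. Now $\bigcap\mathcal{A}\notin W = f_*(U)$ says precisely that $f^{-1}[\bigcap\mathcal{A}]\notin U$, which is exactly what we want. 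Thus $F_f\restriction W$ is a Tukey map and $W\leq_T U$.

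The main (and essentially the only) obstacle is the unboundedness verification, but it collapses to a one-line computation once the preimage-intersection identity is in hand: the definition of the pushforward ultrafilter $f_*(U)$ is tailored so that ``$\bigcap\mathcal{A}\notin W$'' and ``$f^{-1}[\bigcap\mathcal{A}]\notin U$'' are literally the same statement. I would present the two enumerated properties as the substance of the argument and then let the corollary do the rest. One could alternatively route the whole thing through the cover language---$F_f$ being an homomorphism means $X_{F_f}$ is an ultrafilter thin cover of $j_U''W$---but the direct Tukey-map argument is cleaner and self-contained.
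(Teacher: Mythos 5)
Your proposal is correct and follows exactly the route the paper takes: the paper records (without detailed verification) that $F_f(A)=f^{-1}[A]$ is an $\infty$-additive homomorphism and that $F_f\restriction W:W\rightarrow U$ is Tukey, and the corollary is immediate. You simply fill in the short verifications (well-definedness via $W=f_*(U)$, preimage commuting with intersection, and the unboundedness computation), all of which are accurate.
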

Observe that the Rudin-Keisler order is characterized by:

    $$W\leq_{RK}U\text{ iff }\bigcap j_U''W\neq\emptyset.$$
Given any $\alpha\in\bigcap j_U''W$ we can explicitly define the cover $$X=p^{j_U(\kappa)}_\alpha:=\{A\subseteq j_U(\kappa)\mid \alpha\in A\}.$$ A principal ultrafilter is always $\infty$-complete this ultrafilter cover of $j_U''W$.
This simple observation enables us to characterize the Rudin-Keisler order in terms of unbounded maps:
\begin{corollary}
 $W\leq_{RK}U$ iff there is an $\infty$-additive map $f$ such that $f\restriction W$ is a Tukey reduction.
\end{corollary}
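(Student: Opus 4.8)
The plan is to prove the two implications separately, leaning on the dictionary between $\infty$-additive maps $f:P(\kappa)\to P(\kappa)$ and $\infty$-complete filter covers $X_f\in M_U$, together with the observation that $W\leq_{RK}U$ if and only if $\bigcap j_U''W\neq\emptyset$.

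For the forward direction, suppose $W\leq_{RK}U$ and fix a Rudin--Keisler projection $g:\kappa\to\kappa$ with $W=g_*(U)$. I would simply take $f:=F_g$, where $F_g(A)=g^{-1}[A]$. As already recorded above, $F_g$ is an $\infty$-additive homomorphism and $F_g\restriction W:W\to U$ is a Tukey reduction, so $f$ is exactly the required $\infty$-additive map; this direction is immediate.

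The substance lies in the reverse direction. Assume $f:P(\kappa)\to P(\kappa)$ is $\infty$-additive and $f\restriction W$ is a Tukey reduction. By the lemma identifying unbounded maps with thin covers, $X_f$ is a thin cover of $j_U''W$; in particular $j_U(B)\in X_f$ for every $B\in W$, and $X_f$ is a proper filter (if $\emptyset\in X_f$ then, $f$ being monotone, $X_f$ would be all of $P(j_U(\kappa))^{M_U}$ and would contain $j_U(\mathcal{A})$ for an unbounded $\mathcal{A}\subseteq W$, contradicting thinness). The key step is to upgrade additivity to full completeness: by elementarity $j_U(f)$ is $\infty$-additive inside $M_U$, so for any internal family $\mathcal{B}\in M_U$ with $\mathcal{B}\subseteq X_f$ we have $[id]_U\in\bigcap_{A\in\mathcal{B}}j_U(f)(A)=j_U(f)(\bigcap\mathcal{B})$, whence $\bigcap\mathcal{B}\in X_f$. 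Thus $X_f$ is closed under arbitrary internal intersections.

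Applying this to $\mathcal{B}=X_f$ itself yields a least element $m:=\bigcap X_f\in X_f$, which is nonempty because $X_f$ is proper. Since every $j_U(B)$ with $B\in W$ lies in $X_f$, we get $m\subseteq\bigcap_{B\in W}j_U(B)=\bigcap j_U''W$, so $\bigcap j_U''W\neq\emptyset$, and by the observation characterizing the Rudin--Keisler order, $W\leq_{RK}U$. I expect the main obstacle to be precisely the completeness upgrade: making sure that $\infty$-additivity of $f$ (a statement about all external subfamilies of $P(\kappa)$) really does transfer through $j_U$ to closure of $X_f$ under all internal intersections in $M_U$, and checking that $X_f$ is a proper filter so that its minimal element is a genuine witness to $\bigcap j_U''W\neq\emptyset$.
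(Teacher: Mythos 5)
Your proof is correct and takes essentially the same route as the paper's: the forward direction via $F_g(A)=g^{-1}[A]$, and the reverse direction by noting that an $\infty$-additive Tukey map yields an $\infty$-complete thin (hence proper) filter cover of $j_U''W$ whose nonempty minimum witnesses $\bigcap j_U''W\neq\emptyset$. The only difference is that you spell out the transfer of $\infty$-additivity through $j_U$ and the properness of $X_f$ explicitly, steps the paper delegates to its earlier propositions relating $\mu$-additive maps to $\mu$-complete filter covers.
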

\begin{proof}
    From left to right follows from the previous paragraph. In the other direction, if $f$ is such a map, then there is a thin filter cover $X$ of $j_U''W$ which is $\infty$-complete. This means that $\bigcap X\in X$.  Since $X$ is thin, $X$ is a proper filter and thus $\emptyset\notin X$. It follows that $\bigcap X\neq\emptyset$ and in particular $\bigcap j_U''W\neq\emptyset$. It follows that $U\leq_{RK}W$.
\end{proof}
\begin{remark}
    Ther is no hope of characterazing the Tukey order on all ultrafilters using function $f:\kappa\rightarrow \kappa$, as it is consistent that there are $2^{2^\kappa}$-many incomperable ultrafilters in the Tukey order, and since there is always a Tukey-top (i.e. maximal) ultrafilter.
\end{remark}
Next we address continuity of functions $f:P(\kappa)\rightarrow P(\kappa)$.
\begin{definition} A function $f:P(\kappa)\rightarrow P(\kappa)$ is 
continuous if for every $\alpha<\kappa$ there is $\xi_\alpha<\kappa$ such that for every $A\in W$, $f(A)\cap \alpha$ depends only on $A\cap\xi_\alpha$.
$f$ is Lipschitz if we can pick $\xi_\alpha=\alpha$ and super-Lipschitz if $f(A)\cap\alpha+1$ depends only on $A\cap \alpha$.
\end{definition}

\begin{definition}
We say that $X$ concentrates on $A$ if for every $B,C$ with $B\cap A=C\cap A$, $B\in X$ iff $C\in X$.
\end{definition}
\begin{remark}
    If $X$ is upwards closed, and $X$ concentrates on $A\subseteq j_U(\kappa)$, then $A\in X$: Indeed $j_U(\kappa)\in X$ and $A\cap A=A=A\cap j_U(\kappa)$, hence $A\in X$.

If $X$ is moreover a filter then $X$ concentrates on any $A\in X$: Let $B\in X$ and suppose that $B\cap A=C\cap A$, then $B\cap X\in A$ and therefore $C\cap X\in A$ and therefore $C\in A$.

\end{remark}
\begin{proposition}
    Let $U$ be an ultrafilter. If $f:P(\kappa)\rightarrow P(\kappa)$ is continuous with parameters $\xi_\alpha$, then in 
$M_U$, $X_f$ concentrates on $[\alpha\mapsto \xi_{\alpha+1}]_U$. In particular, if $f$ is Lipschitz then $X_f$ concentrates on $[id]_U+1$ and if $f$ is super-Lipschitz then $X_f$ concentrates on $[id]_U$.
\end{proposition}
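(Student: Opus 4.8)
The plan is to express continuity as a single first-order sentence about the structure $(V;f,\vec{\xi})$, where $\vec{\xi}=\l \xi_\alpha\mid \alpha<\kappa\r$, push it through the elementary embedding $j_U$ by \Lo s's theorem, and then read off concentration by instantiating the universally quantified ordinal parameter at the right place, namely at $[id]_U+1$.

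First I would record the combinatorial content of continuity in transferable form: for every $\alpha<\kappa$ and all $A,A'\in P(\kappa)$,
\[ A\cap \xi_\alpha = A'\cap \xi_\alpha \;\Longrightarrow\; f(A)\cap \alpha = f(A')\cap \alpha. \]
This is first-order with parameters $f$ and $\vec{\xi}$, so applying $j_U$ and \Lo s's theorem yields in $M_U$: for every $\alpha<j_U(\kappa)$ and all $A,A'\in (P(j_U(\kappa)))^{M_U}$,
\[ A\cap j_U(\vec{\xi})_\alpha = A'\cap j_U(\vec{\xi})_\alpha \;\Longrightarrow\; j_U(f)(A)\cap \alpha = j_U(f)(A')\cap \alpha. \]
Next I would instantiate the ordinal at $\alpha:=[id]_U+1$. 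The key index computation is that, by \Lo s's theorem, $j_U(\vec{\xi})_{[id]_U+1}=[\alpha\mapsto \xi_{\alpha+1}]_U$, since $[id]_U+1=[\alpha\mapsto \alpha+1]_U$ and $\vec{\xi}(\alpha+1)=\xi_{\alpha+1}$; call this ordinal $\eta$. So the transferred implication at $\alpha=[id]_U+1$ reads: whenever $A\cap\eta=A'\cap\eta$, one has $j_U(f)(A)\cap([id]_U+1)=j_U(f)(A')\cap([id]_U+1)$. Finally, recall that $A\in X_f$ precisely when $[id]_U\in j_U(f)(A)$, and that $[id]_U\in [id]_U+1$; hence membership of $[id]_U$ in $j_U(f)(A)$ is already decided by $j_U(f)(A)\cap([id]_U+1)$. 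Therefore $A\cap\eta=A'\cap\eta$ forces $A\in X_f\Leftrightarrow A'\in X_f$, which is exactly the assertion that $X_f$ concentrates on $\eta$.

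The two special cases are immediate specializations. For Lipschitz $f$ we may take $\xi_\alpha=\alpha$, so that $j_U(\vec{\xi})_\beta=\beta$ for all $\beta$ and $\eta=[\alpha\mapsto \alpha+1]_U=[id]_U+1$. For super-Lipschitz $f$ one repeats the argument with the super-Lipschitz sentence ``$A\cap\alpha=A'\cap\alpha\Rightarrow f(A)\cap(\alpha+1)=f(A')\cap(\alpha+1)$'', transferred and then instantiated directly at $\alpha=[id]_U$; again $[id]_U\in [id]_U+1$ lets us conclude concentration on $[id]_U$.

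The main obstacle, and really the only delicate point, is the index bookkeeping: one must invoke continuity at level $[id]_U+1$ rather than $[id]_U$, so as to capture membership of the ordinal $[id]_U$ itself (which lies in the ``$\cap\,\alpha$'' part only when $\alpha$ strictly exceeds it), and one must correctly identify the representing function of $j_U(\vec{\xi})_{[id]_U+1}$ as $\alpha\mapsto \xi_{\alpha+1}$ via \Lo s. A secondary point worth a sentence is that continuity is used here for arbitrary $A,A'\in P(\kappa)$, so that the transferred implication applies to all $A,A'\in(P(j_U(\kappa)))^{M_U}$, which is what full concentration of $X_f$ in $M_U$ requires.
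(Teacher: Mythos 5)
Your proof is correct and follows essentially the same route as the paper's: transfer the continuity statement through $j_U$ via \L o\'s, instantiate at $[id]_U+1$ (identifying $j_U(\vec{\xi})_{[id]_U+1}=[\alpha\mapsto\xi_{\alpha+1}]_U$), and note that membership in $X_f$ is decided by $j_U(f)(A)\cap([id]_U+1)$ since $[id]_U\in[id]_U+1$. The paper's version is a one-line compression of exactly this argument; your write-up merely supplies the bookkeeping it leaves implicit, and rightly flags that continuity must be read for arbitrary $A,A'\in P(\kappa)$ for full concentration in $M_U$.
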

\begin{proof}
    Suppose that $X\in A$ and $Y\subseteq j_U(W)$ is such that $Y\cap \xi_{[id]_U+1}=X\cap \xi_{[id]_U+1}$, then $j_U(f)(Y)\cap [id]_{U}+1=j_U(f)(X)\cap [id]_U+1$ and therefore $Y\in A$. 
    \end{proof}
    \begin{proposition}
        If $X\in M_U$ concentrates on $\delta<j_U(\kappa)$ then there is a continuous function $f:P(\kappa)\rightarrow P(\kappa)$
    such that $f_X=_Uf$. In particular, if $X$ concentrates on $[id]_U+1$, then $f$ above cab be Lipschitz and if it concentrates on $[id]_U$ then super-Lipschitz. \end{proposition}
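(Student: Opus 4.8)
The plan is to take $f$ to be (a representative of) $f_X$ itself, after adjusting its defining sequence on a $U$-null set so that continuity holds literally rather than only modulo $U$. First I would fix a function $g:\kappa\to\kappa$ with $\delta=[g]_U$, which exists since $\delta<j_U(\kappa)$, and a sequence $\vec{X}=\langle X_\alpha\mid\alpha<\kappa\rangle$ representing $X$, so that $f_X(y)=\{\alpha<\kappa\mid y\in X_\alpha\}$ for $y\in P(\kappa)$. The guiding idea is that concentrating on $\delta$ in $M_U$ should reflect, coordinatewise, to each $X_\alpha$ concentrating on $g(\alpha)$, and this latter condition is exactly the ``local'' continuity needed at the $\alpha$-th output coordinate; these local conditions then assemble into continuity of $f_X$.

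Next I would make the reflection precise. The property ``$X$ concentrates on $\delta$'' is first-order in the parameters $X,\delta$, with the quantifiers over $B,C$ ranging over $P(j_U(\kappa))^{M_U}$, so by Lo\'{s}' theorem it holds in $M_U$ iff $Z:=\{\alpha<\kappa\mid X_\alpha\text{ concentrates on }g(\alpha)\}\in U$. Since replacing $X_\alpha$ for $\alpha\notin Z$ alters neither $X$ (as $\vec{X}$ is changed only on a $U$-null set) nor the $=_U$-class of $f_X$, I may set $X_\alpha=\emptyset$ off $Z$ and thereby assume every $X_\alpha$ concentrates on $g(\alpha)$; call the resulting function $f$, so $f=_Uf_X$. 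For continuity, given $\beta<\kappa$ put $\xi_\beta=\sup_{\alpha<\beta}g(\alpha)$. For each $\alpha<\beta$ we have $g(\alpha)\le\xi_\beta$, and concentrating on a set persists under enlarging it, so $X_\alpha$ concentrates on $\xi_\beta$; hence whether $\alpha\in f(y)$, i.e.\ whether $y\in X_\alpha$, depends only on $y\cap\xi_\beta$. As this holds simultaneously for all $\alpha<\beta$, the set $f(y)\cap\beta$ depends only on $y\cap\xi_\beta$, which is the definition of continuity.

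The two special cases are then direct computations with a convenient representative of $\delta$. For $\delta=[id]_U+1$ I would take $g(\alpha)=\alpha+1$, so that $\xi_\beta=\sup_{\alpha<\beta}(\alpha+1)=\beta$, making $f$ Lipschitz. For $\delta=[id]_U$ I would take $g(\alpha)=\alpha$; then for every $\alpha\le\beta$ the coordinate $X_\alpha$ concentrates on $\alpha\le\beta$, so each membership $y\in X_\alpha$ depends only on $y\cap\beta$, whence $f(y)\cap(\beta+1)$ depends only on $y\cap\beta$, which is exactly the super-Lipschitz condition.

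The step I expect to be the main obstacle is guaranteeing $\xi_\beta<\kappa$. This is immediate when $\kappa$ is regular, since a supremum of fewer than $\kappa$ ordinals below $\kappa$ remains below $\kappa$, and this is the case relevant to the applications; for singular $\kappa$ one must instead first secure a representative $g$ of $\delta$ that is bounded on every proper initial segment of $\kappa$, and it is precisely this choice of representative, rather than the reflection argument, that needs care.
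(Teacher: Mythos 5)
Your proof is correct and follows essentially the same route as the paper's: reflect ``concentrates on $\delta$'' coordinatewise via \L o\'{s} to get $X_\alpha$ concentrating on $g(\alpha)$ on a $U$-large set, fix the representing sequence off that set, and read off continuity with parameters derived from $g$. The only cosmetic difference is that you take $\xi_\beta=\sup_{\alpha<\beta}g(\alpha)$ where the paper implicitly uses a monotone representative and sets $\xi_{\beta}=\delta_\beta$ directly; your explicit remark that this sup stays below $\kappa$ only for regular $\kappa$ is a fair observation, though harmless in the paper's intended settings ($\kappa$ measurable or $\kappa=\omega$).
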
\begin{proof}
         Suppose that $X$ concentrates on $\delta=[\alpha\mapsto \delta_\alpha]_U$, and let $$Z=\{\alpha<\kappa\mid X_\alpha\text{ concetrates on }\delta_\alpha\}\in U$$
         Let $\vec{X}$ be a representing sequence for $X$ so that for every $\alpha$, $X_\alpha$ concentrates on $\delta_\alpha$, and let $f=f^{\vec{X}}_X$. By the definition of $f^{\vec{X}}_X$, $f(A)=\{\alpha<\kappa\mid A\in X_\alpha\}$. Hence $f$ is continuous with parameters $\delta_\alpha$.
         Indeed,  for every $\beta<\kappa$, if  $X\cap \delta_\beta=Y\cap \delta_\beta$, then for every  $\gamma\leq\beta$ $X\cap \delta_\gamma=Y\cap \delta_\gamma$ and since $A_\gamma$ concentrates on $\delta_\gamma$, we have that $\gamma\in f(X)$ iff $\gamma\in f(Y)$. Hence $f(X)\cap \beta+1=f(Y)\cap \beta+1$.  Finaly $f=f^{\vec{X}}_X=_U f_X$.
\end{proof}
The following theorem was proven in \cite{TomNatasha}. We will need to derive some fine corollaries from the proof, so let us reproduce the proof here:
\begin{theorem}
     If $U$ is a $p$-point ultrafilter, and $f:U\rightarrow W$ is monotone, then there is $X^*\in U$ such that $f\restriction(U\restriction X^*)$ is continuous.
\end{theorem}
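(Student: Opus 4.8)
The plan is to establish continuity one output-level at a time and then glue the levels together using the $p$-point property. Fix the uniform $p$-point $U$ on $\kappa$ and the monotone map $f:U\to W$. For each $\alpha<\kappa$ I will produce a set $X_\alpha\in U$ and a modulus $\xi_\alpha<\kappa$ such that $f(A)\cap\alpha$ depends only on $A\cap\xi_\alpha$ for all $A\in U\restriction X_\alpha$; continuity on a single restriction then follows by diagonalizing the $X_\alpha$. The basic leverage is that monotonicity makes each output coordinate decidable by traces: for a fixed $\gamma<\kappa$ the family $\{A\in U\mid \gamma\notin f(A)\}$ is downward closed under $\subseteq$ while $\{A\in U\mid \gamma\in f(A)\}$ is upward closed, and consequently the statement ``$\gamma\in f(B)$ for every $B\in U$ with $B\cap\xi=t$'' is upward closed in the trace $t\subseteq\xi$, while its out-analogue is downward closed. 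Thus at each level the task reduces to finding a bounded $\xi_\alpha$ settling, for every relevant trace, whether each $\gamma<\alpha$ lands in $f(A)$.

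For the level-$\alpha$ step I fix $\alpha$ and, for each coordinate $\gamma<\alpha$, seek a threshold past which membership $\gamma\in f(A)$ is settled by a bounded part of $A$: for $A\in U$ I take the least $\xi<\kappa$ such that every $B\in U$ with $A\cap\xi\subseteq B\subseteq A$ agrees with $A$ on whether $\gamma\in f(\cdot)$. The point where the $p$-point property is indispensable is precisely in bounding this ordinal below $\kappa$. If no bounded part of $A$ settled coordinate $\gamma$, then for cofinally many $\xi$ there would be $B_\xi\in U$ with $A\cap\xi\subseteq B_\xi\subseteq A$ and $\gamma\notin f(B_\xi)$ while $\gamma\in f(A)$; a pseudo-intersection $B^*\in U$ of $\langle B_\xi\mid\xi<\kappa\rangle$ (which exists since $U$ is a $P_{\kappa^+}$-point, or a $p$-point when $\kappa=\omega$) satisfies $B^*=^*A$ and should collapse the ambiguity. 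When $\kappa$ is inaccessible I also use $2^{|\alpha|}<\kappa$, so that only $<\kappa$ traces and $<\kappa$ candidate values $f(A)\cap\alpha\in P(\alpha)$ occur; this keeps the modulus $\xi_\alpha$ below $\kappa$ and lets $\kappa$-completeness close the construction at limit stages.

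Finally I glue: taking $X^*\in U$ with $X^*\subseteq^* X_\alpha$ for all $\alpha<\kappa$ and replacing each $\xi_\alpha$ by $\max(\xi_\alpha,\eta_\alpha)$, where $\eta_\alpha<\kappa$ bounds $X^*\setminus X_\alpha$, should make $f\restriction(U\restriction X^*)$ continuous with modulus $\alpha\mapsto\xi_\alpha$. I expect the main obstacle to be exactly this gluing, because the $p$-point delivers pseudo-intersections only modulo bounded sets ($\subseteq^*$), whereas monotonicity of $f$ controls only genuine inclusions ($\subseteq$): a bounded modification of $A$ can a priori drag new low coordinates into $f(A)$. The intended resolution is bookkeeping: any two $A,B\subseteq X^*$ agreeing below the enlarged modulus share the same bounded error $A\setminus X_\alpha=B\setminus X_\alpha$, so one passes to $A\cap X_\alpha,\,B\cap X_\alpha\in U\restriction X_\alpha$ and absorbs the common bounded part into the trace. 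Making this absorption rigorous, so that a single $X^*$ and a single modulus function work at every level simultaneously, is the delicate point, and it is cleanest to carry it out as one length-$\kappa$ fusion that decides coordinates as it descends rather than as an afterthought to the level-by-level construction.
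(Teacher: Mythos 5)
Your central level-$\alpha$ step rests on a claim that is false as stated. You assert that for each $A\in U$ and each coordinate $\gamma$ there is a bounded $\xi<\kappa$ such that every $B\in U$ with $A\cap\xi\subseteq B\subseteq A$ agrees with $A$ on whether $\gamma\in f(\cdot)$. Take $Z\subseteq\kappa$ unbounded with $Z\notin U$ and let $\gamma\in f(B)$ iff $Z\subseteq B$; this is monotone, and for any $A\supseteq Z$ in $U$ we have $\gamma\in f(A)$, yet $B_\xi=(A\cap\xi)\cup(A\setminus Z)$ lies in $U$, satisfies $A\cap\xi\subseteq B_\xi\subseteq A$, and omits $\gamma$ for every $\xi$. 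So no threshold exists before one has already shrunk to a good $X^*$ (here $X^*\subseteq A\setminus Z$), which is circular for your plan. Your proposed proof of the claim also cannot be repaired: a pseudo-intersection $B^*$ of the $B_\xi$'s is only $\subseteq^*$-below them, and monotonicity of $f$ says nothing about $f(B^*)$ versus $f(B_\xi)$ or $f(A)$ under $\subseteq^*$ or $=^*$ --- this is exactly the $\subseteq$ versus $\subseteq^*$ mismatch you flag yourself, but it kills the step rather than merely complicating it. A second structural problem is quantifier order: even granting per-$A$ thresholds, they depend on $A$, and there are $2^\kappa$ many $A$'s with a given trace, so no sup is available.

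The paper's proof avoids both problems by quantifying over traces rather than over sets. For each $\alpha$ it fixes a bounded window $\rho_\alpha$ and, for each trace $s\subseteq\rho_\alpha$ and each target $\delta<\gamma_\alpha$, picks a \emph{single} witness $Y_{s,\delta}\in U$ with $Y_{s,\delta}\cap\rho_\alpha=s$ and $\delta\notin f(Y_{s,\delta})$, if one exists. There are fewer than $\kappa$ such pairs (inaccessibility of $\kappa$, or finiteness when $\kappa=\omega$), so $X_\alpha=\bigcap_{s,\delta}Y_{s,\delta}\in U$ by completeness. The canonical set $s\cup(X_\alpha\setminus\rho_\alpha)$ then sits genuinely below every chosen witness with trace $s$, so monotonicity --- applied only to honest inclusions --- shows $f(Y)\cap\gamma_\alpha=f\bigl((Y\cap\rho_\alpha)\cup(X_\alpha\setminus\rho_\alpha)\bigr)\cap\gamma_\alpha$ for every $Y$ with $Y\setminus\rho_\alpha\subseteq X_\alpha$; this makes $f(Y)\cap\gamma_\alpha$ a function of the trace $Y\cap\rho_\alpha$ alone. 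The $p$-point property enters only once, at the end, to form the modified diagonal intersection $X^*=\Delta^*_{\alpha<\kappa}X_\alpha$ with $X^*\setminus\rho_\alpha\subseteq X_\alpha$, which performs the gluing you defer to a ``length-$\kappa$ fusion.'' If you reorganize your argument around one witness per (trace, coordinate) pair instead of one threshold per set, the rest of your outline goes through.
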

\begin{proof}
    Suppose that $U$ is any $p$-point $\kappa$-complete ultrafilter, $W$ is any ultrafilter on $\kappa$, and $f:U\rightarrow W$ is monotone. Let $\pi:\kappa\rightarrow \kappa$ represent $\kappa$ in $M_U$. Then by $p$-pointness we may assume that $\pi$ is almost one-to-one. Denote by $\rho_\alpha=\sup(\pi^{-1}[\alpha+1])$ and fix any sequence $\gamma_\alpha<\kappa$. For $\alpha<\kappa$. let us define a sequence of sets $X_\alpha$:
    For every $s\subseteq \rho_\alpha$ and $\delta<\gamma_\alpha$, if there is a set $Y\in U$ such that $Y\cap\rho_\alpha=s$ and $\delta\notin f(Y)$, pick $Y_{s,\delta}=Y$. Otherwise let $Y_{s,\delta}=\kappa$. Define $$X_\alpha=\bigcap_{s,\delta}Y_{s,\delta}\in U$$
    \begin{claim}
     $X_\alpha$ has the property that for any $Y\in U$, if $Y\setminus \rho_\alpha\subseteq X_\alpha\setminus \rho_\alpha$, $f(Y)\cap \gamma_\alpha=f((Y\cap \rho_\alpha)\cup(X_\alpha\setminus \rho_\alpha))\cap \gamma_\alpha$.
    \end{claim}
    \begin{proof}
        Indeed, $Y\subseteq (Y\cap \rho_\alpha)\cup(X_\alpha\setminus \rho_\alpha)$, so by monotonicity, we get $``\subseteq"$. In the other direction, if $\delta\notin f(Y)\cap \gamma\alpha$, let $s=Y\cap\rho_\alpha$, we have that $s\cup X_\alpha\setminus \rho_\alpha\subseteq Y_{s,\delta}$ and since $\delta\notin f(Y_{s,\delta})$,  it follows again by monotonicity that $\delta\notin f(s\cup X_\alpha\setminus\rho_\alpha)$.
    \end{proof}
    By $p$-pointness, let $X^*=\Delta^*_{\alpha<\kappa}X_\alpha$ be the modified diagonal intersection defined as
    $$\Delta^*_{\alpha<\kappa}X_\alpha=\{\nu<\kappa\mid \forall \alpha<\pi(\nu), \ \nu\in X_\alpha\}\in U.$$ Then for every $\alpha<\kappa$ we have $X^*\setminus\rho_\alpha\subseteq X_\alpha$. Now for every $\alpha<\kappa$ and every $Y\subseteq X^*$ we have that $f(Y)\cap \gamma_\alpha=f((Y\cap \rho_\alpha)\cup (X^*\setminus \rho_\alpha))\cap \gamma_\alpha$. Hence $f\restriction U\restriction X^*$ is continuous.
\end{proof}
Suppose that $U$ is normal, in particular $\rho_\alpha=\alpha+1$ and suppose that $\gamma_\alpha=\alpha+2$. Denote by $2^{<\kappa}$ the set of all binary functions $f$ with $\dom(f)\in\kappa$. Define   $\hat{f}(h)$ for $h\in 2^\alpha$. For $\alpha=0$, $\hat{f}(h)=\emptyset$. For $\alpha+1$ successor, define $$\hat{f}(h)=f((\chi^{-1}(h) \cap X^*)\cup X^*\setminus \dom(h))\cap\alpha+2,$$
where $\chi:P(\kappa)\rightarrow 2^\kappa$ is the map sending $X$ to its indicator function $\chi(X)$.
For limit $\alpha$, define $\hat{f}(h)=\bigcup_{\beta<\alpha}\hat{f}(h\restriction \beta+1)$. This defined $\hat{f}:2^{<\kappa}\rightarrow P(\kappa)$. Now define $f^*:P(\kappa)\rightarrow P(\kappa)$ by $f^*(X)=\bigcup_{\alpha<\kappa}\hat{f}(\chi(X)\restriction\alpha)$.

\begin{proposition} Suppose that $U$ is normal, then
    \begin{enumerate}
        \item if $h_1\subseteq h_2$, then $\hat{f}(h_1)\sqsubseteq \hat{f}(h_2)$.
        \item $f^*$ is Lipschitz continuous.
        \item $f\restriction (U\restriction X^*)\subseteq f^*$.
        \item $f^*$ is monotone.
        \item If $f$ is cofinal then $f^*$ is cofinal.
        \item  If $f$ is unbounded then $f^*$ is unbounded.
\item If $f$ is cofinal and $U\neq W$, then $f^*$ is super-Lipschits.        \end{enumerate}
\end{proposition}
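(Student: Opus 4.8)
The plan is to establish the seven clauses of the proposition in roughly the order listed, since later clauses lean on earlier ones. The central object is the tree of approximations $\hat{f}(h)$ for $h\in 2^{<\kappa}$, so I would first pin down clause (1): by induction on $\dom(h_2)$, the limit stage is immediate from the definition $\hat f(h)=\bigcup_{\beta<\alpha}\hat f(h\restriction\beta+1)$, and at a successor stage the key is the monotonicity of $f$ together with the Crucial Claim from the preceding theorem, which says that $f(Y)\cap\gamma_\alpha$ is determined by $Y\cap\rho_\alpha$ once $Y\subseteq X^*$; here $\rho_\alpha=\alpha+1$ and $\gamma_\alpha=\alpha+2$ by normality. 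The point is that extending $h$ changes only the part of the argument above $\dom(h)$, so the values below the current coordinate are frozen, giving the end-extension $\sqsubseteq$. Once (1) holds, clause (2) (Lipschitz continuity of $f^*$) is essentially a restatement: $f^*(X)\cap(\alpha+1)$ is computed from $\hat f(\chi(X)\restriction(\alpha+1))$, which depends only on $\chi(X)\restriction(\alpha+1)$, i.e.\ only on $X\cap\alpha$, so I can take $\xi_\alpha=\alpha$.

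For clause (3), the assertion $f\restriction(U\restriction X^*)\subseteq f^*$ means that for $X\in U$ with $X\subseteq X^*$ we have $f(X)=f^*(X)$; I would verify this levelwise, showing $f(X)\cap(\alpha+2)=\hat f(\chi(X)\restriction(\alpha+1))$ using exactly the Crucial Claim applied to $Y=X$ (so $(Y\cap\rho_\alpha)\cup(X^*\setminus\rho_\alpha)=(\chi^{-1}(h)\cap X^*)\cup X^*\setminus\dom(h)$ with $h=\chi(X)\restriction(\alpha+1)$), and then taking the union over $\alpha$. Clause (4) monotonicity should follow because for $X\subseteq X'$ one has $\chi(X)\restriction\alpha$ and $\chi(X')\restriction\alpha$ related in a way that, after intersecting with $X^*$, respects the monotonicity of $f$ that is baked into each $\hat f$-value; I would prove $\hat f(\chi(X)\restriction\alpha)\subseteq\hat f(\chi(X')\restriction\alpha)$ by the same inductive bookkeeping and then union up. Clauses (5) and (6), preservation of cofinality and unboundedness, I would derive from (3): on the cofinal/unbounded-relevant sets $f$ and $f^*$ agree after restricting to $X^*$, and since $U\restriction X^*$ is Tukey-equivalent (indeed $=_U$-equivalent in the cover language) to $U$, the reduction properties transfer. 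Concretely, for unboundedness I would take an unbounded $\mathcal A\subseteq U$, intersect each member with $X^*$ to land inside $U\restriction X^*$ where $f^*=f$, and use that $f$ is unbounded there.

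The step I expect to be the main obstacle is clause (7): if $f$ is cofinal and $U\neq W$, then $f^*$ is super-Lipschitz, meaning $f^*(A)\cap(\alpha+1)$ depends only on $A\cap\alpha$ (the strengthening from $\xi_\alpha=\alpha$ giving $f^*(A)\cap\alpha$, to getting one extra point $\alpha+1$ from the same data). The delicate point is that a priori $\hat f(h)$ for $h\in 2^{\alpha+1}$ controls $f^*(A)\cap(\alpha+2)$ from $A\cap(\alpha+1)$, so squeezing out the extra coordinate requires an argument that the value of $f^*(A)$ at the coordinate $\alpha$ cannot actually depend on whether $\alpha\in A$. I anticipate this is where $U\neq W$ and cofinality are used essentially: cofinality of $f$ forces $f(A)\in W$ for the relevant $A$, and $U\neq W$ provides a set separating the two ultrafilters that lets me rule out the degenerate dependence, presumably by showing that otherwise $f$ would encode a Rudin-Keisler-type identification forcing $U=W$. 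My plan is to assume toward a contradiction that $f^*(A)\cap(\alpha+1)$ genuinely depends on the bit $\chi(A)(\alpha)$ on a $W$-large set of $\alpha$, and then manufacture from this dependence, via cofinality, a function witnessing $W\leq_{RK}U$ with an identification collapsing to $U=W$, contradicting $U\neq W$. I would treat this clause last and budget the most care for it, while the first six clauses reduce to the tree induction (1) and the Crucial Claim already proved above.
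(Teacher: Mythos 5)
Your treatment of clauses (1)--(6) follows the paper's argument in all essentials: the end-extension property (1) comes from the claim $f(Y)\cap\gamma_\alpha=f((Y\cap\rho_\alpha)\cup(X^*\setminus\rho_\alpha))\cap\gamma_\alpha$ for $Y\subseteq X^*$, (2) and (4) are levelwise consequences of (1) and monotonicity of $f$, (3) is the same claim applied to $Y=X\subseteq X^*$, and (5), (6) transfer cofinality and unboundedness through $U\restriction X^*$ exactly as the paper does. One small slip in (2): $\hat f(\chi(X)\restriction(\alpha+1))$ depends on $X\cap(\alpha+1)$, not only on $X\cap\alpha$; what you actually obtain at this stage is that $f^*(X)\cap\alpha$ is determined by $X\cap\alpha$, which is Lipschitz --- if your stronger claim held already, clause (7) would be vacuous.

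Clause (7) is where the proposal has a genuine gap, on three counts. First, your contradiction hypothesis --- that $f^*(A)\cap(\alpha+1)$ depends on the bit $\chi(A)(\alpha)$ on a $W$-large set of $\alpha$ --- is not the negation of super-Lipschitzness: the property must hold at \emph{every} $\alpha$, and it can fail at a single limit $\alpha$ without producing any $W$-large dependence to feed an RK argument. Second, the step ``manufacture a function witnessing $W\leq_{RK}U$ collapsing to $U=W$'' is not worked out, and it is unclear what map you would build from a pointwise dependence of $f^*(A)$ at $\alpha$ on $\chi(A)(\alpha)$. Third, and most importantly, you try to prove super-Lipschitzness for the $f^*$ already fixed, whereas the statement requires exercising the freedom in the construction: the paper picks $X_0\in U$ with $X_0^c\in W$ (this is precisely where $U\neq W$ enters), uses cofinality of $f$ to find $Y$ with $f(Y)\subseteq X_0^c$, and shrinks $X^*$ so that $X^*\subseteq X_0\cap Y$, whence every value satisfies $f^*(X)\subseteq f(X^*)\subseteq X_0^c$. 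The extra coordinate is then handled directly, with no contradiction: if $\alpha\in X_0$ then $\alpha\notin f^*(X)$ for any $X$; if $\alpha\notin X_0$ then $\alpha\notin X^*$, so the bit of $X$ at $\alpha$ is erased upon intersecting with $X^*$ and $f^*(X)\cap(\alpha+1)=f^*(Y)\cap(\alpha+1)$ follows from (2). Without shrinking $X^*$ there is no reason the original $f^*$ should be super-Lipschitz at a limit $\alpha\in X^*$, so the plan as stated attacks a statement that may simply be false for the unrefined $X^*$.
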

\begin{proof}
(1) We may assume that $\dom(h_1),\dom(h_2)$ are successor cardinals as the limit case follows from the successor case and the definition of $\hat{f}$. Suppose that $h_1\subseteq h_2$ have successor domains $\alpha_1\leq\alpha_2$, then $\chi(h_2)\cap X^*\cap \alpha_1=\chi(h_1)\cap X^*$, we get that $\hat{f}(h_1)=f((\chi(h_1)\cap X^*)\cup 
 X^*\setminus \alpha_1)\cap \alpha_1+1$. Also, taking $Y=(\chi(h_2)\cap X^*)\cup 
 X^*\setminus \alpha_2$, we have that $Y\subseteq X^*$. By the property of $f$, we have that $f(Y)\cap \alpha_1+1=f((Y\cap \alpha_1)\cup X^*\setminus \alpha_1)\cap \alpha_1+1$. Hence
 \begin{align*}
    \hat{f}(h_2)\cap \alpha_1+1&=f((\chi(h_2)\cap X^*)\cup 
 X^*\setminus \alpha_2)\cap \alpha_1+1\\&=f((Y\cap \alpha_1)\cup X^*\setminus \alpha_1)\cap \alpha_1+1\\&=f((\chi(h_2)\cap X^*\cap \alpha_1)\cup (X^*\setminus \alpha_1))\cap \alpha_1+1\\&=f((\chi(h_1)\cap X^*)\cup (X^*\setminus \alpha_1))\cap \alpha_1+1=\hat{f}(h_1).
\end{align*}
 Thus $\hat{f}(h_1)\sqsubseteq \hat{f}(h_2)$. 

 (2) follows from (1). Indeed,  since $f^*(X)$ is the $\sqsubseteq$-increasing union of $\hat{f}(\chi(X)\restriction\alpha)$, we see that $$f^*(X)\cap \alpha= \hat{f}(\chi(X)\restriction \alpha)\cap \alpha,$$ and if $X\cap \alpha=Y\cap \alpha$ then $\chi(X)\restriction\alpha=\chi(Y)\restriction \alpha$.  

 (3) If $Y\subseteq X^*$ and $Y\in U$, then for every $\alpha<\kappa$, $Y\cap X^*\cap \alpha+1=Y\cap \alpha+1$ and $$f(Y)\cap \alpha+1=f((Y\cap \alpha+1)\cup (X^*\setminus \alpha+1))\cap \alpha+1=\hat{f}(\chi(Y)\restriction \alpha+1)=f^*(Y)\cap \alpha+1.$$ Since this is true for every $\alpha$, $f^*(Y)=f(Y)$.

 (4) For every $X\subseteq \kappa$ and for every  $\alpha<\kappa$, $$\hat{f}(\chi(X)\restriction \alpha+1)=f(X\cap X^*\cap \alpha+1)\cup X^*\setminus \alpha+1)\cap\alpha+1.$$ Since $f$ is monotone, we conclude that if $X_1\subseteq X_2$, then for every $\alpha$, $f^*(X_1)\cap \alpha+1\subseteq f^*(X_2)\cap \alpha+1$. resulting in $f^*(X_1)\subseteq f^*(X_2)$. 
 
 (5) If $f$ is cofinal, then $f''U\restriction X^*$ is also cofinal (since $U\restriction X^*$ is cofinal in $U$). By $(3)$, $f^*{}''U\restriction X^*$ is cofinal and therefore $f^*$ has a cofinal image. This together with $(4)$ suffices for $f^*$ to be cofinal.
 
 (6) Suppose that $f$ is unbounded. To see that $f^*$ is unbounded, suppose that $\mathcal{A}\subseteq U$ is unbounded, then also $\mathcal{A}\restriction X^*=\{A\cap X^*\mid X\in \mathcal{A}\}$ is unbounded. Since $f$ is unbounded then $f''\mathcal{A}\restriction X^*$ in unbounded. Now it is not hard to see that for every $Y\subseteq \kappa$, $f^*(Y)=f^*(Y\cap X^*)=f(Y\cap X^*)$ and therefore $f^*{}''\mathcal{A}$ is also unbounded.

(7) First note that by the proof of $(2)$, for $\alpha$ successor we get even more, that $f^*(X)\cap \alpha+1=\hat{f}(\chi(X)\restriction\alpha)$, and therefore if $X\cap\alpha=Y\cap \alpha$ then $f^*(X)\cap \alpha+1=f^*(X)\cap \alpha+1$. Now let $X_0\in U$ be such that $X_0^c\in W$. Since $f$ is cofinal, there is $Y$ such that $f(Y)\subseteq X_0^c$ and we may take our $X^*$ so that $X^*\subseteq X_0\cap Y$ and in particular $f(X^*)\subseteq X_0^c$. Note that for every $h$, $\hat{f}(h)\subseteq f(X^*)$ and therefore for every $X$, $f^*(X)\subseteq f(X^*)$. Let $\alpha<\kappa$ and suppose that $X,Y$ are such that $X\cap \alpha=Y\cap \alpha$. By $(2)$, $f^*(X)\cap \alpha=f^*(Y)\cap \alpha$. Let us split into cases. If $\alpha\in X_0$, then $\alpha\notin f^*(X)$ and $\alpha\notin f^*(Y)$. Hence $f^*(X)\cap\alpha+1=f^*(Y)\cap\alpha+1$. If $\alpha\notin X_0$, then $\alpha\notin X^*$, hence $\alpha\notin X\cap X^*$ and $\alpha\notin Y\cap X^*$. It follows that $$X\cap X^*\cap\alpha+1=X\cap X^*\cap\alpha=Y\cap X^*\cap \alpha= Y\cap X^*\cap\alpha+1$$
We conclude that $$f^*(X)\cap \alpha+1=f^*(X\cap X^*)\cap\alpha+1=f^*(Y\cap X^*)\cap\alpha+1=f^*(Y)\cap \alpha+1.$$
\end{proof}
    


\begin{corollary}
    If $W\leq_T U$, and $U$ is normal and $W\neq U$, then there is an upwards closed cover $X\in M_W$ of $j_U''W$ such that $X$ concentrates on $[id]_U$. 
\end{corollary}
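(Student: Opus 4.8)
The plan is to chain together the machinery already established in this section. The hypothesis $W \leq_T U$ gives us, via Schmidt duality, a monotone cofinal map $g : U \to W$, and dually a Tukey (unbounded) map; but what we actually want to exploit is a monotone \emph{cofinal} map so that we can run the continuity-reduction theorem. Since $U$ is normal (hence a $p$-point), the reproduced theorem yields $X^* \in U$ on which $g \restriction (U \restriction X^*)$ is continuous. More importantly, the subsequent Proposition constructs from $g$ the canonical function $f^* : P(\kappa) \to P(\kappa)$, and items (4), (5), (6) of that proposition tell us $f^*$ is monotone, cofinal, and unbounded, while item (2) says $f^*$ is Lipschitz continuous. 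The crucial upgrade is item (7): because $U \neq W$ and $f^*$ arises from a cofinal map, $f^*$ is in fact \emph{super-Lipschitz}.

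The next step is to feed this $f^*$ into the translation between functions and covers. Since $f^*$ is unbounded and monotone, the Lemma characterizing thin covers shows $X_{f^*}$ is a thin cover of $j_U'' W$, and since $f^*$ is monotone, the Proposition on properties of $X_f$ gives that $X_{f^*}$ is upwards closed. This already produces an upwards closed cover. The final ingredient is the Proposition relating continuity of $f$ to concentration of $X_f$: there I would invoke the super-Lipschitz clause, which states precisely that if $f$ is super-Lipschitz then $X_f$ concentrates on $[\mathrm{id}]_U$. Applying this to $f^*$ yields that $X_{f^*}$ concentrates on $[\mathrm{id}]_U$, completing all the required properties.

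So the proof I would write is short: assemble $g$ from $W \leq_T U$ and Schmidt duality, build $f^*$ and cite (2),(4),(6),(7) of the Proposition to get that $f^*$ is super-Lipschitz, monotone and unbounded, then cite the Lemma (thin cover) and the two structural Propositions to conclude $X := X_{f^*}$ is an upwards closed thin cover of $j_U'' W$ concentrating on $[\mathrm{id}]_U$. The statement asks only for an \emph{upwards closed cover} (not necessarily thin), so even the thinness is a bonus; the essential content is upward-closedness plus concentration on $[\mathrm{id}]_U$.

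The main obstacle I anticipate is purely bookkeeping about which direction of the duality to use and which map must be cofinal versus merely unbounded. The super-Lipschitz conclusion in item (7) requires $f$ to be \emph{cofinal}, so one must start from a cofinal map $g : U \to W$ — this is exactly what Schmidt duality delivers from $W \leq_T U$ (a Tukey map $W \to U$ dualizes to a cofinal map $U \to W$), and for filters one may assume it monotone, which is the hypothesis the reproduced continuity theorem needs. I would be careful to state that we pass to a monotone cofinal $g$, not to the Tukey map directly, since the whole $f^*$-construction and its item (7) are predicated on cofinality of the input. Once that is pinned down, everything else is a direct citation.
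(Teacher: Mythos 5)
Your proposal is exactly the paper's (implicit) argument: the corollary carries no proof in the paper precisely because it is the composition of Schmidt duality, the continuity theorem for $p$-points, items (2),(4),(5),(7) of the $f^*$-proposition, and the translations ``monotone $\Rightarrow$ upwards closed'' and ``super-Lipschitz $\Rightarrow$ concentrates on $[id]$''. One bookkeeping point you flagged but did not fully resolve: since $f^*$ extends a cofinal map $U\to W$, the induced set $X_{f^*}$ lives in $M_W$, covers $j_W''U$, and concentrates on $[id]_W$ --- this is what the statement's ``$X\in M_W$'' forces, and the ``$j_U''W$'' and ``$[id]_U$'' appearing there are evidently slips in the source; also your appeal to item (6) for thinness requires $f$ to be unbounded rather than merely cofinal, but as you yourself note, thinness is not part of the conclusion, so this costs nothing.
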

The above is closely related to the Ketonen order. The reason is that
the Ketonen ordered is defined by $U<_{\mathbb{K}}W$ if there is a cover $X$ such that $X$ is an ultrafilter concentrating on $[id]_U$.  

Since we lack the characterization of cofinal maps in the ultrapower, there are some properties of the cover $X$ we are missing. These properties might be useful in an attempt to prove that normal ultrafilters are Tukey minimal.
\begin{question}
    Is there a property of the cover $X_f$ that is equivalent to $f$ being cofinal? How about monotone continuous and cofinal?
\end{question}
    
\section{On Cohesive ultrafilters}\label{Sec: Cohesivness and Kanamori's theorem 1}
Recall that an ultrafilter $U$ is $(\lambda,\mu)$-cohesive (or alternatively, $\text{Gal}(U,\mu,\lambda)$ holds) if for any $\mathcal{A}\in[U]^\lambda$ there is $\mathcal{B}\in[\mathcal{A}]^\mu$ such that $\bigcap\mathcal{B}\in U$. 
The results of the previous section provide an elegant characterization of the cohesiveness in terms of the ultrapower:
\begin{theorem}\label{thm:Characterization of Galvin}
    Let $U$ be an ultrafilter over $\kappa\geq\omega$, and $\mu\leq \lambda$ be any cardinals. Then the following are equivalent:
    \begin{enumerate}
        \item $U$ is $(\lambda,\mu)$-cohesive.
        \item Any cover $X\in M_U$ of $j_U''\lambda\subseteq X$ contains a set of the form $j_U(Y)$ for some  $Y\in[\lambda]^\mu$.
    \end{enumerate}
\end{theorem}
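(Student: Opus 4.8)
The plan is to run the whole argument through the dictionary $f\mapsto X_f$, $X\mapsto f_X$ developed above, turning the combinatorial definition of cohesiveness into a statement about which sets of the form $j_U(Y)$ lie below a given cover. The single computation powering both directions is: for $f:\lambda\to P(\kappa)$ and $Y\subseteq\lambda$,
\[
j_U(Y)\subseteq X_f \quad\Longleftrightarrow\quad \bigcap_{\alpha\in Y}f(\alpha)\in U .
\]
Indeed, unwinding the definition of $X_f$, the inclusion $j_U(Y)\subseteq X_f$ says exactly that $[id]_U\in j_U(f)(y)$ for every $y\in j_U(Y)$, i.e. $[id]_U\in\bigcap_{y\in j_U(Y)}j_U(f)(y)$; by elementarity applied to the first-order assertion ``$Z$ is the intersection of $f$ over $Y$'' this intersection equals $j_U\big(\bigcap_{\alpha\in Y}f(\alpha)\big)$, and $[id]_U\in j_U(Z)$ holds iff $Z\in U$. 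I would record this as a short lemma before the theorem.

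For $(2)\Rightarrow(1)$, take $\mathcal A\in[U]^\lambda$ and fix an \emph{injective} enumeration $\mathcal A=\{A_\alpha\mid\alpha<\lambda\}$. Setting $f(\alpha)=A_\alpha$ gives $f:\lambda\to U$, so by Corollary~\ref{cor: cover vs U image} the set $X_f$ covers $j_U''\lambda$. Applying $(2)$ to $X_f$ yields $Y\in[\lambda]^\mu$ with $j_U(Y)\subseteq X_f$, hence $\bigcap_{\alpha\in Y}A_\alpha\in U$ by the displayed equivalence. Injectivity makes $\mathcal B=\{A_\alpha\mid\alpha\in Y\}$ of size exactly $\mu$ with $\bigcap\mathcal B\in U$, witnessing $(\lambda,\mu)$-cohesiveness.

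For $(1)\Rightarrow(2)$, let $X\in M_U$ cover $j_U''\lambda$. Replacing $X$ by $X\cap j_U(\lambda)$ (still a cover, and any $j_U(Y)\subseteq j_U(\lambda)$ sitting below it sits below $X$), I may assume $X\subseteq j_U(\lambda)$ and set $f:=f_X:\lambda\to P(\kappa)$, so $X=X_f$ and, by Corollary~\ref{cor: cover vs U image}, $f:\lambda\to U$; I must produce $Y\in[\lambda]^\mu$ with $\bigcap_{\alpha\in Y}f(\alpha)\in U$. If some value $A\in U$ is attained by $f$ on an index set of size $\ge\mu$, any $\mu$-sized subset $Y$ of that fiber works, as $\bigcap_{\alpha\in Y}f(\alpha)=A\in U$. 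Otherwise every fiber has size $<\mu$, and (for $\lambda$ regular) the image $\{f(\alpha)\mid\alpha<\lambda\}$ must then have size $\lambda$, since a partition of a regular $\lambda$ into fewer than $\lambda$ pieces of size $<\lambda$ cannot exhaust $\lambda$; choosing $I\in[\lambda]^\lambda$ on which $f$ is injective, the set $\{f(\alpha)\mid\alpha\in I\}$ lies in $[U]^\lambda$, so cohesiveness yields $\mathcal B$ of size $\mu$ with $\bigcap\mathcal B\in U$, and $Y=\{\alpha\in I\mid f(\alpha)\in\mathcal B\}$ is as required.

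The main obstacle — the only place the argument is not purely formal — is the bookkeeping between the set-valued definition of cohesiveness (families of \emph{distinct} sets) and the indexed families $f_X:\lambda\to U$ coming from covers, where repetitions are allowed. For regular $\lambda$ this is harmless by the pigeonhole step just used. For singular $\lambda$ one must be more careful: a cover whose associated $f_X$ has all fibers small but uses cofinally many distinct values need not lie above any $j_U(Y)$ with $|Y|=\lambda$, so the literal biconditional requires either restricting to the regular $\lambda$ that the point spectrum actually concerns or supplementing the singular case with a separate argument. Away from this point, both implications follow immediately from the displayed equivalence together with Corollary~\ref{cor: cover vs U image}.
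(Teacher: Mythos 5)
Your proof is correct and follows essentially the same route as the paper: both directions run through the dictionary between covers $X$ and functions $f:\lambda\to U$, with the displayed equivalence $j_U(Y)\subseteq X_f\iff\bigcap_{\alpha\in Y}f(\alpha)\in U$ being exactly the \L o\'s computation the paper performs inline in each direction. Your extra care about repetitions (the pigeonhole step for regular $\lambda$ and the caveat for singular $\lambda$) addresses a point the paper silently glosses over when it applies the set-based definition of cohesiveness to the indexed family $\langle A_i\mid i<\lambda\rangle$, and is a reasonable refinement rather than a different argument.
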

\begin{proof}
    Assume that $U$ is $(\lambda,\mu)$-cohesive and let $X=[\beta\mapsto X_\beta]_U$ be any cover as in $(2)$. For every $i<\lambda$ let $A_i=\{\beta<\kappa\mid i\in X_\beta\}$, then by Lo\'{s} Theorem, $A_i\in U$.  By $(\lambda,\mu)$-cohesiveness, there is $Y\in[ \lambda]^\mu$ such that $A^*=\bigcap_{i\in Y}A_i\in U$. It follows that for every $\beta\in A^*$, $Y\subseteq X_\beta$ and therefore (again by Lo\'{s}) $j_U(Y)\subseteq X$.

    In the other direction, suppose that $(2)$ holds and let us prove that $U$ is $(\lambda,\mu)$-cohesive. Let $\l A_i\mid i<\lambda\r\subseteq U$. Working in $M_U$, consider $$j_U(\l A_i\mid i<\lambda\r)=\l A'_i\mid i<j_U(\lambda)\r$$ and define $X=\{\beta<j_U(\lambda)\mid [id]_U\in A'_\beta\}\in M_U$. Then, since $A'_{j_U(\beta)}=j_U(A_\beta)$, $[id]_U\in A'_{j_U(\beta)}$ and $j_U(\beta)\in X$. It follows that $X$ covers $j_U''\lambda$. By $(2)$ there is $Y\in [\lambda]^\mu$ such that $j_U(Y)\subseteq X$ and therefore $\bigcap_{i\in Y}A_i\in U$. To see this, note that  $$j_U(\bigcap_{i\in Y}A_i)=\bigcap_{i\in j_U(Y)}A'_i$$ and since $j_U(Y)\subseteq X$, $[id]_U\in A'_i$ for all $i\in j_U(Y)$ hence $[id]_U\in j_U(\bigcap_{i\in Y}A_i)$
\end{proof}
\begin{remark}
     \begin{enumerate}
         \item Requiring that the cover $X$ contains a set of the form $j_U(Y)$ where $Y$ has size $\mu$ is equivalent to representing $X=[\beta\mapsto X_\beta]_U$ and requiring the existence of $A\in U$ such that $\bigcap_{\beta\in A}X_\beta$ has size $\mu$.
         \item Condition $(2)$ above can be replaced with the following: Any $X\in M_U$ such that (in $V$) $|X\cap j_U''\lambda|=\lambda$ contains  a set of the form $j_U(Y)$ for some $Y\in[\lambda]^\mu$.
         The reason is that the existence of a bijection $\varphi:\lambda\rightarrow j_U^{-1}[X]$ enables to convert the set $X$ to a full cover of $j_U''\lambda$.
     \end{enumerate}
\end{remark}
Let us now reproduce the characterization of Tukey-top ultrafilters using Theorem  \ref{thm:Characterization of Galvin} and Theorem \ref{thm: Tukey-order characterization}
\begin{corollary}
    Let $U$ be an ultrafilter over $\kappa\geq\omega$, and $\mu\leq\lambda$ are such that $\lambda^{<\mu}=\lambda$. The following are equivalent:
    \begin{enumerate}
        \item For any $\mu$-directed set $\mathbb{P}$, such that $|\mathbb{P}|\leq\lambda$, $\mathbb{P}\leq_T (U,\supseteq)$.
        \item $([\lambda]^{<\mu},\subseteq)\leq_T (U,\supseteq)$.
        \item $U$ is not $(\lambda,\mu)$-cohesive.
    \end{enumerate}
\end{corollary}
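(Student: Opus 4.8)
The plan is to treat $(2)\Leftrightarrow(3)$ as the combinatorial heart (a higher-cardinal analogue of Isbell's criterion) and to obtain $(1)\Leftrightarrow(2)$ from the fact that $([\lambda]^{<\mu},\subseteq)$ is the Tukey-largest $\mu$-directed poset of size $\le\lambda$. Throughout I would assume $\mu$ regular, so that $[\lambda]^{<\mu}$ is genuinely $\mu$-directed and a family $\mathcal S\subseteq[\lambda]^{<\mu}$ is unbounded exactly when $|\bigcup\mathcal S|\ge\mu$; by regularity of $\mu$ together with $|s|<\mu$ for $s\in\mathcal S$, this forces $|\mathcal S|\ge\mu$. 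I would also use freely that, by Theorem \ref{thm: Tukey-order characterization} and the thin-cover/unbounded-map correspondence, $\mathbb P\le_T U$ is equivalent to the existence of an unbounded $f:\mathbb P\to U$ (equivalently, by Schmidt's duality, a monotone cofinal map $U\to\mathbb P$); the cohesiveness side can alternatively be read off Theorem \ref{thm:Characterization of Galvin}, but the direct combinatorial form is more convenient here.

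For $(2)\Rightarrow(3)$ I would start from an unbounded $f:[\lambda]^{<\mu}\to U$ and put $A_\alpha:=f(\{\alpha\})\in U$. For any $Y\in[\lambda]^\mu$ the family $\{\{\alpha\}:\alpha\in Y\}$ is unbounded (its union is $Y$), so $\{A_\alpha:\alpha\in Y\}$ is unbounded in $U$, i.e. $\bigcap_{\alpha\in Y}A_\alpha\notin U$; hence $U$ is not $(\lambda,\mu)$-cohesive, provided $\{A_\alpha:\alpha<\lambda\}$ genuinely has size $\lambda$. This last point is where $\lambda^{<\mu}=\lambda$ enters: each fiber $\{\alpha:A_\alpha=B\}$ must have size $<\mu$ (a size-$\mu$ sub-fiber would give $\bigcap=B\in U$), and a short cardinal computation using $\lambda^{<\mu}=\lambda$ (which also forces $\lambda$ regular in the case $\mu=\lambda$, by König) shows there are $\lambda$ distinct values. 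For $(3)\Rightarrow(2)$, the tempting map $f(s)=\bigcap_{\alpha\in s}A_\alpha$ is wrong in general, since $<\mu$-intersections of members of a non-$\mu$-complete $U$ need not lie in $U$. Instead I would fix a bijection $e:[\lambda]^{<\mu}\to\lambda$ (using $\lambda^{<\mu}=\lambda$) and set $f(s):=A_{e(s)}$, which manifestly maps into $U$. If $\mathcal S$ is unbounded then $|\mathcal S|\ge\mu$, so $e''\mathcal S\in[\lambda]^{\ge\mu}$; choosing $Y\subseteq e''\mathcal S$ of size $\mu$ gives $\bigcap_{s\in\mathcal S}f(s)\subseteq\bigcap_{\beta\in Y}A_\beta\notin U$, whence $\bigcap_{s\in\mathcal S}f(s)\notin U$. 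Thus $f$ is unbounded and $[\lambda]^{<\mu}\le_T U$.

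Finally, $(1)\Rightarrow(2)$ is immediate, as $[\lambda]^{<\mu}$ is itself a $\mu$-directed poset of size $\lambda^{<\mu}=\lambda$. For $(2)\Rightarrow(1)$ I would show $\mathbb P\le_T[\lambda]^{<\mu}$ for every $\mu$-directed $\mathbb P$ with $|\mathbb P|\le\lambda$: enumerate $\mathbb P=\{p_i:i<\lambda\}$ and let $g(s)$ be an upper bound of $\{p_i:i\in s\}$ in $\mathbb P$ (which exists since $|s|<\mu$ and $\mathbb P$ is $\mu$-directed); this $g:[\lambda]^{<\mu}\to\mathbb P$ is monotone with cofinal range, hence cofinal, so by Schmidt's duality $\mathbb P\le_T[\lambda]^{<\mu}$, and composing with $(2)$ yields $\mathbb P\le_T U$. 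The one genuinely delicate point, and the place I expect to have to be careful, is $(3)\Rightarrow(2)$: one must resist the intersection map (which silently assumes $\mu$-completeness) and instead re-index via $e$, which works precisely because for regular $\mu$ unboundedness in $[\lambda]^{<\mu}$ is a statement about the cardinality of $\mathcal S$ rather than about the poset structure, so that no intersection of fewer than $\mu$ members of $U$ is ever required to remain in $U$.
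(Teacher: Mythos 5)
Your proof is correct, but it takes a genuinely different route from the paper's. The paper proves the cycle $(1)\Rightarrow(2)\Rightarrow(3)\Rightarrow(1)$ entirely through the ultrapower machinery of Section~\ref{Section: Cover Characterizaiton}: for $(2)\Rightarrow(3)$ it takes a thin cover $X\in M_U$ of $j_U''[\lambda]^{<\mu}$ and extracts from it the cover $X'=\{y\mid \{y\}\in X\}$ of $j_U''\lambda$, checking that $X'$ witnesses failure of cohesiveness via Theorem~\ref{thm:Characterization of Galvin}; for $(3)\Rightarrow(1)$ it pulls a thin cover of $j_U''\lambda$ back along $j_U(f)$ for an injection $f:\mathbb{P}\rightarrow\lambda$. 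You instead work entirely in $V$ with unbounded maps, proving $(2)\Leftrightarrow(3)$ by the classical Isbell-style argument (re-indexing a witnessing family via a bijection $e:[\lambda]^{<\mu}\rightarrow\lambda$ rather than taking intersections, which is exactly the right precaution) and $(1)\Leftrightarrow(2)$ from the Tukey-maximality of $[\lambda]^{<\mu}$ among $\mu$-directed posets of size $\leq\lambda$. The paper's route is chosen to showcase the new covering characterization (the corollary is explicitly a ``reproduction'' of a known result via Theorems~\ref{thm: Tukey-order characterization} and~\ref{thm:Characterization of Galvin}); yours is more elementary and self-contained but does not exercise the ultrapower translation at all. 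Two small points: your standing assumption that $\mu$ is regular is harmless and in fact implicitly needed by the paper as well, since $[\lambda]^{<\mu}$ is $\mu$-directed only for regular $\mu$, so $(1)\Rightarrow(2)$ requires it in either treatment; and in $(2)\Rightarrow(1)$ your map $g$ need not be monotone for an arbitrary choice of upper bounds, but this does not matter --- $g$ is cofinal directly, because any cofinal $\mathcal{C}\subseteq[\lambda]^{<\mu}$ contains, for each $i<\lambda$, some $t\ni i$, whence $g(t)\geq p_i$.
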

\begin{proof}
    Clearly, $(1)$ implies $(2)$. To see that $(2)$ implies $(3)$, by Theorem \ref{thm: Tukey-order characterization} there is a thin cover $X$ of $j_U''[\lambda]^{<\mu}$. Define $X'=\{Y\mid \{Y\}\in X\}\in M_U$ Then $j_U''\lambda\subseteq X'$. Note that if $I\in [\lambda]^\mu$, then $I^*=\{\{i\}\mid i\in I\}$ is unbounded in $[\lambda]^{<\mu}$ and therefore there $j_U(I^*)\not\subseteq X$. therefore, there is $i\in j_U(I)$ such that $\{i\}\notin X$, namely, $i\notin X'$. So $X'$ is a thin cover of $j_U''\lambda$ and by Theorem \ref{thm:Characterization of Galvin}, $U$ is not $(\lambda,\mu)$-cohesive. To see that $(3)$ implies $(1)$, Let $\mathbb{P}$ be any $\mu$-directed set of size $\leq\lambda$ and let $f:\mathbb{P}\rightarrow \lambda$ be injective. By Theorem \ref{thm:Characterization of Galvin} find a thin cover $X$ of $j_U''\lambda$. Then $X'=j_U^{-1}(f)[X]$ is a thin cover of  $j_U''\mathbb{P}$.
\end{proof}

By Galvin's Theorem \cite{MR0369081}, every normal (or even $p$-point) ultrafilter is $(\kappa^+,\kappa)$-cohesive. Hence we get the following:
\begin{corollary}
    If $U$ is normal (or even a $p$-point), then whenever $X\in M_U$ covers $j_U''\kappa^+$, there is $Y\in [\kappa^+]^\kappa$ such that $j_U(Y)\subseteq X$.
\end{corollary}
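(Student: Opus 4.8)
The plan is to derive this statement as an immediate application of Galvin's Theorem together with the ultrapower characterization of cohesiveness just established in Theorem \ref{thm:Characterization of Galvin}. The key observation is that the desired conclusion about covers is precisely the content of $(\lambda,\mu)$-cohesiveness read through that theorem, so the work reduces to supplying the correct values of $\lambda$ and $\mu$ and invoking the known combinatorial fact about $p$-points.

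First I would recall Galvin's Theorem in the form relevant here: every normal ultrafilter $U$ on $\kappa$ (and more generally every $p$-point) is $(\kappa^+,\kappa)$-cohesive; that is, $\mathrm{Gal}(U,\kappa,\kappa^+)$ holds. This is exactly the hypothesis needed, with $\lambda = \kappa^+$ and $\mu = \kappa$, to feed into Theorem \ref{thm:Characterization of Galvin}. I would then simply apply the equivalence $(1) \Leftrightarrow (2)$ of that theorem: since $U$ is $(\kappa^+,\kappa)$-cohesive, statement $(2)$ tells us that any cover $X \in M_U$ with $j_U''\kappa^+ \subseteq X$ must contain a set of the form $j_U(Y)$ for some $Y \in [\kappa^+]^\kappa$. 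This is verbatim the conclusion of the corollary.

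Concretely, the proof is one line: by Galvin's Theorem $U$ is $(\kappa^+,\kappa)$-cohesive, so by Theorem \ref{thm:Characterization of Galvin} applied with $\lambda=\kappa^+$ and $\mu=\kappa$, whenever $X\in M_U$ covers $j_U''\kappa^+$ there is $Y\in[\kappa^+]^\kappa$ with $j_U(Y)\subseteq X$. The only thing to check is that $\mu \le \lambda$, i.e.\ $\kappa \le \kappa^+$, which holds trivially, so the hypotheses of Theorem \ref{thm:Characterization of Galvin} are satisfied.

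There is essentially no obstacle here, since all the substantive work has already been done: the combinatorial input (Galvin's Theorem) is cited as known, and the translation into the language of covers in $M_U$ is exactly Theorem \ref{thm:Characterization of Galvin}. The only mild subtlety worth a remark is that one should verify the cover hypothesis $j_U''\kappa^+ \subseteq X$ matches the formulation in Theorem \ref{thm:Characterization of Galvin}, where it appears as ``any cover $X\in M_U$ of $j_U''\lambda\subseteq X$''; these are literally the same condition with $\lambda=\kappa^+$, so no reconciliation is needed. The corollary is therefore a direct specialization and the proof should be stated as such.
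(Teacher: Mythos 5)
Your proof is correct and is exactly the argument the paper intends: the corollary is stated immediately after the observation that Galvin's Theorem makes every normal (or $p$-point) ultrafilter $(\kappa^+,\kappa)$-cohesive, and the conclusion then follows by specializing Theorem \ref{thm:Characterization of Galvin} to $\lambda=\kappa^+$, $\mu=\kappa$. No differences to report.
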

An improvement of Galvin's theorem can be found in \cite{SatInCan} to iterated sums of $p$-points. So the above corollary holds in this generality as well.


Kanamori proved that if $\{U\}\cup\{W_n\mid n<\omega\r\}$ is  a set of $(\omega_1,\omega_1)$-cohesive ultrafilters on $\omega$, then $\sum_U W_n$ is $(\omega_1,\omega)$-cohesive. He then says that this does not generalize to $\kappa>\omega$. We will prove that to some extent his result do generalize to $\kappa>\omega$, and that on measurable cardinals we can even say a bit more. We say that a directed set $\mathbb{P}$ is $(\lambda,\mu)$-cohesive, if for every $\l p_\alpha\mid\alpha<\lambda\r\subseteq\mathbb{P}$ there is $I\in[\lambda]^\mu$ such that $\{p_i\mid i\in I\}$ is bounded. It is not hard to see that cohesiveness is an invariant of the Tukey order: 
\begin{fact}
    If $\mathbb{P}\leq_T\mathbb{Q}$ and $\mathbb{Q}$ is $(\lambda,\mu)$-cohesive, then $\mathbb{P}$ is $(\lambda,\mu)$-cohesive. So if $\mathbb{P}\equiv_T\mathbb{Q}$, then $\mathbb{Q}$ is $(\lambda,\mu)$-cohesive if and only if $\mathbb{P}$ is $(\lambda,\mu)$-cohesive.
\end{fact}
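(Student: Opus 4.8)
The plan is to unwind the definition of the Tukey order directly via a witnessing Tukey (unbounded) map and transport boundedness backwards along it. First I would fix a Tukey map $f:\mathbb{P}\rightarrow\mathbb{Q}$ witnessing $\mathbb{P}\leq_T\mathbb{Q}$, recalling that by definition $f$ sends every unbounded subset of $\mathbb{P}$ to an unbounded subset of $\mathbb{Q}$. Taking the contrapositive, the only property of $f$ I will actually use is: whenever $f''\mathcal{A}$ is bounded in $\mathbb{Q}$, the set $\mathcal{A}$ is bounded in $\mathbb{P}$.

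Next, given an arbitrary sequence $\langle p_\alpha\mid\alpha<\lambda\rangle\subseteq\mathbb{P}$, I would push it forward to the sequence $\langle f(p_\alpha)\mid\alpha<\lambda\rangle\subseteq\mathbb{Q}$ and apply the $(\lambda,\mu)$-cohesiveness of $\mathbb{Q}$ to this pushed-forward sequence. This yields a set $I\in[\lambda]^\mu$ such that $\{f(p_i)\mid i\in I\}$ is bounded in $\mathbb{Q}$. Since $\{f(p_i)\mid i\in I\}=f''\{p_i\mid i\in I\}$, the contrapositive of the Tukey property immediately gives that $\{p_i\mid i\in I\}$ is bounded in $\mathbb{P}$, with the same index set $I$ of size $\mu$. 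As the sequence $\langle p_\alpha\mid\alpha<\lambda\rangle$ was arbitrary, $\mathbb{P}$ is $(\lambda,\mu)$-cohesive, proving the first assertion.

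For the second assertion, I would simply invoke the first in both directions: if $\mathbb{P}\equiv_T\mathbb{Q}$ then $\mathbb{P}\leq_T\mathbb{Q}$ and $\mathbb{Q}\leq_T\mathbb{P}$, so $(\lambda,\mu)$-cohesiveness of either transfers to the other. I do not anticipate any genuine obstacle here: the entire content is the observation that a Tukey map transports boundedness backwards, while cohesiveness is literally the statement that one can extract a bounded subfamily of prescribed size $\mu$. The only point requiring a word of care is that passing from the indexed sequence $\langle p_\alpha\mid\alpha<\lambda\rangle$ to the image family does not raise its size, so the bounded subfamily of size $\mu$ produced in $\mathbb{Q}$ pulls back to one indexed by the very same $I\in[\lambda]^\mu$; there is no need for injectivity of $f$ or for any appeal to Schmidt duality.
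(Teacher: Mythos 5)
Your proposal is correct and is exactly the standard argument the paper leaves implicit (the Fact is stated without proof as ``not hard to see''): push the $\lambda$-sequence forward along a Tukey map, extract a bounded subfamily of size $\mu$ in $\mathbb{Q}$ by cohesiveness, and pull boundedness back via the contrapositive of unboundedness-preservation. Your remark that no injectivity of $f$ is needed, since the directed-set version of cohesiveness is phrased for indexed sequences rather than sets, is the right point of care.
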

In fact, similar to ultrafilters, it is possible to show that if $\lambda^{<\mu}=\lambda$, $\mathbb{P}$ is not $(\lambda,\mu)$-cohesive exactly when $([\lambda]^{<\mu},\subseteq)\leq_T \mathbb{P}$.
 We will also need the following theorem.
\begin{theorem}[B.-Dobrinen {\cite{TomNatasha}}]\label{thm: Tom Natasha product for complete}
    Let $U,W$ be $\kappa$-complete ultrafilters over $\kappa>\omega$. Then $U\cdot W\equiv_T U\times W$
\end{theorem}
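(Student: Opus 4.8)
The plan is to establish the two Tukey inequalities $U\times W\leq_T U\cdot W$ and $U\cdot W\leq_T U\times W$ separately. Throughout I identify $U\cdot W$ with the family of $C\subseteq\kappa\times\kappa$ satisfying $\{\xi<\kappa\mid C_\xi\in W\}\in U$, where $C_\xi=\{\eta\mid(\xi,\eta)\in C\}$, ordered by $\supseteq$, and $U\times W$ with the coordinatewise reverse-inclusion product.

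The inequality $U\times W\leq_T U\cdot W$ is the soft direction and uses no completeness. The two coordinate projections $\pi_1,\pi_2:\kappa\times\kappa\to\kappa$ satisfy $(\pi_1)_*(U\cdot W)=U$ and $(\pi_2)_*(U\cdot W)=W$, since $\pi_1^{-1}(A)=A\times\kappa\in U\cdot W$ iff $A\in U$, and $\pi_2^{-1}(B)=\kappa\times B\in U\cdot W$ iff $B\in W$. Thus $U\leq_{RK}U\cdot W$ and $W\leq_{RK}U\cdot W$, so by the corollary that $\leq_{RK}$ refines $\leq_T$ we get $U\leq_T U\cdot W$ and $W\leq_T U\cdot W$. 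Since the product of two directed sets is their least upper bound in the Tukey order (pairing monotone cofinal maps $U\cdot W\to U$ and $U\cdot W\to W$ yields a monotone cofinal map $U\cdot W\to U\times W$), this gives $U\times W\leq_T U\cdot W$.

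For the reverse inequality I would invoke Schmidt duality and build a Tukey (unbounded) map $h:U\cdot W\to U\times W$. For the first coordinate take the base set $A(C)=\{\xi\mid C_\xi\in W\}\in U$. For the second coordinate I extract a single member of $W$ from the fibers using $\kappa$-completeness: set $\hat{C}_\gamma=\bigcap\{C_\xi\mid \xi\in A(C)\cap\gamma\}$, which lies in $W$ as an intersection of fewer than $\kappa$ sets of $W$, and diagonalize, $\beta(C)=\Delta_{\gamma<\kappa}\hat{C}_\gamma=\{\eta<\kappa\mid \forall\gamma<\eta,\ \eta\in\hat{C}_\gamma\}$; then $h(C)=(A(C),\beta(C))$. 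To verify unboundedness it suffices, contrapositively, to show that whenever $h''\mathcal{C}$ is bounded in $U\times W$, say $A(C)\supseteq A^{*}\in U$ and $\beta(C)\supseteq B^{*}\in W$ for all $C\in\mathcal{C}$, then $\bigcap\mathcal{C}\in U\cdot W$ (so $\mathcal{C}$ is bounded). Indeed, fix $\xi\in A^{*}$ and $\eta\in B^{*}$ with $\eta>\xi+1$; then $\eta\in\beta(C)$ gives $\eta\in\hat{C}_{\xi+1}\subseteq C_\xi$ for every $C\in\mathcal{C}$, so $B^{*}\setminus(\xi+2)\subseteq\bigcap_{C\in\mathcal{C}}C_\xi$, whence $\bigcap_{C}C_\xi\in W$ for all $\xi\in A^{*}$, and therefore $\{\xi\mid(\bigcap\mathcal{C})_\xi\in W\}\supseteq A^{*}\in U$, i.e.\ $\bigcap\mathcal{C}\in U\cdot W$.

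The main obstacle is the well-definedness of $h$, namely that $\beta(C)\in W$: the diagonal intersection of a $\subseteq$-decreasing $\kappa$-sequence of $W$-sets lies in $W$ exactly when $W$ is normal, and for a merely $\kappa$-complete $W$ this can fail. I would handle this in two steps. First prove the theorem for normal $U,W$, where the computation above goes through verbatim. Then reduce the general $\kappa$-complete case to the normal one: using Theorem~\ref{thm: Tukey-order characterization} together with the RK/Ketonen structure of $\kappa$-complete ultrafilters, I would replace $\beta(C)$ by the trace on $\kappa$ of the canonical fiber $\tilde{C}=[\xi\mapsto C_\xi]_U\in j_U(W)$, absorbing the failure of normality into a careful refinement of the base $A(C)$ (shrinking it so that the relevant fibers align). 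This reduction, rather than the unboundedness verification, is where the real difficulty lies, and it is the step I expect to be the most delicate.
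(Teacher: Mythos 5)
This theorem is imported by the paper from \cite{TomNatasha} without proof, so there is no in-paper argument to compare against; I evaluate your proposal on its own terms. Your easy direction $U\times W\leq_T U\cdot W$ is complete and correct: the two projections witness $U,W\leq_{RK}U\cdot W$, and the Cartesian product is the Tukey-least upper bound of two directed sets. Your hard direction has the right architecture --- an unbounded map $C\mapsto(A(C),\beta(C))$ --- and your unboundedness verification is correct \emph{assuming} $h$ is well defined. But there is a genuine gap exactly where you flag one: for a merely $\kappa$-complete $W$ the diagonal intersection $\Delta_{\gamma<\kappa}\hat{C}_\gamma$ need not lie in $W$ (closure of $W$ under diagonal intersections of decreasing sequences is equivalent to normality), so $\beta(C)\in W$ fails in general, and your proposed repair --- ``reduce to the normal case using the RK/Ketonen structure, absorbing the failure of normality into a refinement of $A(C)$'' --- is a description of a hope, not an argument. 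As it stands the theorem is proved only for normal $W$.

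The fix is much more local than the reduction you envision, and a version of it already appears in this paper (in the proof of the continuity theorem for $p$-points, via the modified diagonal intersection $\Delta^*$). Fix $g:\kappa\rightarrow\kappa$ representing $\kappa$ in $M_W$, i.e.\ $[g]_W=\kappa$, and replace your $\beta(C)$ by
\[
\beta(C)=\{\eta<\kappa\mid \forall\gamma<g(\eta),\ \eta\in\hat{C}_\gamma\}.
\]
Then $\beta(C)\in W$ for \emph{any} $\kappa$-complete $W$: in $M_W$ one must check $[id]_W\in j_W(\hat{C})_\gamma=j_W(\hat{C}_\gamma)$ for every $\gamma<j_W(g)([id]_W)=\kappa$, which holds by \L o\'s since each $\hat{C}_\gamma\in W$ (here $\kappa$-completeness gives $\hat{C}_\gamma\in W$, and no monotonicity or normality is used). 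Your unboundedness computation survives verbatim after replacing the condition $\eta>\xi+1$ by $g(\eta)>\xi+1$; the set $\{\eta\mid g(\eta)>\xi+1\}$ is in $W$ because $[g]_W=\kappa>\xi+1$, so $B^*\cap\{\eta\mid g(\eta)>\xi+1\}\subseteq\bigcap_{C\in\mathcal{C}}C_\xi$ still gives $(\bigcap\mathcal{C})_\xi\in W$ for every $\xi\in A^*$. With this one change your map is well defined and unbounded for arbitrary $\kappa$-complete $U,W$, and no two-step reduction to the normal case is needed.
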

\begin{theorem}\label{thm: improvement of product}
    Let $U,W$ be $\kappa$-complete ultrafilters over $\kappa>\omega$. Suppose that $U$ is $(\lambda,\lambda)$-cohesive and $W$ is $(\lambda,\mu)$-cohesive (or the other way around). Then $U\cdot W$ is $(\lambda,\mu)$-cohesive.
\end{theorem}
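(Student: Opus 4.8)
The plan is to transfer the problem from the Fubini product $U\cdot W$ to the honest product directed set $U\times W=(U,\supseteq)\times(W,\supseteq)$, where the two cohesiveness hypotheses can be combined by a simple diagonal argument. By Theorem \ref{thm: Tom Natasha product for complete}, the assumption that $U$ and $W$ are $\kappa$-complete over $\kappa>\omega$ gives $U\cdot W\equiv_T U\times W$; and since $(\lambda,\mu)$-cohesiveness is a Tukey invariant (the Fact above), it suffices to prove that the product directed set $U\times W$ is $(\lambda,\mu)$-cohesive. This is the only place the completeness assumption enters: the combinatorial core below uses nothing but the two cohesiveness hypotheses.

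First I would record the description of boundedness in the product. In $(U,\supseteq)\times(W,\supseteq)$ the order is componentwise, so a family $\{(A_\alpha,B_\alpha)\mid \alpha\in I\}$ is bounded precisely when each coordinate family is bounded in its factor, that is, when $\bigcap_{\alpha\in I}A_\alpha\in U$ and $\bigcap_{\alpha\in I}B_\alpha\in W$. Thus, given an arbitrary $\lambda$-sequence $\langle (A_\alpha,B_\alpha)\mid \alpha<\lambda\rangle$ in $U\times W$, the goal is to produce $I\in[\lambda]^\mu$ with both coordinate intersections large.

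The diagonal argument proceeds in two stages. Assume $U$ is $(\lambda,\lambda)$-cohesive and $W$ is $(\lambda,\mu)$-cohesive. Applying $(\lambda,\lambda)$-cohesiveness of $U$ to the first-coordinate sequence $\langle A_\alpha\mid \alpha<\lambda\rangle$ yields $I_0\in[\lambda]^\lambda$ with $\bigcap_{\alpha\in I_0}A_\alpha\in U$. Now the second-coordinate subsequence $\langle B_\alpha\mid \alpha\in I_0\rangle$ has length $|I_0|=\lambda$, so applying $(\lambda,\mu)$-cohesiveness of $W$ to it produces $I\in[I_0]^\mu$ with $\bigcap_{\alpha\in I}B_\alpha\in W$. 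Since $I\subseteq I_0$ we have $\bigcap_{\alpha\in I}A_\alpha\supseteq\bigcap_{\alpha\in I_0}A_\alpha\in U$, so the first intersection remains in $U$ by monotonicity of $\bigcap$ under $\supseteq$. Hence $\{(A_\alpha,B_\alpha)\mid\alpha\in I\}$ is bounded and $|I|=\mu$, establishing that $U\times W$ is $(\lambda,\mu)$-cohesive. The symmetric hypothesis (``or the other way around'') is handled identically after swapping the two factors, which is legitimate because $U\times W\cong W\times U$ as directed sets, whence $U\cdot W\equiv_T W\cdot U$ by Theorem \ref{thm: Tom Natasha product for complete}.

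There is no serious obstacle once the two cited ingredients are in hand: the real content has been absorbed into Theorem \ref{thm: Tom Natasha product for complete}, which trades the Fubini product for the product, and into the Tukey-invariance of cohesiveness. The only point requiring care is that the second application of cohesiveness must be made to the subsequence indexed by $I_0$, not to the whole sequence, so that the first-coordinate intersection over the smaller set $I$ is preserved for free; it is precisely the asymmetry between the $(\lambda,\lambda)$ and $(\lambda,\mu)$ hypotheses that makes this nesting of the two refinements succeed.
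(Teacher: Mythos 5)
Your proof is correct and follows essentially the same route as the paper's: reduce to $U\times W$ via Theorem \ref{thm: Tom Natasha product for complete} and Tukey-invariance of cohesiveness, then nest the two cohesiveness applications (first the $(\lambda,\lambda)$ one, then the $(\lambda,\mu)$ one on the resulting size-$\lambda$ subfamily). Your write-up is in fact slightly cleaner about which coordinate pairs with which ultrafilter.
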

\begin{proof}
    First, by Theorem \ref{thm: Tom Natasha product for complete}, $U\cdot W\equiv_T U\times W$. By the previous fact, it suffices to prove that  $U\times W$ is $(\lambda,\kappa)$-cohesive. Given $\l (A_\alpha,B_\alpha)\mid \alpha<\lambda\r$ we need to find $\mu$-many of the pairs which have a lower bound. Indeed since $U$ is $(\lambda,\lambda)$-cohesive,  there is $I\subseteq \lambda$ of size $\lambda$ such that $B^*=\bigcap_{i\in I}B_i\in U$. Applying $(\lambda,\mu)$-cohesiveness to $\l A_i\mid i\in I\r$, find $J\subseteq I$ of size $\mu$ such that $A^*=\bigcap_{j\in J}A_j\in W$. Then $\l \l A_j,B_j\r\mid j\in J\r$ is bounded by $(A^*,B^*)$.
\end{proof}
On $\omega$, it is not true that for every two ultrafilter $U,W$, $U\cdot W\equiv U\times W$. Indeed, Dobrinen-Todorcevic \cite{Dobrinen/Todorcevic11} gave an exmaple (under $\mathfrak{u}<\mathfrak{d}$) of an ultrafilter (even a $p$-point) $U$ which is not Tukey equivalent to its Fubini square. However, by \cite[Cor. 1.9]{TomNatasha2}, if $W\cdot W\equiv_T W$, then $U\cdot W\equiv_TU\times W$ and the above proof works. 
\begin{corollary}\label{cor: the case of omega}
    Fix any two ultrafilters $U,W$ over $\omega$ such $W\cdot W\equiv_T W$. If $U$ is $(\lambda,\lambda)$-cohesive and $W$ is $(\lambda,\mu)$-cohesive then $U\cdot W$ is $(\lambda,\mu)$-cohesive. 
\end{corollary}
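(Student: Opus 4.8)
The plan is to follow the proof of Theorem~\ref{thm: improvement of product} line by line, changing only the input that yields the Tukey equivalence between the Fubini product and the direct product. In that theorem the $\kappa$-completeness of $U$ and $W$ was used through Theorem~\ref{thm: Tom Natasha product for complete} to get $U\cdot W\equiv_T U\times W$; over $\omega$ this equivalence can fail, so the extra hypothesis $W\cdot W\equiv_T W$ is precisely what will be fed in to recover it.

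First I would recall that $(\lambda,\mu)$-cohesiveness is a Tukey invariant, by the Fact stated just before Theorem~\ref{thm: improvement of product}; thus it suffices to establish the property on any directed set Tukey-equivalent to $(U\cdot W,\supseteq)$. Next I would invoke \cite[Cor.~1.9]{TomNatasha2}, which says that $W\cdot W\equiv_T W$ implies $U\cdot W\equiv_T U\times W$. These two facts together reduce the corollary to checking that the product poset $U\times W$ is $(\lambda,\mu)$-cohesive.

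For the last step I would run the same two-stage thinning as in Theorem~\ref{thm: improvement of product}. Given $\l (A_\alpha,B_\alpha)\mid\alpha<\lambda\r$ with $A_\alpha\in U$ and $B_\alpha\in W$, apply $(\lambda,\lambda)$-cohesiveness of $U$ to $\l A_\alpha\mid\alpha<\lambda\r$ to obtain $I\in[\lambda]^\lambda$ with $\bigcap_{i\in I}A_i\in U$, and then apply $(\lambda,\mu)$-cohesiveness of $W$ to $\l B_i\mid i\in I\r$ to obtain $J\in[I]^\mu$ with $\bigcap_{j\in J}B_j\in W$; since $J\subseteq I$ and $U$ is upward closed, $\bigcap_{j\in J}A_j\in U$ as well, so the $\mu$-many pairs $\l (A_j,B_j)\mid j\in J\r$ are bounded by $(\bigcap_{j\in J}A_j,\bigcap_{j\in J}B_j)$. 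I do not anticipate a real obstacle: all the substance lives in the cited product formula \cite[Cor.~1.9]{TomNatasha2}, and the only point deserving attention is to confirm that $W\cdot W\equiv_T W$ is exactly the trigger for $U\cdot W\equiv_T U\times W$ on $\omega$, after which the combinatorial argument is identical to the one already given.
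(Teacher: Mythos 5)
Your proposal is correct and matches the paper's own argument: the paper likewise cites \cite[Cor. 1.9]{TomNatasha2} to get $U\cdot W\equiv_T U\times W$ from $W\cdot W\equiv_T W$, and then notes that the proof of Theorem \ref{thm: improvement of product} (Tukey invariance of cohesiveness plus the two-stage thinning of the pairs) goes through unchanged.
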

In \cite{TomNatasha2}, the class of ultrafilters which satisfies $W\cdot W\equiv_T W$ was investigated, and includes rapid $p$-points (and almost rapid $p$-points- see \cite{Tomcommute}), many instances of generic ultrafilters for $P(\omega)/I$, Milliken-Taylor ultrafilter, and more. 

A general formula for Fubini products was given by  Todorcevic-Dobrinen and Milovich \cite{Dobrinen/Todorcevic11,Milovich12}, but more relevant to out needs, the following upper bound for Fubini sums:
\begin{theorem}\label{thm: formula for omega}
    For any two ultrafilters $U,W_n$ on $\omega$,
    $$\sum_UW_n\leq_T U\times \prod_{n<\omega}W_n.$$
\end{theorem}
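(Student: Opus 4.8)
The plan is to invoke Schmidt duality and produce a \emph{cofinal} monotone map $g$ from the product directed set $U\times\prod_{n<\omega}W_n$ to $\sum_U W_n$; by the Schmidt duality proposition this yields a Tukey map in the reverse direction, hence $\sum_U W_n\leq_T U\times\prod_{n<\omega}W_n$. Recall that an element of the product poset is a pair $(A,\vec{B})$ with $A\in U$ and $\vec{B}=\langle B_n\mid n<\omega\rangle\in\prod_{n<\omega}W_n$, ordered coordinatewise by reverse inclusion, and that $\sum_U W_n$ lives, up to a bijection $\omega\times\omega\cong\omega$ (which preserves the Tukey type), on $\omega\times\omega$ with $C\in\sum_U W_n$ iff $\{n\mid C_n\in W_n\}\in U$, where $C_n=\{m\mid (n,m)\in C\}$ is the $n$-th section.

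The map I would use is the obvious assembly of the data into one set:
$$g(A,\vec{B})=\bigcup_{n\in A}\{n\}\times B_n.$$
First I would check $g(A,\vec{B})\in\sum_U W_n$: its $n$-th section is $B_n$ for $n\in A$ and $\emptyset$ otherwise, so $\{n\mid g(A,\vec{B})_n\in W_n\}=A\in U$, using $B_n\in W_n$ for every $n$. Monotonicity is immediate from the displayed union: shrinking $A$ or any single $B_n$ (i.e.\ moving up in the reverse-inclusion order of the product) only shrinks $g(A,\vec{B})$, so $g$ respects the reverse-inclusion orders on both sides.

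For cofinality it suffices, since $g$ is monotone, to verify that $g$ has cofinal range, namely that every $C\in\sum_U W_n$ contains some value of $g$. Given such $C$, set $A=\{n\mid C_n\in W_n\}\in U$, put $B_n=C_n$ for $n\in A$ and $B_n=\omega$ for $n\notin A$; then $\vec{B}\in\prod_{n<\omega}W_n$ and $g(A,\vec{B})=\bigcup_{n\in A}\{n\}\times C_n\subseteq C$. The standard upgrade from cofinal range to genuine cofinality then applies: if $\mathcal{B}$ is cofinal in the product and $C\in\sum_U W_n$, choose $(A,\vec{B})$ with $g(A,\vec{B})\subseteq C$, then $b\in\mathcal{B}$ above $(A,\vec{B})$, and monotonicity gives $g(b)\subseteq g(A,\vec{B})\subseteq C$, so $g''\mathcal{B}$ is cofinal. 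Schmidt duality then finishes the argument.

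I do not expect a real obstacle here: the construction is explicit and the three verifications (landing in $\sum_U W_n$, monotonicity, cofinal range) are each short. The only point demanding a little care is the cofinality step, where one must fill in the coordinates $B_n$ for those $n\notin A$ (the indices with $C_n\notin W_n$) by arbitrary members of $W_n$ so that $\vec{B}$ is a legitimate element of the \emph{full} product $\prod_{n<\omega}W_n$, and then observe that these junk coordinates are harmless since $g$ simply discards them. This also explains why the full product, rather than a sub-product over a fixed $U$-large set, is the natural upper bound: the relevant index set $A$ varies with $C$.
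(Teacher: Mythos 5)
Your proof is correct; the map $g(A,\vec{B})=\bigcup_{n\in A}\{n\}\times B_n$ is indeed monotone with cofinal range, and Schmidt duality (or just the definition of a cofinal map) gives $\sum_U W_n\leq_T U\times\prod_{n<\omega}W_n$. The paper does not supply its own proof of this theorem --- it cites Dobrinen--Todorcevic and Milovich --- but your argument is the standard one for this upper bound and all three verifications check out.
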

Let us reproduce Kanamori's result on $\omega$:
\begin{theorem}
    Suppose that $U$ and $W_n$ are $(\omega_1,\omega_1)$-cohesive for every $n<\omega$. Then $\sum_UW_n$ is $(\omega_1,\omega)$-cohesive.
\end{theorem}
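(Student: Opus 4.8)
The plan is to reduce the statement to a cohesiveness computation for the product directed set $U \times \prod_{n<\omega} W_n$ and then transfer back along the Tukey reduction supplied by Theorem~\ref{thm: formula for omega}. Concretely, since $\sum_U W_n \leq_T U \times \prod_{n<\omega} W_n$ and cohesiveness is a Tukey invariant (the Fact above), it suffices to prove that $U \times \prod_{n<\omega} W_n$ is $(\omega_1,\omega)$-cohesive. Here I use that a family $\langle (A_\alpha, \langle B_{\alpha,n} \mid n<\omega\rangle) \mid \alpha < \omega_1\rangle$ of elements of $U \times \prod_{n<\omega} W_n$ is bounded exactly when $\bigcap_\alpha A_\alpha \in U$ and, for every $n$, $\bigcap_\alpha B_{\alpha,n} \in W_n$, since the order is taken coordinatewise under reversed inclusion.

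So fix such a family indexed by $\omega_1$. First I would apply $(\omega_1,\omega_1)$-cohesiveness of $U$ to $\langle A_\alpha \mid \alpha<\omega_1\rangle$ to obtain $I_0 \in [\omega_1]^{\omega_1}$ with $\bigcap_{i \in I_0} A_i \in U$. Then I would iterate over the coordinates: given $I_n \in [\omega_1]^{\omega_1}$, apply $(\omega_1,\omega_1)$-cohesiveness of $W_n$ to $\langle B_{i,n} \mid i \in I_n\rangle$ to get $I_{n+1} \in [I_n]^{\omega_1}$ with $\bigcap_{i \in I_{n+1}} B_{i,n} \in W_n$. This produces a $\subseteq$-decreasing chain $I_0 \supseteq I_1 \supseteq \cdots$ of sets of size $\omega_1$.

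The extraction step is the crux. Since the chain is countable and every $I_n$ is infinite, I would construct a pseudo-intersection $J = \{j_n \mid n<\omega\}$ by choosing $j_n \in I_n$ distinct from the previous choices. Because $j_m \in I_m \subseteq I_0$ for all $m$, we have $J \subseteq I_0$, and for each $n$, $j_m \in I_m \subseteq I_{n+1}$ whenever $m \geq n+1$, so $J \setminus I_{n+1} \subseteq \{j_0,\dots,j_n\}$ is finite, i.e.\ $J \subseteq^* I_{n+1}$. I then claim $\{(A_j, \langle B_{j,n}\rangle) \mid j \in J\}$ is bounded. For the $U$-coordinate, $\bigcap_{j \in J} A_j \supseteq \bigcap_{i \in I_0} A_i \in U$, hence $\bigcap_{j \in J} A_j \in U$. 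For each $n$, write $\bigcap_{j \in J} B_{j,n} = \bigl(\bigcap_{j \in J \cap I_{n+1}} B_{j,n}\bigr) \cap \bigl(\bigcap_{j \in J \setminus I_{n+1}} B_{j,n}\bigr)$; the first factor is a superset of $\bigcap_{i \in I_{n+1}} B_{i,n} \in W_n$, while the second is a finite intersection of $W_n$-sets, so by closure under finite intersection $\bigcap_{j \in J} B_{j,n} \in W_n$. Thus $J \in [\omega_1]^\omega$ witnesses $(\omega_1,\omega)$-cohesiveness of $U \times \prod_{n<\omega} W_n$, and pulling back along Theorem~\ref{thm: formula for omega} gives the conclusion for $\sum_U W_n$.

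The main obstacle, and the reason the conclusion is $(\omega_1,\omega)$ rather than $(\omega_1,\omega_1)$, is precisely this diagonalization: a countable decreasing chain of uncountable sets always admits a countable pseudo-intersection but need not admit an uncountable one, so the $\omega$ coordinates of $\prod_{n<\omega} W_n$ force us down to an $\omega$-sized bounded subfamily. The observation that $J \subseteq^* I_{n+1}$ (rather than $J \subseteq I_{n+1}$) already suffices, thanks to finite-intersection closure of the ultrafilters $W_n$, is what allows the pseudo-intersection to do its job; this is also the point at which the argument fails to lift verbatim to $\kappa>\omega$, where one instead routes through $U \cdot W \equiv_T U \times W$ as in Theorem~\ref{thm: improvement of product}.
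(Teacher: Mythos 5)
Your proof is correct and follows essentially the same route as the paper: reduce to $U\times\prod_{n<\omega}W_n$ via Theorem~\ref{thm: formula for omega} and Tukey-invariance of cohesiveness, build a decreasing $\omega_1$-chain of index sets by applying $(\omega_1,\omega_1)$-cohesiveness coordinate by coordinate, diagonalize to a countable set, and absorb the finitely many exceptions in each coordinate using closure under finite intersections (the paper phrases this last step as directedness, since its lemma is stated for arbitrary $(\omega_1,\omega_1)$-cohesive directed sets rather than ultrafilters).
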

\begin{proof}
    Since $(\lambda,\mu)$-cohesiveness is downwards closed with respect to the Tukey order, by theorem \ref{thm: formula for omega}, it remains to see that $U\times \prod_{n<\omega}W_n$ is $(\omega_1,\omega)$-cohesive. In the next lemma we sill prove that the Cartesian product of $\omega$-many $(\omega_1,\omega_1)$-cohesive directed sets is $(\omega_1,\omega)$-cohesive. 
\end{proof}
\begin{lemma}
    Suppose that $\{\mathbb{P}_i\mid i<\omega\}$ is a countable set of $(\omega_1,\omega_1)$-cohesive directed sets. Then $\prod_{n<\omega}\mathbb{P}_i$ is $(\omega_1,\omega)$-cohesive
\end{lemma}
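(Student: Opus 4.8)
The plan is to prove the statement by a coordinatewise fusion followed by a diagonalization. Recall first that in the product $\prod_{n<\omega}\mathbb{P}_n$, ordered coordinatewise, a family $\{\vec{q}^\alpha\mid\alpha\in I\}$ is bounded if and only if for every $n<\omega$ the projected family $\{\vec{q}^\alpha(n)\mid\alpha\in I\}$ is bounded in $\mathbb{P}_n$; indeed, an upper bound is obtained by choosing an upper bound in each coordinate separately. Fix an arbitrary family $\langle \vec{p}^\alpha\mid\alpha<\omega_1\rangle$ in the product, and write $\vec{p}^\alpha=\langle \vec{p}^\alpha(n)\mid n<\omega\rangle$. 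The goal is to find $I\in[\omega_1]^\omega$ with $\{\vec{p}^\alpha\mid\alpha\in I\}$ bounded.

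First I would construct a $\subseteq$-decreasing sequence $\omega_1=I_{-1}\supseteq I_0\supseteq I_1\supseteq\cdots$ of subsets of $\omega_1$, each of size $\omega_1$, such that for every $n<\omega$ the family $\{\vec{p}^\alpha(n)\mid\alpha\in I_n\}$ is bounded in $\mathbb{P}_n$. This is done by recursion on $n$: given $I_{n-1}\in[\omega_1]^{\omega_1}$, re-index it as an $\omega_1$-sequence and view $\langle \vec{p}^\alpha(n)\mid\alpha\in I_{n-1}\rangle$ as an $\omega_1$-indexed family in $\mathbb{P}_n$. Since $\mathbb{P}_n$ is $(\omega_1,\omega_1)$-cohesive, there is $I_n\subseteq I_{n-1}$ of size $\omega_1$ such that $\{\vec{p}^\alpha(n)\mid\alpha\in I_n\}$ is bounded, as required.

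Next I would diagonalize: choose, for each $n<\omega$, an ordinal $\alpha_n\in I_n$ with the $\alpha_n$ pairwise distinct (possible since each $I_n$ is uncountable), and set $I=\{\alpha_n\mid n<\omega\}\in[\omega_1]^\omega$. To see that $\{\vec{p}^\alpha\mid\alpha\in I\}$ is bounded, fix a coordinate $m<\omega$. For every $n\geq m$ we have $\alpha_n\in I_n\subseteq I_m$, so $\vec{p}^{\alpha_n}(m)$ lies in the bounded family $\{\vec{p}^\alpha(m)\mid\alpha\in I_m\}$; thus $\{\vec{p}^{\alpha_n}(m)\mid n\geq m\}$ is bounded in $\mathbb{P}_m$. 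The remaining points $\vec{p}^{\alpha_n}(m)$ for $n<m$ form a finite set, and in a directed set a bounded set together with finitely many extra points is still bounded, so $\{\vec{p}^{\alpha_n}(m)\mid n<\omega\}$ is bounded in $\mathbb{P}_m$. Since this holds for every $m$, the coordinatewise criterion shows that $\{\vec{p}^\alpha\mid\alpha\in I\}$ is bounded in the product, completing the argument.

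The argument is essentially routine; the one point requiring care is the order of the diagonalization, namely that a fixed coordinate $m$ is controlled only on the tail $\{\alpha_n\mid n\geq m\}$, and that the finitely many exceptional points are absorbed using directedness. This is also exactly where the method cannot be pushed to yield an uncountable bounded subfamily: the diagonalization inherently produces only a countable set, which is why the conclusion is $(\omega_1,\omega)$-cohesiveness rather than $(\omega_1,\omega_1)$-cohesiveness.
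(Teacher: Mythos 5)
Your proof is correct and follows essentially the same route as the paper's: a nested sequence of uncountable index sets obtained coordinatewise from $(\omega_1,\omega_1)$-cohesiveness, a diagonal choice of pairwise distinct $\alpha_n\in I_n$, and absorption of the finitely many exceptional points in each coordinate by directedness (the paper's bound $p^*_m$ extends both $p^0_m$ and the finitely many $p_{\alpha_i,m}$ for $i<m$). No gaps; nothing further to add.
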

\begin{proof}
    Let $\l (p_{\alpha,m})_{m<\omega}\mid \alpha<\omega_1\r\subseteq \prod_{m<\omega}\mathbb{P}_m$. Inductively find $X_0\supseteq X_1\supseteq X_2...$ all of size $\omega_1$ such that for every $m$, $\{p_{\alpha,m}\mid \alpha\in X_m\}$ is bounded by $p^0_m$. Choose $\alpha_i\in X_{i}$ so that $i\neq j$ implies $\alpha_i\neq \alpha_j$. Let $p^*_m$ be an extension of $p^0_m$ and $p_{\alpha_i,m}$ for $i< m$, which exists by directedness. Then we claim that $\{ (p_{\alpha_i,m})_{m<\omega}\mid i<\omega\}$ is bounded by $(p^*_m)_{m<\omega}$. Indeed, for every $i<\omega$, and every $m<\omega$, if $i<m$, then by $p^*_m$ was chosen to be an extension of $p_{\alpha_i,m}$. If $i\geq m$, then $\alpha_i\in X_i\subseteq X_m$. Hence $p_{\alpha_i,m}$ is bounded by $p^0_m$ and in turn by $p^*_m$.  
\end{proof}
On $\kappa>\omega$, there is an analog formula: for $U$ an ultrafilter on $\gamma$ and $\l W_\alpha\mid \alpha<\gamma\r$ any ultrafilters, 
$$\sum_U W_\alpha\leq_T U\times \prod_{\alpha<\gamma}W_\alpha.$$
However the argument above does not generalize to arbitrary products, as we might not be able to find longer decreasing sequence in the course of the lemma. 
    \section{Depth and point spectrum of ultrafilters}\label{Sec: depth and point}
    In this section we address the property of $(\lambda,\lambda)$-cohesivness for general $\lambda$.  \begin{claim}\label{claim: cohesive iff Tukey}
        For any infinite cardinal $\lambda$, $U$ is not $(cf(\lambda),cf(\lambda))$-cohesive iff $\lambda\leq_T U$
    \end{claim}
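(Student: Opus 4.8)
The plan is to reduce to the case of a regular cardinal and then simply unfold the definitions. First I would record the standard fact that the ordinal $\lambda$, viewed as a directed set, is Tukey-equivalent to the regular cardinal $\delta:=\cf(\lambda)$: fixing an increasing cofinal sequence $\langle \lambda_i\mid i<\delta\rangle$ in $\lambda$, the map $g:\delta\to\lambda$ given by $g(i)=\lambda_i$ is at once a Tukey map (unbounded $A\subseteq\delta$ has $\sup g''A=\lambda$, witnessing $\delta\le_T\lambda$) and a cofinal map (witnessing $\lambda\le_T\delta$). Hence $\lambda\equiv_T\delta$, so $\lambda\le_T U$ iff $\delta\le_T U$, and it suffices to prove that for the regular cardinal $\delta$, $U$ fails to be $(\delta,\delta)$-cohesive iff $\delta\le_T U$.

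The key observation is that, because $\delta$ is regular, a set $A\subseteq\delta$ is unbounded precisely when $|A|=\delta$, while a family $\mathcal{B}\subseteq U$ is bounded in $(U,\supseteq)$ precisely when $\bigcap\mathcal{B}\in U$. Thus an unbounded (Tukey) map $f:\delta\to U$ is exactly a map satisfying $\bigcap_{\alpha\in A}f(\alpha)\notin U$ for every $A\in[\delta]^\delta$, whereas the failure of $(\delta,\delta)$-cohesiveness asserts the existence of $\mathcal{A}\in[U]^\delta$ with $\bigcap\mathcal{B}\notin U$ for every $\mathcal{B}\in[\mathcal{A}]^\delta$. These are essentially the same statement, and I would prove each implication by translating between $f$ and $\mathcal{A}$.

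For the direction from non-cohesiveness to Tukey, I would take a witnessing $\mathcal{A}\in[U]^\delta$, enumerate it injectively as $\langle A_\alpha\mid\alpha<\delta\rangle$, and set $f(\alpha)=A_\alpha$; any $A\in[\delta]^\delta$ then yields $f''A\in[\mathcal{A}]^\delta$, so $\bigcap f''A\notin U$ and $f$ is unbounded. For the converse, given a Tukey map $f:\delta\to U$, I would first argue $|f''\delta|=\delta$: otherwise, by regularity of $\delta$ the set $\delta=\bigcup_{B\in f''\delta}f^{-1}(\{B\})$ would be a union of fewer than $\delta$ fibres, so some fibre would have size $\delta$, i.e. be unbounded, yet $f$ sends it to the bounded family $\{B\}$, a contradiction. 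Taking $\mathcal{A}=f''\delta\in[U]^\delta$, any $\mathcal{B}\in[\mathcal{A}]^\delta$ lifts, by choosing one preimage $\alpha_B\in f^{-1}(\{B\})$ per element, to an unbounded set $A=\{\alpha_B\mid B\in\mathcal{B}\}\in[\delta]^\delta$ with $f''A=\mathcal{B}$; hence $\bigcap\mathcal{B}=\bigcap f''A\notin U$, so $\mathcal{A}$ witnesses the failure of $(\delta,\delta)$-cohesiveness.

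I expect the only genuinely delicate point to be the bookkeeping with regularity in the converse direction, namely ensuring that $f''\delta$ and the lifted index sets retain full cardinality $\delta$, since this is exactly where the passage to $\cf(\lambda)$ rather than $\lambda$ is essential; the remaining steps are direct transcriptions of the definitions of Tukey map and of cohesiveness, together with the reduction $\lambda\equiv_T\cf(\lambda)$.
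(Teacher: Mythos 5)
Your proof is correct, but it takes a different route from the paper. The paper's proof is a one-liner that factors through the machinery of Sections 2--3: it notes $\lambda\equiv_T \cf(\lambda)$ and $[\cf(\lambda)]^{<\cf(\lambda)}\equiv_T(\cf(\lambda),<)$ for regular $\cf(\lambda)$, and then invokes the corollary to the ultrapower characterization of cohesiveness (Theorem \ref{thm:Characterization of Galvin} together with Theorem \ref{thm: Tukey-order characterization}), which equates $([\lambda]^{<\mu},\subseteq)\leq_T U$ with the failure of $(\lambda,\mu)$-cohesiveness. You instead prove the regular case $\delta=\cf(\lambda)$ from scratch: after the same reduction $\lambda\equiv_T\delta$, you translate directly between unbounded maps $f:\delta\to U$ and witnessing families $\mathcal{A}\in[U]^\delta$, using that boundedness in $(U,\supseteq)$ is exactly $\bigcap\mathcal{B}\in U$ and that unboundedness in a regular $\delta$ is exactly full cardinality. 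Your handling of the converse direction --- checking $|f''\delta|=\delta$ via regularity and lifting $\mathcal{B}$ to a full-size index set by choosing one preimage per element --- is exactly the bookkeeping the cited corollary hides, and is carried out correctly. What your approach buys is self-containedness: it avoids any appeal to the thin-cover characterization and, in particular, does not even implicitly touch the hypothesis $\lambda^{<\mu}=\lambda$ appearing in the corollary the paper cites (harmless here since $[\delta]^{<\delta}\equiv_T\delta$ for regular $\delta$, but your argument sidesteps the question entirely). What the paper's route buys is uniformity with the rest of the section, where the same covers and the same corollary are reused for the $(\lambda,\mu)^*$-cohesive analogue.
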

    \begin{proof}
        Note that $\lambda\equiv_T cf(\lambda)$ and since $cf(\lambda)$ is regular,  $[cf(\lambda)]^{<cf(\lambda)}\equiv_T  (cf(\lambda),<)$, and the rest follows from Theorem \ref{thm:Characterization of Galvin}.
    \end{proof}
    The following theorem is due to Kanamori:
\begin{theorem}[Kanamori\cite{Kanamori1978}]
Suppose that $2^\kappa=\kappa^+$, then any ultrafilter $U$ over $\kappa$ is not $(\kappa^+,\kappa^+)$-cohesive.
\end{theorem}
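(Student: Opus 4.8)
The plan is to prove the statement in its contrapositive form directly: exhibit a family $\langle A_\alpha : \alpha<\kappa^+\rangle\subseteq U$ witnessing that $U$ fails to be $(\kappa^+,\kappa^+)$-cohesive, i.e. one with $\bigcap_{\alpha\in I}A_\alpha\notin U$ for every $I\in[\kappa^+]^{\kappa^+}$. Equivalently, by Claim \ref{claim: cohesive iff Tukey} applied with $\lambda=\kappa^+$ (which is regular, so $cf(\kappa^+)=\kappa^+$), it suffices to produce a Tukey map $\kappa^+\to U$, and the family below is exactly such a map. Throughout I take $U$ uniform, as in the original statement \ref{thm:Kanamoti cohesive kappa+}; a principal ultrafilter is trivially cohesive, so some such restriction is unavoidable.

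The two ingredients are, first, that $2^\kappa=\kappa^+$ lets us enumerate $U=\{Z_\gamma : \gamma<\kappa^+\}$, since $|U|\le 2^\kappa=\kappa^+$; and second, the classical fact that a uniform ultrafilter on $\kappa$ has character $\mathfrak{ch}(U)>\kappa$, that is, no subfamily of $U$ of size $\le\kappa$ is a base for $U$. I would include a short self-contained proof of this fact by a splitting argument: given a candidate base $\{B_\gamma:\gamma<\kappa\}$, recursively choose pairwise distinct points $a_\gamma,b_\gamma\in B_\gamma$ (possible since $|B_\gamma|=\kappa$ while fewer than $\kappa$ points have been used), and set $D=\{a_\gamma:\gamma<\kappa\}$; then each $B_\gamma$ meets both $D$ and $\kappa\setminus D$, so whichever of $D,\kappa\setminus D$ lies in $U$ is a member of $U$ containing no $B_\gamma$, contradicting that the family is a base.

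With these in hand I build $\langle A_\alpha:\alpha<\kappa^+\rangle\subseteq U$ by recursion. At stage $\alpha$ the initial segment $\{Z_\gamma:\gamma<\alpha\}$ has size $|\alpha|\le\kappa<\mathfrak{ch}(U)$, hence is not a base for $U$; unpacking what this means, there is some $A_\alpha\in U$ with $Z_\gamma\not\subseteq A_\alpha$ for every $\gamma<\alpha$. This reformulation — ``$\mathcal B$ is not a base'' holds exactly when ``some member of $U$ contains no element of $\mathcal B$'' — is the conceptual heart of the construction. Consequently, for each fixed $\gamma$ we have $Z_\gamma\subseteq A_\alpha$ only when $\alpha\le\gamma$, so $|\{\alpha:Z_\gamma\subseteq A_\alpha\}|\le|\gamma|+1\le\kappa$.

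Finally I check the family works. Suppose toward a contradiction that some $I\in[\kappa^+]^{\kappa^+}$ has $\bigcap_{\alpha\in I}A_\alpha\in U$, and write this intersection as $Z_\gamma$ for a suitable $\gamma$. Then $Z_\gamma\subseteq A_\alpha$ for every $\alpha\in I$, so $I\subseteq\{\alpha:Z_\gamma\subseteq A_\alpha\}$, a set of size $\le\kappa$, contradicting $|I|=\kappa^+$. Hence no $\kappa^+$-sized subfamily has intersection in $U$, and $U$ is not $(\kappa^+,\kappa^+)$-cohesive. The only genuine obstacle is arranging the successful recursive choice of $A_\alpha$ at each stage, which is precisely where $\mathfrak{ch}(U)>\kappa$ is used (and where $2^\kappa=\kappa^+$ forces $\mathfrak{ch}(U)=\kappa^+$); everything else is bookkeeping.
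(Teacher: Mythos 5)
Your proof is correct, but it reaches the conclusion by a different route than the paper. The paper does not argue this statement directly: it derives it as the special case $\mathfrak{ch}(U)=\kappa^+$ of Theorem \ref{thm: second kanamori improvment}, which shows $\mathfrak{ch}(U)\leq_T U$ for every uniform ultrafilter with no cardinal-arithmetic hypothesis. There the diagonalization is carried out \emph{inside a base of minimal size}: from a base of cardinality $\mathfrak{ch}(U)$ one extracts a sequence $\langle b^*_i\rangle$ in which no earlier term is contained in a later one (possible because, by minimality of $\mathfrak{ch}(U)$, no proper initial segment generates $U$), and the fact that the resulting sequence is still a base is what produces the contradiction. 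You instead diagonalize against an enumeration $\langle Z_\gamma : \gamma<\kappa^+\rangle$ of the \emph{entire ultrafilter}, which is exactly where $2^\kappa=\kappa^+$ enters; the engine is the same in both arguments (``fewer than $\mathfrak{ch}(U)$ sets never form a base''), but your version is tied to CH at $\kappa$ and yields only the $\kappa^+$ case, whereas the paper's version gives failure of $(cf(\mathfrak{ch}(U)),cf(\mathfrak{ch}(U)))$-cohesiveness outright. In compensation, your write-up is more self-contained: you supply the splitting-family proof that $\mathfrak{ch}(U)>\kappa$ for uniform $U$, a fact the paper uses silently when asserting $\mathfrak{ch}(U)=\kappa^+$ under CH, and you correctly flag that the uniformity hypothesis, present in the introduction's statement of this theorem but dropped in its restatement here, is genuinely needed (a non-uniform ultrafilter concentrating on a set of small power is $(\kappa^+,\kappa^+)$-cohesive).
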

The previous theorem is a special case of the following theorem since under $2^\kappa=\kappa^+$, $\mathfrak{ch}(U)=\kappa^+$ for any uniform ultrafilter on $\kappa$:
\begin{theorem}\label{thm: second kanamori improvment}
    Let $U$ be a uniform ultrafilter over $\kappa$. Then $\mathfrak{ch}(U)\leq_TU$.
\end{theorem}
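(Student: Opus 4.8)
The plan is to recognize that $\mathfrak{ch}(U)$ is nothing but the cofinality $\cof(U,\supseteq)$ of the directed set $(U,\supseteq)$ (a generating family is precisely a cofinal subset), and then to establish the general principle that $\cof(D)\leq_T D$ holds for \emph{every} directed set $D$; the theorem is the special case $D=(U,\supseteq)$. Writing $\lambda=\mathfrak{ch}(U)$ and invoking Schmidt duality, it suffices to produce a \emph{cofinal} map $\rho\colon (U,\supseteq)\to(\lambda,<)$, since such a map dualizes to a Tukey map $(\lambda,<)\to (U,\supseteq)$, that is, $\lambda\leq_T U$. No appeal to uniformity is actually needed here; uniformity only serves to guarantee $\mathfrak{ch}(U)\geq\kappa^+$, which is what makes the statement an honest strengthening of Theorem~\ref{thm:Kanamoti cohesive kappa+}.

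To build $\rho$, fix a base $\langle B_\alpha\mid \alpha<\lambda\rangle$ of $U$ of minimal cardinality $\lambda$ and set
\[
\rho(X)=\min\{\alpha<\lambda\mid B_\alpha\subseteq X\}.
\]
This is well defined because the $B_\alpha$ generate $U$. It is moreover monotone from $(U,\supseteq)$ to $(\lambda,<)$: if $Y\subseteq X$ then $\{\alpha\mid B_\alpha\subseteq Y\}\subseteq\{\alpha\mid B_\alpha\subseteq X\}$, so $\rho(X)\leq\rho(Y)$ — shrinking a set can only push its least index up.

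The only real content is the cofinality of $\rho$, and this is exactly where the minimality of the base is used. Suppose $\mathcal{C}\subseteq U$ is cofinal, i.e.\ generating, yet $\rho''\mathcal{C}$ is bounded in $\lambda$, say $\rho''\mathcal{C}\subseteq\gamma$ with $\gamma<\lambda$. Then every $X\in\mathcal{C}$ contains some $B_\alpha$ with $\alpha<\gamma$; and since $\mathcal{C}$ generates $U$, every $Z\in U$ contains some $X\in\mathcal{C}$, hence some $B_\alpha$ with $\alpha<\gamma$. Thus $\{B_\alpha\mid\alpha<\gamma\}$ is a generating family of size $|\gamma|<\lambda$, contradicting $\mathfrak{ch}(U)=\lambda$. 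Therefore $\rho''\mathcal{C}$ is unbounded, equivalently cofinal, in the linear order $(\lambda,<)$, so $\rho$ is cofinal and $\lambda=\mathfrak{ch}(U)\leq_T U$ as desired. This argument is insensitive to whether $\lambda$ is regular or singular: boundedness of $\rho''\mathcal{C}$ always yields a generating set of size $<\lambda$, and for singular $\lambda$ one simply reads the conclusion through $(\lambda,<)\equiv_T \mathrm{cf}(\lambda)$.

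I do not expect a genuine obstacle; the one point requiring care is bookkeeping — confirming that ``cofinal'' for a subset of the linear order $(\lambda,<)$ means exactly ``unbounded'', and that the minimality contradiction survives the singular case. For context, this dovetails with the cohesiveness picture: by Claim~\ref{claim: cohesive iff Tukey}, $\mathfrak{ch}(U)\leq_T U$ is equivalent to $U$ failing to be $(\mathrm{cf}(\mathfrak{ch}(U)),\mathrm{cf}(\mathfrak{ch}(U)))$-cohesive, which is the content of Theorem~\ref{thm:improve Kanamori cohesive}; and specializing to $2^\kappa=\kappa^+$, where $\mathfrak{ch}(U)=\kappa^+$, recovers Kanamori's Theorem~\ref{thm:Kanamoti cohesive kappa+}. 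One could alternatively run the whole argument through the ultrapower cover characterization (Theorem~\ref{thm:Characterization of Galvin}), but the direct cofinal-map route above seems cleanest.
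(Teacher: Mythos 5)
Your proof is correct, and it reaches the conclusion by the route dual to the one the paper takes. The paper works on the Tukey-map side: by a transfinite recursion of length $\mathfrak{ch}(U)$ it refines a minimal base to a sequence $\langle b^*_i \mid i<\mathfrak{ch}(U)\rangle$ with $b^*_j\not\subseteq b^*_i$ for $j<i$, and then verifies directly that $i\mapsto b^*_i$ is an unbounded map from $(\mathfrak{ch}(U),<)$ to $(U,\supseteq)$ (if $I$ were unbounded with $\bigcap_{i\in I}b^*_i\in U$, some $b^*_j$ would have to sit inside a later $b^*_i$). You work on the cofinal-map side: the map $\rho(X)=\min\{\alpha\mid B_\alpha\subseteq X\}$ is monotone, boundedness of $\rho''\mathcal{C}$ for a generating $\mathcal{C}$ would produce a generating family of size $<\mathfrak{ch}(U)$, and Schmidt duality converts $\rho$ into the required Tukey map. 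Both arguments turn on exactly the same minimality of the character; yours dispenses with the recursion and with checking that the refined sequence is still a base, while the paper's version has the mild advantage of exhibiting an explicit unbounded $\mathfrak{ch}(U)$-sequence inside $U$. Your side remarks are also accurate: uniformity plays no role in the argument itself (it only makes the statement a genuine strengthening of Theorem \ref{thm:Kanamoti cohesive kappa+}), and the singular case causes no trouble since $(\lambda,<)\equiv_T(\cf(\lambda),<)$.
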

\begin{proof}
    Let us construct a sequence of length $\mathfrak{ch}(U)$ witnessing that $\mathfrak{ch}(U)\leq_T U$. Let $\mathcal{B}$ be a base for $U$ of size $\theta$, and let us construct a sequence $\l b^*_i\mid i<\mathfrak{ch}(U)\r$ starting with $b^*_0=b_0$. Suppose that $\l b^*_i\mid i<\gamma\r$ has been defined. By minimality of $\mathfrak{ch}(U)$, there is $X\in U$ such that for any $i<\gamma$, $b^*_i\not\subseteq X$. Since $\mathcal{B}$ is a base, there is $b\in \mathcal{B}$ such that $b\subseteq X$, and therefore for any $i<\gamma$, $b^*_i\not\subseteq b$. let $i_\gamma<\mathfrak{ch}(U)$ be minimal such that $b^*_{i_\gamma}$ is not generated by the previous $b_i$'s, and $b^*_\gamma=b_{i_\gamma}$. It is not hard to check that $\l b^*_i\mid i<\mathfrak{ch}(U)\r$ is again a base for $U$. Suppose towards a contradiction that there is $I\subseteq\theta$ unbounded such that $\bigcap_{i\in I}b^*_i\in U$. Then there $j<\theta$ such that $b^*_j\subseteq \bigcap_{i\in I}b^*_i$. Pick any $i\in I$ such that $i>j$, then $b^*_j\subseteq b^*_i$, contradicting the choice of $b^*_i$.

\end{proof}
Kanamori asked \cite[Question 2]{Kanamori1978} the following:
\begin{center}
    Is the existence of a $\kappa$-complete ultrafilter over $\kappa>\omega$ which is $(\kappa^+,\kappa^+)$-cohesive consistent?
\end{center}
    
In this section we will answer Kanamori's question by studying the \textit{point spectrum} and establishing some connections of it to the order $(U,\supseteq^*)$. This order was studied by Milovich \cite{Milovich08} and later by Dobrinen and the author \cite{TomNatasha}.
\subsection{The point spectrum of an ultrafilter}
Define the \textit{point spectrum} of an ultrafilter $U$ by $$Sp_{T}(U)=\{\lambda\in Reg\mid \lambda\leq_T (U,\supseteq)\}.$$ 
By Claim \ref{claim: cohesive iff Tukey}, we also have 
$$Sp_{T}(U)=\{\lambda\in Reg\mid U\text{ is  not }(\lambda,\lambda)\text{-cohesive}\}.$$
Isbell \cite{Isbell65} and independently Juh\'{a}sz \cite{Juhasz67} proved that cardinals such that $\kappa^{<\kappa}=\kappa$ always admits a uniform ultrafilter $U$ which is not $(2^\kappa,\omega)$-cohesive, and therefore $Sp(U)=Reg\cap [\omega,2^\kappa]$. Moreover, if for example, $\kappa$ is $\kappa$-compact (or even less- see \cite{TomGabe24}) there is always a $\kappa$-complete ultrafilter, and even one which extends the club filter, which is not $(2^\kappa,\kappa)$-cohesive. Such ultrafilter in particular are not $(\lambda,\lambda)$-cohesive for any regular $\kappa\leq\lambda\leq 2^\kappa$.


\begin{definition}
    An ultrafilter $U$ is $(\lambda,\mu)^*$-cohesive if the directed set $(U,\supseteq^*)$ $(\lambda,\mu)$-cohesive; that is, if for every sequence $\l A_\alpha\mid\alpha<\lambda\r\subseteq U$ there is $I\in[\lambda]^\mu$ such that $\{A_i\mid i\in I\}$ admits a pseudo intersection in $U$
\end{definition}
Similar to the usual cohesiveness characterization, we have the following:
\begin{proposition}
    Let $U$ be an ultrafilter. The following are equivalent:
    \begin{enumerate}
        \item For any $\mu$-directed poset $\mathbb{P}$, such that $|\mathbb{P}|\leq\lambda$, $\mathbb{P}\leq_T (U,\supseteq^*)$.
        \item $([\lambda]^{<\mu},\subseteq)\leq_T (U,\supseteq^*)$.
        \item $U$ is not $(\lambda,\mu)^*$-cohesive.
    \end{enumerate}
\end{proposition}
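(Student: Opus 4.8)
The plan is to mirror the proof of the analogous characterization for the ordinary ($\supseteq$) order, since the only difference is that the ambient directed set is now $(U,\supseteq^*)$ rather than $(U,\supseteq)$. The logical skeleton I would follow is the cycle $(1)\Rightarrow(2)\Rightarrow(3)\Rightarrow(1)$, exactly as in the earlier corollary that handled the $\supseteq$ case.

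First I would prove $(1)\Rightarrow(2)$, which is immediate: the poset $([\lambda]^{<\mu},\subseteq)$ is itself $\mu$-directed (the union of fewer than $\mu$ sets each of size $<\mu$ still has size $<\mu$, using $\lambda^{<\mu}=\lambda$ to control cardinalities) and has cardinality $\lambda^{<\mu}=\lambda$, so it is one of the posets to which the hypothesis of $(1)$ applies. Next, for $(3)\Rightarrow(1)$, I would take an arbitrary $\mu$-directed poset $\mathbb{P}$ of size $\le\lambda$, fix an injection $g:\mathbb{P}\rightarrow\lambda$, and build a Tukey reduction $\mathbb{P}\to(U,\supseteq^*)$ out of a witness that $U$ is not $(\lambda,\mu)^*$-cohesive: given the sequence $\l A_\alpha\mid\alpha<\lambda\r\subseteq U$ having no $\mu$-subfamily with a pseudo-intersection in $U$, the map $p\mapsto A_{g(p)}$ sends any unbounded (in the $\mu$-directed sense) subset of $\mathbb{P}$ to a family of size $\mu$ in $U$, which therefore cannot be $\subseteq^*$-bounded in $U$; the $\mu$-directedness of $\mathbb{P}$ is exactly what guarantees that a set with no bound has a subset of size $\mu$ with no bound. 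The cardinal arithmetic $\lambda^{<\mu}=\lambda$ again guarantees we are inside the right size class.

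The core of the argument is $(2)\Rightarrow(3)$, and I expect this to be the main obstacle, because unlike the $\supseteq$ case there is no clean ultrapower characterization of $(U,\supseteq^*)$ available (the covers machinery of Section~\ref{Section: Cover Characterizaiton} is tailored to $\supseteq$). So I would argue combinatorially and directly with the $\supseteq^*$ order. Assume $([\lambda]^{<\mu},\subseteq)\leq_T(U,\supseteq^*)$ via a monotone cofinal map $h:(U,\supseteq^*)\to([\lambda]^{<\mu},\subseteq)$ (using Schmidt duality and the remark that for filters one may take the cofinal map monotone). From $h$ I would extract a family $\l A_\alpha\mid\alpha<\lambda\r\subseteq U$ witnessing failure of $(\lambda,\mu)^*$-cohesiveness: for each $\alpha<\lambda$ pick $A_\alpha\in U$ with $\alpha\in h(A_\alpha)$. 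If some $I\in[\lambda]^\mu$ had a pseudo-intersection $A\in U$, then $A\subseteq^* A_i$ for all $i\in I$, so by monotonicity $h(A)\supseteq h(A_i)\ni i$ for every $i\in I$, forcing $I\subseteq h(A)$; but $|h(A)|<\mu\le|I|$, a contradiction.

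The delicate point to get right is the direction of monotonicity for the dual cofinal map on $(U,\supseteq^*)$: here $A\subseteq^* B$ means $B$ is $\supseteq^*$-below $A$, so a monotone cofinal map must satisfy $A\subseteq^* B\Rightarrow h(B)\subseteq h(A)$, and I would verify that the indexing above respects this so that the pseudo-intersection $A$ ends up mapped above all the relevant $h(A_i)$. I would also double-check that $h(A)\in[\lambda]^{<\mu}$ genuinely has size below $\mu$ at the point of contradiction, which is where the hypothesis $\lambda^{<\mu}=\lambda$ is silently used to make sure $([\lambda]^{<\mu},\subseteq)$ is the intended directed set of size $\lambda$. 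Once these bookkeeping points are settled, the proof is a routine adaptation of the $\supseteq$ argument, and the whole proposition follows by chaining the three implications.
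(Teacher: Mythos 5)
Your argument is essentially correct, and it is worth noting that the paper gives no proof of this proposition at all: it is asserted as ``similar to the usual cohesiveness characterization,'' whose proof goes through the thin-cover/ultrapower machinery of Section 2. As you correctly observe, that machinery is built for $(U,\supseteq)$ and does not transfer to $(U,\supseteq^*)$, so a direct combinatorial argument like yours is in fact what is required. Your $(3)\Rightarrow(1)$ and $(2)\Rightarrow(3)$ are faithful translations of the two halves of the paper's corollary for $\supseteq$; in particular, your use of singletons in $(2)\Rightarrow(3)$ matches the paper's step ``$I^*=\{\{i\}\mid i\in I\}$ is unbounded in $[\lambda]^{<\mu}$,'' just without the detour through $M_U$. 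In $(3)\Rightarrow(1)$ the precise point is that an unbounded $\mathcal{B}\subseteq\mathbb{P}$ has size at least $\mu$ by $\mu$-directedness, so $g''\mathcal{B}$ contains some $I\in[\lambda]^\mu$, and $\{A_i\mid i\in I\}$ has no pseudo-intersection in $U$ by the choice of the witnessing sequence; your phrasing is slightly loose there but the idea is right.

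Two points to tighten. First, $(1)\Rightarrow(2)$ genuinely needs $\lambda^{<\mu}=\lambda$ (and regularity of $\mu$ for the $\mu$-directedness of $[\lambda]^{<\mu}$); this hypothesis appears in the paper's $\supseteq$ analogue but was dropped from the statement of this proposition, so you are right to reinstate it rather than wrong to use it. Second, in $(2)\Rightarrow(3)$ you invoke a monotone cofinal map $h:(U,\supseteq^*)\to([\lambda]^{<\mu},\subseteq)$, but the paper's monotonization remark is stated only for $(F,\supseteq)$, so this needs a word of justification. It is repairable: since bounded families in $[\lambda]^{<\mu}$ have least upper bounds (unions), one can take a Tukey map $u$ and set $h(A)=\bigcup\{s\mid A\subseteq^* u(s)\}$, which is well defined, monotone and cofinal. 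But it is cleaner to avoid monotone maps entirely: with $u:([\lambda]^{<\mu},\subseteq)\to(U,\supseteq^*)$ unbounded, put $A_\alpha=u(\{\alpha\})$; for any $I\in[\lambda]^\mu$ the family $\{\{i\}\mid i\in I\}$ is unbounded in $[\lambda]^{<\mu}$, hence $\{A_i\mid i\in I\}=u''\{\{i\}\mid i\in I\}$ is unbounded in $(U,\supseteq^*)$, i.e.\ has no pseudo-intersection in $U$. This also sidesteps any worry about repetitions among the $A_\alpha$.
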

Define
$$Sp^*_T(U)=\{\lambda\in Reg\mid \lambda\leq_T (U,\supseteq^*)\}=\{\lambda\in Reg\mid U\text{ is not }(\lambda,\lambda)^*\text{-cohesive}\}.$$
It is easy to see that $(U,\supseteq^*)\leq_T (U,\supseteq)$ and that $(\lambda,\mu)$-cohesivness implies $(\lambda,\mu)^*$-cohesivness. Moreover the other implication is usually true as well:
\begin{lemma}
    Let $U$ be a uniform ultrafilter over $\kappa$ and a cardinal $\mu$ such that $cf(\mu)\neq\kappa$. If $U$ is  $(\lambda,\mu)^*$-cohesive then $U$ is $(\lambda,\mu)$-cohesive. 
\end{lemma}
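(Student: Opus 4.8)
The plan is to route the argument through the pseudo-intersection supplied by $(\lambda,\mu)^*$-cohesiveness and then convert it into a genuine intersection by a pigeonhole on tails. Fix $\mathcal{A}=\{A_\alpha\mid\alpha<\lambda\}\in[U]^\lambda$, enumerated injectively. Applying $(\lambda,\mu)^*$-cohesiveness produces $I\in[\lambda]^\mu$ and a set $A\in U$ with $A\subseteq^* A_i$ for every $i\in I$; since $U$ is uniform over $\kappa$, ``$\subseteq^*$'' means bounded below $\kappa$, so for each $i\in I$ I would fix $\beta_i<\kappa$ with $A\setminus A_i\subseteq\beta_i$, defining $g\colon I\to\kappa$ by $g(i)=\beta_i$. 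The whole lemma then reduces to the combinatorial claim that there are $\gamma<\kappa$ and $J\in[I]^\mu$ with $g(j)<\gamma$ for all $j\in J$.

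Granting the claim, I would finish as follows. For every $j\in J$ we have $A\setminus A_j\subseteq\beta_j\subseteq\gamma$, hence $A\setminus\gamma\subseteq A_j$, and therefore $A\setminus\gamma\subseteq\bigcap_{j\in J}A_j$. Since $A\in U$ and $\gamma<\kappa$, uniformity gives $\kappa\setminus\gamma\in U$ (a set of size $<\kappa$ cannot belong to a uniform ultrafilter on $\kappa$), so $A\setminus\gamma=A\cap(\kappa\setminus\gamma)\in U$ and consequently $\bigcap_{j\in J}A_j\in U$. As $\{A_j\mid j\in J\}\in[\mathcal{A}]^\mu$, this exhibits $U$ as $(\lambda,\mu)$-cohesive.

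It remains to prove the claim, and this is the only place the hypothesis $\mathrm{cf}(\mu)\neq\kappa$ enters. Set $N_\gamma=\{i\in I\mid g(i)<\gamma\}$ for $\gamma<\kappa$; these increase with union $I$, and it suffices to find one $\gamma$ with $|N_\gamma|=\mu$. If $\mu<\kappa$ and $\kappa$ is regular this is immediate, since $|g[I]|\le\mu<\mathrm{cf}(\kappa)$ forces $\sup g[I]<\kappa$ and $J=I$ works; the case $\mu=\kappa$ is excluded outright, as for regular $\kappa$ it would give $\mathrm{cf}(\mu)=\kappa$. Suppose then $\mu>\kappa$ and, towards a contradiction, that $|N_\gamma|<\mu$ for all $\gamma<\kappa$. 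The estimate $\mu=|I|\le\kappa\cdot\sup_{\gamma<\kappa}|N_\gamma|$ forces $\sup_{\gamma<\kappa}|N_\gamma|=\mu$, so $h(\gamma):=|N_\gamma|$ is a non-decreasing cofinal map $\kappa\to\mu$. Letting $T$ be the set of $\gamma$ at which $h$ strictly exceeds all its earlier values, $T$ is cofinal in $\kappa$ (otherwise $h$ is eventually constant below $\mu$), and $h\restriction T$ is strictly increasing with image cofinal in $\mu$; hence $\mathrm{cf}(\mu)=\mathrm{cf}(\mathrm{ot}(T))=\mathrm{cf}(\kappa)=\kappa$, contradicting the hypothesis. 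Thus some $N_\gamma$ has size $\mu$.

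I expect the last combinatorial claim to be the main obstacle, and specifically the regime $\mu>\kappa$: its content is precisely that $\mathrm{cf}(\mu)\neq\kappa$ prevents the tails $g(i)$ from being spread cofinally through $\kappa$ on every $\mu$-sized subset, and the computation of $\mathrm{cf}(\mu)$ via the increase-points of $h$ is exactly calibrated to this. The argument as written leans on $\kappa$ being regular, both so that small-range functions are automatically bounded and so that a cofinal subset of $\kappa$ has cofinality $\kappa$. For singular $\kappa$ the bare inequality $\mathrm{cf}(\mu)\neq\kappa$ no longer suffices (for instance $\mu=\mathrm{cf}(\kappa)$ admits an everywhere-unbounded $g$), so there one must either strengthen the hypothesis to involve $\mathrm{cf}(\kappa)$ or exploit structure of the $A_i$ beyond the crude bounds $\beta_i$. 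I would therefore present the clean argument for regular $\kappa$, which covers the intended applications where $\mu$ is regular and $\neq\kappa$, and handle any singular case separately.
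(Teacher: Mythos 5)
Your proof is correct and follows essentially the same route as the paper's: extract the pseudo-intersection $A$ and the tail bounds $\xi_i$, then use $cf(\mu)\neq\kappa$ to find a $\mu$-sized subfamily on which the tails are uniformly bounded below some $\gamma<\kappa$, so that $A\setminus\gamma\in U$ is a genuine lower bound. The only difference is the organization of that pigeonhole step --- the paper splits into the cases $\mu>\kappa$ (constant tail value on a $\mu$-sized subset) and $cf(\mu)<\kappa$ (decomposing $I$ into pieces of regular size), whereas your unified argument via the increase-points of $\gamma\mapsto|N_\gamma|$ handles singular $\mu>\kappa$ in one stroke --- and you are right that both arguments implicitly use the regularity of $\kappa$, which holds in all intended applications.
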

\begin{proof} Assume that $U$ is $(\lambda,\mu)^*$-cohesive and
    let $\l X_i\mid i<\lambda\r\subseteq U$. By assumption, there is $A\in U$ and $I\in [\lambda]^\mu$ such that for every $i\in I$, $A\subseteq^* X_i$.
For each $i\in I$, let $\xi_i<\kappa$ be such that $A\setminus \xi_i\subseteq X_i$. Let us split into cases. If $\mu>\kappa$, then there is $I'\in [I]^\mu$ and $\xi^*<\kappa$ such that for every $i\in I'$, $\xi_i=\xi^*$. If $cf(\mu)<\kappa$, and let $\l 
 \mu_i\mid i<cf(\mu)\r$ be regular cardinals different from $\kappa$ converging to $\mu$. Write $I=\biguplus_{i<cf(\mu)}I_i$ where $|I_i|=\mu_i$, for each $i<cf(\mu)$. If $\mu_i<\kappa$, we let $\eta_i=\sup_{j\in I_i}\xi_j$, and if $\mu_i>\kappa$, since it is regular, we can apply the previous part to find $J_i\subseteq I_i$, $|J_i|=\mu_i$ such that for every $j\in J_i$, $\xi_j=\eta_i$. Then we can take $\xi^*=\sup_{i<cf(\mu)}\eta_i<\kappa$ and we let $I'=\bigcup_{i<cf(\mu)}J_i$. In any case, $A\setminus\xi^*\subseteq \bigcap_{i\in I'} X_i$. By uniformity, $A\setminus \xi^*\in U$ and therefore $\bigcap_{i\in I'}X_i\in U$.
\end{proof}
\begin{corollary}\label{Cor: Spec^* is enough} If $cf(\lambda)\neq\kappa$, then $U$ is $(\lambda,\lambda)$-cohesive iff $U$ is $(\lambda,\lambda)^*$-cohesive. In particular,
$$Sp_{T}(U)\setminus \{\kappa\}=Sp^*_T(U)\setminus\{\kappa\}$$
\end{corollary}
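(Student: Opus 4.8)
The plan is to obtain both directions of the equivalence immediately from the preceding Lemma, with no new combinatorics required. The forward implication, that $(\lambda,\lambda)$-cohesiveness yields $(\lambda,\lambda)^*$-cohesiveness, holds for every $\lambda$ and is the elementary observation recorded just before the Lemma: if $I\in[\lambda]^\lambda$ satisfies $\bigcap_{i\in I}A_i\in U$, then this genuine intersection is in particular a pseudo-intersection of $\{A_i\mid i\in I\}$ lying in $U$. For the reverse implication I would simply invoke the preceding Lemma with $\mu=\lambda$; the hypothesis $cf(\lambda)\neq\kappa$ is exactly the condition the Lemma imposes on $\mu$, so $(\lambda,\lambda)^*$-cohesiveness gives back $(\lambda,\lambda)$-cohesiveness. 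Together these two implications establish the first assertion.

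For the ``in particular'' clause I would unwind the two characterizations of the spectra recalled above, namely $Sp_{T}(U)=\{\lambda\in Reg\mid U\text{ is not }(\lambda,\lambda)\text{-cohesive}\}$ and the analogous description of $Sp^*_T(U)$ with starred cohesiveness. Fix a regular $\lambda\neq\kappa$. Since $\lambda$ is regular, $cf(\lambda)=\lambda\neq\kappa$, so the first part of the corollary applies and $U$ is $(\lambda,\lambda)$-cohesive exactly when it is $(\lambda,\lambda)^*$-cohesive; negating, $\lambda\in Sp_{T}(U)$ iff $\lambda\in Sp^*_T(U)$. As this equivalence holds for every regular $\lambda$ other than $\kappa$, the two sets coincide once $\kappa$ is removed, which is the claimed equality.

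The only point requiring care---and the closest thing to an obstacle---is the regularity bookkeeping: the Lemma is phrased with a condition on $cf(\mu)$, whereas the spectra live on regular cardinals, so one must observe that for regular $\lambda$ the condition $cf(\lambda)\neq\kappa$ collapses to $\lambda\neq\kappa$, which is precisely why the single excised value is $\kappa$. No estimate on $U$, $\lambda$, or $\mu$ beyond what the Lemma already supplies enters the argument, so the corollary is genuinely a formal consequence of the Lemma together with the trivial forward implication.
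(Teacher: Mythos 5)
Your proposal is correct and follows exactly the route the paper intends: the paper leaves this corollary as an immediate consequence of the preceding lemma (applied with $\mu=\lambda$) together with the trivial implication that a genuine intersection in $U$ is in particular a pseudo-intersection, and your handling of the ``in particular'' clause via $cf(\lambda)=\lambda$ for regular $\lambda$ is the right bookkeeping. Nothing is missing.
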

\begin{remark} For any uniform ultrafilter over $\kappa$ will have $\kappa\in Sp_{T}(U)$. However $U$ can be $(\kappa,\kappa)^*$-cohesive if for example $U$ is a $p$-point. In fact, for $\kappa$-complete ultrafilters $U$ over $\kappa$, $U$ is a $p$-point iff $U$ is   $(\kappa,\kappa)^*$-cohesive. Or in other words, for $\kappa$-complete ultrafilters, $U$ is not a $p$-point iff $Sp_T(U)=Sp^*_T(U)$.
\end{remark}
\begin{definition}
    Let $U$ be an ultrafilter. Let the \textit{Depth Spectrum of U} be denoted by $Sp_{dp}(U)$ and defined to be the set of all regular cardinal lengths $\theta$ of sequences $\l A_i\mid i<\theta\r\subseteq U$ which are $\supseteq^*$-decreasing and have no measure one pseudo intersection in $U$. Define the \textit{Depth} of $U$ to be $\mathfrak{dp}(U)=\min(Sp_{dp}(U))$.
\end{definition}
It is not hard to prove using Zorn's lemma that $Sp_{dp}(U)\neq\emptyset$ and therefore $\mathfrak{dp}(U)$ is well defined.
Note that if $\theta$ is singular, and $\l A_i\mid i<\theta\r$ is $\supseteq^*$-decreasing with no $\supseteq^*$-bound, then $cf(\theta)\in Sp_{dp}(U)$. Hence $\mathfrak{dp}(U)$ is a regular cardinal. 

Suppose that $\kappa$ is $\kappa$-compact\footnote{That is, every $\kappa$-cpomplete filter can be extended to a $\kappa$-complete ultrafilter.}, and  let $\l X_i\mid i<\theta\r$ be a tower. Since any tower has the $<\kappa$-intersection property, there is a $\kappa$-complete ultrafilter $U$ such that for every $i$, $X_i\in U$. Hence $\theta\in Sp_{dp}(U)$. In particular, the tower number $\mathfrak{t}_\kappa\in Sp_{dp}(U)$ for some $U$. Since $U$ is uniform, then it has to be that $\mathfrak{t}_\kappa=\mathfrak{dp}(U)$ as any pseudo intersection in $U$ must have size $\kappa$. 

\begin{proposition}
   $Sp_{dp}(U)\subseteq Sp_{T}(U)$.
\end{proposition}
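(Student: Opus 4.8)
The plan is to exhibit an explicit Tukey reduction from $\theta$ into $(U,\supseteq^*)$ whenever $\theta\in Sp_{dp}(U)$, and then cheaply promote it to $(U,\supseteq)$. Fix $\theta\in Sp_{dp}(U)$ and let $\langle A_i\mid i<\theta\rangle\subseteq U$ be a $\supseteq^*$-decreasing sequence with no measure one pseudo intersection in $U$, as guaranteed by the definition of $Sp_{dp}(U)$. First I would consider the map $g\colon\theta\to (U,\supseteq^*)$ given by $g(i)=A_i$ and argue that it is a Tukey map, i.e. that it sends unbounded subsets of $\theta$ to unbounded subsets of $(U,\supseteq^*)$.

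The key step is the following. Since $\theta$ is regular, a subset $I\subseteq\theta$ is unbounded exactly when $|I|=\theta$, and such an $I$ is cofinal in $\theta$. Suppose toward a contradiction that for some unbounded $I$ the image $g''I=\{A_i\mid i\in I\}$ is bounded in $(U,\supseteq^*)$; recalling that boundedness in $(U,\supseteq^*)$ means the existence of a pseudo intersection in $U$, we obtain $B\in U$ with $B\subseteq^* A_i$ for every $i\in I$. Now for an arbitrary $j<\theta$, cofinality of $I$ gives some $i\in I$ with $i\ge j$, and the $\supseteq^*$-decreasing shape of the sequence gives $A_i\subseteq^* A_j$; transitivity of $\subseteq^*$ (valid for any infinite $\kappa$, since the union of two sets of size $<\kappa$ again has size $<\kappa$) then yields $B\subseteq^* A_j$. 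As $j$ was arbitrary, $B$ is a pseudo intersection in $U$ of the entire sequence, contradicting the choice of the sequence. Hence $g''I$ is unbounded and $g$ is Tukey, so $\theta\le_T(U,\supseteq^*)$; equivalently, $\theta\in Sp^*_T(U)$, or in the language of cohesiveness, $U$ is not $(\theta,\theta)^*$-cohesive.

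Finally I would pass from $(U,\supseteq^*)$ to $(U,\supseteq)$: since $(U,\supseteq^*)\le_T(U,\supseteq)$, as already noted above, and $\le_T$ is transitive, we get $\theta\le_T(U,\supseteq^*)\le_T(U,\supseteq)$, whence $\theta\in Sp_T(U)$. This yields the desired inclusion $Sp_{dp}(U)\subseteq Sp_T(U)$.

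The only delicate point—and the single place where the hypotheses are really used—is the promotion of a pseudo intersection over a cofinal subfamily to a pseudo intersection over the whole tower; this is precisely where the $\supseteq^*$-decreasing shape of the witnessing sequence and the regularity of $\theta$ enter. Everything else is bookkeeping: identifying boundedness in $(U,\supseteq^*)$ with the existence of a pseudo intersection, and invoking the one-line reduction $(U,\supseteq^*)\le_T(U,\supseteq)$. I do not anticipate any genuine obstacle beyond verifying these routine identifications.
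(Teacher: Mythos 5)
Your proof is correct and rests on exactly the same key step as the paper's: a pseudo-intersection of a cofinal subfamily of the $\supseteq^*$-decreasing witness is, by regularity of $\theta$ and transitivity of $\subseteq^*$, a pseudo-intersection of the entire sequence, contradicting the choice of that sequence. The only cosmetic difference is that you package this as an explicit Tukey map into $(U,\supseteq^*)$ and then compose with $(U,\supseteq^*)\leq_T(U,\supseteq)$ (incidentally giving the slightly sharper $Sp_{dp}(U)\subseteq Sp^*_T(U)$), whereas the paper argues directly against $(\theta,\theta)$-cohesiveness and invokes its equivalence with $\theta\leq_T U$.
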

\begin{proof}
Fix a witnessing sequence $\l A_\alpha\mid \alpha<\theta\r\subseteq U$ is $\subseteq^*$-decreasing. If $U$ was $(\theta,\theta)$-cohesive, then there would have been $I\in [\theta]^{\theta}$ such that $A\in U$ is a pseudo intersection of $\{A_i\mid i\in I\}$. We claim that $A$ is a pseudo intersection for the entire sequence. Indeed, let $\alpha<\theta$, then there is $\alpha'\in I$ such that $\alpha'\geq\alpha$. Hence $A\subseteq^* A_{\alpha'}\subseteq^* A_\alpha$. Contradiction.
\end{proof}

The most general setup to examine the point spectrum is the Galois-Tukey connections 
(see for example Blass's Chapter in \cite{BlassHandbook}).  However, this generality will not contribute to our specific interest in ultrafilters. Given a directed set $\mathbb{P}$, the \textit{lower character} $l(\mathbb{P})$ is the smallest size of an unbounded family in $\mathbb{P}$, while the \textit{upper character} $u(\mathbb{P})$ is the smallest size of a cofinal subset of $\mathbb{P}$. In our case, where $\mathbb{P}=(U,\supseteq)$ or $\mathbb{P}=(U,\supseteq^*)$ we get:
\begin{proposition}
    For any uniform ultrafilter $U$, we have:
    \begin{enumerate}
        \item $u((U,\supseteq))=u((U,\supseteq^*))=\mathfrak{ch}(U)$. 
        \item $l(U,\supseteq)=crit(j_U)$ is the completeness degree of the ultrafilter $U$.
        \item $l(U,\supseteq^*)=\mathfrak{dp}(U)$
    \end{enumerate}
    We have that $crit(j_U)\leq\mathfrak{dp}(U)\leq\mathfrak{ch}(U)$.
\end{proposition}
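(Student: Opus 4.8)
The plan is to treat the four assertions by unwinding the definitions of the upper and lower characters in each of the two orders, and then read off the final chain of inequalities from monotonicity of these invariants.

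For the upper characters (item (1)) I would first observe that $\mathcal B\subseteq U$ is cofinal in $(U,\supseteq)$ exactly when for every $X\in U$ there is $b\in\mathcal B$ with $b\subseteq X$, i.e. exactly when $\mathcal B$ generates $U$; hence $u((U,\supseteq))=\mathfrak{ch}(U)$ is immediate. For $(U,\supseteq^*)$ one inequality is equally cheap: a base is $\subseteq^*$-cofinal, so $u((U,\supseteq^*))\le\mathfrak{ch}(U)$. For the converse I would pass from a $\subseteq^*$-cofinal family $\mathcal B$ to its tail-closure $\mathcal B'=\{b\setminus\xi\mid b\in\mathcal B,\ \xi<\kappa\}$; uniformity guarantees $b\setminus\xi\in U$ (a set of size $<\kappa$ is not in $U$), and $\mathcal B'$ is a genuine base since $b\subseteq^* X$ means $b\setminus\xi\subseteq X$ for some $\xi$. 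This yields $\mathfrak{ch}(U)\le|\mathcal B|\cdot\kappa$. To absorb the factor $\kappa$ I would use the auxiliary fact $\mathfrak{ch}(U)\ge\kappa$ for uniform $U$: if $\mathcal C$ were a base of size $<\kappa$, choosing a point $p_c\in c$ for each $c\in\mathcal C$ produces a set $P$ of size $<\kappa$, so $\kappa\setminus P\in U$ has no member of $\mathcal C$ below it — a contradiction. The delicate point, and the step I expect to be the \emph{main obstacle}, is precisely this cardinal arithmetic: one must rule out a $\subseteq^*$-cofinal family of size $<\kappa$ so that $|\mathcal B|\cdot\kappa=|\mathcal B|$. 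I would attack this in the same spirit — removing from a candidate $X\in U$ a set meeting each $b_\alpha$ cofinally while keeping $X\in U$ — and here the naive point-picking from the $\supseteq$-case does not transfer verbatim and needs genuine care.

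For the lower characters I would dualize. In $(U,\supseteq)$ a family $\mathcal A$ is bounded exactly when some $B\in U$ has $B\subseteq A$ for all $A\in\mathcal A$, which happens iff $\bigcap\mathcal A\in U$; hence $\mathcal A$ is unbounded iff $\bigcap\mathcal A\notin U$, and the least size of such a family is the completeness of $U$, which equals $crit(j_U)$, giving item (2). For item (3), $\mathcal A$ is bounded in $(U,\supseteq^*)$ iff it has a pseudo-intersection in $U$, so $l((U,\supseteq^*))$ is the least size of a family in $U$ with no measure-one pseudo-intersection; then $l((U,\supseteq^*))\le\mathfrak{dp}(U)$ is immediate, a witnessing tower being such a family. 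For the reverse I would take a minimal unbounded family $\mathcal A=\{A_\alpha\mid\alpha<\lambda\}$ and first note $\lambda$ is regular by the standard argument: if $\mathcal A=\bigcup_{i<cf(\lambda)}\mathcal A_i$ with $|\mathcal A_i|<\lambda$, each $\mathcal A_i$ has a pseudo-intersection $C_i\in U$ by minimality, and the $C_i$ would form an unbounded family of size $cf(\lambda)$, forcing $cf(\lambda)=\lambda$. Then, using that every subfamily of size $<\lambda$ is bounded, I would recursively pick $B_\alpha\in U$ a pseudo-intersection of $\{A_\beta\mid\beta\le\alpha\}\cup\{B_\gamma\mid\gamma<\alpha\}$; this makes $\langle B_\alpha\mid\alpha<\lambda\rangle$ a $\subseteq^*$-decreasing tower with no pseudo-intersection in $U$ (one would pseudo-intersect all of $\mathcal A$), so $\lambda\in Sp_{dp}(U)$ and $\mathfrak{dp}(U)\le\lambda$, giving equality.

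Finally, the chain $crit(j_U)\le\mathfrak{dp}(U)\le\mathfrak{ch}(U)$ follows formally from (1)–(3). For the left inequality, no pseudo-intersection implies $\bigcap\mathcal A\notin U$, so every $\supseteq^*$-unbounded family is $\supseteq$-unbounded; as the class of $\supseteq^*$-unbounded families is contained in that of $\supseteq$-unbounded families, the minimum over the larger class is no larger, i.e. $l((U,\supseteq))\le l((U,\supseteq^*))$. For the right inequality I would invoke the general fact $l\le u$ for any directed set without a maximum, noting $(U,\supseteq^*)$ has none: each $A\in U$ splits into two unbounded halves, one of which lies in $U$ and is strictly $\subsetneq^*$-below $A$. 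Thus $\mathfrak{dp}(U)=l((U,\supseteq^*))\le u((U,\supseteq^*))=\mathfrak{ch}(U)$.
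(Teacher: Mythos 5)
Your argument is correct and follows essentially the same route as the paper, which simply cites (1) and (2) as well known and proves (3) by exactly your refinement recursion (the paper packages it as a lemma: any family of length $\theta\leq\mathfrak{dp}(U)$ can be shrunk to a $\subseteq^*$-decreasing one by taking pseudo-intersections at limits, whence every family of size $<\mathfrak{dp}(U)$ is bounded; your ``minimal unbounded family yields a tower of the same regular length'' is the contrapositive packaging of the same idea). The final chain is also obtained the same way. The one step you flag as an obstacle --- ruling out a $\subseteq^*$-cofinal family $\mathcal B=\{b_\alpha\mid\alpha<\mu\}$ with $\mu\leq\kappa$ --- closes by the two-sided version of your point-picking: enumerate $\mu\times\kappa$ in order type $\kappa$ and at stage $(\alpha,i)$ choose two fresh points of $b_\alpha$ above all points chosen so far (possible since $|b_\alpha|=\kappa$ by uniformity), declaring one into a set $X$ and one out of it. Then for every $\alpha$ both $b_\alpha\cap X$ and $b_\alpha\setminus X$ are unbounded, so whichever of $X,\kappa\setminus X$ lies in $U$ is a member of $U$ almost-containing no $b_\alpha$; hence $u((U,\supseteq^*))\geq\kappa^+$ and your factor of $\kappa$ is absorbed. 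Note also that even without completing this step, the final inequality $\mathfrak{dp}(U)\leq\mathfrak{ch}(U)$ only uses the cheap direction $u((U,\supseteq^*))\leq\mathfrak{ch}(U)$, so the concluding chain is unaffected.
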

\begin{proof}
     $(1),(2)$ are well known facts. To see $(3)$, by minimality, $l(U,\supseteq^*)\leq \mathfrak{dp}(U)$. The other direction follows from the next simple lemma which implies that  for every sequence of length $\theta<\mathfrak{dp}(U)$ is $\supseteq^*$-bounded.
\end{proof}
\begin{lemma}\label{Lemma: making decreasing}
For any sequence $\l X_i\mid i<\theta\r\subseteq U$ for $\theta\leq\mathfrak{dp}(U)$, there is $\l X_i^*\mid i<\theta\r\subseteq U$ such that $X_i^*\subseteq X_i$ for all $i$ which is $\subseteq^*$-decreasing. 
\end{lemma}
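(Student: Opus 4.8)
The plan is to build the sequence $\langle X_i^* \mid i<\theta\rangle$ by transfinite recursion on $i<\theta$, maintaining at every stage that the partial sequence constructed so far is $\subseteq^*$-decreasing with each term in $U$ and each term contained in the corresponding $X_i$. The key idea is that $\theta\le\mathfrak{dp}(U)$ means no strictly $\subseteq^*$-decreasing sequence of length $<\theta$ can fail to have a pseudo-intersection in $U$, so at limit stages below $\theta$ we always have enough room to continue.

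First I would set $X_0^*=X_0$. At a successor stage $i+1$, given $X_i^*\in U$ with $X_i^*\subseteq^* X_j^*$ for all $j<i$ (appropriately interpreted) and $X_i^*\subseteq X_i$, I would simply set $X_{i+1}^*=X_{i+1}\cap X_i^*$. This lies in $U$ since $U$ is a filter, is contained in $X_{i+1}$, and satisfies $X_{i+1}^*\subseteq X_i^*$, so the decreasing condition (even honest $\subseteq$ at successors) is preserved. The substantive step is the limit stage. Suppose $i<\theta$ is a limit and $\langle X_j^* \mid j<i\rangle$ has been constructed, $\subseteq^*$-decreasing, each in $U$. Since $i<\theta\le\mathfrak{dp}(U)$ and $\mathfrak{dp}(U)=\min(Sp_{dp}(U))$, a $\subseteq^*$-decreasing sequence of length $i$ cannot witness membership in $Sp_{dp}(U)$; hence it \emph{does} admit a measure-one pseudo-intersection, i.e.\ there is $Y\in U$ with $Y\subseteq^* X_j^*$ for all $j<i$. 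I would then take $X_i^*=Y\cap X_i\in U$, which still pseudo-intersects the earlier terms (since $X_i\in U$ shifts nothing mod the bounded sets, and $Y\subseteq^* X_j^*$ gives $X_i^*\subseteq^* X_j^*$) and lies inside $X_i$.

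The one point requiring care is the length restriction: the definition of $Sp_{dp}(U)$ concerns \emph{regular} cardinal lengths, so I must confirm the pseudo-intersection argument applies at an arbitrary limit ordinal $i<\theta$, not merely at regular cardinals. This is handled by the remark preceding the lemma: if a $\subseteq^*$-decreasing sequence of length $i$ with $\mathrm{cf}(i)<\mathfrak{dp}(U)$ had no $\subseteq^*$-bound in $U$, then by passing to a cofinal subsequence of order type $\mathrm{cf}(i)$ one would produce a decreasing sequence of regular length $\mathrm{cf}(i)\in Sp_{dp}(U)$, forcing $\mathrm{cf}(i)\ge\mathfrak{dp}(U)$, a contradiction since $\mathrm{cf}(i)\le i<\theta\le\mathfrak{dp}(U)$. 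Thus every proper initial segment is $\subseteq^*$-bounded in $U$, which is exactly what the limit step needs.

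The main obstacle, such as it is, is purely bookkeeping: one must be consistent about whether the decreasing condition is read as $\subseteq$ or $\subseteq^*$ across successor versus limit stages, and ensure that intersecting with $X_i$ at limits does not destroy the pseudo-intersection property with respect to the earlier $X_j^*$. Both are immediate from the fact that $U$ is closed under finite intersections and that $\subseteq^*$ is transitive and compatible with shrinking by a measure-one set. No new combinatorial input beyond the definition of $\mathfrak{dp}(U)$ and the cofinality observation is required, so the recursion goes through cleanly for all $\theta\le\mathfrak{dp}(U)$.
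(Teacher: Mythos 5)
Your proposal is correct and follows essentially the same route as the paper: recursion with $X_{i+1}^*=X_{i+1}\cap X_i^*$ at successors and, at a limit $\alpha<\theta\le\mathfrak{dp}(U)$, intersecting $X_\alpha$ with a $\subseteq^*$-lower bound of the sequence built so far, whose existence is guaranteed by $\alpha<\mathfrak{dp}(U)$. Your extra paragraph justifying the pseudo-intersection step at limit ordinals of arbitrary (non-regular) order type via the cofinality remark is a detail the paper leaves implicit, and it is handled correctly.
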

\begin{proof}
    We construct $X_i$ by induction. At successor step, we let $X^*_{i+1}=X_{i+1}\cap X^*_i$. At limit steps $\alpha$, since $\alpha<\mathfrak{dp}(U)$, the sequence $\l X_i^*\mid i<\alpha\r$ which by induction is $\subseteq^*$-decreasing has a $\subseteq^*$-lower bound $A\in U$. We let $X^*_\alpha=A\cap X_\alpha$.
\end{proof}

Recall that an ultrafilter $U$ over $\kappa\geq\omega$ is called a $P_\lambda$-point, if $(U,\supseteq^*)$ is $\lambda$-directs. A $p$-point is a $P_{\kappa^+}$-point.  \begin{corollary}\label{Cor: Plambda}
    $\mathfrak{dp}(U)$ is the unique $\lambda$ such that $U$ is $P_\lambda$-point but not a $P_{\lambda^+}$-point.
\end{corollary}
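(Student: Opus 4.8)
The plan is to route everything through the preceding Proposition, which already identifies $l(U,\supseteq^*)=\mathfrak{dp}(U)$, and to reformulate the $P_\lambda$-point property in terms of this lower character. By definition $U$ is a $P_\lambda$-point exactly when $(U,\supseteq^*)$ is $\lambda$-directed, i.e.\ when every subfamily of $U$ of size $<\lambda$ admits a pseudo-intersection in $U$; equivalently, when no subfamily of size $<\lambda$ is unbounded in $(U,\supseteq^*)$. Since $l(U,\supseteq^*)$ is the least size of an unbounded family in $(U,\supseteq^*)$, this is precisely the statement that $U$ is a $P_\lambda$-point if and only if $\lambda\leq l(U,\supseteq^*)$. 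Invoking the Proposition to replace $l(U,\supseteq^*)$ by $\mathfrak{dp}(U)$, I obtain the clean equivalence that $U$ is a $P_\lambda$-point if and only if $\lambda\leq\mathfrak{dp}(U)$.

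Granting this equivalence, the corollary is a one-line bookkeeping argument. $U$ is a $P_\lambda$-point but not a $P_{\lambda^+}$-point if and only if $\lambda\leq\mathfrak{dp}(U)$ and $\mathfrak{dp}(U)<\lambda^+$, that is, $\lambda\leq\mathfrak{dp}(U)<\lambda^+$. Since $\mathfrak{dp}(U)$ is a cardinal and the half-open interval $[\lambda,\lambda^+)$ contains no cardinal other than $\lambda$, this forces $\lambda=\mathfrak{dp}(U)$; conversely $\lambda=\mathfrak{dp}(U)$ trivially satisfies $\mathfrak{dp}(U)\leq\mathfrak{dp}(U)<\mathfrak{dp}(U)^+$. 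This yields existence and uniqueness simultaneously.

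If one prefers not to quote the Proposition, the same two facts can be proved directly. For existence, to see $U$ is a $P_{\mathfrak{dp}(U)}$-point I would take any family of length $\theta<\mathfrak{dp}(U)$, refine it to a $\subseteq^*$-decreasing sequence via Lemma \ref{Lemma: making decreasing}, and extract a pseudo-intersection: if $\theta$ is regular this is immediate from $\theta\notin Sp_{dp}(U)$, and if $\theta$ is singular I pass to a cofinal subsequence of length $cf(\theta)<\mathfrak{dp}(U)$, whose pseudo-intersection bounds the whole decreasing sequence. That $U$ is not a $P_{\mathfrak{dp}(U)^+}$-point is witnessed directly by a $\supseteq^*$-decreasing sequence of length $\mathfrak{dp}(U)\in Sp_{dp}(U)$ with no measure-one pseudo-intersection. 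Uniqueness then follows from downward monotonicity of the $P_\lambda$-point property: were $U$ both a $P_\lambda$-point that fails $P_{\lambda^+}$ and a $P_{\lambda'}$-point that fails $P_{\lambda'^+}$ with $\lambda<\lambda'$, then $\lambda^+\leq\lambda'$ would make $U$ a $P_{\lambda^+}$-point, a contradiction.

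I expect the only genuinely delicate point, in the hands-on route, to be the treatment of singular stages $\theta<\mathfrak{dp}(U)$: one must ensure that failure of a pseudo-intersection at a singular length would push $cf(\theta)$ into $Sp_{dp}(U)$ below its minimum, which is exactly the observation recorded just before the definition of the depth. Through the Proposition this subtlety is already absorbed into the computation $l(U,\supseteq^*)=\mathfrak{dp}(U)$, so the shortest and cleanest route is simply to cite it and run the cardinal-arithmetic bookkeeping of the second paragraph.
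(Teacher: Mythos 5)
Your proof is correct and follows the same route the paper intends: the corollary is stated without proof immediately after the proposition computing $l(U,\supseteq^*)=\mathfrak{dp}(U)$, and the intended argument is exactly your reduction of the $P_\lambda$-point property to $\lambda\leq l(U,\supseteq^*)$ followed by the cardinal-arithmetic bookkeeping. Your hands-on alternative, including the treatment of singular lengths via $cf(\theta)\in Sp_{dp}(U)$, simply unpacks Lemma~\ref{Lemma: making decreasing} and the remark preceding the definition of the depth, so it adds nothing essentially new.
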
 Most of the following propositions regarding $Sp_T(U)$, can be derived from the general set up:
\begin{proposition}\label{prop: propertied of dp and ch}
    Let $U$ be a uniform ultrafilter, then:
    \begin{enumerate}
        \item $\mathfrak{dp}(U)\leq cf(\mathfrak{ch}(U))\in Sp_T(U)$.
        \item $\min(Sp_T(U))=crit(j_U), \min(Sp_T(U,\supseteq^*))=\mathfrak{dp}(U)$.
        \item $\mathfrak{ch}(U)$ is an upper bound for $Sp_T(U)$. 
    \end{enumerate}
\end{proposition}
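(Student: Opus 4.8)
The plan is to reduce all three parts to two general facts about the lower and upper characters $l(\mathbb{P})$, $u(\mathbb{P})$ of a directed set $\mathbb{P}$, and then specialize them using the identities $u((U,\supseteq))=u((U,\supseteq^*))=\mathfrak{ch}(U)$, $l((U,\supseteq))=crit(j_U)$ and $l((U,\supseteq^*))=\mathfrak{dp}(U)$ recorded in the previous proposition. The two facts are: (i) if $\mathbb{P}\leq_T\mathbb{Q}$ then $l(\mathbb{Q})\leq l(\mathbb{P})$ and $u(\mathbb{P})\leq u(\mathbb{Q})$, and moreover $\min\{\lambda\in Reg\mid \lambda\leq_T\mathbb{P}\}=l(\mathbb{P})$; and (ii) $l(\mathbb{P})\leq cf(u(\mathbb{P}))$. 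Fact (i) is essentially definitional: a Tukey map sends a minimal unbounded family to an unbounded family, giving $l(\mathbb{Q})\leq l(\mathbb{P})$, while the Schmidt-dual cofinal map sends a minimal cofinal family to a cofinal family, giving $u(\mathbb{P})\leq u(\mathbb{Q})$. Since $l((\lambda,<))=u((\lambda,<))=\lambda$ for regular $\lambda$, the first inequality shows every regular $\lambda\leq_T\mathbb{P}$ satisfies $l(\mathbb{P})\leq\lambda$. Conversely, $l(\mathbb{P})$ is regular (a singular value would split a minimal unbounded family into fewer, smaller, hence bounded pieces, whose bounds form a smaller unbounded family), and $l(\mathbb{P})\leq_T\mathbb{P}$ is witnessed by $\xi\mapsto b_\xi$, where $b_\xi$ bounds a fixed minimal unbounded family $\{a_\eta\mid\eta\leq\xi\}$; each such initial segment has size $<l(\mathbb{P})$ and is therefore bounded, and any common bound of $\{b_\xi\mid \xi\in S\}$ for a cofinal $S$ would bound the whole family, a contradiction.

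With (i) in hand, parts (2) and (3) are immediate. For (3), if $\lambda\in Sp_T(U)$ then $\lambda\leq_T(U,\supseteq)$, so $\lambda=u((\lambda,<))\leq u((U,\supseteq))=\mathfrak{ch}(U)$; hence $\mathfrak{ch}(U)$ bounds $Sp_T(U)$ from above. For (2), I apply the identity $\min\{\lambda\in Reg\mid\lambda\leq_T\mathbb{P}\}=l(\mathbb{P})$ to $\mathbb{P}=(U,\supseteq)$ and to $\mathbb{P}=(U,\supseteq^*)$, obtaining $\min(Sp_T(U))=l((U,\supseteq))=crit(j_U)$ and $\min(Sp_T(U,\supseteq^*))=l((U,\supseteq^*))=\mathfrak{dp}(U)$.

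For part (1), I split the statement into its two assertions. That $cf(\mathfrak{ch}(U))\in Sp_T(U)$ follows from Theorem \ref{thm: second kanamori improvment}, which gives $\mathfrak{ch}(U)\leq_T U$: as a cardinal (viewed as a linear order) is Tukey-equivalent to its cofinality, $cf(\mathfrak{ch}(U))\equiv_T\mathfrak{ch}(U)\leq_T(U,\supseteq)$, and since $cf(\mathfrak{ch}(U))$ is regular it lies in $Sp_T(U)$. The remaining inequality $\mathfrak{dp}(U)\leq cf(\mathfrak{ch}(U))$ is precisely fact (ii) applied to $\mathbb{P}=(U,\supseteq^*)$, whose lower character is $\mathfrak{dp}(U)$ and whose upper character is $\mathfrak{ch}(U)$.

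The main obstacle is fact (ii): it is the only step needing a genuine argument rather than bookkeeping, and it must survive the case in which $\mathfrak{ch}(U)$ is singular and $U$ is, say, a $p$-point, where a priori $\mathfrak{dp}(U)$ (which is then at least $\kappa^+$) could threaten to exceed $cf(\mathfrak{ch}(U))$. I would prove (ii) directly. Fix a cofinal $C\subseteq\mathbb{P}$ with $|C|=u(\mathbb{P})$, and when $u(\mathbb{P})$ is singular write $C=\bigcup_{i<cf(u(\mathbb{P}))}C_i$ as an increasing union with each $|C_i|<u(\mathbb{P})$. By minimality of $u(\mathbb{P})$ no $C_i$ is cofinal, so I may choose $d_i\in\mathbb{P}$ with no element of $C_i$ lying above $d_i$. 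Then $\{d_i\mid i<cf(u(\mathbb{P}))\}$ is unbounded: a common bound $b$ would be dominated by some $c\in C$, say $c\in C_i$, and then $c\geq b\geq d_i$ contradicts the choice of $d_i$. Hence $l(\mathbb{P})\leq cf(u(\mathbb{P}))$, completing the proof.
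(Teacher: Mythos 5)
Your proof is correct and follows essentially the same route as the paper: the decisive step, $\mathfrak{dp}(U)\leq cf(\mathfrak{ch}(U))$, is proved by the identical device of splitting a minimal ($\subseteq^*$-)base into an increasing union of $cf(\mathfrak{ch}(U))$ small pieces, choosing a witness not generated by each piece, and observing that these witnesses form an unbounded family in $(U,\supseteq^*)$ of size $cf(\mathfrak{ch}(U))$. The paper phrases this as a contradiction with the existence of a pseudo-intersection while you phrase it contrapositively as the general inequality $l(\mathbb{P})\leq cf(u(\mathbb{P}))$, and your use of Tukey-monotonicity of the upper character for part (3) in place of the paper's pigeonhole on a fixed base is just the Schmidt-dual formulation of the same argument.
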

\begin{proof}
For $(1)$, we have already seen that $cf(\mathfrak{ch}(U))\in Sp_T(U)$ by Theorem \ref{thm: second kanamori improvment}. Suppose otherwise that $cf(\mathfrak{ch}(U))<\mathfrak{dp}(U)$ and fix $\l \theta_j\mid j<cf(\mathfrak{ch}(U))\r$ cofinal in $\mathfrak{ch}(U)$. Let $\l b_i\mid i<\mathfrak{ch}(U)\r$ be a base for $U$. For each $\theta_j$, by minimality, there is $x_j\in U$ which is not $\subset^*$-generated by $\l b_i\mid i<\theta_j\r$. Since  $cf(\mathfrak{ch}(U))<\mathfrak{dp}(U)=l(U,\supseteq^*)$, the sequence $\l x_j\mid j<cf(\mathfrak{ch}(U))\r$ has a pseudo-intersection  $x^*\in U$. Then $x^*$ cannot be $\subseteq^*$-generated by any base element, contradiction.

For $(2)$, if $\theta<crit(j_U)$, or $\theta<\mathfrak{dp}(U)=l(U,\supseteq^*)$, then $\theta\notin Sp_T(U)$ or $\theta\notin Sp_T(U,\supseteq^*)$ respectively, as any sequence of length $\theta$ is bounded. To see for example that $\mathfrak{dp}(U)\in Sp_T(U,\supseteq^*)$ (the proof that $crit(j_U)\in Sp_T(U)$ is completely analogous), we note that $\mathfrak{dp}(U)\in Sp_{dp}(U)\subseteq Sp_T(U)$. To see $(3)$, let  $\mathfrak{ch}(U)<\lambda$ be regular. Then given any $\lambda$-many sets in $U$, $\lambda$-many of them must contain the same element from a fixed base of size $\mathfrak{ch}(U)$. Hence $U$ will be $(\lambda,\lambda)$-cohesive, namely $\lambda\notin Sp_T(U)$.
\end{proof}
\begin{corollary}\label{cor minimum spec}
    If $\mathfrak{dp}(U)\neq \kappa$, then $\min(Sp_T(U)\setminus\{\kappa\})=\mathfrak{dp}(U)$.
\end{corollary}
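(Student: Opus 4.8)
The plan is to combine Corollary~\ref{Cor: Spec^* is enough} with part~(2) of Proposition~\ref{prop: propertied of dp and ch}. The statement asks for $\min(Sp_T(U)\setminus\{\kappa\})=\mathfrak{dp}(U)$ under the hypothesis $\mathfrak{dp}(U)\neq\kappa$, so the natural route is to transfer the already-established fact $\min(Sp_T(U,\supseteq^*))=\mathfrak{dp}(U)$ from the $\supseteq^*$-spectrum to the $\supseteq$-spectrum, using that the two spectra agree away from $\kappa$.

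First I would recall that by Corollary~\ref{Cor: Spec^* is enough}, $Sp_T(U)\setminus\{\kappa\}=Sp^*_T(U)\setminus\{\kappa\}$, where $Sp^*_T(U)=Sp_T(U,\supseteq^*)$. Thus it suffices to compute $\min(Sp^*_T(U)\setminus\{\kappa\})$. By Proposition~\ref{prop: propertied of dp and ch}(2), $\min(Sp^*_T(U))=\mathfrak{dp}(U)$, i.e.\ $\mathfrak{dp}(U)\in Sp^*_T(U)$ and no regular cardinal below $\mathfrak{dp}(U)$ lies in $Sp^*_T(U)$ (indeed any $\supseteq^*$-sequence shorter than $\mathfrak{dp}(U)=l(U,\supseteq^*)$ is bounded, so no such cardinal is in the spectrum).

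Now I would use the hypothesis $\mathfrak{dp}(U)\neq\kappa$. Since $\mathfrak{dp}(U)\in Sp^*_T(U)$ and $\mathfrak{dp}(U)\neq\kappa$, we have $\mathfrak{dp}(U)\in Sp^*_T(U)\setminus\{\kappa\}=Sp_T(U)\setminus\{\kappa\}$, so $\mathfrak{dp}(U)$ is a genuine member of the set whose minimum we seek. For minimality, suppose $\theta\in Sp_T(U)\setminus\{\kappa\}$ is regular; then $\theta\in Sp^*_T(U)$ by Corollary~\ref{Cor: Spec^* is enough}, and since $\mathfrak{dp}(U)=\min(Sp^*_T(U))$ we get $\theta\geq\mathfrak{dp}(U)$. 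Combining the two inequalities yields $\min(Sp_T(U)\setminus\{\kappa\})=\mathfrak{dp}(U)$.

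The only point requiring care—and the one I expect to be the main (though minor) obstacle—is verifying that removing $\kappa$ from $Sp^*_T(U)$ cannot push the minimum up past $\mathfrak{dp}(U)$; this is exactly where the hypothesis $\mathfrak{dp}(U)\neq\kappa$ is used, guaranteeing that $\mathfrak{dp}(U)$ itself survives the deletion of $\kappa$. Without that hypothesis the minimum could jump to the next element of the spectrum, so the assumption is not merely cosmetic. Everything else is a direct appeal to the cited results, so the argument is short.
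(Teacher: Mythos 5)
Your proof is correct and follows exactly the route the paper intends: the corollary is an immediate consequence of Corollary~\ref{Cor: Spec^* is enough} (the two spectra agree off $\kappa$) together with Proposition~\ref{prop: propertied of dp and ch}(2) ($\min Sp^*_T(U)=\mathfrak{dp}(U)$), with the hypothesis $\mathfrak{dp}(U)\neq\kappa$ used precisely as you say, to ensure $\mathfrak{dp}(U)$ survives the removal of $\kappa$. Nothing is missing.
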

\begin{corollary}
    If $\mathfrak{ch}(U)$ is regular, then $\mathfrak{ch}(U)=\max(Sp_T(U))$.
\end{corollary}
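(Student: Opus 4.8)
The plan is to observe that this corollary is an immediate consequence of the two bounds established in Proposition~\ref{prop: propertied of dp and ch}, once the regularity hypothesis is used to collapse $cf(\mathfrak{ch}(U))$ to $\mathfrak{ch}(U)$.

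First I would note that since $\mathfrak{ch}(U)$ is regular we have $cf(\mathfrak{ch}(U))=\mathfrak{ch}(U)$. By part~(1) of Proposition~\ref{prop: propertied of dp and ch} we know $cf(\mathfrak{ch}(U))\in Sp_T(U)$, and hence $\mathfrak{ch}(U)\in Sp_T(U)$. Equivalently, one may invoke Theorem~\ref{thm: second kanamori improvment} directly: it gives $\mathfrak{ch}(U)\leq_T U$, and since $\mathfrak{ch}(U)$ is a regular cardinal this places it in $Sp_T(U)$ by the very definition of the point spectrum.

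Next I would use part~(3) of the same proposition, which asserts that $\mathfrak{ch}(U)$ is an upper bound for $Sp_T(U)$; that is, every $\lambda\in Sp_T(U)$ satisfies $\lambda\leq\mathfrak{ch}(U)$. Combining the two facts — that $\mathfrak{ch}(U)$ is both a member of $Sp_T(U)$ and an upper bound for it — immediately yields $\mathfrak{ch}(U)=\max(Sp_T(U))$.

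There is no genuine obstacle here, since the entire content has already been discharged in Proposition~\ref{prop: propertied of dp and ch} (and ultimately in Theorem~\ref{thm: second kanamori improvment}). The sole role of the regularity hypothesis is to force the witness $cf(\mathfrak{ch}(U))$ to membership in the spectrum to coincide with the upper bound $\mathfrak{ch}(U)$; when $\mathfrak{ch}(U)$ is singular it cannot itself belong to $Sp_T(U)$, and pinning down the maximum of the spectrum in that regime is exactly the intriguing case flagged as open in the introduction.
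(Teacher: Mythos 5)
Your proof is correct and follows exactly the route the paper intends: regularity gives $cf(\mathfrak{ch}(U))=\mathfrak{ch}(U)\in Sp_T(U)$ by Proposition \ref{prop: propertied of dp and ch}(1) (equivalently Theorem \ref{thm: second kanamori improvment}), and part (3) of the same proposition makes $\mathfrak{ch}(U)$ an upper bound, hence the maximum. The paper leaves this corollary without a written proof precisely because the argument is this immediate.
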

Note that using Theorem \ref{thm:Characterization of Galvin}, we can characterize in terms of the ultrapower $\sup Sp_T(U)$ as the least $\mu$ such that for every $\lambda>\mu$ regular, $M_U$ does not have a thin cover for $j_U''\lambda$. Hence the previous corollary provides an ultrapower characterization of $\mathfrak{ch}(U)$, whenever this cardinal is regular.  
    Ultrafilters with a singular character exists, for example, if we add $\kappa^{+\kappa^+}$-many Cohen function to $\kappa$ in a model of $GCH$, then in the extension the $2^\kappa=\kappa^{+\kappa^+}=\mathfrak{ch}(U)$ for all $U$ on $\kappa$. However, in the model above we will still have that $\mathfrak{ch}(U)=\sup(Sp_T(U))$ and therefore the proposed ultrapower characterization remains valid. Hence the following question is natural:
    \begin{question}
        Is it true that $\mathfrak{ch}(U)=\sup(Sp_T(U))$ for any uniform ultrafilter $U$? how about $\kappa$-complete ultrafilters over $\kappa$?
    \end{question}
    In that direction Isbell proved \cite{Isbell65} then if every singular cardinal is strong limit then for every $\mathbb{P}$, $u(\mathbb{P})=\sup(Sp_T(U))$. However, this theorem is irrelevant for us, since if $\mathfrak{ch}(U)$ is singular then $\kappa<\mathfrak{ch}(U)\leq 2^\kappa$, and Isbell's theorem does not apply.

    One my wonder what is the possible cofinalities of $\mathfrak{ch}(U)$. It is not hard to prove that for $\kappa$-complete ultrafilters $cf(\mathfrak{ch}(U))\geq\kappa^+$. If we give up $\kappa$-completeness we will see in Proposition \ref{prop: small cofinality character} that small cofinality is consistent.


\begin{proposition}\label{prop:product spectra}
    \begin{enumerate}
        \item For any two $\kappa$-complete ultrafilters over $\kappa>\omega$, $Sp_T(U\cdot W)=Sp_T(U)\cup Sp_T(W)$.
        \item For any two ultrafilter $U,W$ on $\omega$ such that $W\cdot W\equiv_TW$,  $Sp_T(U\cdot W)=Sp_T(U)\cup Sp_T(W)$.
    \end{enumerate} 
\end{proposition}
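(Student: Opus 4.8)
The plan is to reduce both items to the Cartesian product $U\times W$ and then to prove the clean identity $Sp_T(U\times W)=Sp_T(U)\cup Sp_T(W)$ for arbitrary directed sets. For the reduction: in case (1) Theorem \ref{thm: Tom Natasha product for complete} gives $U\cdot W\equiv_T U\times W$, while in case (2) the hypothesis $W\cdot W\equiv_T W$ yields the same equivalence $U\cdot W\equiv_T U\times W$ via the mechanism behind Corollary \ref{cor: the case of omega} (namely \cite[Cor. 1.9]{TomNatasha2}). Since $\lambda\leq_T\mathbb{P}$ is preserved under $\equiv_T$, the point spectrum is a Tukey invariant, so $Sp_T(U\cdot W)=Sp_T(U\times W)$ in both cases, and it suffices to compute the spectrum of the product.

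For the inclusion $Sp_T(U)\cup Sp_T(W)\subseteq Sp_T(U\times W)$, I would use that the coordinate projections $\pi_U\colon U\times W\to U$ and $\pi_W\colon U\times W\to W$ are cofinal (a bound in one coordinate lifts to a bound in the product), so by Schmidt duality $U\leq_T U\times W$ and $W\leq_T U\times W$; transitivity of $\leq_T$ then gives the inclusion.

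For the reverse inclusion I would argue by contrapositive through cohesiveness. Recall that for a regular $\lambda$ one has $\lambda\in Sp_T(\mathbb{P})$ iff $\mathbb{P}$ is not $(\lambda,\lambda)$-cohesive: an unbounded subset of $(\lambda,<)$ is precisely a subset of size $\lambda$, so a Tukey map $\lambda\to\mathbb{P}$ is exactly a $\lambda$-sequence every size-$\lambda$ subfamily of which is unbounded. So suppose $\lambda$ is regular and both $U$ and $W$ are $(\lambda,\lambda)$-cohesive; I claim $U\times W$ is $(\lambda,\lambda)$-cohesive. Given $\l(A_\alpha,B_\alpha)\mid\alpha<\lambda\r$, apply $(\lambda,\lambda)$-cohesiveness of $U$ to $\l A_\alpha\mid \alpha<\lambda\r$ to obtain $I_0\in[\lambda]^\lambda$ with $\{A_i\mid i\in I_0\}$ bounded; then apply $(\lambda,\lambda)$-cohesiveness of $W$ to the size-$\lambda$ family $\{B_i\mid i\in I_0\}$ to obtain $I\subseteq I_0$ with $|I|=\lambda$ and $\{B_i\mid i\in I\}$ bounded. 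The pairs indexed by $I$ are then bounded coordinatewise, so $\lambda\notin Sp_T(U\times W)$. This is precisely the two-step bounding argument of Theorem \ref{thm: improvement of product}, specialized to $\mu=\lambda$, and it uses nothing beyond the Fact that cohesiveness is a Tukey invariant together with Claim \ref{claim: cohesive iff Tukey}.

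The only point requiring care, and the closest thing to an obstacle, is the regularity of $\lambda$: it is needed both for the characterization of membership in $Sp_T$ via $(\lambda,\lambda)$-cohesiveness, and so that the intermediate family $\{B_i\mid i\in I_0\}$ retains full size $\lambda$ and can be re-enumerated as a genuine $\lambda$-sequence before the second application of cohesiveness. Since $Sp_T$ consists only of regular cardinals by definition, this is automatic, and no appeal to $\lambda^{<\lambda}=\lambda$ or to the structure of $[\lambda]^{<\lambda}$ is required.
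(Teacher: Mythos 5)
Your proposal is correct and follows essentially the same route as the paper: the easy inclusion comes from $U,W\leq_T U\cdot W$, and the reverse inclusion is the contrapositive cohesiveness argument, which is exactly the content of Theorem \ref{thm: improvement of product} (for (1)) and Corollary \ref{cor: the case of omega} (for (2)) specialized to $\mu=\lambda$, combined with Claim \ref{claim: cohesive iff Tukey}. The only cosmetic difference is that you unfold the reduction to the Cartesian product and re-derive the two-step bounding argument, where the paper simply cites those results.
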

\begin{proof}
We prove $(1),(2)$ together, since both $U,W\leq_T U\cdot W$, $Sp_T(U\cdot W)\supseteq Sp_T(U)\cup Sp_T(W)$. In the other direction, if $\lambda\in Sp_T(U\cdot W)$, then $U\cdot W$ is not $(\lambda,\lambda)$-cohesive. By Theorem \ref{thm: improvement of product} for $(1)$, or Corollary \ref{cor: the case of omega} for $(2)$,     either $U$ or $W$ are not $(\lambda,\lambda)$-cohesive, namely, $\lambda\in Sp_T(U)\cup Sp_T(W)$.
\end{proof}
This cannot be improved, even just for $p$-point. Indeed,  it is consistent that there are $p$-point $W,U$ on $\omega$ such that $\mathfrak{ch}(W),\mathfrak{ch}(U)<\mathfrak{d}$ (see the remark following \cite[Cor. 36]{Dobrinen/Todorcevic11} and 
 other constructions where $U\not\geq_T\omega^\omega$), and $\mathfrak{d}$ is regular. By Milovich \cite{Milovich12},  $U\cdot W\equiv_T U\times W\times \omega^\omega$ and therefore $\mathfrak{d}\leq_T U\cdot W$. On the other hand,  $\mathfrak{d}\notin Sp_T(U)\cup Sp_T(W)$ as both $Sp_T(U)$ and $Sp_T(W)$ are bounded by $\max(\mathfrak{ch}(U),\mathfrak{ch}(W))<\mathfrak{d}$.

\subsection{Strong generating sequences and $P_\lambda$-points.}

\begin{definition}\label{definition: generating sequence}
    Let $U$ be an ultrafilter over $\kappa$. A sequence $\l A_\alpha\mid \alpha<\lambda\r$ such that:
    \begin{enumerate}
        \item If $\alpha<\beta<\lambda$ then $A_\beta\subseteq^* A_\alpha$.
        \item For every $X\in U$ there is $\alpha<\lambda$ such that $A_\alpha\subseteq^* X$.
    \end{enumerate}
    is called a \textit{strong generating sequence} for $U$.
\end{definition}
\begin{remark}
    For example, if $U$ is a $p$-point and $2^\kappa=\kappa^+$ then $U$ has a strong generating sequence of length $\kappa^+$.
\end{remark}
 Garti and Shelah (see \cite{BrookeTaylor2015CardinalCA} for details) noticed that
    assume $\kappa$ is supercompact and $\lambda>\kappa$ is regular, it is consistent that there is a $\kappa$-complete ultrafilter $U$ over $\kappa$ with a strong generating sequence of length $\lambda$.

\begin{lemma}
    If $U$ has a generating sequence of length $\lambda$, then $\lambda>\kappa$, and $U$ is a $P_\lambda$ point.
    Moreover, $\mathfrak{dp}(U)= \lambda$.
\end{lemma}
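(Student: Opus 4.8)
The plan is to prove the three assertions in order, establishing $\lambda>\kappa$ first (the only clause requiring real combinatorics), then the $P_\lambda$-point property, and finally $\mathfrak{dp}(U)=\lambda$. Throughout I write $\langle A_\alpha\mid\alpha<\lambda\rangle$ for the given strong generating sequence of Definition \ref{definition: generating sequence}, so that the sequence is $\subseteq^*$-decreasing and $\subseteq^*$-cofinal in $U$. For the second and third clauses $\lambda$ must be regular, as $\mathfrak{dp}(U)$ always is; I work under that hypothesis, noting that a generating sequence of singular length thins to a cofinal subsequence of length $\mathrm{cf}(\lambda)$ which is again strongly generating, so the substantive content lives at regular lengths.

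For $\lambda>\kappa$, suppose toward a contradiction that $\lambda\le\kappa$. The naive idea of deleting one point from each $A_\alpha$ to manufacture some $X\in U$ with no $A_\alpha\subseteq X$ fails here, because the sequence generates only \emph{mod bounded} sets, so removing boundedly many points never destroys $A_\alpha\subseteq^* X$. Instead I will build two \emph{disjoint} sets $P,Q\subseteq\kappa$ with the property that $P\cap A_\alpha$ and $Q\cap A_\alpha$ are both unbounded for every $\alpha<\lambda$. This is carried out by a single recursion of length $\kappa$: enumerate $\lambda\times\kappa$ in order type $\kappa$ (possible since $\lambda\le\kappa$), and at the step indexed by $(\alpha,i)$ pick two fresh ordinals $p,q\in A_\alpha$, placing $p$ into $P$ and $q$ into $Q$. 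Since $|A_\alpha|=\kappa$ while fewer than $\kappa$ points have been used so far and $\kappa$ is regular, such a choice is always available; and over all $i<\kappa$ the fixed $\alpha$ receives $\kappa$ distinct points of $A_\alpha$ into each of $P$ and $Q$, hence an unbounded intersection. As $P\cap Q=\varnothing$, at most one of $P,Q$ lies in $U$; say $P\notin U$ and set $X=\kappa\setminus P\in U$. Then for every $\alpha<\lambda$ the set $A_\alpha\setminus X=A_\alpha\cap P$ is unbounded, so $A_\alpha\not\subseteq^* X$, contradicting clause (2) of Definition \ref{definition: generating sequence}. Hence $\lambda>\kappa$. (This construction in fact shows $\mathfrak{ch}(U)>\kappa$ for any uniform $U$, which is what is really at stake.)

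For the $P_\lambda$-point property, take $\langle C_\beta\mid\beta<\mu\rangle\subseteq U$ with $\mu<\lambda$. Clause (2) supplies for each $\beta$ an index $\gamma_\beta<\lambda$ with $A_{\gamma_\beta}\subseteq^* C_\beta$; since $\lambda$ is regular and $\mu<\lambda$, we get $\gamma:=\sup_{\beta<\mu}\gamma_\beta<\lambda$, and by clause (1) the set $A_\gamma\in U$ satisfies $A_\gamma\subseteq^* A_{\gamma_\beta}\subseteq^* C_\beta$ for all $\beta$, so $A_\gamma$ is a common pseudo-intersection. Thus $U$ is a $P_\lambda$-point, and in particular $\mathfrak{dp}(U)\ge\lambda$. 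For the reverse inequality I show the sequence itself has no pseudo-intersection in $U$, i.e. that $U$ is not a $P_{\lambda^+}$-point, so that by Corollary \ref{Cor: Plambda} we conclude $\mathfrak{dp}(U)=\lambda$ (equivalently, $\langle A_\alpha\mid\alpha<\lambda\rangle$ is a $\subseteq^*$-decreasing sequence of regular length $\lambda$ witnessing $\lambda\in Sp_{dp}(U)$). Suppose $B\in U$ with $B\subseteq^* A_\alpha$ for all $\alpha$. Partition $B=B_0\sqcup B_1$ into two unbounded pieces; one of them, say $B_0$, lies in $U$. By clause (2) there is $\alpha_0$ with $A_{\alpha_0}\subseteq^* B_0$, whence $B\subseteq^* A_{\alpha_0}\subseteq^* B_0$, forcing $B\setminus B_0=B_1$ to be bounded, a contradiction.

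The main obstacle is the first clause. The two-disjoint-fat-sets recursion is the one genuinely new ingredient, required precisely because a $\subseteq^*$-base cannot be diagonalized pointwise the way a $\subseteq$-base can; everything rests on producing a single $X\in U$ whose complement meets every $A_\alpha$ unboundedly. Once $\lambda>\kappa$ is in hand, the $P_\lambda$-point property and the equality $\mathfrak{dp}(U)=\lambda$ follow routinely from the generating and $\subseteq^*$-decreasing properties together with Corollary \ref{Cor: Plambda}.
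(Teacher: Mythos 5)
Your proof is correct and follows the same skeleton as the paper's: the $P_\lambda$-point clause is obtained exactly as in the paper by pulling back each $C_\beta$ to an index $\gamma_\beta$ and taking $\gamma=\sup_{\beta<\mu}\gamma_\beta<\lambda$, and the equality $\mathfrak{dp}(U)=\lambda$ is obtained from Corollary \ref{Cor: Plambda} together with the observation that the generating sequence itself has no pseudo-intersection in $U$. The difference is that you supply two arguments the paper leaves unproved: the splitting argument showing a pseudo-intersection $B\in U$ of the sequence is impossible (partition $B$ into two unbounded halves and note the half in $U$ would almost contain some $A_{\alpha_0}\supseteq^* B$), and the disjoint-pair recursion showing $\lambda>\kappa$ by producing $X\in U$ with $A_\alpha\setminus X$ unbounded for every $\alpha$. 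Both added arguments are sound (the second uses uniformity of $U$, i.e.\ $|A_\alpha|=\kappa$, which is the standing context here), and your explicit remark that the statement should be read at regular $\lambda$ --- since a singular-length sequence thins to one of length $\cf(\lambda)$ --- is a point the paper glosses over.
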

\begin{proof}
    Let $\l A_\alpha\mid \alpha<\lambda\r$ be a strong generating sequence. Given any sequence $\l X_\alpha\mid \alpha<\theta\r$ where $\theta<\lambda$, for every $\alpha<\theta$ there is $\beta_\alpha<\lambda$ such that $A_{\beta_\alpha}\subseteq^* X_\alpha$. Take $\beta^*=\sup_{\alpha<\theta}\beta_\alpha$, then $A_{\beta^*}$ $\supseteq^*$-bounds the $X_\alpha$'s. For the ``Moreover" part, by Corollary \ref{Cor: Plambda} $\lambda\leq \mathfrak{dp}(U)$. As a generating sequence, $\l A_\alpha\mid \alpha<\lambda\r$ cannot have a pseudo intersection. Hence $\lambda\in Sp_{dp}(U)$, and the equality holds.
\end{proof}

\begin{corollary}\label{Cor: Spec for strong generating}
    If $U$ has a strong generating sequence of length $\lambda$, then $\mathfrak{dp}(U)=\mathfrak{ch}(U)$ and  $$Sp_{dp}(U)=Sp_{T}(U)\setminus\{\kappa\}=\{\lambda\}.$$
\end{corollary}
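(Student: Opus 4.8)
The plan is to chain the three relevant cardinal invariants---the depth $\mathfrak{dp}(U)$, the character $\mathfrak{ch}(U)$, and the length $\lambda$ of the strong generating sequence---collapsing them to a single value, and then to read off both spectra by squeezing them between the minimum and the upper bound supplied by Proposition~\ref{prop: propertied of dp and ch}. First I would pin down $\mathfrak{dp}(U)=\mathfrak{ch}(U)=\lambda$. The previous lemma already delivers $\mathfrak{dp}(U)=\lambda$, together with $\lambda>\kappa$ and $\lambda\in Sp_{dp}(U)$. For the character, observe that condition $(2)$ in Definition~\ref{definition: generating sequence} says precisely that the strong generating sequence is cofinal in $(U,\supseteq^*)$, and its range has size at most $\lambda$; hence $\mathfrak{ch}(U)=u((U,\supseteq^*))\leq\lambda$ by the character computation in the earlier Proposition. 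Combined with the general inequality $\mathfrak{dp}(U)\leq\mathfrak{ch}(U)$ from that same Proposition, this gives $\lambda=\mathfrak{dp}(U)\leq\mathfrak{ch}(U)\leq\lambda$, so all three coincide.

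Next I would compute $Sp_{dp}(U)$. Since $\mathfrak{dp}(U)=\min(Sp_{dp}(U))=\lambda$ by definition of the depth, every $\theta\in Sp_{dp}(U)$ satisfies $\theta\geq\lambda$. In the other direction, $Sp_{dp}(U)\subseteq Sp_T(U)$, and Proposition~\ref{prop: propertied of dp and ch}$(3)$ bounds $Sp_T(U)$ above by $\mathfrak{ch}(U)=\lambda$, so $\theta\leq\lambda$. As $\lambda\in Sp_{dp}(U)$, this forces $Sp_{dp}(U)=\{\lambda\}$.

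Finally I would identify $Sp_T(U)\setminus\{\kappa\}$. Because $\lambda=\mathfrak{dp}(U)>\kappa$, we have $\mathfrak{dp}(U)\neq\kappa$, so Corollary~\ref{cor minimum spec} applies and gives $\min(Sp_T(U)\setminus\{\kappa\})=\mathfrak{dp}(U)=\lambda$. The upper bound $\mathfrak{ch}(U)=\lambda$ again caps $Sp_T(U)$ from above, and $\lambda\in Sp_T(U)\setminus\{\kappa\}$ since $\lambda\in Sp_{dp}(U)\subseteq Sp_T(U)$ and $\lambda\neq\kappa$. Squeezing between the minimum and the upper bound yields $Sp_T(U)\setminus\{\kappa\}=\{\lambda\}$, completing the displayed chain of equalities.

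The only genuinely delicate point is the translation between the $\subseteq^*$-cofinality witnessed directly by the strong generating sequence and the $\subseteq$-based definition of $\mathfrak{ch}(U)$; rather than reconstructing a genuine $\subseteq$-base by hand, I would lean on the already established equality $u((U,\supseteq))=u((U,\supseteq^*))=\mathfrak{ch}(U)$. The other subtlety, namely that the generating sequence admits no pseudo-intersection in $U$ (so that it truly witnesses $\lambda\in Sp_{dp}(U)$ rather than a bounded sequence), is handled by the previous lemma. Everything else is bookkeeping with the inequalities already in place.
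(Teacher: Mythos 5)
Your proposal is correct and follows essentially the same route as the paper: the paper's proof is exactly the chain $\kappa<\lambda=\mathfrak{dp}(U)=\min(Sp_T(U)\setminus\{\kappa\})\leq\sup(Sp_T(U))\leq\mathfrak{ch}(U)\leq\lambda$, obtained from the preceding lemma, Corollary~\ref{cor minimum spec}, and Proposition~\ref{prop: propertied of dp and ch}, followed by the same squeeze $\{\lambda\}\subseteq Sp_{dp}(U)\subseteq Sp_T(U)\setminus\{\kappa\}=\{\lambda\}$. You merely make explicit the step $\mathfrak{ch}(U)\leq\lambda$ via $u((U,\supseteq^*))$, which the paper leaves implicit.
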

\begin{proof}
    By Corollary \ref{cor minimum spec} and Proposition \ref{prop: propertied of dp and ch}, $$\kappa<\lambda=\mathfrak{dp}(U)=\min(Sp_T(U)\setminus\{\kappa\})\leq\sup(Sp_T(U))\leq \mathfrak{ch}(U)\leq \lambda.$$  Hence $$\{\lambda\}\subseteq Sp_{dp}(U)\subseteq Sp_{T}(U)\setminus \{\kappa\}=\{\lambda\}.$$
\end{proof}
The point spectrum  characterizes the existence of strong generating sequences:
\begin{lemma}\label{Lemma: classifying strong generate}
    Let $U$ be a uniform ultrafilter on $\kappa$. $U$ has a strongly generating sequence of length $\lambda$ if and only if $\mathfrak{dp}(U)=\mathfrak{ch}(U)=\lambda$. 
\end{lemma}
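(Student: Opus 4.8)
The plan is to prove the two directions separately and to notice that one of them has essentially already been carried out earlier in the section. For the forward direction, suppose $U$ has a strong generating sequence of length $\lambda$. Then Corollary \ref{Cor: Spec for strong generating} already records that in this situation $\mathfrak{dp}(U)=\mathfrak{ch}(U)$ and $Sp_{dp}(U)=Sp_T(U)\setminus\{\kappa\}=\{\lambda\}$; tracing its proof, which runs through the chain $\kappa<\lambda=\mathfrak{dp}(U)=\min(Sp_T(U)\setminus\{\kappa\})\leq \sup(Sp_T(U))\leq \mathfrak{ch}(U)\leq \lambda$, yields exactly $\mathfrak{dp}(U)=\mathfrak{ch}(U)=\lambda$. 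So this direction needs no new argument beyond quoting Corollary \ref{Cor: Spec for strong generating}.

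For the converse, assume $\mathfrak{dp}(U)=\mathfrak{ch}(U)=\lambda$ and construct a strong generating sequence of length $\lambda$. First I would use $\mathfrak{ch}(U)=\lambda$ to fix an enumerated base $\l b_i\mid i<\lambda\r$ for $U$, so that every $X\in U$ contains some $b_i$ as a subset in the $\subseteq$-sense of the definition of $\mathfrak{ch}(U)$. The key move is to feed this base into Lemma \ref{Lemma: making decreasing}: since the length $\lambda$ of the base equals $\mathfrak{dp}(U)$ we have $\lambda\leq\mathfrak{dp}(U)$, so the lemma applies and returns a $\subseteq^*$-decreasing sequence $\l A_i\mid i<\lambda\r\subseteq U$ with $A_i\subseteq b_i$ for every $i<\lambda$. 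The hypothesis $\lambda\leq\mathfrak{dp}(U)$ is precisely what legitimizes the limit steps of that construction, where a $\subseteq^*$-lower bound of the previously chosen sets is required and exists because every sequence of length below $\mathfrak{dp}(U)=l(U,\supseteq^*)$ is $\supseteq^*$-bounded.

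It then remains to verify that $\l A_i\mid i<\lambda\r$ is indeed a strong generating sequence of length $\lambda$. Clause $(1)$, that the sequence is $\subseteq^*$-decreasing, is immediate from Lemma \ref{Lemma: making decreasing}. For clause $(2)$, let $X\in U$; since $\l b_i\mid i<\lambda\r$ is a base there is $i<\lambda$ with $b_i\subseteq X$, whence $A_i\subseteq b_i\subseteq X$ and in particular $A_i\subseteq^* X$. Thus both clauses hold, and the sequence witnesses that $U$ has a strong generating sequence of length exactly $\lambda$.

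I do not expect a genuine obstacle here, since the real content has been front-loaded into Lemma \ref{Lemma: making decreasing} and Corollary \ref{Cor: Spec for strong generating}. The only points that require care are bookkeeping: that the base can be taken of length exactly $\lambda$, so that the resulting sequence has the prescribed length, and that $\lambda\leq\mathfrak{dp}(U)$ justifies invoking Lemma \ref{Lemma: making decreasing}. Both follow at once from the standing assumption $\mathfrak{dp}(U)=\mathfrak{ch}(U)=\lambda$.
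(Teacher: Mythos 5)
Your proposal is correct and follows exactly the paper's route: the forward direction is quoted from Corollary \ref{Cor: Spec for strong generating}, and the converse applies Lemma \ref{Lemma: making decreasing} to a base of length $\lambda=\mathfrak{ch}(U)=\mathfrak{dp}(U)$ to produce the $\subseteq^*$-decreasing refinement, which generates because each $A_i$ sits inside the base element $b_i$. You merely spell out the final verification that the paper leaves implicit.
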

\begin{proof}
    One direction follows from Corollary \ref{Cor: Spec for strong generating}. In the other direction, suppose that $\mathfrak{dp}(U)=\mathfrak{ch}(U)=\lambda$, given a base $\l X_i\mid i<\lambda\r$ for $U$, we may apply Lemma \ref{Lemma: making decreasing} to the base to obtain a strong generating sequence for $U$ of length $\lambda$.
\end{proof}

\begin{corollary}
    If $\kappa$ is supercompact cardinal, it is consistent that $\kappa$ carries a $\kappa$-complete ultrafilter which is $(\kappa^+,\kappa^+)$-cohesive. 
\end{corollary}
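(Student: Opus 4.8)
The plan is to reduce the statement entirely to the machinery developed in this section, the key move being to produce a strong generating sequence of length $\kappa^{++}$ rather than $\kappa^+$. First I would invoke the consistency result of Garti and Shelah (\cite{BrookeTaylor2015CardinalCA}): since $\kappa$ is supercompact and $\kappa^{++}$ is regular with $\kappa^{++}>\kappa$, it is consistent that there is a $\kappa$-complete ultrafilter $U$ over $\kappa$ carrying a strong generating sequence $\langle A_\alpha\mid\alpha<\kappa^{++}\rangle$. I would pass to such a model and work with this fixed $U$ from now on.

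Next I would compute $Sp_T(U)$ using Corollary \ref{Cor: Spec for strong generating}. Since $U$ has a strong generating sequence of length $\kappa^{++}$, that corollary gives
$$Sp_{dp}(U)=Sp_T(U)\setminus\{\kappa\}=\{\kappa^{++}\}.$$
In particular $\kappa^+\notin Sp_T(U)$, because the only possible members of $Sp_T(U)$ are $\kappa$ and $\kappa^{++}$. The same conclusion can be reached directly from Lemma \ref{Lemma: classifying strong generate}, which gives $\mathfrak{dp}(U)=\mathfrak{ch}(U)=\kappa^{++}$, together with Corollary \ref{cor minimum spec} placing $\min(Sp_T(U)\setminus\{\kappa\})=\mathfrak{dp}(U)=\kappa^{++}$ and Proposition \ref{prop: propertied of dp and ch}(3) bounding $Sp_T(U)$ above by $\mathfrak{ch}(U)=\kappa^{++}$.

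Finally, recalling that $Sp_T(U)=\{\lambda\in Reg\mid U\text{ is not }(\lambda,\lambda)\text{-cohesive}\}$, the fact that the regular cardinal $\kappa^+$ is not in $Sp_T(U)$ says precisely that $U$ is $(\kappa^+,\kappa^+)$-cohesive. Since $U$ is moreover $\kappa$-complete over $\kappa$, this produces the desired ultrafilter and thereby answers Question \ref{Kanamoris question} positively.

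The main obstacle here is conceptual rather than technical: one must resist the natural temptation to build a generating sequence of length exactly $\kappa^+$, which would instead place $\kappa^+$ into $Sp_T(U)$ and hence \emph{destroy} $(\kappa^+,\kappa^+)$-cohesiveness. The crucial point is that lengthening the generating sequence to $\kappa^{++}$ makes $U$ a $P_{\kappa^{++}}$-point, so by the spectrum computation $U$ is $(\theta,\theta)$-cohesive for every regular $\theta$ strictly between $\kappa$ and $\kappa^{++}$, and in particular at $\theta=\kappa^+$. Once this choice of length is in place, every remaining step is a direct application of the spectrum identities already established above.
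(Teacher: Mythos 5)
Your proof is correct and follows exactly the route the paper intends: apply the Garti--Shelah consistency result with $\lambda=\kappa^{++}$, then use Corollary \ref{Cor: Spec for strong generating} to get $Sp_T(U)\setminus\{\kappa\}=\{\kappa^{++}\}$, so that $\kappa^+\notin Sp_T(U)$ and hence $U$ is $(\kappa^+,\kappa^+)$-cohesive. Your closing remark correctly isolates the one subtle point, namely that the generating sequence must have length $\kappa^{++}$ rather than $\kappa^+$.
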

In the next section we address the question of consistency strength of such ultrafilters.
\section{Consistency results}\label{Section: Consistency}

Although the existence of a long generating sequence seems stronger than Kanamori's question, it is actually equivalent:
\begin{theorem}
    For any $\lambda>\kappa$ The following are equicosistent:
    \begin{enumerate}
        \item there exists of an ultrafilter with a strong generating sequence of length $\lambda$.
        \item there is a $\kappa$-complete ultrafilter $U$ with $\min(Sp_T(U)\setminus\{\kappa\})=\lambda$; $U$ is $(\mu,\mu)$-cohesive for every regular $\mu\in (\kappa,\lambda)$.
        \item there exists of a $\kappa$-complete ultrafilter $U$ with $\mathfrak{dp}(U)=\lambda$.
\item There exists a $P_{\lambda^+}$-point over $\kappa$, and $\lambda$ is regular.
    \end{enumerate} 
\end{theorem}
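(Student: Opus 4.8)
The plan is to run a loop of implications through the four items in which all of the combinatorics is carried out in ZFC using the classification already in hand, and the consistency-theoretic content is concentrated in a single L\'evy collapse. Throughout $U$ is a uniform ultrafilter over $\kappa$, and I arrange $\kappa$-completeness as the other items demand.

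First I would settle the ZFC core among (1), (2) and (3). Item (1) gives (3) immediately, since a strong generating sequence of length $\lambda$ forces $\mathfrak{dp}(U)=\mathfrak{ch}(U)=\lambda$ by Lemma \ref{Lemma: classifying strong generate}, in particular $\mathfrak{dp}(U)=\lambda$. For (3) $\Rightarrow$ (2), since $\lambda>\kappa$ we have $\mathfrak{dp}(U)\neq\kappa$, so Corollary \ref{cor minimum spec} gives $\min(Sp_T(U)\setminus\{\kappa\})=\mathfrak{dp}(U)=\lambda$; the cohesiveness clause of (2) adds nothing, being exactly the statement that no regular $\mu\in(\kappa,\lambda)$ lies in $Sp_T(U)$, which this minimum already asserts. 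The converse (2) $\Rightarrow$ (3) is where the first care is needed: $\min(Sp_T(U)\setminus\{\kappa\})=\lambda$ pins $\mathfrak{dp}(U)$ to $\lambda$ through Corollary \ref{cor minimum spec} only once the degenerate possibility $\mathfrak{dp}(U)=\kappa$ (that is, $U$ failing to be a $p$-point) is excluded; I would either argue that a witness to (2) can be taken to be a $p$-point, or observe that for an equiconsistency it suffices to extract \emph{some} $\kappa$-complete ultrafilter of depth exactly $\lambda$ from a model of (2).

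The consistency content sits in the passage between (4) and the block (1)--(3), and the clean direction is downward from (4). A witness $U$ to (4) has large depth and hence large $\mathfrak{ch}(U)$ by Proposition \ref{prop: propertied of dp and ch}. I would force with the L\'evy collapse $\mathrm{Coll}(\lambda,\mathfrak{ch}(U))$. This poset is $<\lambda$-closed, so it adds no subset of $\kappa$ (as $\kappa<\lambda$); thus $U$ survives as a $\kappa$-complete ultrafilter on $\kappa$ generated by the very same base, and every $<\lambda$-sequence of members of $U$ in the extension already belongs to the ground model, so the ground-model directedness of $U$ keeps $U$ a $P_\lambda$-point in the extension. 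Now $|\mathfrak{ch}(U)^V|=\lambda$ forces $\mathfrak{ch}(U)\le\lambda$, while $P_\lambda$-ness forces $\mathfrak{dp}(U)\ge\lambda$; with $\mathfrak{dp}(U)\le\mathfrak{ch}(U)$ from Proposition \ref{prop: propertied of dp and ch} this yields $\mathfrak{dp}(U)=\mathfrak{ch}(U)=\lambda$, and hence a strong generating sequence of length $\lambda$ by Lemma \ref{Lemma: classifying strong generate}. This realizes (1), and through the first paragraph (2) and (3), in the collapse extension.

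To close the loop I must pass from the depth-$\lambda$ configuration back to item (4). By Corollary \ref{Cor: Plambda} an ultrafilter with $\mathfrak{dp}(U)=\lambda$ is a $P_\lambda$-point, which already delivers a $P$-point witness at index $\lambda$; conversely, whenever a witness of higher depth is available, the collapse of the previous paragraph trims it back down to depth exactly $\lambda$. The step I expect to be the genuine obstacle — and the one that should absorb the large-cardinal strength of the hypothesis — is precisely the preservation analysis of that collapse: checking \emph{simultaneously} that $\kappa$-completeness, the $P_\lambda$-property inherited from the stronger ground-model directedness, and the target value $\mathfrak{ch}(U)=\lambda$ all persist, together with the exclusion of the degenerate branch $\mathfrak{dp}(U)=\kappa$ in (2). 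With these verifications in place the chain $(4)\Rightarrow(1)\Rightarrow(3)\Rightarrow(2)$ together with the return arrow closes into the desired equiconsistency.
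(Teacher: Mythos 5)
Your proposal follows the paper's own route: the implications among (1)--(3) are the ZFC content of Corollary \ref{Cor: Spec for strong generating}, Corollary \ref{cor minimum spec} and Lemma \ref{Lemma: classifying strong generate}, and the consistency-theoretic step from (4) down to (1) is carried by the same L\'evy collapse of $\mathfrak{ch}(U)$ to $\lambda$, with the same preservation analysis ($<\lambda$-closure fixes $P(\kappa)$ and all $<\lambda$-sequences from $U$, so $\kappa$-completeness and $P_\lambda$-directedness survive, giving $\mathfrak{dp}(U)=\mathfrak{ch}(U)=\lambda$ in the extension). The one place where you defer rather than argue is the exclusion of $\mathfrak{dp}(U)=\kappa$ in $(2)\Rightarrow(3)$, and this is the genuine missing step: condition (2) does not by itself say the witness is a $p$-point. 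The paper's fix is to replace $U$ by a normal $U_0\leq_{RK}U$; then $U_0$ is automatically a $p$-point, so $\mathfrak{dp}(U_0)>\kappa$, and since $\mathfrak{dp}(U_0)\in Sp^*_T(U_0)\subseteq Sp^*_T(U)$ and $Sp^*_T(U)\setminus\{\kappa\}=Sp_T(U)\setminus\{\kappa\}$ by Corollary \ref{Cor: Spec^* is enough}, one gets $\mathfrak{dp}(U_0)\geq\min(Sp_T(U)\setminus\{\kappa\})=\lambda$. (This directly yields only $\geq\lambda$; exact equality is recovered, for an equiconsistency, by the collapse of the $(4)\Rightarrow(1)$ step, on which both you and the paper implicitly rely.) So your first suggested strategy is the right one, but its content is the passage to the normal Rudin--Keisler projection and the inclusion of the $\supseteq^*$-spectra under $\leq_{RK}$, which needs to be said.

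A separate remark on your return arrow: Corollary \ref{Cor: Plambda} says that $\mathfrak{dp}(U)=\lambda$ makes $U$ a $P_\lambda$-point and \emph{not} a $P_{\lambda^+}$-point, so your loop (like the paper's own proof of $(3)\Rightarrow(4)$, which cites the same corollary) delivers a $P_\lambda$-point rather than the $P_{\lambda^+}$-point literally demanded by item (4). Comparing with the introduction's instance ($\lambda=\kappa^{++}$, a $P_{\kappa^{++}}$-point, a strong generating sequence of length $\kappa^{++}$), item (4) is evidently intended to read ``$P_\lambda$-point''; under that reading your collapse argument and the return arrow go through unchanged, whereas under the literal reading the equivalence would already fail at $\lambda=\kappa^+$, where (1)--(3) hold for any normal measure under GCH while a $P_{\kappa^{++}}$-point has strictly higher consistency strength.
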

\begin{proof}
    $(1)$ implies $(2)$  by Corollary \ref{Cor: Spec for strong generating}. To see that $(2)$ implies $(3)$, let $U$ witness $(2)$, and let $U_0\leq_{RK} U$ be normal. Then $Sp^*_T(U_0)\subseteq Sp^*_T(U)$, and  $\mathfrak{dp}(U_0)>\kappa$. Hence $\mathfrak{dp}(U_0)\geq \min(Sp_T(U)\setminus\{\kappa\})=\lambda$. To see that $(3)$ implies $(4)$, by \ref{Cor: Plambda}, $U$ is a $P_{\lambda^+}$-point and we already made the observation that $\mathfrak{dp}(U)$ must be regular. Finally for $(4)$ implies $(1)$, 
    suppose that $U$ is a $P_{\lambda^+}$-point for some regular $\lambda>\kappa$, then $U$ a $\kappa$-complete ultrafilter.  Levi collapsing $\mathfrak{ch}(U)$ to be $\lambda$ (if needed) is a $\lambda$-closed forcing, and does not introduce new $<\lambda$-sequence. Hence $U$ is still a $\kappa$-complete ultrafilter in the extension. Moreover, we have $\mathfrak{dp}^{V[G]}(U)=\lambda=\mathfrak{ch}^{V[G]}(U)$. Hence by Lemma \ref{Lemma: classifying strong generate}, $U$ had a strong generating sequence in $V[G]$.
\end{proof}
In particular, the existence of a $\kappa$-complete $(\kappa^+,\kappa^+)$-cohesive ultrafilter is equiconsistent with the existence of an ultrafilter having a strong generating sequence of length $\kappa^{++}$- a principle which we conjecture having very high  consistency strength. Let us us the following result of Gitik to conclude that the consistency strength of this principle is more the $o(\kappa)=\kappa^{++}$
The following is due to Gitik:
\begin{theorem}
    The consistency strength of the existence of a $P_{\kappa^{++}}$-point is greater than $o(\kappa)=\kappa^{++}+1$.
\end{theorem}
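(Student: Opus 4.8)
The plan is to read this as a lower bound on consistency strength, so the real work is inner-model-theoretic and proceeds by a core-model argument in the style of Gitik and Mitchell. I would argue by contradiction: suppose there is a $P_{\kappa^{++}}$-point on $\kappa$ but, toward a contradiction, there is no inner model containing a measurable cardinal of Mitchell order $\kappa^{++}+1$. Under this anti-large-cardinal hypothesis Mitchell's core model $K$ for coherent sequences of measures exists and satisfies weak covering, so the successor and double-successor structure above $\kappa$ is computed closely enough by $K$, and a normal measure derived from the $P_{\kappa^{++}}$-point is captured on the $K$-sequence. Before invoking any of this I would first replace the abstract hypothesis by the concrete combinatorial object furnished earlier in this section: passing to a normal $U_0\le_{RK}U$ (as in the proof of the equiconsistency theorem), $U_0$ is again a $P_{\kappa^{++}}$-point, so by Corollary~\ref{Cor: Plambda} its depth satisfies $\mathfrak{dp}(U_0)\geq\kappa^{++}$, and by Lemma~\ref{Lemma: making decreasing} it carries a $\subseteq^*$-decreasing tower $\langle A_\alpha\mid\alpha<\mathfrak{dp}(U_0)\rangle$ with no pseudo-intersection in $U_0$ but with every proper initial segment $\supseteq^*$-bounded. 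This ``tightness at $\kappa^{++}$'' is the combinatorial engine of the argument.

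The crux — and the step I expect to be by far the hardest — is to convert this tower into Mitchell order inside $K$. Here I would use weak covering to transfer the tower to the canonical measure sequence: the fact that each initial segment of $\langle A_\alpha\rangle$ has a pseudo-intersection in $U_0$ while the full sequence has none should force a strictly increasing chain of length $\geq\kappa^{++}$ among the normal measures on $\kappa$ lying in $K$ below $U_0\cap K$, i.e. $o^K(U_0\cap K)\geq\kappa^{++}$. Since $U_0\cap K$ itself sits above every measure in that chain, it contributes the extra $+1$, yielding $o^K(\kappa)\geq\kappa^{++}+1$. This contradicts the assumption that no inner model realizes $o(\kappa)=\kappa^{++}+1$, and hence any model with a $P_{\kappa^{++}}$-point already computes an inner model of strictly greater consistency strength than the naïve bound $o(\kappa)=\kappa^{++}$ needed merely to violate GCH at a measurable.

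The main obstacle is precisely this last translation. One must make the covering-lemma bookkeeping exact at the double successor $\kappa^{++}$, correctly account for the single extra unit of Mitchell rank (the ``$+1$''), and — most delicately — verify that the tightness of the tower genuinely manifests as \emph{distinct} measures on the coherent sequence rather than as mere iterates of a single normal measure; ruling out this degenerate collapse is where the analysis of iterations of $K$ and the behavior of the map $j_{U_0}\restriction K$ must be brought to bear. Selecting the right core model and the appropriate form of its covering lemma in the presence of the measurable $\kappa$ already living in $V$ is a further technical commitment, and it is here that Gitik's machinery does the heavy lifting.
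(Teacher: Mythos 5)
There is a genuine gap, and it sits exactly where you flag the ``crux'': you never actually convert the combinatorial hypothesis into Mitchell order. Your plan is to extract a $\subseteq^*$-decreasing tower of length $\kappa^{++}$ and then assert that its tightness ``should force'' a strictly increasing chain of $\kappa^{++}$-many normal measures on the $K$-sequence below $U_0\cap K$. No mechanism is offered for producing even two distinct measures from the tower, and this translation is the entire content of the theorem; if such a direct tower-to-Mitchell-order transfer were available, the paper's open question about the exact strength of $P_{\kappa^{++}}$-points would essentially be answered. The actual argument runs in the opposite direction: one writes $j_U\restriction\mathcal{K}$ as a normal iteration of $\mathcal{K}$ (Mitchell--Schindler) and derives a contradiction from the assumption that $o^{\mathcal{K}}(\kappa)$ is small. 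Concretely, if $o^{\mathcal{K}}(\kappa)\leq\kappa^{++}$ the iteration uses only measures; since $\mathcal{K}\models 2^\kappa=\kappa^+$, the first measure $U_0$ applied has only $\kappa^+$-many sets, so the $P_{\kappa^{++}}$-point property yields a single $A\in U$ with $A\subseteq^*B$ for all $B\in U_0$; then $A\in M_U$ (closure under $\kappa$-sequences), $U_0$ is recoverable from $A$ inside $M_U$, and maximality of the core model puts $U_0\in\mathcal{K}^{M_U}$, i.e.\ a measure inside its own ultrapower --- contradiction. This ``diagonalize the trace of $U$ on $\mathcal{K}$ and push the first critical measure into the target model'' step is the idea your sketch is missing; the tower plays no direct role.

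Two further problems. First, you never treat the extender case: to exclude $o(\kappa)=\kappa^{++}+1$ exactly, one must handle a coherent sequence ending in a total $(\kappa,\kappa^{++})$-extender $E_0=F(\kappa,\kappa^{++})$ with which the iteration may begin. The paper's proof spends most of its effort there --- showing $E_0$ and its images are applied only finitely often, analyzing seeds of the normal measure $E_0(\kappa)$ along $j(A)\setminus\kappa$, locating the stage $\xi'$ where an image of $E_0(\kappa)$ is applied, and showing that image is generated by the sets $j(B)\cap(\rho,\eta)$ and hence definable in $M_U$, again contradicting maximality. Your proposal, which speaks only of ``normal measures on $\kappa$ lying in $K$,'' cannot see this case at all, yet it is precisely what the ``$+1$'' in the statement is about. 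Second, even granting your unproven crux, you would conclude only $o^K(\kappa)\geq\kappa^{++}+1$, which gives consistency strength \emph{at least} $o(\kappa)=\kappa^{++}+1$, whereas the theorem asserts strictly \emph{greater than}; the paper obtains this by refuting $o^{\mathcal{K}}(\kappa)=\kappa^{++}+1$ itself, forcing $o^{\mathcal{K}}(\kappa)\geq\kappa^{++}+2$.
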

\begin{proof}
    Assume that the core model $\mathcal{K}$ exists. Let us start by showing that in $\mathcal{K}$, $o(\kappa)>\kappa^{++}$. Just otherwise, let $U$ be a $P_{\kappa^{++}}$-point in $V$ and let $j=j_U:V\rightarrow M$ be the ultrapower embedding. By Mitchel and Schindler \cite{MitCoherent,schindler_2006}, $j\restriction \mathcal{K}:\mathcal{K}\rightarrow \mathcal{K}^M$ is an iterated ultrapower of the measures/extenders of $\mathcal{K}$. Suppose that $j=i_{0,\theta}$ where $\l i_{\alpha,\beta}\mid 0\leq \alpha\leq \beta\leq \theta\r$ is a normal iteration such that $i_{\alpha,\alpha+1}$ is an ultrapower embedding by an extender/measure on some increasing sequence $\l \delta_\alpha\mid \alpha<\theta\r$. In particular $\delta_0=\kappa$. If $o^{\mathcal{K}}(\kappa)\leq\kappa^{++}$, then $i_{0,\theta}$ consists only of ultrapowers by measures. Let $U_0$ be the first normal ultrafilter on $\kappa$ used in the iteration, so in particular, $i_{0,1}$ is the ultrapower by $U_0$. We note that $U_0\subseteq U$, this follows since the critical point of $i_{1,\theta}$ is greater than $\kappa$. Since in $\mathcal{K}$, $2^\kappa=\kappa^{+}$, and since $U$ is a $P_{\kappa^{++}}$-point, we can find a single set $A\in U$ such that for every $B\in U_0$, $A\subseteq^* B$. But then $A\in M$ (since $M$ is close under $\kappa$-sequences) and therefore $U_0\in M$. Again, since the critical point of $i_{1,\theta}$ is greater than $\kappa$, $U_0$ is a $\mathcal{K}^M$-ultrafilter and by maximality of the core model $U_0\in\mathcal{K}^M$. This is impossible as it would mean that $U_0\in \mathcal{K}_{U_0}$ (the ultrapower of $\mathcal{K}$ by $U_0$), contradiction. 

    Now suppose $o(\kappa)=\kappa^{++}+1$, then in $\mathcal{K}$ we would have a coherent sequence of of length $\kappa^{++}+1$; that is, a coherent sequence over $\kappa$ consists of $\kappa^{++}$ measures denoted by $F(\kappa, \alpha)$, for  $\alpha< \kappa^{++}$ and a single $(\kappa,\kappa^{++})$- total extender $F(\kappa,\kappa^{++})$ which can now be used in the iteration $j_U\restriction\mathcal{K}$. Denote $E_0=F(\kappa,\kappa^{++})$ Note that if the iteration starts with a measure, we again reach a contradiction as above. So suppose that $j\restriction\mathcal{K}$ is a normal iteration that starts with $j_{E_0}$. First, we claim that $E_0$ and its images can only be used finitely many times at stages $\{\nu_n\mid n<\omega\}$. Since otherwise, letting $\nu_\omega=\sup_{n<\omega}\nu_n$ by \cite{GITIK1993227} proposition 1.3 and the proceeding remark, $i_{0,\nu_\omega}(E_0)$ is going to be a member of $\mathcal{K}^{M_U}$. But then at stage $\nu_\omega$, we must apply either a measure or an extender on $i_{0,\nu}(\kappa)$ (as otherwise, $i_{0,\nu}(\kappa)=j(\kappa)$ will have cofinality $\omega$, which is impossible). This is impossible, since   then we will be able to conclude that $i_{0,\nu}(E_0)$ is a member of its own ultrapower (or even worst, to the ultrapower by a measure which is Mitchell below it), this is a contradiction.  
    
     Let $E_0(\kappa)$ be the normal measure of $E_0$. Since the normal iteration $j\restriction\mathcal{K}$ starts with $E_0$, we have that $E_0(\kappa)=U\cap \mathcal{K}$. Again, by the same reasoning as above, we can find $A\in U$ such that for every $B\in E_0(\kappa)$, $A\subseteq^* B$. In particular, $j(A)\setminus\kappa\subseteq \bigcap j''E_0(\kappa)$. 
     It is not hard to see that every $\eta\in j(A)\setminus\kappa$ is a seed for $E_0(\kappa)$, namely, $$E_0(\kappa)=\{X\subseteq\kappa\mid \eta\in j(A)\}$$
     Suppose that $i=i_{0,\alpha+1}$ is the part of the iteration such that after $i$ we only iterate ultrapowers by measures. Since $j(A)\in j(U)$, then $j(A)\setminus\kappa$ is unbounded in $j(\kappa)$ and therefore there is $\eta\in j(A)$ above $i_{0,\alpha}(\kappa)<j(\kappa)$. We claim that $\eta\geq i(\kappa)$ and therefore $i(\kappa)<j(\kappa)$. Otherwise,   
      there is $a\in [i_{0,\alpha}(\kappa^{++})]^{<\omega}$, and a function $f:[\kappa]^{|a|}\rightarrow\kappa$ such that $i(f)(a)=\eta$. So the function $g:\kappa\rightarrow\kappa$ defined by $g(x)=\sup_{\vec{\xi}\in [x^{++}]^{|a|}}f(\vec{\xi})+1$ satisfies that $i(g)(i_{0,\alpha}(\kappa))> \eta$. Let $C_g$ be the club of closure points of $g$, then $C_g\in E_0(\kappa)$ and therefore $\eta\in i(C_g)$, which means that $i(g)(i_{0,\alpha}(\kappa))<\eta,$
    contradiction. By the same reasoning, we conclude that $\eta$ must be the image of $\kappa$ in the iteration. Namely there is some $\xi$ such that $i_{0,\xi}(\kappa)=\eta$. Since $\eta<j(\kappa)$, this means that at some stage $\xi\leq\xi'$, $i_{\xi',\xi'+1}$ is the ultrapower by some normal measure on $\eta$. Since $\eta$ is a seed for $E_0(\kappa)$, it must be that $i_{\xi',\xi'+1}$ is the ultrapower by $i_{0,\xi'}(E_0(\kappa))$. But then $i_{0,\xi'}(E_0(\kappa))\notin \mathcal{K}^{M_U}$. Thus we will reach the desired contradiction once we prove that $i_{0,\xi'}(E_0(\kappa))\in M_U$. 
    \begin{claim}
        $i_{0,\xi'}(E_0(\kappa))$ is generated by $\{j(B)\cap(\rho,\eta)\mid B\in E_0(\kappa), \ \rho<\eta\}$.
    \end{claim}
    \begin{proof}
        Let $X\in i_{0,\xi'}(E_0(\kappa))$, then $X=i_{0,\xi'}(f)(\lambda_1,...\lambda_n)$ for some $\lambda_1,...,\lambda_n<i_{0,\xi'}(\kappa)$ and $f:[\kappa]^n\to E_0(\kappa)$. Let $B=\Delta_{\vec{\xi}\in[\kappa]^n}f(\vec{\xi})$, then $i_{0,\xi'}(B)\setminus \lambda_n+1\subseteq X$. Finally, note that  the critical point of $i_{\xi',\infty}$ (the tail of the iteration) is $\eta$, hence $$j(B)\cap \eta=i_{\xi',\infty}(i_{0,\xi'}(B))\cap \eta=i_{0,\xi'}(B).$$
        We conclude that $j(B)\cap (\lambda,\eta)\subseteq X$.
    \end{proof}
    To finish the proof, we have that for each $B\in E_0(\kappa)$ and each $\kappa\leq\xi<\eta$, $j(A)\cap (\xi,\eta)\subseteq j(B)\cap (\xi,\eta)$. Hence $i_{0,\xi'}(E_0)=\{X\in \mathcal{K}^{M_U}\mid \exists \xi<\eta, j(A)\cap (\xi,\eta)\subseteq X\}$ is definable in $M_U$, contradiction. 
\end{proof}
A generalization of the above argument gives the following
\begin{corollary}
    If there is a $P_{\kappa^{++}}$-point and the core model exists, then $o^{\mathcal{K}}(\kappa)$ is at least $ \kappa^{++}+\kappa$
\end{corollary}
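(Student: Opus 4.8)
The plan is to run the argument of the preceding theorem with the single top extender $E_0$ replaced by the entire top segment of the coherent sequence, reasoning by contradiction. Assume the core model $\mathcal{K}$ exists and there is a $P_{\kappa^{++}}$-point $U$, yet $o^{\mathcal{K}}(\kappa)<\kappa^{++}+\kappa$. By the theorem we already know $o^{\mathcal{K}}(\kappa)>\kappa^{++}+1$, so we may write $o^{\mathcal{K}}(\kappa)=\kappa^{++}+\xi$ for some $1<\xi<\kappa$. In $\mathcal{K}$, GCH holds, so there are at most $\kappa^{++}$ normal measures on $\kappa$; hence the coherent sequence at $\kappa$ consists of the $\kappa^{++}$ normal measures $F(\kappa,\alpha)$, $\alpha<\kappa^{++}$, together with a Mitchell-increasing tower $\langle E_\beta\mid\beta<\xi\rangle$ of genuine extenders on $\kappa$, where $E_\beta=F(\kappa,\kappa^{++}+\beta)$. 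As before, set $j=j_U\colon V\to M$ and let $j\restriction\mathcal{K}=i_{0,\theta}$ be the associated normal iteration of $\mathcal{K}$ given by Mitchell--Schindler \cite{MitCoherent,schindler_2006}, with $i_{\alpha,\alpha+1}$ an ultrapower by an extender on an increasing sequence $\langle\delta_\alpha\rangle$ and $\delta_0=\kappa$.

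First I would dispose of the degenerate starts exactly as in the theorem: if $i_{0,1}$ is an ultrapower by one of the normal measures $F(\kappa,\alpha)$, then that measure reappears in $\mathcal{K}^{M}$ and hence in its own ultrapower, a contradiction. So the iteration begins with one of the top extenders, say $i_{0,1}=j_{E_{\beta_0}}$ for some $\beta_0<\xi$, and as before $U\cap\mathcal{K}=E_{\beta_0}(\kappa)$. Since $|P(\kappa)\cap\mathcal{K}|=\kappa^{+}<\kappa^{++}$ and $U$ is a $P_{\kappa^{++}}$-point, $U\cap\mathcal{K}$ has a pseudo-intersection $A\in U$, and every $\eta\in j(A)\setminus\kappa$ is a seed for $E_{\beta_0}(\kappa)$. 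Next I would generalize the ``finitely often'' step to the whole tower: for each $\beta<\xi$, the extender $E_\beta$ and all of its images are applied only finitely many times, since otherwise the supremum $\nu_\omega$ of infinitely many such stages would give $i_{0,\nu_\omega}(E_\beta)\in\mathcal{K}^{M_U}$ by \cite[Proposition 1.3]{GITIK1993227}, while the obligatory extender on $i_{0,\nu_\omega}(\kappa)$ at stage $\nu_\omega$ would place an extender into its own ultrapower. Because $\xi<\kappa$ and $\kappa$ is regular, the stages at which any top-extender image is applied form a set of order type $<\kappa$, beyond which the iteration applies only images of normal measures.

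The heart of the argument is then the seed analysis, carried out as in the theorem but with the top segment in place of $E_0$. One pins down $\eta$ as the image $i_{0,\zeta}(\kappa)$ of $\kappa$ at the stage $\zeta$ just before the tail consisting only of measure-ultrapowers, shows $i_{0,\zeta}(\kappa)<j(\kappa)$, and concludes that at the corresponding stage $\zeta'$ the step $i_{\zeta',\zeta'+1}$ is an ultrapower by $i_{0,\zeta'}(E_{\beta_0}(\kappa))$. The pseudo-intersection then shows, exactly as in the Claim of the theorem, that $i_{0,\zeta'}(E_{\beta_0}(\kappa))$ is generated by $\{\,j(B)\cap(\rho,\eta)\mid B\in E_{\beta_0}(\kappa),\ \rho<\eta\,\}$ and is therefore definable in $M_U$; this contradicts that a normal measure cannot lie in its own ultrapower. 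Since the same contradiction is reached regardless of which top extender $E_{\beta_0}$ the iteration begins with, no value $1<\xi<\kappa$ is possible, and combined with the theorem we conclude $o^{\mathcal{K}}(\kappa)\geq\kappa^{++}+\kappa$.

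I expect the main obstacle to be the bookkeeping in the generalized ``finitely often'' step together with the accompanying seed analysis: with $\xi$-many Mitchell-ordered top extenders rather than a single $E_0$, one must control how their images interleave along the iteration and verify that the seed $\eta$ lands precisely at a measure-ultrapower stage whose measure is $i_{0,\zeta'}(E_{\beta_0}(\kappa))$, so that $A$ still captures enough generators to define that measure inside $M_U$. The bound $\kappa^{++}+\kappa$ is exactly what keeps the number of top-extender stages below $\kappa$ and the per-extender use finite; pushing $\xi$ to $\kappa$ would let the top extenders be applied cofinally into $j(\kappa)$ and break the self-membership contradiction, which is why the corollary asserts only $o^{\mathcal{K}}(\kappa)\geq\kappa^{++}+\kappa$.
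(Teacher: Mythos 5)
Your overall architecture matches the paper's: dispose of the case where the iteration starts with a measure, reduce to the theorem when there is a last extender stage, and otherwise derive a contradiction from repeated use of top extenders. But the pivotal step --- that each $E_\beta$ and its images are applied only finitely often, ``exactly as in the theorem'' --- does not follow from the theorem's argument once there is more than one extender on the sequence, and this is a genuine gap. In the theorem, if images of the \emph{unique} top extender $E_0$ are applied at stages $\nu_0<\nu_1<\cdots$, then $i_{0,\nu_\omega}(E_0)\in\mathcal{K}^{M_U}$ where $\nu_\omega=\sup_n\nu_n$, and whatever is applied at $i_{0,\nu_\omega}(\kappa)$ at stage $\nu_\omega$ is either $i_{0,\nu_\omega}(E_0)$ itself or something Mitchell below it; in either case coherence cuts $i_{0,\nu_\omega}(E_0)$ off the sequence of the resulting model while it survives as a set in $\mathcal{K}^{M_U}$, contradicting maximality. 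With a tower $\l E_\beta\mid \beta<\xi\r$, the extender applied at the limit stage may be $i_{0,\nu_\omega}(E_{\beta'})$ for some $\beta'>\beta$; then $i_{0,\nu_\omega}(E_\beta)$ remains on the coherent sequence of that ultrapower and no contradiction arises. So a fixed $E_\beta$ can in principle be applied infinitely often, and your conclusion that the extender stages are bounded with a last one (which you need to anchor the seed analysis at ``the stage $\zeta$ just before the tail'') is unsupported. A secondary issue: even granting finiteness per extender, a union of $\xi$ finite sets can have limit order type; boundedness in $j(\kappa)$ does follow from $\cf^V(j(\kappa))>\kappa$, but a bounded set need not have a maximum, so the seed analysis would have to be carried out at a limit stage, which you do not address.

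The paper's proof sidesteps all of this with a different case split. Let $C_0$ be the set of stages at which an extender (rather than a measure) is applied to an image of $\kappa$; the corresponding critical points form a closed subset of $j(\kappa)$. If $C_0$ is bounded it has a maximum, and the theorem's seed analysis applies after the last extender stage. If $C_0$ is unbounded, then since the index used at each stage lies in the \emph{fixed} set $\bar{\kappa}<\kappa<\cf^V(j(\kappa))$ (the indices are not moved by the iteration, as every critical point exceeds $\bar{\kappa}$), some single index is used on a stationary set, hence on a closed copy of $\omega+1$; applying the same index at the supremum of its own cofinal applications is exactly the configuration that the theorem's ``finitely often'' lemma rules out. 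To repair your write-up, replace the per-extender finiteness claim with this pigeonhole argument, or else explain why a higher-indexed extender cannot be the one applied at the relevant limit stage.
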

\begin{proof}

Assume otherwise, there there are $\l E_i\mid i<\bar{\kappa}\r$ for $\bar{\kappa}<\kappa$ is the extenders in the coherent sequence on $\kappa$. In particular for every step of the iteration $\alpha$, $i_{0,\alpha}(\l E_i\mid i<\bar{\kappa}\r)=\l i_{0,\alpha}(E_i)\mid i<\bar{\kappa}\r$ includes all the extenders on $i_{0,\alpha}(\kappa)$.
Let 
$C_0$ enumerate only the steps of the iteration where we performed an extender ultrapower over an images of $\kappa$. Note that $C_0$ is closed below $j(\kappa)$ as the iteration is normal and by \cite{GITIK1993227}. If $C_0$ would have been bounded, then there is a last extender that is being performed, and the argument of the previous argument applies. For each $\nu\in C_0$ there is an index $\alpha_\nu< \bar{\kappa}$ such that $i_{\nu,\nu+1}$ is an ultrapower extender by $F(\nu,\alpha_\nu)$. Since $j(\kappa)$ has $V$-cofinality greater than $\kappa$, there is an $\omega+1$ copy inside $C_0$ where we applies the same index of extenders. This is a contradiction. 

\end{proof}
\begin{corollary}
    If there is a $\kappa$-complete, $(\kappa^+,\kappa^+)$-cohesive ultrafilter on $\kappa$ then there is an inner model with a measurable cardinal $\lambda$ such that $o(\lambda)\geq\lambda^{++}+\lambda$.
\end{corollary}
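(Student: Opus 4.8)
The plan is to reduce the statement to the preceding corollary, which already does the hard inner-model work: it shows that a $P_{\kappa^{++}}$-point, together with the existence of the core model $\mathcal K$, forces $o^{\mathcal K}(\kappa)\geq\kappa^{++}+\kappa$. Thus two things remain to be supplied: (i) extracting a $P_{\kappa^{++}}$-point from a $\kappa$-complete $(\kappa^+,\kappa^+)$-cohesive ultrafilter, and (ii) removing the auxiliary hypothesis that $\mathcal K$ exists.

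For (i), let $U$ be the given $\kappa$-complete $(\kappa^+,\kappa^+)$-cohesive ultrafilter on $\kappa$. Since $\mathrm{cf}(\kappa^+)=\kappa^+\neq\kappa$, the corollary identifying $(\lambda,\lambda)$- with $(\lambda,\lambda)^*$-cohesiveness (for $\mathrm{cf}(\lambda)\neq\kappa$) gives that $U$ is $(\kappa^+,\kappa^+)^*$-cohesive, i.e. $\kappa^+\notin Sp^*_T(U)$. Next I would pass to the normal ultrafilter $U_0\leq_{RK}U$ derived from the seed $\kappa=\mathrm{crit}(j_U)$; since a Rudin-Keisler reduction induces a Tukey reduction of the $\supseteq^*$-orders, $Sp^*_T(U_0)\subseteq Sp^*_T(U)$, so also $\kappa^+\notin Sp^*_T(U_0)$. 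Being normal, $U_0$ is a $p$-point, hence $(\kappa,\kappa)^*$-cohesive, so $\kappa\notin Sp^*_T(U_0)$ and $\mathfrak{dp}(U_0)>\kappa$. As $\mathfrak{dp}(U_0)=\min Sp^*_T(U_0)$ is a regular cardinal avoiding both $\kappa$ and $\kappa^+$, we conclude $\mathfrak{dp}(U_0)\geq\kappa^{++}$, and by Corollary \ref{Cor: Plambda} this says precisely that $U_0$ is a $P_{\kappa^{++}}$-point; moreover $U_0$ is $\kappa$-complete, being normal. (This is the content of the implications $(2)\Rightarrow(3)\Rightarrow(4)$ of the equiconsistency theorem specialized to $\lambda=\kappa^+$, so it may also simply be quoted.)

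For (ii), I would argue by the standard core-model dichotomy. Suppose toward a contradiction that no inner model contains a measurable $\lambda$ with $o(\lambda)\geq\lambda^{++}+\lambda$; then in particular there is no inner model with a Woodin cardinal, so the core model $\mathcal K$ exists. Applying the preceding corollary to the $P_{\kappa^{++}}$-point $U_0$ produced in (i) yields $o^{\mathcal K}(\kappa)\geq\kappa^{++}+\kappa$. But $\mathcal K$ is an inner model in which $\kappa$ is measurable with $o^{\mathcal K}(\kappa)\geq\kappa^{++}+\kappa$, so $\lambda=\kappa$ witnesses exactly what we assumed impossible, a contradiction; hence the desired inner model exists.

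The genuinely hard mathematics --- the analysis of the normal iteration $j_{U_0}\restriction\mathcal K$ and the seed computation forcing an image of the top extender into its own ultrapower --- lives entirely in the preceding corollary, which I may invoke. Here the only delicate points are the spectrum bookkeeping of step (i) (above all the observation that $\mathfrak{dp}(U_0)$ cannot be caught at $\kappa$ or $\kappa^+$) and the correct formulation of the dichotomy in step (ii). I expect (ii) --- pinning the dichotomy to the right large-cardinal threshold so that the hypothesis ``$\mathcal K$ exists'' can be discharged --- to be the main point to state carefully, since everything else is either a short computation or a direct citation.
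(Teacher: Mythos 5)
Your proposal is correct and follows essentially the route the paper intends (the paper leaves this corollary without an explicit proof): extract a $P_{\kappa^{++}}$-point from the $(\kappa^+,\kappa^+)$-cohesive ultrafilter exactly as in the implications $(2)\Rightarrow(3)\Rightarrow(4)$ of the equiconsistency theorem, then invoke the preceding corollary with $\lambda=\kappa$. Your explicit core-model dichotomy in step (ii) correctly discharges the hypothesis that $\mathcal K$ exists, which the paper leaves implicit.
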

This can be pushed even higher, but we leave the following problem:
\begin{question}
    What is the consistency strength of a $P_{\lambda}$-point for $\lambda>\kappa^+$?
\end{question}
Equivalently, 
\begin{question}
    What is the consistency strength of the existence of a $\kappa$-complete $(\kappa^+,\kappa^+)$-cohesive ultrafilter over $\kappa$?
\end{question}

Next we would like to treat the possible complexity of $Sp_T(U)$. First, we would like to rise the following question:
\begin{question}
    Is it consistent that $Sp_T(U)$ is not an interval of regular cardinals?
\end{question}
A natural approach would be to take two ultrafilter $U,W$ such that $U$ has a strong generating sequence of length $\kappa^+$ (for example) and $W$ has a strong generating sequence of length $\kappa^{+++}$. By proposition \ref{prop:product spectra} $Sp_T(U\cdot W)=Sp_T(U)\cup Sp_T(W)=\{\kappa^+,\kappa^{+++}\}$. We do not know if it is consistent to have two such ultrafilters. However, the usual way to obtain such ultrafilters; that is, to iterate Mathias forcing $\lambda$-many times is doomed. Indeed if we iterate the above forcing to produce one ultrafilter with a strong generating sequence of length $\lambda$, and then iterate again $\lambda'$-many times to generate an ultrafilter with a strong generating sequence of length $\lambda'$. If $\lambda,\lambda'$ have different cofinalities, any ultrafilter in the generic extension strongly generated by $cf(\lambda)$-many sets
would have to be generated from an intermediate step of the iteration. The following lemma assures that adjoining even one Mathias generic set prevents ultrafilters from being generated by the ground model.
\begin{lemma}
    Let $a_G$ be an $\mathbb{M}_U$-generic set. Denote by $$R=\bigcup_{i<\kappa}[(a_G)_{2i},(a_G)_{2i+1}).$$ Then both $R$ and $\kappa\setminus R$ cannot contain an unbounded set from the ground model. 
\end{lemma}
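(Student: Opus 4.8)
The plan is to argue by genericity, showing that for every unbounded $A\in V$ the sets $A\cap R$ and $A\cap(\kappa\setminus R)$ are both unbounded in $\kappa$; this immediately gives $A\not\subseteq R$ and $A\not\subseteq\kappa\setminus R$, which is exactly the assertion. The key observation is that membership in $R$ is a parity condition on the position relative to the generic: if $(a_G)_\gamma\leq\xi<(a_G)_{\gamma+1}$, then $\xi\in R$ precisely when $\gamma$ is even (writing $\gamma=\lambda+n$ with $\lambda$ limit or $0$ and $n<\omega$, meaning $n$ even). Thus, to decide whether a fixed $\xi$ lands in $R$, it suffices to control the index of the largest element of $a_G$ that is $\leq\xi$.

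First I would fix an unbounded $A\in V$ and, for each $\alpha<\kappa$, set (with $p$ ranging over $\mathbb{M}_U$)
$$E_\alpha=\{p\mid \exists a\in A,\ a>\alpha,\ p\Vdash \check a\in\dot R\},$$
$$E'_\alpha=\{p\mid \exists a\in A,\ a>\alpha,\ p\Vdash \check a\in\kappa\setminus\dot R\}.$$
The heart of the argument is that $E_\alpha$ and $E'_\alpha$ are dense. Given a condition $(s,B)$ with stem $s$ a bounded subset of $\kappa$ and $B\in U$ concentrating above $\sup s$, pick two elements $b_0<b_1$ of $B$ with $b_0>\sup s$, and then, using that $A$ is unbounded, pick $a\in A$ with $a>\max(\alpha,b_1)$. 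Since $U$ is $\kappa$-complete (equivalently, uniform over $\kappa$), the tail $B\cap(a,\kappa)$ is still in $U$, so both $(s\cup\{b_0\},B\cap(a,\kappa))$ and $(s\cup\{b_0,b_1\},B\cap(a,\kappa))$ are legitimate extensions of $(s,B)$. In the first, the only elements of $a_G$ that are $\leq a$ are those of $s\cup\{b_0\}$, so the largest of them, $b_0$, carries index $\gamma_1=\mathrm{otp}(s)$ and $a$ lies in the block $[(a_G)_{\gamma_1},(a_G)_{\gamma_1+1})$; in the second, the largest such element $b_1$ carries index $\gamma_2=\mathrm{otp}(s)+1$. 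Because $\gamma_1$ and $\gamma_2$ are consecutive ordinals, exactly one of them is even, so exactly one of these two extensions forces $\check a\in\dot R$ and the other forces $\check a\in\kappa\setminus\dot R$. Hence below every condition there is an extension in $E_\alpha$ and one in $E'_\alpha$, establishing density.

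By genericity, $G$ meets $E_\alpha$ and $E'_\alpha$ for every $\alpha<\kappa$, producing elements of $A$ above each $\alpha$ lying in $R$ and in $\kappa\setminus R$ respectively; thus $A\cap R$ and $A\cap(\kappa\setminus R)$ are both unbounded, as desired. The main obstacle is the bookkeeping around the ordinal indexing. One must verify that freezing the constraint set above $a$ really pins $a_G\cap(a+1)$ down to be exactly the stem --- this is the Prikry-style feature that every generic element beyond the stem comes from the current constraint set, and it is what guarantees that $a$ sits strictly inside a single block whose index is computable --- and one must be careful with the even/odd convention on ordinals so that passing from a stem of order type $\mathrm{otp}(s)+1$ to one of order type $\mathrm{otp}(s)+2$ flips the relevant parity. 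The ``one extra point versus two extra points'' device is precisely what makes this parity flip automatic, sidestepping any need to compute the parity of $\mathrm{otp}(s)$ itself or to worry about whether $s$ has limit order type.
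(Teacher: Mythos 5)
Your proof is correct and follows essentially the same route as the paper's: extend the stem and shrink the measure-one set above a chosen point of the ground-model set so that the point's block --- and hence its membership in $R$ --- is decided by the parity of a stem index, which you control. The paper phrases this as a direct contradiction after normalizing the stem so that its maximum carries an odd index and then inserting one new point above the target, whereas you package the same combinatorial fact as a density argument with the one-point/two-point parity flip; the substance is identical.
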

\begin{proof}
    Suppose otherwise that $X\in V$ and $X\subseteq R$ (the proof for $\kappa\setminus R$ is similar). Let $\l a,A\r\in G$ be a condition such that $\l a,A\r\Vdash_{\mathbb{M}_U} \check{X}\subseteq \lusim{R}$. We may assume that $\max(a)=a_{2i+1}$ for some $i<\kappa$ (namely $otp(a)$ is an even successor ordinal). Since $X$ is unbounded, there is $x\in X\setminus a_{2i+1}+1$, and since $A\in U$, there is $\alpha\in A\setminus X$. extend $\l a,A\r$ to $\l a\cup\{\alpha\},A\setminus \alpha+1\r$. This condition forces that $x\in X\setminus R$, contradiction. 
\end{proof}

\begin{lemma}\label{Lemma: Cohens blowup Tukey}
     Let $V[G]$ be the usual Cohen extension where we added $\kappa^{++}$-many Cohen generic functions from $\kappa$ to $2$ to a model of $2^\kappa=\kappa^+$. Then any uniform ultrafilter $U\in V[G]$ is not $(\kappa^{++},\kappa^+)$-cohesive. In particular it is not $(\kappa^+,\kappa^+)$-cohesive and not $(\kappa^{++},\kappa^{++})$. Namely $ Sp_{T}(U)\supseteq\{\kappa,\kappa^+,\kappa^{++}\}$.
\end{lemma}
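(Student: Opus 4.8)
The plan is to produce, inside $V[G]$, a single family $\mathcal A\in[U]^{\kappa^{++}}$ witnessing the failure of $(\kappa^{++},\kappa^{+})$-cohesiveness, using the Cohen columns as an independent family. Write $\mathbb P=\mathrm{Add}(\kappa,\kappa^{++})$ and let $\langle f_\alpha\mid\alpha<\kappa^{++}\rangle$ be the generic functions $f_\alpha:\kappa\to 2$, with $A_\alpha=\{\xi<\kappa\mid f_\alpha(\xi)=1\}$. Given a uniform $U\in V[G]$, for each $\alpha$ exactly one of $A_\alpha,\kappa\setminus A_\alpha$ lies in $U$; call it $B_\alpha$, and set $\mathcal A=\{B_\alpha\mid\alpha<\kappa^{++}\}$. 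Since distinct Cohen columns are mutually generic, the $B_\alpha$ are pairwise distinct, so $\mathcal A\in[U]^{\kappa^{++}}$. A $\kappa^+$-sized subfamily of $\mathcal A$ is exactly $\{B_\alpha\mid\alpha\in I\}$ for some $I\in[\kappa^{++}]^{\kappa^+}$, with intersection $\bigcap_{\alpha\in I}B_\alpha$; hence it suffices to show $\bigcap_{\alpha\in I}B_\alpha\notin U$ for every such $I$. Granting this, the two ``in particular'' clauses follow from the trivial monotonicity of cohesiveness (if $U$ were $(\kappa^+,\kappa^+)$- or $(\kappa^{++},\kappa^{++})$-cohesive it would be $(\kappa^{++},\kappa^+)$-cohesive, by thinning to a subfamily or by passing to a superset respectively), and $\kappa\in Sp_T(U)$ holds for every uniform ultrafilter, giving $Sp_T(U)\supseteq\{\kappa,\kappa^+,\kappa^{++}\}$.

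For the core claim, suppose towards a contradiction that $C:=\bigcap_{\alpha\in I}B_\alpha\in U$ for some $I\in[\kappa^{++}]^{\kappa^+}$; since $U$ is uniform, $C$ is unbounded in $\kappa$. Here I would use that, under $\kappa^{<\kappa}=\kappa$ (which is implicit in ``the usual Cohen extension'' and is what makes $\mathbb P$ be $\kappa^+$-c.c.\ and compute $2^\kappa=\kappa^{++}$ correctly), every subset of $\kappa$ in $V[G]$ has a nice name whose support is a set $S\in V$ of size at most $\kappa$; thus $C\in V[G_S]$, where $G_S=G\restriction S$. Because $|I|=\kappa^{+}>\kappa\ge|S|$, I may pick a coordinate $\alpha^{*}\in I\setminus S$. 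Then $f_{\alpha^{*}}$ is $\mathrm{Add}(\kappa,1)$-generic over $V[G_S]$, while $C\in V[G_S]$ is unbounded. A one-step density argument in $\mathrm{Add}(\kappa,1)$ shows that neither $A_{\alpha^{*}}$ nor its complement can contain $C$: given any condition $q$ and any target bit, unboundedness of $C$ lets me find $\xi\in C\setminus\dom(q)$ and extend $q$ by setting $f_{\alpha^{*}}(\xi)$ to the opposite value, so the sets of conditions forcing $C\not\subseteq A_{\alpha^{*}}$ and $C\not\subseteq\kappa\setminus A_{\alpha^{*}}$ are both dense. Hence $C\not\subseteq B_{\alpha^{*}}$, contradicting $\alpha^{*}\in I$ and $C=\bigcap_{\alpha\in I}B_\alpha\subseteq B_{\alpha^{*}}$.

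The heart of the argument, and the step I expect to be the only real subtlety, is the coordinate count $|S|\le\kappa<\kappa^+=|I|$: the $\kappa^+$-c.c.\ forces the value of $C$ to depend on at most $\kappa$ of the Cohen columns, whereas $I$ names $\kappa^+$ of them, so some column $\alpha^{*}\in I$ is genuinely generic over the model in which $C$ already lives. Everything else is bookkeeping: verifying the nice-name support bound from the chain condition, checking that $\mathbb P$ factors as $\mathbb P_S\times\mathrm{Add}(\kappa,1)\times\mathbb P_{\mathrm{rest}}$ so that $f_{\alpha^{*}}$ is mutually generic with $G_S$, and the density computation above. I would also flag that the argument uses nothing about $U$ beyond uniformity, which is exactly why the conclusion holds for all uniform ultrafilters simultaneously.
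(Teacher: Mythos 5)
Your proposal is correct and follows essentially the same route as the paper's proof: use the $\kappa^+$-c.c.\ to confine the putative intersection $C$ to a subextension $V[G_S]$ with $|S|\le\kappa$, then pick a column $\alpha^*\in I\setminus S$ and use mutual genericity to see that a Cohen-generic subset of $\kappa$ (or its complement) cannot contain an unbounded set of $V[G_S]$. The only cosmetic difference is that the paper first normalizes to the case where every Cohen column itself (rather than possibly its complement) lies in $U$, whereas you carry the choice $B_\alpha\in\{A_\alpha,\kappa\setminus A_\alpha\}$ through and dispose of both cases in the density argument; the substance is identical.
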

\begin{proof}
    Let $G$ be $V$-generic for $Add(\kappa,\kappa^{++})$. Let $U\in V[G]$ be an ultrafilter and let $\l X_\alpha\mid \alpha<\kappa^{++}\r$ be the Cohen mutually generic subsets of $\kappa$ added by $G$. Note that for every $\alpha$, either $X_\alpha\in U$ or $X_\alpha^c\in U$. Since the complement of a Cohen generic set is also Cohen generic, we may assume that $\l X_\alpha\mid \alpha<\kappa^{++}\r\subseteq U$. Let us prove that the sequence of Cohen generics witnesses that $U$ is not $(\kappa^{++},\kappa^+)$-cohesive. Suppose towards a contradiction that for some $I$ of size $\kappa^{+}$, $Y=\bigcap\{X_i\mid i\in I\}\in U$. In particular $Y$ is unbounded in $\kappa$ (since $U$ is uniform). Since $Y\subseteq\kappa$, by $\kappa^+$-c.c, there is $J\subseteq\kappa^{++}$ such that $|J|=\kappa$ and $Y\in V[G\restriction J]$. Pick any $\beta\in I\setminus J$ which exists since $|I|>|J|$. By mutual genericity, $X_i$ is generic over $V[G\restriction J]$, in particular $X_i$ cannot contain a set of size $\kappa$ from $V[G\restriction J]$, contradicting $Y\subseteq X_i$. 
\end{proof}
By Woodin and Gitik, starting with a model of GCH and a measurable $\kappa$  with Mitchell order $o(\kappa)=\kappa^{++}$, it is possible to get a generic extension in which $\kappa$ is measurable and $\kappa^{++}$-many mutually generic Cohen functions (over some intermediate model where $2^\kappa=\kappa^+$).
\begin{corollary}
    In the model above, for every ultrafilter $U$, $\mathfrak{dp}(U)<\mathfrak{ch}(U)$.
\end{corollary}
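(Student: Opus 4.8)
The plan is to combine Lemma~\ref{Lemma: Cohens blowup Tukey} with the general structural bounds on $Sp_T(U)$ established in Proposition~\ref{prop: propertied of dp and ch}. The setup is the Woodin--Gitik model: we start with GCH and a measurable $\kappa$ with $o(\kappa)=\kappa^{++}$, and force to obtain a model where $\kappa$ remains measurable and carries $\kappa^{++}$-many mutually generic Cohen functions added over an intermediate model satisfying $2^\kappa=\kappa^+$. Crucially, this model is of the shape required to run the previous lemma, so its conclusion---that \emph{every} uniform ultrafilter $U$ fails to be $(\kappa^{++},\kappa^+)$-cohesive---is available to us verbatim.

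The goal is to show $\mathfrak{dp}(U)<\mathfrak{ch}(U)$ for every ultrafilter $U$ in this model. First I would locate $\mathfrak{ch}(U)$. Since the forcing adds $\kappa^{++}$-many mutually generic Cohen subsets of $\kappa$, we have $2^\kappa=\kappa^{++}$ in the extension, and one checks (exactly as in the remark preceding Question on singular character) that $\mathfrak{ch}(U)=\kappa^{++}$ for every uniform $U$: no base of size $\kappa^+$ can exist because a base of size $\kappa^+$ would make $U$ be $(\kappa^{++},\kappa^{++})$-cohesive by the counting argument in Proposition~\ref{prop: propertied of dp and ch}(3), contradicting Lemma~\ref{Lemma: Cohens blowup Tukey}. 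So $\mathfrak{ch}(U)=\kappa^{++}$.

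Next I would pin down $\mathfrak{dp}(U)$. By Corollary~\ref{cor minimum spec}, if $\mathfrak{dp}(U)\neq\kappa$ then $\mathfrak{dp}(U)=\min(Sp_T(U)\setminus\{\kappa\})$. Lemma~\ref{Lemma: Cohens blowup Tukey} tells us $\kappa^+\in Sp_T(U)$ (since $U$ is not $(\kappa^+,\kappa^+)$-cohesive), so the minimum element of $Sp_T(U)\setminus\{\kappa\}$ is at most $\kappa^+$. Hence $\mathfrak{dp}(U)\leq\kappa^+<\kappa^{++}=\mathfrak{ch}(U)$, which is exactly the desired strict inequality. (If one prefers the cleaner statement $\mathfrak{dp}(U)=\kappa^+$, note that for a uniform ultrafilter $\mathfrak{dp}(U)>\kappa$ is automatic unless $U$ fails to be a $p$-point at the bottom level; but the bound $\mathfrak{dp}(U)\le\kappa^+$ already suffices for the corollary.)

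The main obstacle, such as it is, is purely bookkeeping: one must verify that the Woodin--Gitik model genuinely satisfies the hypotheses of Lemma~\ref{Lemma: Cohens blowup Tukey}, namely that the $\kappa^{++}$-many Cohen functions are \emph{mutually} generic over an intermediate model of $2^\kappa=\kappa^+$, so that the $\kappa^+$-c.c.\ factoring argument $Y\in V[G\restriction J]$ with $|J|=\kappa$ goes through. Once this is granted, the conclusion follows by combining $\mathfrak{ch}(U)=\kappa^{++}$ with $\mathfrak{dp}(U)\le\kappa^+$, and no further forcing analysis is needed. The fact that this holds for \emph{every} ultrafilter $U$---not merely the measurable one witnessing the large-cardinal hypothesis---is inherited directly from the ``for every uniform $U$'' quantifier in the lemma.
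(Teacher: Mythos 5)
Your argument is correct and is essentially the route the paper intends (the corollary is stated there without proof): Lemma~\ref{Lemma: Cohens blowup Tukey} puts both $\kappa^+$ and $\kappa^{++}$ into $Sp_T(U)$, after which Corollary~\ref{cor minimum spec} pins $\mathfrak{dp}(U)\le\kappa^+$ and Proposition~\ref{prop: propertied of dp and ch}(3) forces $\mathfrak{ch}(U)\ge\kappa^{++}$, giving the strict inequality. Your handling of the edge case $\mathfrak{dp}(U)=\kappa$ and the verification that $\mathfrak{ch}(U)=\kappa^{++}$ are both sound and need no further forcing analysis.
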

Clearly, the previous argument generalized for  $\lambda>\kappa^{++}$ to produce a model where $2^\kappa=\lambda$ and for every uniform ultrafilter $U$ over $\kappa$, $U$ is not $(\lambda,\kappa^+)$-cohesive (so also all the instance of $(\rho,\mu)$-cohesiveness fail for $\kappa^+\leq\mu\leq\rho\leq\lambda$). This improves Theorem 4.2 from \cite{bgp}, which proves this consistency (with the same model!) only for normal ultrafilters.
 The argument from the previous lemma can be used to give an example of an ultrafilter with a character of small cofinality:
\begin{proposition}\label{prop: small cofinality character}
        Relative to a measurable cardinal, it is consistent that there is a uniform ultrafilter on a regular cardinal $\kappa$ with $cf(\mathfrak{ch}(U))=\omega_1$
    \end{proposition}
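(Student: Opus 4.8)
The plan is to start from a measurable cardinal $\kappa$ carrying a normal measure $W$ in a model of GCH, and to force with $\mathrm{Add}(\kappa,\mu)$ for a cardinal $\mu>\kappa$ of cofinality $\omega_1$, for instance $\mu=\kappa^{+\omega_1}$. Since this forcing is ${<}\kappa$-closed and $\kappa^{+}$-c.c., it preserves cofinalities, so $cf(\mu)=\omega_1$ survives and $\kappa$ stays regular; a standard computation gives $2^\kappa=\mu^{+}$. The choice of a \emph{singular} $\mu$ of cofinality $\omega_1$ lying strictly below $2^\kappa$ is in fact forced on us: because $cf(2^\kappa)>\kappa>\omega_1$ by K\"onig's theorem, any uniform $U$ with $cf(\mathfrak{ch}(U))=\omega_1$ must have $\mathfrak{ch}(U)<2^\kappa$, so a witness necessarily realizes $\mathfrak{u}_\kappa\le\mathfrak{ch}(U)=\mu<2^\kappa$. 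Thus the real content is to produce a uniform (and, as the preceding remark demands, non-$\kappa$-complete) ultrafilter whose character is exactly the singular cardinal $\mu$.

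To construct such a $U$ I would exploit the ground-model measure together with the mutual-genericity analysis of Lemma~\ref{Lemma: Cohens blowup Tukey}. Fix an increasing sequence $\langle \mu_i : i<\omega_1\rangle$ of regular cardinals cofinal in $\mu$ with $\mu_0>\kappa$, and carry out a transfinite recursion arranged in $\omega_1$ blocks, the $i$-th block having length $\mu_i$. At each block I would, using $W$ to economically decide new subsets of $\kappa$ and using the independence of the mutually generic Cohen sets to prevent premature stabilization, adjoin generators producing a witness that $\mu_i\in Sp_T(U)$; the union of all the blocks is a family of size $\sum_{i<\omega_1}\mu_i=\mu$. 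Arranging the recursion to be genuinely exhausting yields a base for a uniform ultrafilter $U$ whose point spectrum is cofinal in $\mu$ of order type $\omega_1$, whence $\mathfrak{ch}(U)=\sup Sp_T(U)=\mu$ and $cf(\mathfrak{ch}(U))=\omega_1$.

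The main obstacle, and the step that genuinely needs the measurable, is the lower bound: showing that no base of size $<\mu$ exists, i.e.\ that $\mathfrak{u}_\kappa<2^\kappa$ is actually realized at this (now non-measurable) regular $\kappa$. Here the argument of the previous lemma is decisive: every subset of $\kappa$ in the extension is added by at most $\kappa$ of the Cohen coordinates, and by mutual genericity a generic coordinate cannot contain an unbounded set coming from the complementary coordinates; this rigidity forces any generating family to spread across cofinally many coordinates, while the measure-guided recursion decides each new set without collapsing the cofinality of the character. The delicate bookkeeping is to keep the two demands compatible — exhaustiveness of the recursion, so that $U$ is a genuine ultrafilter, against economy, so that the base stays of size $\mu$ with cofinal type $\omega_1$ — and this balancing act, rather than the arithmetic, is where the bulk of the work lies.
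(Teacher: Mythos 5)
You have the arithmetic constraints right (a witness must have $\mathfrak{ch}(U)<2^\kappa$ and so cannot be $\kappa$-complete), but the proposal has a genuine gap precisely where you defer to ``delicate bookkeeping'': the \emph{upper} bound, i.e.\ the existence of a uniform ultrafilter on $\kappa$ with a base of size only $\mu<2^\kappa$. This is the hard theorem, and the paper does not prove it either --- it quotes Raghavan--Shelah, whose construction produces such an ultrafilter after adding $\kappa^{+\omega_1}$ many Cohen generics with \emph{finite} conditions (a ccc forcing) over a model with a measurable $\kappa$. Your plan instead forces with the ${<}\kappa$-closed $\mathrm{Add}(\kappa,\mu)$, for which that machinery does not apply: a single $\mathrm{Add}(\kappa,1)$-generic set $X$ is such that neither $X$ nor $\kappa\setminus X$ contains an unbounded ground-model set, so the ground-model measure $W$ gives no verdict on $X$, and every decision you make adjoins a fresh unbounded generator that must itself later be decided against the remaining $2^\kappa=\mu^+$ sets. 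Nothing in your recursion explains why this process closes off after only $\mu$ steps rather than $\mu^+$; that closure is exactly the content of realizing $\mathfrak{u}_\kappa<2^\kappa$, which is what has to be proved and which is known from a measurable only for the ccc Cohen forcing. Asserting that the union of your $\omega_1$ blocks is ``genuinely exhausting'' is a restatement of the problem, not a solution.

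Two further points. You have inverted which bound is the obstacle: the lower bound $\mathfrak{ch}(U)\ge\mu$, which you call ``the main obstacle,'' is the short self-contained part and your sketch of it is essentially the paper's own argument (pigeonhole a putative small base against the $\mu$ mutually generic sets, capture a small subset of the resulting intersection in a submodel generated by few coordinates, and contradict genericity of an omitted coordinate). Also, your closing inference $\mathfrak{ch}(U)=\sup Sp_T(U)$ is not available: the paper proves only $\sup Sp_T(U)\le\mathfrak{ch}(U)$ and explicitly leaves the converse as an open question, so arranging the point spectrum to be cofinal in $\mu$ yields only $\mathfrak{ch}(U)\ge\mu$. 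If your $\mu$-sized family really were a base this would be moot, but that is again the unproved upper bound.
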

    \begin{proof}
        Raghavan and Shelah \cite{RagShel} proved that after adding $\kappa^{+\omega_1}$-many Cohen reals, we can find a uniform ultrafilter $D$ in the extension with $\mathfrak{ch}(D)\leq \kappa^{+\omega_1}$. 
        Let us prove that in fact $\mathfrak{ch}(D)=\kappa^{+\omega_1}$. We think of the forcing $Add(\kappa^{+\omega_1},\omega)$ as adding $\kappa^{+\omega_1}$-many characteristic sets to $\kappa$ $\l X_\alpha\mid \alpha<\kappa^{+\omega_1}\r$ with finite approximation.
        The ultrafilter $D$ have to pick for each $\alpha<\kappa^{+\omega_1}$ either $X_\alpha\in D$ or $\kappa\setminus X_\alpha\in D$. As in the proof of \ref{Lemma: Cohens blowup Tukey}, we may assume that for every $\alpha<\kappa^{+\omega_1}$, $X_\alpha\in D$. If $\mathfrak{ch}(D)<\kappa^{+\omega_1}$, we could have found $\omega_1$-many sets $X_\alpha$ for $\alpha\in I$ such that $\bigcap_{\alpha\in I}X_\alpha\in D$.
        Let $Y$ be  any countable subset of $\bigcap_{\alpha\in I}X_\alpha$. Using the chain condition, we see that $Y$ belongs the extension by countably many of the Cohen reals, and therefore to the extension by countably many of the $X_\alpha$'s. That is $Y\in V[\l X_\alpha\mid \alpha\in J\r]$ where $J\in[ I]^{\omega}$. Pick any $\alpha^*\in I\setminus J$. Then $X_\alpha$ is generic over $V[\l X_\alpha\mid \alpha\in J\r]$.  However, by genericity,  the set $X_{\alpha^*}$ is only finitely approximated in $V[\l X_\alpha\mid \alpha\in J\r]$  which produces a contradiction.
    \end{proof}
    \begin{question}
         Is there a strong limit regular cardinal $\kappa$ carrying a uniform ultrafilter $U$ such that $cf(\mathfrak{ch}(U))<\kappa$?
    \end{question}
What about $Sp_{dp}(U)$ in the Cohen extension? Alan Dow informed us that Kunen proved in his master thesis that not only there are no towers of length $\aleph_2$, but there are no $\subseteq^*$ chains of ordertype $\aleph_2$ in the extension by the finite support product of $\aleph_2$-many reals. He also suggested a simpler argument for towers which generalizes to regular uncountable cardinals. We need a slight strengthening of that argument to show that Cohen forcing does not add chains modulo bounded of length $\kappa^{++}$ to a normal ultrafilter $U$ which are unbounded in $(U,\supseteq^*)$. 
\begin{lemma}
Suppose $2^\kappa=\kappa^+$. After forcing with $Add(\kappa,\kappa^{++})$, there is no normal ultrafilter with a sequence $\l A_i\mid i<\kappa^{++}\r\subseteq U$ which is $\subseteq^*$-decreasing, and unbounded in $(U,\subseteq^*)$. Equivalently, $\kappa^{++}\notin Sp_{dp}(U)$.
\end{lemma}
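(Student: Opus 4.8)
The plan is to argue by contradiction, pushing everything back to the ground model $V$ (where $2^\kappa=\kappa^+$, hence $(\kappa^+)^\kappa=\kappa^+$ and $Add(\kappa,\kappa^{++})$ is $\kappa^+$-c.c.) and exploiting that chain condition to reflect the alleged tower. So suppose some condition $p_0$ forces that $\dot U$ is normal and that $\langle\dot A_i\mid i<\kappa^{++}\rangle\subseteq\dot U$ is $\subseteq^*$-decreasing and unbounded in $(\dot U,\subseteq^*)$. First I would pass to nice names, so that by $\kappa^+$-c.c.\ each $\dot A_i$ is decided by a support $S_i\in[\kappa^{++}]^{\le\kappa}$; thus in the extension $A_i\in V[G\restriction S_i]$, and the whole assignment $i\mapsto(\dot A_i,S_i)$ lives in $V$.

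Next I would build a continuous $\in$-increasing chain $\langle N_\eta\mid\eta<\kappa^{++}\rangle$ of elementary submodels of a large $H_\chi$, each of size $\kappa^+$, closed under $\kappa$-sequences (available since $(\kappa^+)^\kappa=\kappa^+$ in $V$) and containing $p_0$, $\dot U$, and the name assignment. This produces a club $E\subseteq\kappa^{++}$ of ordinals $\delta$ of cofinality $\kappa^+$ closed under the support map ($i<\delta\Rightarrow S_i\subseteq\delta$); for such $\delta$ the restricted sequence $\langle\dot A_i\mid i<\delta\rangle$ is an $Add(\kappa,\delta)$-name, so, writing $V_\delta=V[G\restriction\delta]$, we get $\langle A_i\mid i<\delta\rangle\in V_\delta$, and $Add(\kappa,\kappa^{++})\cong Add(\kappa,\delta)\times\mathbb{Q}_\delta$ with a $<\kappa$-closed tail $\mathbb{Q}_\delta=Add(\kappa,\kappa^{++}\setminus\delta)$. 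The point of reflecting is that in each $V_\delta$ we still have $2^\kappa=\kappa^+$, so a strictly $\subseteq^*$-decreasing sequence there cannot have length $\kappa^{++}$; the $\kappa$-closed tail then adds $A_\delta$ as a pseudo-intersection of the reflected initial segment, with $A_\delta\in U$.

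The heart of the argument — and the step where Dow's tower argument must be strengthened — is to convert this reflected picture into a single bound lying in $U$, contradicting unboundedness. I would run the following dichotomy, keyed on a candidate $\kappa$-sized support $T$ for a would-be bound $B\in V[G\restriction T]\cap U$: if the tower were "spread out," so that cofinally many $A_i$ have support essentially disjoint from $T$, then mutual genericity of the fresh coordinates forces $B\setminus A_i$ to be unbounded, i.e.\ $B\not\subseteq^* A_i$; but $\subseteq^*$-decreasingness, together with the fact that the terms reflect into models with only $\kappa^+<\kappa^{++}$ subsets of $\kappa$, forces the traces of the tower on any fixed reflecting $V_\delta$ to stabilize modulo bounded, pinning down a fixed set $B$ on a $\kappa$-sized support. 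The remaining task is to arrange $B\in U$: at each of the $\le\kappa$ coordinate-decisions exactly one of a set and its complement lies in $U$, and since $U$ is $\kappa$-complete and normal the reading-off can be steered so that $B$ is realized as a $\kappa$-ary (diagonal) intersection of $U$-sets, whence $B\in U$. I expect the main obstacle to be exactly this last point: Dow's argument produces a pseudo-intersection that need not be $U$-positive, and — as the local analysis shows, since $U\cap V_\delta$ may have depth only $\kappa^+$ — no bound can be found inside a single $V_\delta$, so the normal-ultrafilter bookkeeping must be interleaved with the production of $B$ across the $\kappa$-closed tail rather than applied afterward. Once such $B\in U$ with $B\subseteq^* A_i$ for all $i$ is obtained, $\subseteq^*$-decreasingness makes it a bound for the entire sequence, contradicting unboundedness; hence $\kappa^{++}\notin Sp_{dp}(U)$ (and, by Corollary \ref{Cor: Plambda}, $\mathfrak{dp}(U)=\kappa^+<\kappa^{++}=\mathfrak{ch}(U)$).
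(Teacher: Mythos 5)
Your reflection setup is essentially the paper's: pass to nice names, use the $\kappa^+$-c.c.\ and a $\kappa$-closed elementary submodel to find $\delta<\kappa^{++}$ of cofinality $\kappa^+$ such that $\langle A_i\mid i<\delta\rangle$ and $\mathcal{U}_\delta=U\cap V[G\restriction\delta]$ live in $V_\delta=V[G\restriction\delta]$, and observe that $A_\delta\in U$ is a pseudo-intersection of the reflected initial segment lying in a small extension of $V_\delta$. Up to that point you are on track. But the closing step --- the actual contradiction --- is missing, and the mechanism you sketch for it does not work. First, the assertion that the traces of the tower on $V_\delta$ ``stabilize modulo bounded, pinning down a fixed set $B$'' is false: under $2^\kappa=\kappa^+$ the model $V_\delta$ genuinely supports $\subseteq^*$-decreasing $\kappa^+$-sequences with no stabilization (indeed the whole point is that $\langle A_i\mid i<\delta\rangle$ is, by elementarity, a \emph{tower} in $\mathcal{U}_\delta$, with no $\mathcal{U}_\delta$-positive pseudo-intersection in $V_\delta$). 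Second, your target is harder than necessary: you try to manufacture a $\subseteq^*$-bound in $U$ for the entire $\kappa^{++}$-sequence via ``interleaved normal-ultrafilter bookkeeping,'' which is not an argument --- and you concede as much when you say you expect the main obstacle to be exactly the step of making the bound $U$-positive.

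The missing idea is a local one, not a global one. Having reflected, you should contradict the picture \emph{at $\delta$}: the term $A_\delta$ itself is a $\overline{\mathcal{U}_\delta}$-positive pseudo-intersection of the length-$\delta$ tower, and by the $\kappa^+$-c.c.\ its name has support of size $\kappa$ over $\delta$, so $A_\delta$ is added to $V_\delta$ by a forcing equivalent to a single $Add(\kappa,1)$. The key lemma (the strengthening of Dow's argument that your proposal never supplies) is that a single Cohen function cannot add a pseudo-intersection of a $\kappa^+$-tower in a $p$-point $\mathcal{W}$ that remains $\overline{\mathcal{W}}$-positive: for each condition $q$ the outer cover $A_q=\{\alpha<\kappa\mid\exists r\geq q,\ r\Vdash\alpha\in\dot A\}$ is a ground-model $\mathcal{W}$-positive set, hence in $\mathcal{W}$; since there are only $\kappa$ conditions, $p$-pointness gives a single $A^*\in\mathcal{W}$ with $A^*\subseteq^*A_q$ for all $q$; the tower property yields $i^*$ with $A^*\setminus A_{i^*}$ unbounded, and a density argument then forces some element of $A^*\setminus A_{i^*}$ into $\dot A$ past the point where $\dot A$ was promised to sit inside $A_{i^*}$. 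Without this lemma (or a substitute for it) your proof does not close, and the ``dichotomy on supports'' cannot replace it, since mutual genericity controls the Cohen generics $X_\alpha$ themselves, not arbitrary sets of the intermediate models.
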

\begin{proof}
    Suppose otherwise and let $\dot{\mathcal{U}},\dot{\mathcal{A}}$ be names and $p\in Add(\kappa,\kappa^{++})$ forcing all the relevant information. We may assume that $\mathcal{A}=\l \dot{A}_i\mid i<\kappa^{++}\r$ and $\mathcal{U}=\l \dot{X}_i\mid i<\kappa^{++}\r$ are sequences of nice names, and $p$ forces that $\mathcal{U}$ is an ultrafilter and $\mathcal{A}$ is $\subseteq^*$-decreasing and unbounded in $(\mathcal{U},\subseteq^*)$. Let $M\prec H_\theta$ be an elementary submodel of size $\kappa^+$, closed under $\kappa$-sequence with $$p,\l \dot{X}_i\mid i<\kappa^{++}\r,\l\dot{A}_i\mid i<\kappa^{++}\r...\in M.$$  Also assume that $M\cap \kappa^{++}=\delta\in\kappa^{++}$. First we note that for each $i<\delta$, $\dot{X}_i,\dot{A}_i\in M$ is a nice name, and since $M$ is closed under $\kappa$-sequnces and by $\kappa^{+}$-cc, $\dot{X}_i$ and $\dot{A}_i$ are in fact names of $Add(\kappa,\delta)$. 
    \begin{claim}
        In $V[G\restriction \delta]$ we have:
        \begin{enumerate}
            \item $\{ (\dot{X}_i)_{G\restriction\delta}\mid i<\delta\}$ is the ultrafilter $\mathcal{U}_\delta=(\dot{\mathcal{U}})_G\cap V[G\restriction\delta]$
            \item $\mathcal{U}_\delta$ is normal.
            \item $\l (\dot{A}_i)_{G\restriction\delta}\mid i<\kappa^{++}\r$ is a tower in $\mathcal{U}_\delta$. \end{enumerate}
    \end{claim} 
\end{proof}
\begin{proof}[Proof of Claim]
Clearly, $\{(\dot{X}_i)_{G\restriction\delta}\mid i<\delta\}\subseteq\mathcal{U}_\delta$. Hence it suffices to prove that $\{(\dot{X}_i)_{G\restriction\delta}\mid i<\delta\}$ is an ultrafilter in $V[G\restriction\delta]$. Suppose not, and let $\dot{X}$ be a nice name such that for some $q\geq p$, $q\in G\restriction\delta$, $$q\Vdash_{Add(\kappa,\delta)} \dot{X},\kappa\setminus\dot{X}\notin\{\dot{X}_i\mid i<\delta\}.$$ Again, by closure of $M$ to $\kappa$-sequences and the $\kappa^+$-cc, $\dot{X},q\in M$. Hence for every $i<\delta$, $M\models q\Vdash \dot{X}\neq \dot{X}_i\wedge \kappa\setminus\dot{X}\neq\dot{X}_i.$ Since $\kappa^{++}\cap M=\delta$, $$M\models q\Vdash`` \dot{\mathcal{U}}\text{ is not an ultrafilter}",$$ contradiction. Normality just follows from the $\kappa^+$-cc and the fact that cofinality of $\delta$ is $\kappa^+$.  A similar argument shows that $\l (\dot{A}_i)_G\mid i<\kappa^{++}\r$ is a tower in $\mathcal{U}_\delta$.
\end{proof}
Note that $\dot{A}_{\delta}$ is forced by $p$ to be $\subseteq^*$-bound, so by $\kappa^+$-cc, over the model $V[G\restriction\delta]$ we can find $\kappa$-many coordinates $I\subseteq[\delta,\kappa^{++})$ such that in $V[G\restriction\delta][G\restriction I]$ we will have a $\subseteq^*$-bound. Note that $G\restriction I$ is forcing equivalent to the extension by a single Cohen $Add(\kappa,1)$. Hence it suffices to prove the following claim:
\begin{claim}
    Suppose that $\mathcal{W}$ is a normal ultrafilter ($p$-point is enough) on $\kappa$, $\l A_i\mid i<\kappa^+\r$ is a tower in $\mathcal{W}$. Then adding a single Cohen function to $\kappa$ does not add pseudo intersection $X$ for $\l A_i\mid i<\kappa^{+}\r$ which is positive with respect to the filter generated by $\mathcal{W}$ in the extension.
\end{claim}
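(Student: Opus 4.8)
The plan is to argue by contradiction: assume some condition $p$ in $\mathbb{C}=\mathrm{Add}(\kappa,1)$ forces that $\dot X$ names a pseudo-intersection of $\l A_i\mid i<\kappa^+\r$ that is positive for the filter $\bar{\mathcal{W}}$ generated by $\mathcal{W}$, and produce from this a pseudo-intersection of the tower lying in $\mathcal{W}$ itself (in the ground model), contradicting that $\l A_i\r$ is a tower. The two facts I rely on are: (i) $\mathcal{W}$ is an ultrafilter of the ground model, so for a ground-model set $B$ one has $B$ is $\bar{\mathcal{W}}$-positive iff $B\in\mathcal{W}$ (otherwise $\kappa\setminus B\in\mathcal{W}$ witnesses that $B$ is null); and (ii) $\mathbb{C}$ is $\kappa^+$-cc of size $\kappa$ (using $2^{<\kappa}=\kappa$, which holds in the relevant models), hence preserves the regularity of $\kappa^+$.

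First I would uniformize the mod-bounded errors. Since $p$ forces $\dot X\subseteq^* A_i$ for every $i<\kappa^+$, there is, for each $i$, a name $\dot\gamma_i<\kappa$ for the least bound with $\dot X\setminus\dot\gamma_i\subseteq A_i$. In the extension the map $i\mapsto\gamma_i$ sends $\kappa^+$ into $\kappa$, and as $\kappa^+$ is regular and $>\kappa$ it is constant with some value $\gamma^*<\kappa$ on a set $I$ of size $\kappa^+$, which is therefore cofinal. Thus $p$ forces the existence of $\gamma^*<\kappa$ and a cofinal $\dot I\subseteq\kappa^+$ with $\dot X\setminus\gamma^*\subseteq A_i$ for all $i\in\dot I$; I then extend $p$ to a condition $q$ that decides the value $\gamma^*$.

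The crux is a second pigeonhole that replaces the generic set $\dot I$ by a ground-model cofinal family pinned down by a single condition. Let $I_0=\{i<\kappa^+\mid \exists r\ge q,\ r\Vdash \dot X\setminus\gamma^*\subseteq A_i\}\in V$. A genericity argument shows $I_0$ is cofinal: given $i_0$, any generic $G\ni q$ yields a cofinal $\dot I^G$, so it contains some $i\ge i_0$, witnessed by some $r\in G$ forcing $i\in\dot I$; since $q$ forces $\dot X\setminus\gamma^*\subseteq A_{i'}$ for all $i'\in\dot I$, this $r$ forces $\dot X\setminus\gamma^*\subseteq A_i$, so $i\in I_0$. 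For each $i\in I_0$ pick such an $r_i\ge q$. Here the smallness of $\mathbb{C}$ enters decisively: since $|\mathbb{C}|=\kappa<\kappa^+$, the assignment $i\mapsto r_i$ is constant with some value $r^*$ on a cofinal set $I_1\subseteq I_0$. Hence $r^*\Vdash \dot X\setminus\gamma^*\subseteq A_i$ for every $i\in I_1$, i.e. $r^*\Vdash \dot X\setminus\gamma^*\subseteq A^{**}$, where $A^{**}:=\bigcap_{i\in I_1}A_i\in V$.

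Finally I would read off the contradiction. Because $I_1$ is cofinal and $\l A_i\r$ is $\subseteq^*$-decreasing, $A^{**}$ is a pseudo-intersection of the whole tower: for any $j$, choosing $i\in I_1$ with $i>j$ gives $A^{**}\subseteq A_i\subseteq^* A_j$. On the other hand, since $\mathcal{W}$ is uniform and $\gamma^*<\kappa$, positivity of $\dot X$ transfers to $\dot X\setminus\gamma^*$ (apply it to $B\setminus\gamma^*\in\mathcal{W}$ for each $B\in\mathcal{W}$), and as $r^*$ forces $\dot X\setminus\gamma^*\subseteq A^{**}$, it forces $A^{**}$ to be positive. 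But $A^{**}\in V$, so the statement ``$A^{**}$ is positive'' is absolute and, by (i), equivalent to $A^{**}\in\mathcal{W}$. Thus $A^{**}\in\mathcal{W}$ is a pseudo-intersection of $\l A_i\mid i<\kappa^+\r$ in $\mathcal{W}$, contradicting that this sequence is a tower. The main obstacle is precisely the passage from the per-coordinate bounds to a single ground-model cofinal family carried by one condition; it is overcome by the two pigeonholes, and it is here that the proof genuinely uses the Cohen forcing through its size $\kappa$ rather than merely through its closure or chain condition (indeed the final step needs only that $\mathcal{W}$ is a uniform ultrafilter, not its normality).
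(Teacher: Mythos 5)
Your proof is correct, but it is genuinely different from the one in the paper. The paper's argument attaches to each condition $q\geq p$ the ground-model outer cover $A_q=\{\alpha<\kappa\mid\exists r\geq q,\ r\Vdash\alpha\in\dot A\}$, observes that each $A_q\in\mathcal W$ by positivity, and then invokes the $p$-point property of $\mathcal W$ to find a single $A^*\in\mathcal W$ with $A^*\subseteq^*A_q$ for all of the $\kappa$-many conditions $q$; the contradiction is then extracted by a density argument, using that $A^*\setminus A_{i^*}$ is unbounded for some $i^*$ because the sequence is a tower. You instead never use the $p$-point property at all: your double pigeonhole (first uniformizing the tail bound $\gamma^*$ over the $\kappa^+$-many indices, then uniformizing the witnessing condition $r_i$ over the $\kappa$-sized forcing) produces one condition $r^*$ forcing $\dot X\setminus\gamma^*\subseteq\bigcap_{i\in I_1}A_i=A^{**}$ for a cofinal ground-model set $I_1$, and positivity of a ground-model set relative to $\overline{\mathcal W}$ is absolute and equivalent to membership in $\mathcal W$, so $A^{**}\in\mathcal W$ bounds the whole tower. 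Each step checks out (the $\kappa^+$-cc of $\mathrm{Add}(\kappa,1)$ preserves $\kappa^+$, so both pigeonholes are legitimate, and uniformity suffices to pass from $\dot X$ to $\dot X\setminus\gamma^*$). What each approach buys: yours is more elementary and proves a formally stronger statement, valid for any uniform ultrafilter, but it leans on the tower having regular length strictly greater than the number of conditions; the paper's argument trades that for the $p$-point hypothesis, which is the shape that scales to $P_\lambda$-points versus forcings of size $<\lambda$ when the tower is not assumed longer than the forcing. For the claim as stated both hypotheses are available, so either route closes the proof.
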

\begin{proof}[Proof of claim.] Suppose otherwise, the $\dot{A}$ is a name and $p$ a condition such that $$p\Vdash`` \dot{A}\in(\overline{\mathcal{W}})^+\text{ is a pseudo intersection of }\l A_i\mid i<\kappa^+\r",$$
where $\overline{\mathcal{W}}$ is the filter generated by $\mathcal{W}$ in the extension. For each $q\geq p$ let $A_q=\{\alpha<\kappa\mid \exists r\geq q, \ r\Vdash \alpha\in \dot{A}\}$. Then $A_q$ is in the ground model cover of $\dot{A}$, hence positive for $\mathcal{W}$. Therefore $A_q\in \mathcal{W}$. Since $Add(\kappa,1)$ has size $\kappa$, and $\mathcal{W}$ is assumed to be a $p$-point, there is $A^*\in \mathcal{W}$ which is a pseudo intersection for all the $A_q$'s. Since $A_i$ is a tower, there is $i^*<\kappa^{+}$ such that $A^*\setminus A_i$ is unbounded in $\kappa$. Find $p_0\geq p$ such that $p_0\Vdash \dot{A}\setminus \xi\subseteq A_i$. Find $\xi'$ such that $A^*\setminus \xi'\subseteq A_{p_0}$ and find $\rho>\xi,\xi'$ such that $\rho\in A^*\setminus A_i$. Hence $\rho \in A_{p_0}$ and by definition there is $p'\geq p_0$ forcing that $\rho\in \dot{A}$. But $\rho\notin A_i$ and $p'$ is suppose to force also that $\dot{A}\setminus \xi\subseteq A_i$, contradiction. 
\end{proof}

In order to add values of the depth spectra, let us prove the following sufficient condition. We will use the following notations:
$$Cl(X)=\{\alpha<\sup(X)\mid \sup(X\cap\alpha)=\alpha\}\cup X$$
\begin{lemma}
    Suppose that $M$ is a ZFC transitive model $\kappa<\lambda$ regular cardinals of $M$ and $\l M_i\mid i<\lambda\r$ , $\l\mathcal{U}_i\mid i<\lambda\r$, $\l k_i\mid i<\lambda\r$ are such that: 
    \begin{enumerate}
        \item If $i<j$ then $M_i\subseteq M_j$.
        \item $\mathcal{U}_i$ is a normal $M_i$-ultrafilter.
        \item $k_i\subseteq \kappa$ diagonalizes $\mathcal{U}_i$ (i.e. $k_i\subseteq^* X$ for every $X\in \mathcal{U}_i$). Also $k_j\in M_{i}$ for all $j<i$. 
        \item $P(\kappa)^M=\bigcup_{i<\lambda}P(\kappa)^{M_i}$.
    \end{enumerate}
    Then $\l Cl(\kappa_i)\mid i<\lambda\r$ is a $\subseteq^*$-decreasing sequence of clubs which generates the club filter of $M$.  In particular,  $\l Cl(k_i)\mid i<\lambda\r$ is unbounded.
\end{lemma}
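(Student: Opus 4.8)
The plan is to isolate a single ``closure trick'' and apply it in both directions. The engine is the observation that the operation $Cl(\cdot)$ interacts well with $\subseteq^*$ relative to \emph{closed} target sets: if $D\subseteq\kappa$ is closed and $k\subseteq^* D$, then $Cl(k)\subseteq^* D$. Indeed, fixing $\gamma$ with $k\setminus\gamma\subseteq D$, any $\alpha\in Cl(k)$ with $\alpha>\gamma$ is either a point of $k$ (hence lies in $D$) or a limit point of $k\setminus\gamma\subseteq D$, hence a limit point of $D$, hence lies in $D$ by closedness; so $Cl(k)\setminus(\gamma+1)\subseteq D$. I would prove this one line first, since every substantive step reduces to it. Beforehand I would record two routine facts: a diagonalizing sequence $k_i$ is necessarily cofinal in $\kappa$ (otherwise $k_i\subseteq^* X$ holds vacuously for all $X$), so $Cl(k_i)$ is a genuine club, and since $k_i\in M_{i+1}\subseteq M$ by hypothesis (3) we have $Cl(k_i)\in M$; and the standard fact that a normal $M_i$-ultrafilter contains every club of $M_i$, combined with the absoluteness of ``club'' between the transitive models $M_i\subseteq M$ (closedness and unboundedness below $\kappa$ are absolute).

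For the $\subseteq^*$-decreasing claim, fix $i<j$. By hypothesis (3) we have $k_i\in M_j$, so $Cl(k_i)\in M_j$ is a club of $M_j$; by normality $Cl(k_i)\in\mathcal{U}_j$, whence $k_j\subseteq^* Cl(k_i)$ because $k_j$ diagonalizes $\mathcal{U}_j$. Since $Cl(k_i)$ is closed, the closure trick upgrades this to $Cl(k_j)\subseteq^* Cl(k_i)$. The crucial point here --- and the reason one passes to closures rather than working with the $k_i$ directly --- is that $k_i$ itself need not lie in $\mathcal{U}_j$, whereas its closure, being a club of $M_j$, always does.

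For the generation claim, let $C$ be any club of $M$. By hypothesis (4), $C\in M_i$ for some $i$, and by absoluteness $C$ is a club of $M_i$, so $C\in\mathcal{U}_i$ and $k_i\subseteq^* C$; the closure trick again yields $Cl(k_i)\subseteq^* C$. As each $Cl(k_i)$ is itself a club of $M$, this exhibits $\langle Cl(k_i)\mid i<\lambda\rangle$ as a $\subseteq^*$-base for the club filter of $M$, which is the assertion that it generates that filter.

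Finally, for unboundedness, suppose toward a contradiction that some unbounded $A\in M$ (say a club, or a member of the club filter) satisfies $A\subseteq^* Cl(k_i)$ for every $i$. Then by the generation just established, $A\subseteq^* C$ for every club $C$ of $M$; that is, $A$ would be a pseudo-intersection of the club filter. This is impossible for regular uncountable $\kappa$: given the increasing enumeration $\langle a_\xi\mid\xi<\kappa\rangle$ of $A$, split it into its successor-indexed part $A_0$ and limit-indexed part $A_1$; then the club $C=Cl(A_1)\in M$ contains no successor-indexed $a_\xi$ (such a point is neither in $A_1$ nor a limit of $A_1$), so $A\setminus C\supseteq A_0$ is unbounded and $A\not\subseteq^* C$. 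I expect this final splitting argument and the closure trick to be the only points needing genuine care; the remainder is bookkeeping with hypotheses (3) and (4) to place each relevant club into the model whose normal ultrafilter then captures it.
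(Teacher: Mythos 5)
Your proof is correct and follows essentially the same route as the paper: put $Cl(k_j)$ into $\mathcal{U}_i$ via normality and hypothesis (3) to get the $\subseteq^*$-decreasing property, and use hypothesis (4) plus normality for generation. You are in fact more careful than the paper in two places --- you prove the ``closure trick'' ($k\subseteq^* D$ with $D$ closed implies $Cl(k)\subseteq^* D$) that the paper silently uses, and you supply an argument for the final ``unbounded'' clause (no unbounded set is $\subseteq^*$-below every club), which the paper leaves implicit.
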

\begin{proof}
    To see that $Cl(k_i)$ are $\subseteq^*$-decreasing, let $j<i$, since $k_j\in M_{i}$, then $Cl(k_j)$ is definable in $M_{i}$ and since $\mathcal{U}_{i}$ is normal, $Cl(k_j)\in \mathcal{U}_{i}$. By condition $(3)$ it follows that $k_{i}\subseteq^* Cl(k_j)$. Hence $Cl(k_{i})\subseteq Cl(k_j)$. Next, let $C\in M$ be a club at $\kappa$, then by $(4)$, $C\in M_i$ for some $i$ and therefore by normality $C\in \mathcal{U}_i$. It follows as before that $Cl(k_{i})\subseteq^* C$. 
\end{proof}
To realize the above situation while keeping $\kappa$ measurable, we use the Extender-based Magidor-Radin forcing of Merimovich \cite{CarmiRadin}. We do not need to define the forcing here as the construction of the models together with all the required verification was done in \cite{BenNeriaGitik}, and we simply cite the relevant propositions and definitions from their paper. 
\begin{theorem}
    Let $\kappa$ be a measurable cardinal, $\vec{E}=\l E_\alpha\mid \alpha<\lambda\r$, $\lambda>\kappa^+$ is regular which is not the successor of a singular of cofinality at most $\kappa$. Suppose that $\vec{E}$ has a local repeat point\footnote{See \cite[Def. 5.9]{CarmiRadin}}.  Then in the generic extension by $\mathbb{P}_{\bar{E},\lambda}$, $\kappa$ is measurable and there is a $\subseteq^*$-decreasing sequence of clubs of length $\lambda$ which generates the club filter. 
\end{theorem}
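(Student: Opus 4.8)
The plan is to reduce the theorem to the sufficient condition established in the preceding Lemma by extracting from the generic object the three sequences $\l M_i\mid i<\lambda\r$, $\l \mathcal{U}_i\mid i<\lambda\r$ and $\l k_i\mid i<\lambda\r$ required there, and to import the verification of its hypotheses from \cite{BenNeriaGitik}. Since the preceding Lemma converts conditions $(1)$--$(4)$ directly into a $\subseteq^*$-decreasing sequence of clubs $\l Cl(k_i)\mid i<\lambda\r$ generating the club filter, once those conditions are in place the second half of the conclusion is immediate; the measurability of $\kappa$ comes separately from the local repeat point.

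First I would recall the structure of Merimovich's forcing $\mathbb{P}_{\bar{E},\lambda}$: a generic $G$ adds a Radin-type club through $\kappa$ whose associated generic sequence is indexed by the $\lambda$ extender stages, and each stage contributes a generic ``block'' $k_i\subseteq\kappa$. I would set $M_i$ to be the submodel obtained by restricting $G$ to the first $i$ coordinates (the intermediate extension by the natural length-$i$ subforcing), so that $(1)$ $M_i\subseteq M_j$ for $i<j$ and $k_j\in M_i$ whenever $j<i$ are built into the definition. The normal ultrafilters $\mathcal{U}_i$ would be taken to be the image of the normal measure of the extender used at stage $i$, as reflected in $M_i$; the Prikry/genericity property of Radin forcing is precisely what makes the block $k_i$ diagonalize $\mathcal{U}_i$, i.e. $k_i\subseteq^* X$ for every $X\in\mathcal{U}_i$, giving condition $(3)$. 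Conditions $(1)$--$(3)$ are therefore bookkeeping about the decomposition of the forcing, and I would cite the corresponding propositions of \cite{BenNeriaGitik} rather than reprove them.

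The substantive input is condition $(4)$, $P(\kappa)^{V[G]}=\bigcup_{i<\lambda}P(\kappa)^{M_i}$. I would obtain it from the factorization of $\mathbb{P}_{\bar{E},\lambda}$ as an increasing union of length $\lambda$ of the stage-$i$ subforcings, together with its chain condition and the Prikry property: every new subset of $\kappa$ is decided by conditions with a bounded stem, hence already lies in some $M_i$. This is exactly the place where the two hypotheses on $\lambda$ are used---regularity of $\lambda$ guarantees that the cofinally many capturing stages actually yield a sequence of length $\lambda$, and the requirement that $\lambda$ not be the successor of a singular of cofinality $\le\kappa$ rules out the cofinality collapse that would otherwise spoil the capturing argument. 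The local repeat point keeps $\kappa$ measurable in $V[G]$, which I would again cite from \cite{CarmiRadin} and \cite{BenNeriaGitik}.

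Finally, with $(1)$--$(4)$ verified I would invoke the preceding Lemma to conclude that $\l Cl(k_i)\mid i<\lambda\r$ is a $\subseteq^*$-decreasing sequence of clubs generating (hence unbounded in) the club filter of $V[G]$, completing the proof. The hard part will be the faithful identification of the objects $M_i,\mathcal{U}_i,k_i$ inside Merimovich's forcing and the verification of the capturing property $(4)$; since \cite{BenNeriaGitik} carries out exactly this construction and verification, the bulk of the argument is a matter of matching their propositions to the four hypotheses of the Lemma.
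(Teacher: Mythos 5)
Your proposal follows essentially the same route as the paper: reduce to the preceding Lemma by producing $\l M_i\r$, $\l\mathcal{U}_i\r$, $\l k_i\r$ from the generic for $\mathbb{P}_{\bar{E},\lambda}$, cite \cite{BenNeriaGitik} for the verification of conditions $(1)$--$(4)$, and get measurability of $\kappa$ from the local repeat point via \cite{CarmiRadin}. Two details of your sketch differ from how the argument actually runs, and are worth flagging because the naive versions would not obviously work. First, the intermediate models $M_\eta$ are not the extensions by ``the first $i$ coordinates'': the paper takes an increasing chain of elementary submodels $N_\eta\prec H_\theta$ (chosen so that the first $\eta$ nice names for subsets of $\kappa$, and the names $\dot{k}_\rho$ for $\rho<\eta$, lie in $N_\eta$), uses that $\mathbb{P}_{\bar{E},\lambda}\cap N_\eta$ is a \emph{complete} subforcing isomorphic to $\mathbb{P}_{\bar{E}\restriction\delta_\eta,\delta_\eta}$ with $\delta_\eta=N_\eta\cap\lambda$, and sets $M_\eta=V[G\cap N_\eta]$; it is exactly this elementary-submodel device that makes the restriction a complete subforcing, which a bare initial-segment restriction of an extender-based forcing need not be. Second, condition $(4)$ is not obtained from a ``bounded stem'' decision argument but from bookkeeping: one enumerates $P(\kappa)^{V[G]}$ by $\lambda$ many nice names, places the first $\eta$ of them into $N_\eta$, and uses the $\kappa^{++}$-c.c.\ to see that each such name is already a $\mathbb{P}_{\bar{E},\lambda}\cap N_\eta$-name, hence evaluated in $M_\eta$. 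Since you defer to the propositions of \cite{BenNeriaGitik}, which carry out precisely this construction, these are corrections of emphasis rather than gaps in the overall strategy.
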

\begin{proof}
    We follow the argument of \cite{BenNeriaGitik}. Let $G\subseteq \mathbb{P}_{\bar{E},\lambda}$ be $V$-generic and let $M=V[G]$. By Merimovich \cite{CarmiRadin}, in $V[G]$, $2^\kappa\leq\lambda$ and $\kappa$ is measurable. Let $\l \dot{X}_i\mid i<\lambda\r$ be a sequence of nice names which enumerates $P(\kappa)^{V[G]}$. As in \cite[Thm. 2.13]{BenNeriaGitik}, for each $\eta<\lambda$, we can find and elementary submodels $N_\eta\prec H_\theta$, such that $\{\dot{X}_i\mid i<\eta\}\subseteq N_\eta$ and  $N_\eta$ satisfies the assumptions describe at the top of \cite[Sec.~ 2.1]{BenNeriaGitik}. Moreover, it is possible to make $\l N_\eta\mid \eta<\lambda\r$ increasing with respect to inclusion. By  \cite[Lemma 2.1\& Prop. 2.4]{BenNeriaGitik}, for each $\eta<\lambda$, $\mathbb{P}_{\bar{E},\lambda}\cap N_\eta$ is a complete subforcing of $\mathbb{P}_{\bar{E},\lambda}$ which is isomorphic to $\mathbb{P}_{\bar{E}\restriction \delta_\eta,\delta_\eta}$, where $\delta_\eta=N_\eta\cap\lambda$. Letting $M_\eta=V[G\cap N_\eta]$, considition $(1)$ is satisfied as the $N_\eta$'s are increasing. By the $\kappa^{++}$-cc of the forcing, we may assume that each $\dot{X}_i$ for $i<\eta$ is a $\mathbb{P}_{\bar{E},\lambda}\cap N_\eta$-name and therefore $X_i\in M_\eta$ for every $i<\eta$. Hence $P(\kappa)^{V[G]}=\bigcup_{i<\lambda}P(\kappa)^{M_i}$ and condition $(4)$ is satisfied. 
    The existence of the ultrafilter $\mathcal{U}_\eta$ follows from \cite[Def. 2.6 \& Prop. 2.7]{BenNeriaGitik} and finally, the set $k_\eta$ exists by \cite[Def. 2.10 \& Prop. 2.12]{BenNeriaGitik}. It is evident from the definition of $k_\eta$ that it can be computed in $V[G\cap N_{\eta+1}]$
    (since we only need to compare $o(\nu(\bar{\kappa}))$ and $\nu(\bar{\delta}_\eta)_0$, which is possible as $\delta_\eta\in N_{\eta+1}$). However, we could simply require in the inductive construction of the sequence $N_{\eta}$ that also $\dot{k}_\rho\in N_\eta$ for $\rho<\eta$.
\end{proof}
To preserve measurability, the assumption that $\vec{E}$ has a local repeat point can be improved. For example, consider the case $\lambda=\kappa^{++}$, then we only need to ensure the existence of a local repeat point (see \cite[Def. 2.2]{BenNeriaGitik}) $\delta$ of cofinality $\kappa^{++}$ which is a limit of local repeat points. Since it is possible to produce such local repeat by elementary submodels $N_\eta$ as in the proof of the previous theorem, the existence of such $\delta$ is below $o(\kappa)=\kappa^{+3}$.

\begin{question}
    What is the consistency strength of having $\kappa^{++}\in Sp_{dp}(U)$ for a $\kappa$-complete ultrafilter $U$?
\end{question}
\subsection*{Acknowledgment} The author would like to thank Moti Gitik for insightful discussions which greatly contributed to the last section of this paper and to Alan Dow for pointing out to us several results on $\omega$ which inspired some of the consistency results of the last section. Also to Gabe Goldberg for countless hours of mathematical and spiritual support.
\bibliographystyle{amsplain}
\bibliography{ref}
\end{document}